\documentclass[12pt, reqno, letterpaper]{amsart}
\usepackage[margin=1in]{geometry}
\usepackage{lmodern}
\usepackage[T1]{fontenc}
\usepackage{amssymb, amsmath, amsfonts, amsthm, mathrsfs, tikz-cd, hyperref}
\usepackage{cite}
\usepackage{dutchcal, bm}
\usepackage{fancyhdr}
\usepackage{color}

\usepackage{subfiles}
\usepackage{xr}

\definecolor{linkcol}{rgb}{0,0,0.6}
\definecolor{hrefcol}{rgb}{0,0.5,0.5}
\definecolor{citecol}{rgb}{0.6,0,0.6}
\definecolor{amber}{rgb}{0.5,0.4,0.0}
\hypersetup{colorlinks=true,
            linkcolor=linkcol,
            filecolor=magenta,      
            urlcolor=hrefcol,
            citecolor=citecol}
\tikzcdset{arrow style=tikz, diagrams={>=stealth}}

\makeatletter
\def\l@subsection{\@tocline{2}{0pt}{4pc}{5pc}{}}
\makeatother



\makeatletter 
\renewcommand\th@plain{\slshape}
\makeatother

\newtheoremstyle{result}
  {1.7ex}
  {1.7ex}
  {\slshape}
  {0pt}
  {\scshape}
  {.}
  { }
  {\thmname{#1}\thmnumber{ #2}\thmnote{\if&#3& #3\else { }$-$ #3\fi}}

\newtheoremstyle{unique}
  {1.7ex}
  {1.7ex}
  {}
  {0pt}
  {\scshape}
  {.}
  { }
  {\thmnote{#3}}

\newtheoremstyle{other}
  {1.7ex}
  {1.7ex}
  {}
  {0pt}
  {\scshape}
  {.}
  { }
  {\thmname{#1}\thmnumber{ #2}\thmnote{\if&#3& #3\else { }$-$ #3\fi}}

\theoremstyle{result}
\makeatletter
  \ifdefined\preamble@file 
    \newtheorem{THM}{Theorem}
  \else 
    \newtheorem{THM}{Theorem}[section]
  \fi
\makeatother
\newtheorem{PROP}[THM]{Proposition}
\newtheorem{LEM}[THM]{Lemma}
\newtheorem{COR}[THM]{Corollary}

\theoremstyle{unique}

\theoremstyle{other}
\newtheorem{DEF}[THM]{Definition}

\newtheorem{CONSTR}[THM]{Construction}
\newtheorem{RMK}[THM]{Remark}
\newtheorem{RMK!}[THM]{Important Remark}

\newtheorem{EXP}[THM]{Example}

\newtheorem{CONJ}[THM]{Conjecture}

\def\bb{\mathbb}
\def\fk{\mathfrak}
\def\0{\varnothing}
\def\scr{\mathcal}
\def\<{\langle}
\def\>{\rangle}

\def\spinc{\scr S\hspace{-0.16em}{\it pin^c}}
\def\rspinc{{\it r}\hspace{-0.1em}\scr {S}\hspace{-0.16em}{\it pin^c}}
\def\rsym{{\it r}\hspace{-0.1em}\scr S}
\def\rdiv{{\it r}\hspace{0.05em}\scr D}
\def\hatP{\widehat{\scr P}}
\def\rhatP{{\it r}\hspace{-0.05em}\widehat{\scr P}}

\newcommand{\keywd}[1]{{\fontseries{b}\selectfont{\boldmath#1}}}

\newcommand{\nocontentsline}[3]{}
\let\origcontentsline\addcontentsline
\newcommand\stoptoc{\let\addcontentsline\nocontentsline}
\newcommand\resumetoc{\let\addcontentsline\origcontentsline}

\makeatletter
\ifdefined\preamble@file 
  \labelformat{subsection}{\thesubsection$^{\texttt{\jobname\@}}$}
  \labelformat{equation}{\theequation$^{\texttt{\jobname\@}}$}
  
  \labelformat{THM}{\theTHM$^{\texttt{\jobname\@}}$}
  \newcommand{\makebib}[1]{\bibliographystyle{abbrvurl}\bibliography{#1}{}}
\else 
  \numberwithin{equation}{section}
  
  \newcommand{\makebib}[1]{}
\fi
\makeatother

\title[$\widehat{\it HFR}$ with $\mathbb Z$ coefficients]{Real Heegaard Floer homology \\ with integral coefficients}
\author{Ciprian M. Bonciocat}
\address{Department of Mathematics, Stanford University, 450 Jane Stanford Way, Building 380,
Stanford, CA 94305-2125, USA.}
\email{ciprianb@stanford.edu}

\begin{document}


\begin{abstract}
  We treat the question of upgrading Guth-Manolescu \cite{GM25}'s real Heegaard Floer theory to a $\mathbb Z/2$-graded invariant defined over $\mathbb Z$, specifically for the hat version with one basepoint on each component of the branching link. We provide if-and-only-if obstructions to the existence of relative $\mathbb Z/2$-gradings and $\mathbb Z$-lifts, in terms of purely homological information associated to the 3-manifold with involution. When the corresponding obstruction vanishes, we study the set of such $\mathbb Z$-lifts, which a priori is only an invariant up to a torsor action. The obstruction theory could be of independent interest in various Floer theories more broadly, particularly Lagrangian Floer theory.
\end{abstract}
\maketitle

\setcounter{tocdepth}{2}
\tableofcontents
\newpage

\section{Introduction}\label{sec:intro}

		In 2025, Guth and Manolescu \cite{GM25} defined a real version of the Heegaard Floer homology of Ozsv\'ath and Szab\'o \cite{OS-I,OS-II,OS-III}, where the word ``real'' signifies the presence of an orientation-preserving involution $\tau$ on the 3-manifold $Y$, with non-empty fixed locus of codimension 2, and which extends to a conjugate-linear automorphism of the ${\rm Spin}^c$ structure.  This construction is motivated by earlier gauge-theoretic analogues, such as Li's real monopole Floer homology \cite{Li22}, shown later by Xiao \cite{Xi26-I} to be equivalent to Konno-Miyazawa-Taniguchi's construction \cite{KMT24}, as well as the real Seiberg-Witten invariants of Tian and Wang \cite{TW09}, later generalized by Kato \cite{Ka22}. These theories were used by Miyazawa \cite{Miy23} in order to produce an infinite family of exotic $\mathbb{RP}^2 \subset S^4$, and consequently exotic involutions on the double branched cover $\mathbb {CP}^2$, due to further work by Hughes, Kim, and Miller \cite{HKM24}, showing that the smooth structure on these $\mathbb{CP}^2$'s is standard. Other relevant work in the development of real Seiberg-Witten or monopole theory, as well as interesting applications can be found in \cite{Nak13,Nak20,KMT21,KMT24,BH24,Kuh25,Kuh26,KPT24}. Further work has been done in building the foundations for real Heegaard Floer theory as well \cite{He26, Sr26, LO26, Xi26-II, BGX26}, including Guth-Manolescu \cite{GM26}, very recently establishing naturality of the $\bb Z/2$-coefficients homology in various flavors.

		Miyazawa's original example of an exotic $\mathbb{RP}^2$ in $S^4$ comes from ambient connect-summing the roll-spin of the Fintushel-Stern knot $P(2, 3, 7)$ with a standard $\mathbb{RP}^2$. A key fact used in his argument is the trace-like formula
		\begin{equation}\label{eq:degeq}
			|{\rm deg}(\rho K \subset S^4)| = |{\rm deg}(K \subset S^3)|,
		\end{equation}
		relating an invariant of a knot in $S^3$ to an invariant of the associated roll-spun 2-knot in $S^4$, both called ``degree.'' The 3-dimensional invariant on the right equals the absolute value of the Euler characteristic of Li's $\widetilde{\it HMR}$ associated to the double branched cover of $S^3$ over $K$. The 4-dimensional invariant on the left is defined as the real Seiberg-Witten invariant of the double branched cover of $S^4$ along the roll-spun 2-knot $\rho K$. Miyazawa's formula (\ref{eq:degeq}) is an equality of numerical invariants over $\mathbb Z$ and not just $\mathbb Z/2$, and this fact ends up being crucial in his proof of exoticness, since all the relevant invariants over $\bb Z/2$ end up determined by elementary algebraic topology of the branched cover. 
		
		Guth-Manolescu's $\widehat{\it HFR}$, which is a direct analogue of Li's $\widetilde{\it HMR}$ (and conjecturally isomorphic to it) has so far only been defined over $\mathbb Z/2$, with a relative $\mathbb Z/2$-grading in certain cases. Srivastava \cite{Sr26} gives a sufficient condition for the existence of \emph{absolute} $\mathbb Z/2$-gradings in the hat version of the theory, which holds true in the case of $S^3$, and further identifies the Euler characteristic as a suitable specialization of the Alexander polynomial. The question of $\mathbb Z$-coefficients remains to be addressed, in order to hope for a Heegaard-theoretic analogue of Miyazawa's formula (\ref{eq:degeq}).
		
		In this paper, we provide the obstructions to the existence of \emph{relative} $\mathbb Z/2$-gradings and $\mathbb Z$-lifts, in the case of $\widehat{\it HFR}$ theory with exactly one basepoint on every component of the branching link $C := Y^\tau$. The obstructions are if-and-only-if in a technical sense to be made precise later (cf. {\sc Defs.~\ref{def:z2gr},~\ref{def:cohsys}}). In short, we prove that
		\begin{list}{$\cdot$}{}
			\item There exists a free abelian group $A := A(Y, \tau)$, endowed with a bilinear form $F : {\rm Sym}^2 A \to \mathbb Z/2$, such that the obstructions to $\mathbb Z/2$-gradings and $\mathbb Z$-coefficients for any particular real Heegaard diagram are expressed in terms of vanishing conditions on the form $F$;
			\item The set of choices of $\mathbb Z$-lifts of the theory forms a torsor over $A^\vee := {\rm Hom}(A, \mathbb Z/2)$ for any particular diagram, and Heegaard moves induce torsor isomorphisms between the sets of choices of $\mathbb Z/2$-gradings and $\mathbb Z$-lifts. In particular, the set of choices is only well-defined as relatively graded over $\mathbb Z/2$ or $A^\vee$, respectively;
			\item In many cases, e.g.~ when $Y' := Y / \tau$ is a $\mathbb Z/2$-homology sphere and the link $C$ is connected, the group $A$ is automatically zero, and hence the $\mathbb Z$-lift is unique.
		\end{list}
	For more precise and complete statements, see {\sc Thm.~\ref{thm:main}} listed at the bottom of this section, together with the vanishing properties listed in {\sc Thm.~\ref{thm:vanish}}, which show that it is quite difficult for the obstruction to $\mathbb Z$-coefficients to vanish in practice, though such cases do exist.

	Our main results boil down to an explicit computation of the homotopy type of a suitable stabilization of the Lagrangian Floer pathspace associated to the given Lagrangian intersection problem, as well as an explicit formula for Stiefel-Whitney classes of the difference of the two Lagrangians tangent bundles. The homotopy type ends up being a disjoint union of tori, indexed over real ${\rm Spin}^c$-structures (cf.~{\sc Prop.~\ref{prop:spcalc},~Thm.~\ref{thm:comp}}), with $\pi_1$ of each component being canonically identified with the free abelian group $A(Y, \tau)$ mentioned previously. The space of choices of $\mathbb Z/2$-gradings and $\mathbb Z$-lifts are torsors over $H^0$ and $H^1$ of the toroidal components with $\bb Z/2$-coefficients, i.e.~ $\mathbb Z/2$ and $A^\vee = {\rm Hom}(A, \bb Z/2)$, respectively.

	In this paper, we do not rigorously address any naturality claims of the choices involved, but content ourselves to proving that Heegaard moves produce \emph{some} natural identification between the sets of choices. This latter correspondence is given by adding some element in $\mathbb Z/2$ or $A^\vee$ under the torsor action. For a complete understanding of this phenomenon, one must analyze the effect of loops in the ``space'' of real Heegaard diagrams on the said choices. In particular, our result only recovers Srivastava's result \cite{Sr26} up to an ambiguity of the choice of $\mathbb Z/2$-grading. We provide reasons to believe that these torsors of choices can be made canonical, both via higher multiplications in the Fukaya category, as well as via pushforwards in $K$-theory, in the last section {\sc\S\ref{sec:abs}}, although more details would have to be provided for a rigorous mathematical proof.

	We have only addressed the hat version with one basepoint in each connected component of the branching link $C$, because the homotopy type of the components of the stable pathspace would become more complicated. Depending on which complications one decides to allow, $\bb{RP}^\infty$ and $\bb{CP}^\infty$ factors may appear, and tracking them becomes non-trivial in the computation of Stiefel-Whitney classes, although we expect this to be feasible with more work.

	\makebib{main}

\subsection{Precise formulation of the main results}\label{ssec:conv}

	We now give more rigorous statements of the definitions and theorems promised at the beginning of the Introduction, and set forth more notation to be used throughout the rest of the paper.
	
	\hypertarget{setup}{}{\sc Setup.} The whole theory will be developed with respect to a \emph{double branched cover}, which we encode by the following geometric data called $\scr Y$:
	\begin{list}{$\cdot$}{}
	\item A commutative diagram $\tau \circlearrowleft Y \overset\pi\to Y'$ of closed connected oriented 3-manifolds, where $\tau$ is a smooth involution that preserves orientation, and $\pi$ is the projection that identifies $Y'$ with $Y/\tau$, canonically inheriting the orientation from $Y$.
	\item The fixed-point locus $C \subset Y$ is required to be a closed \emph{non-empty} 1-dimensional manifold, i.e.~ a link inside $Y$. It naturally embeds as a link in the quotient $Y'$ as well, and we still use the same symbol $C$ to denote its image in $Y'$.
	\item Away from the link, we obtain an honest $2:1$ cover $Y \setminus C \twoheadrightarrow Y' \setminus C$, which is uniquely determined as a $C_2 \cong \{\pm 1\}$-principal bundle by a class $v \in H^1(Y' \setminus C; \{\pm 1\})$ which evaluates to $-1$ on a closed loop $\gamma \subset Y$ precisely when the monodromy around $\gamma$ is non-trivial.
	\item We also use $N$ and $N'$ to denote small tubular neighborhoods of $C$ in $Y$ and $Y'$ respectively, requiring $N$ to be $\tau$-invariant, and $N' = N / \tau$. The class $v$ above has the property that when restricted to any toroidal component of $\partial \bar N'$, it counts the mod-2 winding number around the meridian $\mu$. 
	\end{list}

	Now, we must explain to what extent the promised invariant $\widehat{\it CFR}$ is a well-defined invariant, whether we are talking about $\mathbb Z/2$-gradings or $\mathbb Z$-coefficient lifts. It is necessary to introduce the following notions: 

	\begin{DEF}
		If $G$ is a group, and $\scr C$ is a category, a \keywd{relatively $G$-graded collection} of objects in $\scr C$ is defined to be a $G$-torsor $X$, together with a map $T : X \to {\rm Ob}(\scr C)$. A morphism $(X_1, T_1) \to (X_2, T_2)$ is given by an isomorphism of $G$-torsors $\phi : X_1 \to X_2$, together with morphisms $T_1(x) \to T_2(\phi(x))$ in $\scr C$, for all $x \in X_1$; composition and identity morphisms are defined in the obvious way, hence producing a new category \keywd{$\scr C \uparrow G$}.
	\end{DEF} 

	\begin{DEF}
		Let $R$ be a ring, $G$ be a group, and $g \in G$ be a privileged central element. Define a \keywd{relatively $(G, g)$-graded chain complex of $R$-modules} to be a $G$-graded collection of $R$-modules $C_* \in R\textit{--Mod} \uparrow G$, equipped with a degree-$g$ differential $d : C_* \to C_{g \cdot *}$ satisfying $d^2 = 0$. Define a chain map $f$ to be a homomorphism in $R\textit{--Mod} \uparrow G$ that satisfies $d \circ f = f \circ d$, and define a chain homotopy $H$ between equigraded $f_0$ and $f_1$ to be a $g^{-1}$-graded map that satisfies $H \circ d + d \circ H = f_1 - f_0$. Define \keywd{${\bf K}^{\rm rel}(R, G, g)$} as the homotopy category of such relatively $(G, g)$-graded chain complexes.
	\end{DEF}

	The relevance of this definition is that isomorphism in ${\bf K}^{\rm rel}(R, G, g)$ is the adequate notion of equivalence up to which our invariant will be defined. The group $G$ will turn out to be abelian, and takes the form $\mathbb Z/2$ in the case of $\mathbb Z/2$-gradings, $A^\vee := {\rm Hom}(A, \bb Z/2)$ in the case of $\mathbb Z$-coefficients, and $\mathbb Z/2 \times A^\vee$ in the case of both, where:

	\begin{DEF}\label{def:A}
		We define the group \underline{$A = A(\scr Y)$} as the twisted coefficients cohomology
		\[ A := H^1(Y' \setminus N', \partial N'; \underline{\bb Z}^v),\]
		where $\underline{\mathbb Z}^v$ is the local coefficient system with fiber $\mathbb Z$ and monodromy given by the element $v \in H^1(Y' \setminus N'; \pm 1)$ introduced in the setup, viewed as a homomorphism $\pi_1(Y' \setminus N') \to \{\pm 1\}$.
	\end{DEF}

	This group ends up being isomorphic to $H^1(Y, C; \mathbb Z)^{-\tau^*}$ {\sc(Prop.~\ref{prop:bigA})}, as well as to the real version of the group of periodic domains  associated to any given real Heegaard diagram {\sc(Cor.~\ref{cor:AisrPi})}, although in the formulation above it is independent of any auxiliary choices. Several important algebraic properties of $A$ are worked out in results {\sc\ref{prop:bigA}-\ref{prop:Apr2}}, such as its freeness, as well as its vanishing for the case that $Y'$ is an $\mathbb F_2$-homology sphere and $C$ a knot. Associated to the group $A$ are the following bilinear forms, which contain the information needed to encode the obstructions to $\mathbb Z/2$-gradings and $\mathbb Z$-lifts:

	\begin{DEF}
		We define a symmetric bilinear form \underline{$F : {\rm Sym}^2 A \to \mathbb Z/2$} via the triple cup product
		\[ F(x, y) := \int_{Y' \setminus N'} x \smile y \smile v, \]
		for all $x, y \in A = H^1(Y' \setminus N', \partial N'; \underline{\bb Z}^v)$. From here, we can naturally change $F$ to obtain an alternating bilinear form \underline{$Q : \Lambda^2 A \to \bb Z/2$} on the same space:
		\[ Q(x, y) := F(x, y) - F(x, x) \cdot F(y, y), \]
	\end{DEF}

	\begin{RMK}\label{rmk:form}
		Although we only defined the form $F$ on $A$, its defining formula can be extended to make sense for any two elements $x, y \in H^1(Y' \setminus N', \partial N'; \bb F_2)$, even if they don't lie in the image of $A$ via mod 2 reduction. Another interesting observation is that the formula defining $F$ can be rewritten in a way that has no reference to $v$, as we prove later in {\sc Cor.~\ref{cor:F2F2summ}}, namely:
		\[ F(x, y) = \int x^{2} \smile y = \int x \smile y^{2}. \]
		Even though it is not apparently obvious why the second equality in this display holds, it follows from the fact that ${\rm Sq}^1(x \smile y) = x^{2} \smile y + x \smile y^{2}$, and by the vanishing of ${\rm Sq}^1 : H^2(Y'; \bb F_2) \to H^3(Y'; \bb F_2)$, thanks to the orientability of $Y'$. This bilinear form indeed agrees with $F$ when pulled back to $H^1(Y' \setminus N', \partial N'; \underline{\bb Z}^v)$, because as we shall prove later (cf. {\sc Prop.~\ref{prop:Apr1}}), all the elements in the latter satisfy
		\[ x \smile x = x \smile v ~\text{mod}~ 2. \]
		Thus, $F(x, y)$ actually does not depend on $v$ at all, but its domain of definition does.

		We also mention that the second term in the definition of $Q$ \emph{becomes zero} (i.e.~ $F = Q$ is already alternating) when $v$ admits an integral lift, or at least a lift to $\mathbb Z/4$-coefficients. Indeed, $x \smile x \in H^2(Y' \setminus N', \partial N'; \bb Z/4)$ is a 2-torsion element by the anti-symmetry of the cup product; by lifting $v$ to an element $V \in H^1(Y' \setminus N'; \bb Z/4)$, this guarantees that the triple cup product lands in $2\mathbb Z / 4 \mathbb Z$ inside $\mathbb Z/4 \cong H^3(Y' \setminus N', \partial N'; \mathbb Z/4)$, and hence dies under reduction mod 2. 
	\end{RMK}

	With all these preliminaries, we may state our main result, together with some vanishing results concerning $A$, $F$ and $Q$.

	\begin{THM}[Main result]\label{thm:main}
		Let $\scr Y$ be a branched double cover, as described in the introductory {\sc Setup} at the beginning of this subsection. The ``hat'' version $\widehat{\it CFR}(\scr Y, \mathfrak s)$ of Guth-Manolescu's invariant with one basepoint on each connected component of $C$, at a particular real ${\rm Spin}^c$-structure $\mathfrak s$, is subject to the following upgrades:
		\begin{list}{$\cdot$}{}
			\item If the diagonal terms \keywd{$F(x, x)$ are zero} for all $x \in A(\scr Y)$, then there exists a relatively $(\mathbb Z/2,1)$-graded lift $\widehat{\it CFR}_*(\scr Y, \mathfrak s; \mathbb F_2)$, distinguished notationally by the asterisk, whose underlying ungraded chain complex recovers $\widehat{\it CFR}(\scr Y, \mathfrak s; \mathbb F_2)$. Its isomorphism type in ${\bf K}^{\rm rel}(\mathbb F_2, \mathbb Z/2, 1)$ is a well-defined invariant.
			\item If the bilinear form \keywd{$Q(x, y)$ is zero} identically, then there exists a relatively $(A^\vee, 0)$-graded chain complex $\widehat{\it CFR}(\scr Y, \fk s; \bb Z[\scr O])$ of $\mathbb Z$-modules distinguished notationally by the $\mathbb Z[\scr O]$, such that each homogeneous piece lifts the Guth-Manolescu invariant $\widehat{\it CFR}(\scr Y, \fk s; \bb F_2)$. Its isomorphism type in ${\bf K}^{\rm rel}(\mathbb Z, A^\vee, 0)$ is a well-defined invariant.
		\end{list}
		In particular, the joint condition for both $\mathbb Z/2$-gradings and $\mathbb Z$-coefficient upgrades to exist is that \keywd{$F(x, y)$ is zero} identically. If so, there is a relatively $(\mathbb Z/2 \times A^\vee, (1, 0))$-graded chain complex $\widehat{\it CFR}_*(\scr Y, \fk s; \mathbb Z[\scr O])$ of $\mathbb Z$-modules, distinguished notationally by incorporating both decorations. Its obvious downgrades recover the expected invariants, and moreover its isomorphism type in ${\bf K}^{\rm rel}(\mathbb Z, \mathbb Z/2 \times A^\vee, (1, 0))$ is a well-defined invariant.
	\end{THM}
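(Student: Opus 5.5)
The approach is to realize $\widehat{\it CFR}$ as a Lagrangian Floer complex. Fix a real Heegaard diagram with one basepoint per component of $C$. The complex is then $CF(L_0, L_1)$ in the fixed locus $M = {\rm Sym}^g(\Sigma)^{\tau}$ of the induced anti-holomorphic involution on a suitable symmetric product. Here $L_0, L_1$ are the real analogues of $\mathbb T_\alpha, \mathbb T_\beta$, and the basepoints are avoided.

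By the standard theory (Seidel, Fukaya--Oh--Ohta--Ono, Welschinger), the obstructions live on the path space $\scr P(L_0, L_1)$ in each component, that is, in each real ${\rm Spin}^c$-structure $\fk s$. The first obstruction is to a relative $\mathbb Z/2$-grading, which is the orientability of the index bundle over loops in $\scr P$. On a loop $\gamma$ this orientability is measured by $w_1$ of the Maslov-type index, i.e.\ by the parity of the Maslov index of the corresponding periodic class. The second obstruction is to coherent orientations, which are then $\mathbb Z$-lifts; it is given by $w_2(TL_0 - TL_1)$ pulled back along the evaluation maps, paired with tori in $\scr P$. When the relevant classes vanish, the grading choices form a torsor over $H^0(\scr P_{\fk s}; \mathbb Z/2)=\mathbb Z/2$ and the orientation choices form a torsor over $H^1(\scr P_{\fk s}; \mathbb Z/2)$.

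The key steps run as follows.
\begin{list}{$\cdot$}{}
\item Compute the homotopy type of a stabilization of $\scr P_{\fk s}$ and show it is a torus with $\pi_1 \cong A(\scr Y)$. For this I would identify $\pi_1$ with the real periodic domains, and these in turn with $H^1(Y'\setminus N',\partial N';\underline{\mathbb Z}^v)$ via Poincar\'e--Lefschetz duality on $\Sigma/\tau$.
\item Compute the Maslov parity of a real periodic domain $x$ and show it equals $F(x,x)=\int x\smile x\smile v$.
\item Compute $w_2(TL_0-TL_1)$ on the 2-torus spanned by $x,y$. Since $H^1$ of a torus is free, $w_1^2$ contributes the product term $F(x,x)F(y,y)$, so the obstruction on that torus becomes $Q(x,y)$.
\item For the joint statement, the grading and orientation obstructions combine into the vanishing of $F$, using that $Q=F$ once the diagonal terms vanish.
\end{list}
Both the $w_1$ and $w_2$ computations should reduce to an index formula for the real Cauchy--Riemann problem on a domain. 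I would express that formula as a cup product on the quotient surface and then push it into $Y'$.

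For invariance, I would check each real Heegaard move separately: real isotopies, handleslides and (equivariant) stabilizations. Continuation and triangle maps respect gradings and orientations once compatible choices are made. Each move induces an isomorphism of torsors of choices, because the torsors are canonically $H^0$ and $H^1$ of the path spaces, and these are identified with $\mathbb Z/2$ and $A^\vee$ independently of the diagram. The induced maps are then isomorphisms in ${\bf K}^{\rm rel}$. Stabilization needs an extra argument: a product index bundle of a trivially oriented local model should not change $A$.

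I expect the main obstacle to be the explicit Stiefel--Whitney computation of $TL_0-TL_1$ on tori in the path space, and in particular identifying it with the triple cup product on $Y'\setminus N'$. My first attempt would be to split $TL_i$ into the tangent bundles of the circle factors $\alpha_i^{\tau}$ or $\beta_i^\tau$ and the fixed quotient pieces. The classes $w_1, w_2$ can then be written via the double cover class $v$ restricted to the curves, followed by a Whitney sum formula and a localization to intersections of $\alpha$- and $\beta$-curves with domains.
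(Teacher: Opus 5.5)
You have the paper's overall architecture: a Floer-theoretic model, obstructions given by $w_1,w_2$ of $TL_0-TL_1$ on a toroidal component of a stabilized path space with $\pi_1\cong A$, and invariance under moves. But the step that carries all the content is set up wrongly, and you have no working method for it. First, the Floer problem is misidentified. The fixed locus of the anti-holomorphic involution on the symmetric product is totally real of half dimension, i.e.\ a Lagrangian, so it cannot be the ambient symplectic manifold. Also, since $\tau(\vec\alpha)=\vec\beta$, there are no invariant circles ``$\alpha_i^\tau,\beta_i^\tau$''. The correct model is $L_0=\mathbb T_\alpha$ and $L_1=\rsym^d\Sigma^\circ$ inside $\scr S^d\Sigma^\circ$. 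Because $\mathbb T_\alpha$ is parallelizable, $w(TL_0-TL_1)$ is just $w(T\rsym^d\Sigma^\circ)$ pulled back to the path space, so $F$ and $Q$ live entirely in the tangent bundle of the real symmetric product. Splitting into tangent bundles of circle factors only sees trivial bundles, and could never produce, for example, $F(x,x)\neq0$ in the $\mathbb{RP}^3$ example. Second, deriving coherent orientations from the vanishing of $w_2(TL_0-TL_1)$ on a single path-space component is not what the standard brane-based theories give; they need relative Spin/Pin data, a stronger condition. The paper builds this obstruction theory itself (Thm.~\ref{thm:obs}), using groupoid cohomology and the extension $\fk O_\partial$ of $O_n\times O_n$.

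The missing key lemma is the computation of $w_*(T\rsym^d\Sigma^\circ)$ in the stable range (Thm.~\ref{thm:wi}).
\begin{itemize}
\item Via the transfer map plus stabilizations, each component of $\rsym^d\Sigma^\circ$ contains an open, homotopy-equivalent subset diffeomorphic to $\scr S^m$ of the interior of $\Sigma'=\Sigma/\tau$, times $\mathbb R^\ell$. Here $\Sigma'$ may be non-orientable.
\item After adding non-orientable handles, $H_1(\Sigma')$ has a basis of disjoint embedded curves $\gamma_i$. Over their product torus the tangent bundle is a trivial bundle plus $\bigoplus_i T\Sigma'|_{\gamma_i}$.
\item The Cartan formula then gives $w_n=W^I_n$, where $I$ is the mod-2 intersection form on $H_1(\Sigma')$.
\item $F$ appears because $\pi_1\rhatP_s=A\to H^1(\Sigma',\partial\Sigma')$ is restriction and $\Sigma'$ is dual to $v$. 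Hence $I$ on the restrictions of $x,y\in A$ equals $\int x\smile y\smile v=F(x,y)$.
\end{itemize}
This also shows your mechanism for $Q$ is wrong: on a torus $w_1^2$ vanishes identically, since degree-one classes square to zero in $H^*(T^2;\mathbb F_2)$. The term $F(x,x)F(y,y)$ is instead the product $w_1(T\Sigma'|_{\gamma_1})\,w_1(T\Sigma'|_{\gamma_2})$ inside $w_2$ of a Whitney sum. Finally, a numerical index formula for real domains (such as Guth--Manolescu's) can give the Maslov parity and hence $w_1$. It carries no orientation information, however, so it cannot detect $w_2$.
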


	\begin{RMK}\label{rmk:fkO}
		The notation $\mathbb Z[\scr O]$ is used in order to suggest that the invariant breaks up
		\[ \widehat{\it CFR}(\scr Y, \mathfrak s; \mathbb Z[\scr O]) \cong \bigoplus_{\mathfrak o \in \scr O} \widehat{\it CFR}(\scr Y, \mathfrak s; \mathbb Z \cdot \mathfrak o) \]
		according to ``the set'' of $\mathbb Z$-lift choices $\mathfrak o \in \scr O$, each lifting the Guth-Manolescu invariant. However, this ``set'' $\scr O$ of choices is not a well-defined invariant as such, but only up to the torsor action of $A^\vee$, so that if one were to study the effect of a loop of Heegaard moves, one could potentially encounter a non-identity identification of the set of choices with itself. Of course, if one can calculate the individual summands and see that they are all non-isomorphic, then $\scr O$ does become well-defined, cf. {\sc \S\ref{ssec:s1s2},~\S\ref{ssec:z4}} for examples. This is also trivially the case when $A = 0$, since the set $\scr O$ is a singleton. We conjecture that non-identity automorphisms can never appear, hence making $\scr O$ always well-defined, cf. last section {\sc\S\ref{sec:abs}}.
	\end{RMK}

	\begin{RMK}
		Since both the spaces in which the obstructions live, as well as the obstructions themselves, do not depend on the real ${\rm Spin}^c$-structure $\mathfrak s$, it is natural to wonder if the sets of choices in various ${\rm Spin}^c$-structures for the same real Heegaard diagram are in canonical bijection. Regarding $\mathbb Z/2$-gradings, the absoluteness of the gradings constructed by \cite{Sr26} shows that at least in that case this claim holds true, though we see no reason why this should be true in general. Likewise, it is unclear to us whether the claim holds for $\mathbb Z$-coefficients. We note that the independence of the obstructions with respect to $\mathfrak s$ is itself somewhat accidental, and we do not expect it to be true for instance in the case of $\mathbb Z$-gradings, due to private communication of Jiakai Li concerning the monopole-theoretic analogue. 
	\end{RMK}

	Some algebraic vanishing criteria for the obstructions are as follows:

	\begin{THM}[Some vanishing results]\label{thm:vanish}
		The abelian group $A$ vanishes if any of the following holds:
		\begin{list}{$\cdot$}{}
			\item $H^1(Y'; \bb F_2) = 0$ and $C$ is a knot. {\sc(Cor.~\ref{cor:YpCF2})}
			\item The involution $\tau^*$ on $H^1(Y; \bb Q)$ is trivial. {\sc(Cor.~\ref{cor:trivtauact})}
		\end{list}
		The latter turns out to be an if-and-only-if characterization, because in fact the free abelian group $A$ has the same rank as $H^1(Y; \bb Q)^{-\tau}$ {\sc(Prop.~\ref{prop:bigA})}.
		Furthermore, even when potentially $A \neq 0$, we can guarantee vanishing of some of the obstructions, in the following cases:
		\begin{list}{$\cdot$}{}
			\item The diagonal terms $F(x, x)$ vanish when the class $v \in H^1(Y' \setminus C; \bb Z/2)$ admits a lift to $\mathbb Z$, or at least $\mathbb Z/4$ coefficients. {\sc({\sc Rmk.~\ref{rmk:form}}, {\rm second paragraph})}
			\item $F$ vanishes identically when $C$ is a local link in $Y'$, or more generally contained in an $\bb F_2$-homology ball, and $v$ is supported inside this homology ball. {\sc(Cor.~\ref{cor:loclink})}
			\item $F$ vanishes identically when $H_1(Y'; \bb Z)$ has no $\mathbb Z/2$-summand. {\sc(Cor.~\ref{cor:F2F2summ})}
			\item $Q$ vanishes identically when $H_1(Y'; \bb Z)$ has at most one $\bb Z/2$-summand. {\sc(Cor.~\ref{cor:F2F2summ})}
		\end{list}
		In the last two criteria, the group $H_1(Y' \setminus C; \bb Z)$ may be used instead {\sc(Prop.~\ref{prop:2torcompl})}.
	\end{THM}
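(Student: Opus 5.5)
The plan treats the bullets of {\sc Thm.~\ref{thm:vanish}} one at a time. Each reduces to a homological computation for $A = H^1(Y' \setminus N', \partial N'; \underline{\bb Z}^v)$ or to a cup-product identity for $F$. We start with the structural fact that $A \cong H^1(Y, C; \bb Z)^{-\tau^*}$ is free. To see this, pass to the double cover $Y \setminus N \to Y' \setminus N'$. There the local system $\underline{\bb Z}^v$ pulls back to the trivial one, so cohomology with $\underline{\bb Z}^v$-coefficients on the base is identified with the $(-1)$-eigenpart of cohomology on the cover, at least after inverting 2. Excision then replaces $(Y \setminus N, \partial N)$ by $(Y, C)$.

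Integrally, we use the transfer short exact sequence of coefficient systems
\[ 0 \to \underline{\bb Z}^v \to \pi_*\underline{\bb Z} \to \underline{\bb Z} \to 0 .\]
Its long exact sequence identifies $A$ with the kernel of $H^1(Y, C) \to H^1(Y', C)$ on anti-invariants. Freeness follows because $H^0(Y' \setminus N', \partial N'; \underline{\bb Z}) = 0$, since the relative group kills constants as $C \neq \varnothing$. Hence $A$ sits inside $H^1$ of a pair, which is torsion-free. Rationally, this gives ${\rm rank}\, A = \dim H^1(Y; \bb Q)^{-\tau}$, because $H^1(Y, C; \bb Q) \to H^1(Y; \bb Q)$ has kernel spanned by differences of component classes. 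That kernel is $\tau$-invariant, as $\tau$ fixes $C$ pointwise. This proves the second vanishing criterion and its converse.

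For the first criterion, we reduce mod 2. Freeness gives $A/2A \hookrightarrow H^1(Y' \setminus N', \partial N'; \bb F_2)$, where mod 2 the twist disappears. Lefschetz duality identifies the target with $H_2(Y' \setminus N'; \bb F_2)$. We compute it from the pair sequence:
\[ H^0(Y' \setminus N') \to H^0(\partial N') \to H^1(Y' \setminus N', \partial N') \to H^1(Y' \setminus N') .\]
With $C$ a knot, the first map is onto, so $H^1(Y' \setminus N', \partial N'; \bb F_2)$ injects into $H^1(Y' \setminus C; \bb F_2) \cong \bb F_2\langle \mu^* \rangle \oplus H^1(Y'; \bb F_2)$. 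Since $H^1(Y'; \bb F_2) = 0$, only the class $v$ dual to the meridian survives. That class restricts nontrivially to $\partial N'$, so it is not in the image. Hence $A/2A = 0$, and freeness gives $A = 0$.

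For the obstruction statements, the diagonal claim is exactly the second paragraph of {\sc Rmk.~\ref{rmk:form}}. For the local-link case, a class $x \in A$ restricted to the homology ball supporting $v$ has zero mod-2 image there, so $x \smile y \smile v$ vanishes by support considerations. For the last two criteria we use the $v$-free formula $F(x, y) = \int x^2 y$, which relies on the identity $x^2 = xv$. That identity should follow from the twisted Bockstein: $x$ lifts to $\underline{\bb Z}^v$, and ${\rm Sq}^1 x = x^2$ equals the twisted Bockstein correction $xv$. Then $x^2 = {\rm Sq}^1 x$ is the image of the integral Bockstein, which factors through the 2-torsion of $H^2(Y'; \bb Z) \cong H_1(Y')$. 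If there is no $\bb Z/2$-summand (only $\bb Z/2^k$ with $k \ge 2$ or odd torsion), then ${\rm Sq}^1 = \beta$ mod 2 vanishes on $H^1(Y'; \bb F_2)$. If there is one summand, ${\rm Sq}^1$ has rank at most one, so $F(x, y) = \ell(x)\ell(y)$ for a single linear functional $\ell$. In that case $F$ is of the form $\ell \otimes \ell$, and $Q = F - F(x, x)F(y, y) = 0$.

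The main obstacle is transferring these statements from $Y'$ to the relative group on $Y' \setminus N'$. This is handled by {\sc Prop.~\ref{prop:2torcompl}}, comparing the 2-torsion of $H_1(Y' \setminus C)$ and $H_1(Y')$, and I would verify the Bockstein compatibility there carefully.
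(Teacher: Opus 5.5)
The gap is in the last two criteria, and especially in the final sentence of the statement. First, the step you call the main obstacle, passing from $x\in A$ to a class on $Y'$, is elementary and is not what Prop.~\ref{prop:2torcompl} does. After excision to $(Y',C)$, a triple cup product landing in $H^3(Y',C;\mathbb F_2)$ needs only one relative factor. So $F(x,y)=\int_{Y'}\bar x^{2}\smile y=\int_{Y'}x\smile\bar y^{2}$, where $\bar x,\bar y$ are the images in $H^1(Y';\mathbb F_2)$. Hence $F$ descends to the image of $A$ in $H^1(Y';\mathbb F_2)/\ker\mathrm{Sq}^1$, and your rank count for $\mathrm{Sq}^1$ then finishes exactly as in Cor.~\ref{cor:F2F2summ}.

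Second, the variant with $H_1(Y'\setminus C;\mathbb Z)$ is never proved, and comparing the 2-torsion of $H_1(Y'\setminus C)$ with that of $H_1(Y')$ cannot supply it. The kernel of $H_1(Y'\setminus C)\twoheadrightarrow H_1(Y')$ is generated by meridians and can absorb $\mathbb Z/2$-summands. For example, take $Y'=\mathbb{RP}^3$ and $C$ two parallel copies of a core circle: then $H_1(Y'\setminus C)\cong\mathbb Z^2$ while $H_1(Y')\cong\mathbb Z/2$. So the complement criterion is not a consequence of the one for $H_1(Y')$.

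The missing idea is to work directly on $Y'\setminus N'$ with the 2-torsion lift $\tilde v\in H^1(Y'\setminus N';\underline{\mathbb Z}^v)$ of $v$, namely the twisted Bockstein of $1$ from Lem.~\ref{lem:bock}.
\begin{itemize}
\item The class $x\smile y\smile\tilde v\in H^3(Y'\setminus N',\partial N';\underline{\mathbb Z}^v)\cong\mathbb Z/2$ reduces to $F(x,y)$.
\item So $F(x,y)\neq 0$ forces $y\smile\tilde v\in H^2(Y'\setminus N',\partial N';\mathbb Z)\cong H_1(Y'\setminus N';\mathbb Z)$ to be 2-torsion and not divisible by $2$. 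It therefore spans a $\mathbb Z/2$ summand.
\item For $Q$, Prop.~\ref{prop:checkQ} provides $x,y$ with $F(x,x)=0$ and $F(x,y)=1$. Then $x\smile\tilde v$, $y\smile\tilde v$ and their difference are all 2-torsion and non-2-divisible, which gives a $\mathbb Z/2\oplus\mathbb Z/2$ summand.
\end{itemize}

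The rest of your plan is sound, and parts of it differ from the paper. The transfer sequence $0\to\underline{\mathbb Z}^v\to\pi_*\underline{\mathbb Z}\to\underline{\mathbb Z}\to 0$ gives freeness of $A$ and $A\cong H^1(Y,C;\mathbb Z)^{-\tau^*}$ at once. The image of $A$ is the kernel of the transfer, which equals the anti-invariants because $\pi^*$ is injective on the torsion-free group $H^1(Y'\setminus N',\partial N';\mathbb Z)$. This replaces the paper's $\cdot p$ Bockstein and Leray--Serre arguments. The knot criterion is correct as written.

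In the local-link case your support argument is a more direct alternative to the paper's route through Prop.~\ref{prop:Apr2}, but the key claim needs to be stated precisely. The restriction of $x$ to $H^1(B\setminus N',\partial N';\mathbb F_2)\cong H^1(B,C;\mathbb F_2)\cong\mathbb F_2^{k-1}$ need not vanish. For $C$ the 2-component unlink in $S^3$, $A\cong\mathbb Z$ maps onto $H^1(S^3,C;\mathbb F_2)\cong\mathbb F_2$. What does vanish is the \emph{absolute} image in $H^1(B\setminus N';\mathbb F_2)$, because $H^1(B;\mathbb F_2)=0$ makes restriction to $\partial N'$ injective there. That is enough, since $y$ and the lift of $v$ relative to $Y'\setminus\mathrm{int}\,B$ supply the boundary conditions.
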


	\begin{RMK}
		Restating the last criterion, i.e.~ the one for the vanishing of $Q$, it follows that $H_1(Y'; \bb Z)$ must have at least a $\bb Z/2 \oplus \bb Z/2$ summand in order for the obstruction to $\mathbb Z$-lifts to have a chance at being nonzero. One may naturally wonder if there exists a double branched cover $\scr Y$ where $Q \not\equiv 0$ and $H_1(Y'; \bb Z) \cong \bb Z/2 \oplus \bb Z/2$. We show that such a double branched cover does indeed exist, with $Y' \cong S^3 / Q_8$, cf. {\sc Ex.~\ref{exp:heegquat}}.
	\end{RMK}

	More algebraic properties about $A$, $Q$, $F$ may be found in {\sc\S\ref{sec:alg}}, including a reformulation for the vanishing of $Q$ in terms of the vanishing of $F$ on a certain subgroup, cf. {\sc Prop.~\ref{prop:checkQ}}.

	\begin{RMK}
		Our work on $\bb Z/2$-gradings generalizes both Guth-Manolescu \cite{GM25} and Srivastava \cite{Sr26}. The former imposes the condition that $c_1(\fk s) \in H^2(Y; \bb Z)$ is divisible by 4 to get relative $\bb Z/2$-gradings; our condition requires only that $c_1(\fk s)$ cupped with $(-\tau^*)$-invariant classes in $H^1(Y,C; \bb Z)$ is divisible by 4; see {\sc Prop.~\ref{prop:obsc1}}. The latter constructs absolute $\bb Z/2$-gradings by requiring that $v \in H^1(Y' \setminus C; \bb Z/2)$ lift to a primitive integral element; we are able to produce relative gradings even when $v$ only lifts to $\bb Z/4$, cf. last paragraph of {\sc Rmk.~\ref{rmk:form}}, which can happen as we show in {\S\ref{ssec:z4}}.
	\end{RMK}

	\subsection{Outline of the paper} In {\sc\S\ref{sec:sym}}, we recall the closely related notions of (real) symmetric products, and (real) divisor spaces, establish homotopical stability relating $\pi_*, H_*, H^*$ of the former to the latter for large enough degree {\sc(Prop.~\ref{prop:sdconn})}, and compute the Stiefel-Whitney classes of the tangent bundle of the former in the stable range {\sc(Thm.~\ref{thm:wi})}. We move on to {\sc\S\ref{sec:heeg}}, where we recall the basic notions concerning real Heegaard diagrams, introduce the unstable and stable pathspaces associated to such a diagram {\sc(Def.~\ref{def:path})}, and further deduce a similar kind of homotopical stability for them {\sc(Prop.~\ref{prop:connsd})}. We show that the stable pathspace has the homotopy type of a disjoint union of tori with $\pi_1 = A$ defined above {\sc(Prop.~\ref{prop:spcalc})}. Section {\sc\S\ref{sec:rs}} recounts the various equivalent reformulations of the notion of a real ${\rm Spin}^c$-structure, and finishes the analysis of the homotopy type of the stable pathspace, by showing that it can be interpreted as the space of (real, relative) ${\rm Spin}^c$-structures on $Y$ {\sc(Thm.~\ref{thm:comp})}, so in particular $\pi_0$ of this space is given by isomorphism classes of such structures.

	In {\sc\S\ref{sec:gror}}, we give a largely self-contained treatise of $\bb Z/2$-gradings and $\bb Z$-coefficients in the context of Lagrangian Floer theory, though at least part of the machinery would apply equally well to any other Floer theory, such as monopole and instanton theories. The obstruction theory we develop is if-and-only-if, in the sense that gradings and orientations are required to respect homotopies of Whitney disks, even when they contribute no holomorphic representatives; the obstructions end up being $w_1$ and $w_2$ of a certain $K$-theory class on the connected component of interest of the pathspace {\sc(Thms.~\ref{thm:obs},~\ref{thm:invar})}. The work appears to be new in its formulation and in its degree of generality, although it expands on other earlier ideas, such as coupled Spin \cite{AM25} and relative Spin structures \cite{Fuk00}, and is related to the modern approach to Floer homotopy theory. The theory has a distinct combinatorial flavor, reminiscent of \cite{OS-II}, and is amenable to a diagram-level understanding.

	The proof of the main result {\sc Thm.~\ref{thm:main}} is presented in {\sc\S\ref{sec:prf}}. Sections {\sc\S\ref{sec:alg}} and {\sc\S\ref{sec:ex}} treat algebraic properties of $A, F, Q$, and then various examples, respectively, and can be read almost independently of the earlier proof sections. The final section {\sc\S\ref{sec:abs}} is somewhat speculative in character, but establishes good reasons to believe that the set of choices of either $\bb Z/2$-gradings or $\bb Z$-coefficients could be made canonical by means of higher Fukaya operations, and likely has an algebro-topological interpretation via Atiyah-Bott-Shapiro pushforwards in $\it KR$-theory, due to the analogy with (real) monopole Floer homology. A precise formulation of these ideas can be found in {\sc Conj.~\ref{conj:abs}}, which treats the more advanced notion of polarization class, from which the obstructions to $\bb Z/2$-gradings and $\bb Z$-coefficients could be extracted, as well as $\bb Z$-gradings or ring spectrum coefficients.

\subsection{Acknowledgements}
  I would like to thank my doctoral advisor Ciprian Manolescu for his guidance and close involvement in this project, as well as David Baraglia, Gary Guth, Judson Kuhrman, Jiakai Li, and Eha Srivastava, for many useful conversations and comparisons between the results established in this paper and analogous developments in both real Heegaard Floer and monopole settings. I am also greatly indebted to William R. and Sara Hart Kimball for sponsoring my Stanford Graduate Fellowship, as well as to the Simons Foundation.

\section{Symmetric products and divisor spaces}\label{sec:sym}

	In this section, we review the notions of symmetric products and divisor spaces for surfaces, including the appropriate real versions, as well as homotopical stability results that will be needed later.

	\begin{DEF}\label{def:symdiv}
		Let $X$ be a finite CW complex. We define the \keywd{$d$-fold symmetric power} ${\scr S}^d X$ as the quotient
		\( {\scr S}^d X := X^{\times d} / \mathfrak S_d \)
		of the natural action of the symmetric group $\mathfrak S_d$ on the $d$-fold Cartesian product, equipped with the quotient topology. 
		
		Likewise, we define the \keywd{divisor group} ${\scr D} X$ set-theoretically as the free abelian group on $X$, topologized with the final topology induced by the collection of maps $X^{\times a} \times X^{\times b} \to \scr D X$ taking $(x_1, \ldots, x_a; y_1, \ldots y_b)$ to $x_1 + \cdots + x_a - y_1 - \cdots - y_b \in \scr D X$ (this agrees with the construction of \cite{Mc69} and \cite{Dr12} for based spaces after adding a disjointed basepoint.) Define the \keywd{degree map} ${\rm deg} : \scr D X \to \mathbb Z$ as the induced map from the terminal morphism $X \to *$, and use the notation $\scr D^d X$ for ${\rm deg}^{-1} (d)$.

		When $X$ is equipped with an involution $\tau$, we use the notation $\rsym^d X$ and $\rdiv^d X$ to indicate the \keywd{real symmetric power}, resp. \keywd{real divisor group}, defined as the fixed-point-set of the induced involution $\tau$ on ${\scr S}^d X$ and ${\scr D}^d X$, respectively.
	\end{DEF}

	\begin{RMK}
		The nomenclature ``divisor group'' for the free abelian group is non-standard, though we use it in this paper since $X$ will almost always be a Riemann surface (with complex-conjugate involution), and moreover the divisor-theoretic interpretation will be significant.
	\end{RMK}

	\begin{EXP}
		There is a natural homeomorphism $\scr S^d \mathbb C \cong \mathbb C^d$ given by sending $[z_1, \ldots, z_d]$ to the coefficients of the monic polynomial $\prod_{i=1}^d (z-z_i) \in \mathbb C[z]$. When $\mathbb C$ is endowed with the conjugation involution, we have $\rsym^d \mathbb C \cong \mathbb R^d$, corresponding to monic polynomials of degree $d$ in $\mathbb R[z]$.
	\end{EXP}

	\begin{EXP}
		In the case that $X$ is a Riemann surface, the symmetric power $\scr S^d X$ naturally becomes a complex manifold, because near every point $k_1 p_1 + \cdots + k_\ell p_\ell$, the manifold locally looks like $\scr S^{k_1} \mathbb C \times \cdots \scr S^{k_\ell} \mathbb C$, which is a naturally identified with $\mathbb C^{k_1 + \cdots + k_\ell} \cong \mathbb C^d$ in virtue of the previous example. If $X$ is further endowed with a conjugate-linear biholomorphism, the space $\rsym^d X$ is a totally real submanifold of half the dimension.
	\end{EXP}

	There are natural inclusions $\scr S^dX \hookrightarrow \scr D^d X$ and $\rsym^d X \hookrightarrow \rdiv^d X$. In what follows, we investigate the connectivity properties of these maps in the case that $X$ is an open Riemann surface. A slightly lesser known version of the Dold-Thom theorem states that $\scr D X$ has the homotopy type of a product of Eilenberg-MacLane spaces with $\pi_* \scr D X \cong H_* (X; \bb Z)$, cf. \cite{Mc69, Dr12} after adding a disjointed basepoint to $X$. There are equivariant versions of the Dold-Thom theorem \cite{Gui} in terms of Bredon cohomology, although we will offer more straightforward interpretations in terms of transfer maps in our particular cases of interest.

	In what follows, let $\Sigma$ be a closed connected genus $g$ Riemann surface, equipped with a complex-conjugate biholomorphic involution $\tau : \Sigma \to \Sigma$, such that $C := \Sigma^\tau$ is a disjoint union of $k > 0$ many circles $C_1, \ldots, C_k$. Pick a collection $\vec w = \{w_1, \ldots, w_k\}$ of basepoints $w_i$ in each circle $C_i$, and consider the punctured surface $\Sigma^\circ := \Sigma \setminus \vec w$ with the induced involution still denoted $\tau$.

	\begin{DEF}\label{def:rdeg}
		Since $C$ is potentially disconnected, it becomes relevant to introduce the \keywd{real degrees} ${\rm rdeg}_i : \rdiv\Sigma^\circ \to \mathbb Z/2$ by counting the parity of the degree of the divisor when restricted to $C_i^\circ := C_i \setminus w_i$. The only relation between these real degrees and the non-equivariant degree is given by
		\[ {\rm deg} \equiv \sum_{i=1}^k {\rm rdeg}_i \quad {\rm mod~2}. \]
		As a result, $\pi_0 (\rdiv\Sigma^\circ)$ is given by the quotient of $\mathbb Z \times (\mathbb Z/2)^k$ by the relation above, which is non-canonically isomorphic to $\mathbb Z \times (\mathbb Z/2)^{k-1}$. All connected components are homeomorphic, since the whole space is a topological abelian group. The analogous statement for $\pi_0 (\rsym^d \Sigma^\circ)$ is that it is indexed by tuples of real degrees $d_i \in {0, 1}$ that sum up to $d$ mod 2, and such that their sum over $\mathbb Z$ does not exceed $d$.

		In either case, we use the notation \keywd{$\rsym^d_{d_1, \ldots, d_k} \Sigma^\circ$} and \keywd{$\rdiv^d_{d_1, \ldots, d_k} \Sigma^\circ$} to indicate the connected component with the real degrees $d_i \in \{0, 1\}$.
	\end{DEF}

	\begin{DEF}
		Denote by $\Sigma'$ the quotient $\Sigma / \tau$, and likewise $\Sigma'^\circ := \Sigma^\circ / \tau$, which are the same up to homotopy. The \keywd{transfer maps}
		\[ p^{-1} : \scr{S}^d \Sigma'^\circ \to \rsym^{2d} \Sigma^\circ, \qquad p^{-1} : \scr{D} \Sigma'^\circ \to \rdiv\Sigma^\circ \]
		are defined by taking, for every point $x \in \Sigma'^\circ$, the sum of the two preimages in $p^{-1}(x)$, where $p$ is the quotient map, with the convention that when there is only one preimage, it should be counted with multiplicity two.

		There are also \keywd{stabilization maps}
		\[ s_i : \rsym^d_{d_1, \ldots, d_k} \Sigma'^\circ \to \rsym^{d+1}_{d_1, \ldots, d_i + 1, \ldots, d_k} \Sigma'^\circ, \quad s_i : \rdiv_{d_1, \ldots, d_k} \Sigma'^\circ \to \rdiv_{d_1, \ldots, d_i + 1, \ldots, d_k} \Sigma'^\circ \]
		well-defined up to contractible choice, given by adding one point in $C_i^\circ$, which can be used to change the real degrees of a divisor.
	\end{DEF}

	\begin{PROP}\label{prop:treq}
		There is a homotopy equivalence from $\scr S^d \Sigma'^\circ$ to $\rsym^{2d + \ell}_{d_1, \ldots, d_k} \Sigma^\circ$, where $\ell$ is the number of nonzero $d_i$, given by taking the transfer map followed by stabilizations for each index $i$ with nonzero $d_i$. Analogously, $\scr D^d \Sigma'^\circ$ is homotopy equivalent to $\rdiv^{2d + \ell}_{d_1, \ldots, d_n} \Sigma^\circ$.
	\end{PROP}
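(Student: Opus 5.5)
We prove the statement by a deformation-retraction argument that exploits the punctures $w_i \in C_i$, treating the symmetric-power case first and then deducing the divisor case from it (or running the same argument). The key geometric input is the structure of $\Sigma^\circ$ near its real locus. Each $C_i^\circ = C_i \setminus w_i$ is an open arc, and a $\tau$-invariant collar $U_i \cong C_i^\circ \times (-1,1)$ of it has $\tau(x,t) = (x,-t)$. The quotient $U_i/\tau \cong C_i^\circ \times [0,1)$ is a half-open strip whose boundary edge is the image of $C_i$. Since $C_i^\circ$ is an arc running out to the puncture, we can isotope points of $C_i^\circ$ toward $w_i$ and push them off to infinity.

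First we describe a point of $\rsym^{n}\Sigma^\circ$. It is a $\tau$-invariant effective divisor, so it decomposes uniquely as $p^{-1}(D') + \sum_i E_i$. Here $D'$ is a divisor on $\Sigma'^\circ$ and $E_i$ is a reduced-parity part supported on $C_i^\circ$. To make this canonical, we pair up points on $C_i^\circ$: an even number of real points on $C_i^\circ$ can be written as a transfer. Concretely, any divisor on $C_i^\circ$ of degree $m_i$ equals $p^{-1}$ of a divisor of degree $\lfloor m_i/2\rfloor$ on the boundary, plus at most one extra point. The component $\rsym^{2d+\ell}_{d_1,\dots,d_k}$ has $m_i \equiv d_i$ mod 2.

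Next we construct the homotopy inverse $r$ to $\Phi := s_{i_1}\circ\cdots\circ s_{i_\ell}\circ p^{-1}$. We use the standard trick of pushing points to the puncture. Choose a flow $\phi_t$ on $\Sigma^\circ$ that is $\tau$-equivariant and supported near the arcs $C_i^\circ$, and that drags each arc $C_i^\circ$ toward $w_i$, so that points on $C_i^\circ$ escape to infinity. Since an escaped point must be replaced to preserve degree, we instead work at the level of the $\mathbb Z$-module structure on $\rdiv$ and subtract, where the divisor statement is cleanest. In $\rdiv\Sigma^\circ$, consider the map
\[ \rdiv_{d_1,\dots,d_k}^{2d+\ell}\Sigma^\circ \to \rdiv^{2d+\ell}_{d_1,\dots,d_k}\Sigma^\circ, \qquad D \mapsto p^{-1}\bigl(\pi_*(D)/\!\sim\bigr)+\textstyle\sum_{d_i=1} c_i, \]
where $c_i \in C_i^\circ$ are fixed points. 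The equivariant Dold–Thom computation gives $\pi_*\rdiv\Sigma^\circ$ via transfer, so it suffices to check that $p^{-1} : \scr D\Sigma'^\circ \to \rdiv^{\rm rdeg=0}\Sigma^\circ$ is a weak equivalence of topological abelian groups. This reduces to two checks:
\begin{itemize}
\item on $\pi_0$, the degree and real degrees match (Def.~\ref{def:rdeg});
\item on higher $\pi_*$, $H_*(\Sigma'^\circ)$ surjects and injects into the homotopy of the real divisors.
\end{itemize}
For the second check we filter $\rdiv\Sigma^\circ$ by support on $C$ versus off $C$. Off $C$, invariant divisors are exactly transfers from $\Sigma'^\circ\setminus\partial$. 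On $C$, we have $\rdiv(\sqcup C_i^\circ)$, and since each $C_i^\circ \simeq \mathbb R$ this is homotopy discrete with $\pi_0 = \mathbb Z^k$. The transfer covers the even classes, and the stabilizations account for the parity. Gluing these two pieces is done via the pushout $\Sigma'^\circ = (\Sigma'^\circ\setminus\partial)\cup$ collar, using the excision property of $\scr D$ (cofibration sequences go to fibration sequences).

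For the symmetric-power case we argue directly with the deformation. The map $\Phi$ lands in the locus where the real part is exactly $\sum_{d_i=1}c_i$ plus pairs. We equivariantly deform any real divisor so that pairs of real points on $C_i^\circ$ collide and bifurcate off into conjugate pairs, as in $\rsym^2\mathbb C \cong \mathbb R^2$, where two real roots move to a complex-conjugate pair. This is done continuously on the collar $U_i$ using the identification $\rsym^m(\mathbb C)\cong\mathbb R^m$ locally. Collisions occur at a compactly controlled rate via the coordinates of monic real polynomials. Contracting the leftover single real point to $c_i$ then gives a deformation retraction onto the image of $\Phi$ up to homotopy.

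The main obstacle is making this local polynomial deformation globally continuous and canonical on all of $\rsym^{n}\Sigma^\circ$. The difficulty arises when real points on $C_i^\circ$ come and go as conjugate pairs approach $C$. We would handle it by working in the quotient: $\rsym^n\Sigma^\circ$ is homeomorphic to the space of effective divisors on the bordered surface $\Sigma'^\circ$ with boundary multiplicities counted via $\scr S^m[0,\infty)\cong[0,\infty)^m$. Then $\rsym^n_{\vec d}\Sigma^\circ$ fibers over a half-collar on which a straight-line homotopy pushing the boundary into the interior, keeping one point per odd $d_i$, is manifestly continuous.
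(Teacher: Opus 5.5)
Your proposal has a genuine gap at the step you yourself flag as the main obstacle, and the devices you offer to get past it are incorrect. The transfer of a boundary point of $\Sigma'^\circ$ is a \emph{double} real point. So every divisor of the form $p^{-1}(D')+\sum_{d_i=1}c_i$ has even multiplicity at each real point other than the $c_i$. This has three consequences:
\begin{itemize}
\item Your claim that a degree-$m_i$ divisor on $C_i^\circ$ equals $p^{-1}$ of a boundary divisor plus at most one point is false; two distinct real points on one arc already give a counterexample. No canonical or continuous ``pairing'' exists.
\item The map $D\mapsto p^{-1}(\pi_*(D)/\!\sim)+\cdots$ is undefined, for the same parity reason.
\item The asserted homeomorphism between $\rsym^n\Sigma^\circ$ and effective divisors on the bordered quotient cannot hold. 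Locally $\rsym^2\bb C\cong\bb R^2$ (real monic quadratics), and its points $x_1+x_2$ with $x_1\neq x_2$ real are transfers of nothing.
\end{itemize}
Likewise, a ``straight-line homotopy pushing the boundary into the interior'' makes no sense for a single real point, which can only leave $C$ after colliding with another one. So the deformation retraction for $\rsym$ is never actually constructed.

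The $\rdiv$ route has a different problem. Saying that equivariant Dold--Thom ``gives $\pi_*\rdiv\Sigma^\circ$ via transfer'' assumes the conclusion: in the paper, {\sc Prop.~\ref{prop:divhom}} is deduced from this very proposition. Also, splitting divisors by support on versus off $C$ is not a product decomposition, since conjugate pairs can land on $C$. Making this route work would require an equivariant Dold--Thom theorem with Bredon coefficients plus an actual computation, neither of which you supply.

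The missing idea is that you never need to move real points at all. Each $C_i^\circ$ is a closed, contractible, $\tau$-fixed arc. Collapse every $C_i^\circ$ to a point $c_i$, and each boundary arc of $\Sigma'^\circ$ to a point $c_i'$. This is a $C_2$-equivariant homotopy equivalence, since both the underlying map and the map on fixed loci are equivalences. Hence it does not change the homotopy types of $\scr S^d$, $\rsym^d$, $\scr D$ and $\rdiv$, and it commutes with transfer and stabilization.

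After the collapse, the fixed locus is the discrete set $\{c_1,\dots,c_k\}$. A real divisor is then uniquely a sum of free orbits plus $m_ic_i$ with $m_i\equiv d_i$. Writing $m_i=2n_i+d_i$ and absorbing $n_ic_i'$ into $D'$ shows that transfer followed by the stabilizations is an honest homeomorphism $\scr S^d(\Sigma'^\circ/\!\sim)\to\rsym^{2d+\ell}_{d_1,\dots,d_k}(\Sigma^\circ/\!\sim)$. The same holds for $\scr D$ and $\rdiv$. This is the paper's proof; it treats both cases at once and needs no Dold--Thom input.
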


	\begin{proof}
		Since $\scr S^d(-)$ is a homotopy-functor, we may replace $\Sigma^\circ$ with the quotient $\Sigma^\circ / \sim$ that collapses each $C_i^\circ$ to a point, and likewise $\Sigma'^\circ$ with the similarly defined quotient, without changing the homotopy type of the symmetric power. After making this modification, the composition of the transfer map and the stabilizations yields an honest homeomorphism, hence proving the claim. The proof is the same for $\scr D(-)$.
	\end{proof}

	\begin{PROP}\label{prop:sdconn}
		The connectivity of the canonical inclusion maps $\scr S^d\Sigma^\circ \hookrightarrow \scr D^d\Sigma^\circ$ and $\rsym^d\Sigma^\circ \hookrightarrow \rdiv^d\Sigma^\circ$ goes to $\infty$ as $d$ increases.
	\end{PROP}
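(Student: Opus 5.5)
The plan is to reduce both statements to a single claim about an open surface homotopy equivalent to a wedge of circles. For the real case, Prop.~\ref{prop:treq} identifies each component $\rsym^{2d+\ell}_{d_1,\ldots,d_k}\Sigma^\circ$ with $\scr S^d\Sigma'^\circ$, and each component $\rdiv^{2d+\ell}_{d_1,\ldots,d_k}\Sigma^\circ$ with $\scr D^d\Sigma'^\circ$. Both identifications are given by the same transfer-plus-stabilization maps, and these maps commute with the inclusions $\scr S \hookrightarrow \scr D$. So we get a commutative square whose horizontal maps are homotopy equivalences. It therefore suffices to treat the non-equivariant inclusion $\scr S^d X \hookrightarrow \scr D^d X$ for $X = \Sigma^\circ$ or $X = \Sigma'^\circ$. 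These are open surfaces, and $\Sigma'^\circ$ is non-compact because the quotient surface has boundary $C$ with the $w_i$ removed. Hence $X \simeq \bigvee^n S^1$ for some finite $n$.

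Since $\scr S^d$ and $\scr D^d$ are homotopy functors, I replace $X$ by the wedge $W = \bigvee^n S^1$. For such $W$ the homotopy types are classical. First, $\scr D^d W$ is a component of the Dold--Thom space, so $\scr D^d W \simeq K(H_1(W),1) = T^n$. Second, $\scr S^d W$ has the homotopy type of a CW complex obtained from a torus $T^n$ up to skeleta of dimension about $d$. Concretely, I would use the description of $\scr S^d(\bigvee^n S^1)$ as $\bigcup_{a_1+\cdots+a_n = d} \prod_i \scr S^{a_i} S^1$. Each $\scr S^{a}S^1 \simeq S^1$ (Morton), via the circle bundle $\scr S^a S^1 \to S^1$ with fiber a $(a-1)$-simplex. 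This makes $\scr S^d W$ homotopy equivalent to the $d$-skeleton of the product cell structure on $T^n$: the cells are products of at most $d$ circles, matching the constraint $\sum a_i = d$ with each nonempty factor using at least one point.

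The inclusion into $\scr D^d W \simeq T^n$ then corresponds to the skeletal inclusion $T^n_{(d)} \hookrightarrow T^n$, which is $d$-connected. To verify this identification I would check it on $\pi_1$ and on cohomology. On $\pi_1$, both sides are $H_1(W)$ under Dold--Thom. On cohomology, the composite is the standard map $H^*(T^n) \to H^*(\scr S^d W)$, which by Macdonald's computation is an isomorphism in degrees $\le d$ for the exterior-algebra part. Since both spaces have abelian $\pi_1$ and are built from tori cells, a Whitehead-type argument applied to the map of universal-cover-compatible cell structures upgrades this to $d$-connectivity. Equivalently, one can apply the stabilization principle: the stabilizations $\scr S^d W \to \scr S^{d+1} W$ are increasingly connected (Steenrod splitting, or Dold's theorem) with colimit $\scr S^\infty W \simeq \scr D^0$-component by Dold--Thom.

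I expect the main obstacle to be the honest homotopy-level identification of the inclusion, not just its effect on homology, because the spaces are not simply connected. I would first try the stabilization route. Write $\scr D^d W$ as the colimit (hocolim) of $\scr S^{d+m} W$ under adding points at a basepoint, using the group-completion form of Dold--Thom, where $\pi_0$ is already complete in each degree. Then show that each stabilization map is $\sim d$-connected by the explicit skeletal model above. Both statements of the proposition then follow, with connectivity growing linearly in $d$; in the real case the bound is roughly $\lfloor (d-\ell)/2 \rfloor$.
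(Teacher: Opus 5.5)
Your proposal is correct. Its first step is exactly the paper's: you reduce the real case to $\scr S^{d}\Sigma'^\circ \hookrightarrow \scr D^{d}\Sigma'^\circ$ through the transfer-plus-stabilization square of Prop.~\ref{prop:treq}. From there you take a different route.

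The paper models the open surface as a punctured plane $\mathbb C\setminus\{q_1,\dots,q_\ell\}$, not as a wedge of circles. Then $\scr S^d F$ is the space of monic degree-$d$ polynomials that do not vanish at the $q_j$, i.e.\ the complement of $\ell$ affine hyperplanes in $\mathbb C^d$. Once $d\ge\ell$, a Vandermonde argument turns these into coordinate hyperplanes, so $\scr S^dF\cong(\mathbb C^\times)^\ell\times\mathbb C^{d-\ell}$ is literally a torus. Separately, the paper identifies $\scr D^dF$ with $\scr S^\infty F$ by comparing the homology theories $\scr S^\infty(-)$ and $\scr D(-)/\scr D(*)$. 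This is elementary and self-contained. It gives the stronger statement that the inclusion is a homotopy equivalence for $d\ge{\rm rk}\,H_1$, which is all the proposition asks.

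Your route gives an explicit linear bound ($d$-connectivity) throughout the unstable range. However, it rests on the theorem of Ong and Kallel--Salvatore that $\scr S^d(\bigvee^n S^1)\simeq T^n_{(d)}$. Your gluing description via the pieces $\prod_i\scr S^{a_i}S^1$ is a heuristic for that theorem, not a proof of it. In the punctured-plane model, the same skeleton statement would instead follow from Hattori's theorem on generic arrangements.

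The step you single out as the main obstacle is not actually one. $\scr D^dW$ is a $K(\mathbb Z^n,1)$ with abelian $\pi_1$, so free homotopy classes of maps into it are determined by the induced map on $H_1$, as in Prop.~\ref{prop:fibertori}. Once the inclusion is an isomorphism on $H_1$, it is homotopic to the skeletal inclusion composed with homotopy equivalences. No Whitehead-type or Macdonald argument is needed.

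Finally, add the $\pi_0$ check that the paper does first. Components of $\rsym^d\Sigma^\circ$ carry the constraint $\sum_i d_i\le d$, so the real inclusion is bijective on $\pi_0$ only once $d\ge k$.
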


	\begin{proof}
		First, let us reduce the real statement to a non-equivariant statement on $\Sigma'^\circ$. On the level of $\pi_0$, the statement follows from the discussion at the end of {\sc Def.~\ref{def:rdeg}}, because the only discrepancy between the two is the extra condition that the sum of the real degrees does not exceed $d$ for $\scr S^d$ in the case of $\pi_0 \rsym^d(\Sigma)$, which will become automatic once $d \ge k$.
		
		Thus, it suffices to show the desired connectivity result in any given connected component, which as we may recall is determined by a sequence of real degrees $d_i \in \mathbb Z/2$ under the constraint that they sum to $d$ mod 2. By {\sc Prop.~\ref{prop:treq}}, we have a commutative square
		\[ \begin{tikzcd}
            \scr S^{\tfrac 12(d - \ell)} \Sigma'^\circ \rar[hookrightarrow]\dar["\sim"] & \scr D^{\tfrac 12(d - \ell) }\Sigma'^\circ \dar["\sim"] \\
			\rsym^{d}_{d_1, \ldots, d_k} \Sigma^\circ \rar[hookrightarrow] & \rdiv^{d}_{d_1, \ldots, d_k} \Sigma^\circ 
		\end{tikzcd} \]
		where $\ell$ is the number of nonzero real degrees $d_i$; the vertical maps are homotopy-equivalences.

		Hence it remains to establish the general connectivity result for the inclusions $\scr S^d F \hookrightarrow \scr D^d F$  associated to any fixed 2-dimensional connected open manifold $F$. This is probably very standard, but we sketch a proof here: first of all, we may replace $\scr D^d$ with the infinite symmetric power $\scr S^\infty F$, defined as the direct limit of all the $\scr S^d F$ along stabilization by an arbitrarily chosen basepoint $p$ inside $F$. Indeed, there is a map $\scr S^\infty F \to \scr D^d F$ given by preserving all the points in the formal sum with the same multiplicities, except for the basepoint $p$, whose multiplicity in $\scr D^d F$ is uniquely determined by the total degree condition. We claim that this map is a homotopy equivalence. To this end, let us further replace $\scr D^d F$ up to homeomorphism with $\scr D F / \scr Dp$ via the obvious projection map; so it suffices to show that the composite $\scr S^\infty F \to \scr D F / \scr Dp$ is a homotopy equivalence. To prove this latter claim, let us recall that the Dold-Thom theorem asserts that $\scr S^\infty(-)$ and $\scr D(-)/\scr D*$ as functors of based spaces turn cofiber sequence into quasi-fiber sequences, hence producing cohomology theories once we apply $\pi_*$. The map $\scr S^\infty(-) \to \scr D(-)/\scr D*$ is therefore a natural transformation of cohomology theories, which must be an isomorphism by the usual inductive 5-lemma argument.

		Thanks to this natural isomorphism, we may reduce the proof of the main claim to showing that $\scr S^d F \hookrightarrow \scr S^\infty F$ has the desired connectivity property as $d \to \infty$. We do this by explicit computation in the following manner: since $F$ is non-closed, it is homotopy-equivalent to a bouquet of circles, which is further homotopy-equivalent to multiply punctured plane $\mathbb C$. We have $\scr S^d \bb C \cong \mathbb C^d$ via sending a tuple $(z_1, \ldots, z_d)$ to the coefficients of the polynomial $(x-z_1) \cdots (x-z_d)$; thus, we may identify the $d$-fold symmetric product of $\mathbb C - \{q_1, \ldots, q_\ell\}$ with the set of polynomials that do not vanish at $q_1, \ldots, q_\ell$. Each vanishing condition cuts out a hyperplane inside $\mathbb C^d$, so $\scr S^d F$ is the complement of $\ell$ many hyperplanes. Once $d \ge \ell$, the functionals cutting out the hyperplanes become linearly independent (e.g.~ by Vandermonde determinants), so the collection of hyperplanes can be arranged to be standard. Now, it is clear that the homotopy type is that of an $\ell$-torus, given by $\ell$ many copies of $\mathbb C^\times$, and $d - \ell$ copies of $\mathbb C$.
	\end{proof}

	By the argument in the last paragraph of the proof above, we have also established:

	\begin{PROP}\label{prop:divhom}
		The spaces $\scr D^d \Sigma^\circ$ and $\rdiv^d \Sigma^\circ$ have $\pi_{* \ge 2} \equiv 0$, and $\pi_1$ given by $H_1(\Sigma^\circ)$ and $H_1(\Sigma')$, respectively. The induced map on $\pi_1$ given by the natural inclusion $\rdiv^d \Sigma^\circ \hookrightarrow \scr D^d \Sigma^\circ$ is represented by the transfer homomorphism $H_1(\Sigma') \to H_1(\Sigma^\circ)$.
	\end{PROP}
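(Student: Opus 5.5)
We combine the ingredients from the proof of Prop.~\ref{prop:sdconn}. First, the non-equivariant statement. The proof of Prop.~\ref{prop:sdconn} gives a homotopy equivalence $\scr S^\infty F \simeq \scr D^d F$ for any connected open surface $F$. It also shows that $\scr S^d F$ has the homotopy type of an $\ell$-torus once $d \ge \ell$, where $F \simeq \bb C \setminus \{q_1, \ldots, q_\ell\}$, and that these tori are compatible with stabilization. Hence $\scr S^\infty F \simeq (S^1)^\ell$, which gives $\pi_{*\ge 2} \scr D^d F = 0$ and $\pi_1 \scr D^d F \cong \bb Z^\ell$. The factor $S^1$ indexed by $q_j$ is a loop of one point of the divisor encircling $q_j$, so this identification is canonically $H_1(F)$. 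Equivalently, the Dold--Thom description says $\pi_*(\scr D F/\scr D p) \cong \tilde H_*(F_+)$ in positive degrees. Taking $F = \Sigma^\circ$ gives the first claim.

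For the real divisor space we use Prop.~\ref{prop:treq}: each component $\rdiv^{d}_{d_1,\ldots,d_k}\Sigma^\circ$ is homotopy equivalent to $\scr D^{(d-\ell)/2}\Sigma'^\circ$, where $\ell$ is the number of nonzero $d_i$. The map is the transfer followed by stabilizations. By the previous paragraph this component is aspherical with $\pi_1 \cong H_1(\Sigma'^\circ)$. Since $\Sigma'^\circ$ is obtained from $\Sigma'$ by deleting boundary points $w_i \in C_i \subset \partial\Sigma'$, it is homotopy equivalent to $\Sigma'$, so $\pi_1 \cong H_1(\Sigma')$. All components are homeomorphic, being translates in a topological group, so the statement holds for every $d$.

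For the last claim we compose with the inclusion $\rdiv^d\Sigma^\circ \hookrightarrow \scr D^d\Sigma^\circ$. A loop in $\Sigma'^\circ$ is represented by moving a single point $x$ of a divisor, with the other points fixed. Under the transfer $p^{-1}$ it becomes the loop $p^{-1}(x)$, in which the two preimage points move together. Under the identification $\pi_1\scr D^d\Sigma^\circ \cong H_1(\Sigma^\circ)$, this loop is the sum of the two lifted paths, i.e.\ the homological transfer of the loop. The stabilizations add only fixed points, so they do not affect $\pi_1$.

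The main obstacle is the naturality of the identification $\pi_1 \scr D F \cong H_1(F)$: we need the isomorphism to be functorial enough that the transfer map of divisor spaces induces the transfer in homology. We plan to settle this by checking on generators. Loops of a single moving point generate $\pi_1$, because $\pi_1$ of the torus model is generated by the individual $\bb C^\times$ factors. On such a generator the image is computed directly as above. A loop crossing the branch locus $C$ needs a small extra local check. There the two preimages merge into a double point, but the transfer is continuous in this region, and homologically the loop still maps to the full preimage cycle.
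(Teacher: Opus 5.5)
Your proposal is correct and is essentially the paper's own argument. The paper deduces this proposition directly from the last paragraph of the proof of Prop.~\ref{prop:sdconn} (the torus model for $\scr S^d$ of a punctured plane, together with $\scr S^\infty F\simeq \scr D^d F$), combined with the transfer-plus-stabilization equivalence of Prop.~\ref{prop:treq}, exactly as you do.

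Your generator-level check that the inclusion induces the homological transfer on $\pi_1$ spells out what the paper leaves implicit. The ``branch locus'' case is unnecessary, since $C$ is the boundary of $\Sigma'$ and generating loops can be pushed into the interior. For generators with nontrivial monodromy, however, the two lifted arcs close up only together. There you need the identification $\pi_1\scr D\Sigma^\circ\cong H_1(\Sigma^\circ)$ in its ``trace'' form, or a quick check in $\scr S^2 S^1$, to conclude that the antipodal-pair loop maps to the full lifted cycle.
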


	\begin{COR}\label{cor:cohsym}
		Given a fixed $m \ge 0$, we have
		\[ \begin{aligned} H^m(\scr{S}^d\Sigma^\circ; R) &\cong {\rm Hom}(\Lambda^m H_1(\Sigma^\circ), R) \\
		 H^m(\rsym^d_{d_1, \ldots, d_k}\Sigma^\circ; R) &\cong {\rm Hom}(\Lambda^m H_1(\Sigma'), R) \end{aligned} \]
		for all $\Sigma$ and all $d$ bounded below by a constant depending only on $m$.
	\end{COR}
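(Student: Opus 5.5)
The plan is to combine the stability result {\sc Prop.~\ref{prop:sdconn}} with the homotopy type of the divisor spaces from {\sc Prop.~\ref{prop:divhom}}. Fix $m \ge 0$. By {\sc Prop.~\ref{prop:sdconn}}, there is a $d_0(m)$ such that for $d \ge d_0(m)$ the inclusions $\scr S^d\Sigma^\circ \hookrightarrow \scr D^d\Sigma^\circ$ and $\rsym^d\Sigma^\circ \hookrightarrow \rdiv^d\Sigma^\circ$ are $(m+1)$-connected. An $(m+1)$-connected map induces an isomorphism on $H^j(-;R)$ for $j \le m$; this follows from the relative Hurewicz theorem and the universal coefficient theorem, applied to the relative CW pair. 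So it suffices to compute $H^m$ of the divisor spaces, componentwise in the real case.

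To make $d_0$ independent of $\Sigma$, I would use the last paragraph of the proof of {\sc Prop.~\ref{prop:sdconn}} explicitly. There $\scr S^d F$ for an open surface $F \simeq \bigvee^\ell S^1$ is the complement of $\ell$ hyperplanes in $\mathbb C^d$. For $d \ge \ell$ this complement is homotopy equivalent to $(S^1)^\ell$, which already agrees with $\scr S^\infty F$. For $d < \ell$, the identification of $\scr S^d F$ with $\scr S^d$ of a wedge of circles, together with the standard cell structure of symmetric products of wedges of circles (Macdonald), shows that $\scr S^d F \to \scr S^\infty F$ is an isomorphism on $H^j$ for $j \le d$. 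I expect this uniformity in the genus and number of punctures to be the main obstacle. The fallback is to cite Macdonald's computation directly: $H^*(\scr S^d(\bigvee^\ell S^1))$ is $\Lambda^{\le d}$ of the generators, so $d \ge m$ suffices, independently of $\ell$.

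Next, compute the cohomology of the targets. By {\sc Prop.~\ref{prop:divhom}} and the Dold--Thom identification in the proof of {\sc Prop.~\ref{prop:sdconn}}, each component of $\scr D^d\Sigma^\circ$ is a $K(H_1(\Sigma^\circ),1)$. Since $H_1(\Sigma^\circ)$ is free of finite rank, this component is homotopy equivalent to a torus $T$ of dimension $\operatorname{rank} H_1(\Sigma^\circ)$. Its cohomology is $H^m(T;R) \cong \Lambda^m_R \operatorname{Hom}(H_1,R) \cong \operatorname{Hom}(\Lambda^m H_1, R)$, using freeness and the Künneth formula.

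The real case runs in parallel. By {\sc Prop.~\ref{prop:treq}}, each component $\rsym^d_{d_1,\ldots,d_k}\Sigma^\circ$ is homotopy equivalent to $\scr S^{(d-\ell)/2}\Sigma'^\circ$, compatibly with the inclusion into $\rdiv^d_{d_1,\ldots,d_k}\Sigma^\circ \simeq \scr D^{(d-\ell)/2}\Sigma'^\circ$. Applying the non-real case to the open surface $\Sigma'^\circ$, with degree $(d-\ell)/2 \ge (d-k)/2$, gives the second isomorphism once $d \ge 2d_0(m)+k$. Finally, $H_1(\Sigma'^\circ) \cong H_1(\Sigma')$, since $\Sigma'^\circ$ deformation retracts onto $\Sigma'$ minus boundary arcs. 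This quotient is a surface with boundary, free on its $H_1$, so the torus argument applies.

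Note that this bound depends on $k$. If a bound depending only on $m$ is required, I would use the uniform version of the previous step, which needs only $(d-\ell)/2 \ge m$. This still involves $k$ through $\ell \le k$, unless $k$ is treated as part of the fixed data, so I would be prepared to state the constant as depending on $m$ and $k$.
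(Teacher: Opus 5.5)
Your proposal is correct and follows the paper's own argument: stability from {\sc Prop.~\ref{prop:sdconn}}, the components of the divisor spaces being tori by {\sc Prop.~\ref{prop:divhom}} and {\sc Prop.~\ref{prop:treq}}, then K\"unneth. The paper's two-line proof never addresses uniformity in $\Sigma$; you supply it in the non-real case via the fact that $\scr S^n$ of a wedge of circles has the cohomology of the $n$-skeleton of the torus, which gives the uniform bound $d \ge m$. Your hesitation in the real case is a correct diagnosis rather than a gap: the threshold $(d-\ell)/2 \ge m$ is sharp. For a genus-$(d-1)$ surface with the maximal number $k = d$ of real circles, the component $\rsym^d_{1,\ldots,1}\Sigma^\circ$ is exactly $\prod_i C_i^\circ \cong \bb R^d$, hence contractible, while ${\rm Hom}(H_1(\Sigma'), R) \cong R^{d-1}$. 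So the constant must depend on $k$ as you say, and the statement's ``depending only on $m$'' overclaims. This is harmless for the paper: stabilization raises $d$ with $k$ fixed, and in any case $d = g+k-1$ together with Harnack's bound $k \le g+1$ forces $\ell \le d/2+1$.
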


	\begin{proof}
		By the proposition, we find that $\rsym^d_{d_1, \ldots, d_k}\Sigma^\circ$ admits a map to a $K(\pi, 1)$ with $\pi$ a free abelian group, which becomes at least $(m+1)$-connected for $d$ sufficiently large in terms of $m$. Since such $K(\pi, 1)$ are homotopy-equivalent to a torus of the appropriate rank, the conclusion then follows by the K\"unneth formula.
	\end{proof}

	\begin{CONSTR}\label{constr:probe}
		There is a very concrete way to understand these cohomology classes as follows: given parameterized curves $\gamma_1, \ldots, \gamma_k : S^1 \to \Sigma'$ or $\Sigma$, the fundamental class of the torus $\gamma_1 \times \cdots \times \gamma_k$ mapping into $\scr S^d \Sigma^\circ$ or $\rsym^d_{d_1, \ldots, d_k} \Sigma^\circ$ via taking the formal sum of the $k$-many points in each $\gamma_i$, and appropriately stabilizing to get to the correct degrees, gives an element of $H_k(\scr S^d \Sigma^\circ; R)$ or $H_k(\rsym^d_{d_1, \ldots, d_k} \Sigma^\circ; R)$. We can pair any given cohomology class with this homology class to get an element of $R$, for any tuple $(\gamma_1, \ldots, \gamma_k)$ of curves inside $\Sigma'$ or $\Sigma^\circ$. It is not difficult to check that these numbers are equal to the value of the associated multilinear map $\Lambda^k H_1(\Sigma^\circ) \to R$ or $\Lambda^k H_1(\Sigma') \to R$ under the isomorphism of the proposition above, evaluated at $[\gamma_1] \wedge \cdots \wedge [\gamma_k]$. Indeed, by naturality of the Dold-Thom theorem, this reduces to checking that the fundamental class of the torus in $\gamma_1 \times \cdots \times \gamma_k \subset \scr D(\gamma_1 \sqcup \cdots \sqcup \gamma_k)$ represents the element $[\gamma_1] \wedge \cdots \wedge [\gamma_k]$. This inclusion map breaks up on the nose as a product of the inclusions $\gamma_i \hookrightarrow \scr D \gamma_i$, so in turn by the multiplicativity of the fundamental classes with respect to the K\"unneth product, it remains to check the result for $k = 1$. In this case, $S^1 \hookrightarrow \scr D^d S^1$ is a homotopy equivalence, e.g.~ by the same argument used in the very last part of the proof of {\sc Prop.~\ref{prop:sdconn}}.
	\end{CONSTR}

	Now, if we pick enough curves to form a basis for $H_1(\Sigma')$ or $H_1(\Sigma)$, the evaluation maps we have just introduced in {\sc Constr.~\ref{constr:probe}}, with respect to all possible size-$k$ subsets, uniquely identify any cohomology class on the symmetric or real symmetric power. In this setting, we investigate the characteristic classes of the symmetric and real symmetric powers in the stable range (cf. {\sc Prop.~\ref{prop:sdconn}}). The Chern classes over $\mathbb Q$ have been studied in the non-equivariant case by \cite{Mc62}, which in particular give the Stiefel-Whitney classes over $\mathbb Z/2$. In our paper, we are interested in the Stiefel-Whitney classes for the real scenario. We use a different technique, which allows us to go through with the computation even in the case that $\Sigma'$ is non-orientable, which does not seem amenable to the algebro-geometric techniques of \cite{Mc62}. First, we introduce some combinatorics and linear algebra associated to it:

    \begin{DEF}
        Let $S$ be a set. A \emph{matching} $M$ in $S$ is a subset $M \subseteq 2^S$ such that $|e| = 2$ for all $e \in M$, and $e \cap e' = \0$ for all $e \neq e'$ in $M$. We use the notation $\scr M(S)$ for the set of matchings in $S$, and the notation $V(M) \subseteq S$ for $\bigcup_{e \in M} e$, given any $M \in \scr M(S)$.
    \end{DEF}
    
    \begin{DEF}\label{def:symtoalt}
        Let $A$ be an abelian group, and $I : {\rm Sym}^2 A \to \mathbb Z/2$ be a symmetric bilinear form. We define an associated multilinear form $W^I_n : A^{\otimes n} \to \mathbb Z/2$ via the formula
        \[ W^I_n(x_1, \ldots, x_n) := \sum_{M \in \scr M(S)} \left(\prod_{\{i, j\} \in M} I(x_i, x_j)\right) \left(\prod_{k \in S \setminus V(M)} I(x_k, x_k)\right), \]
        where $S = \{1, \ldots, n\}$, or more succinctly, but with some redundancy
        \[ W^I_n(x_1, \ldots, x_n) := \sum_{\substack{\sigma \in \mathfrak S_n \\ \sigma^2 = {\rm id}}} \prod_{i=1}^n I(x_i, x_{\sigma(i)}). \]
    \end{DEF}

    \begin{PROP}
        The form $W^I_n$ is alternating, i.e.~ it vanishes whenever $x_i = x_j$ for $i \neq j$.
    \end{PROP}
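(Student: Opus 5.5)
The plan is a sign-reversing (here, mod-$2$ cancelling) involution argument on the index set of the sum. It is most convenient to use the second formula in {\sc Def.~\ref{def:symtoalt}}, where we sum over involutions $\sigma \in \mathfrak S_n$, $\sigma^2 = {\rm id}$. These are in bijection with matchings $M \in \scr M(S)$: the $2$-cycles of $\sigma$ are the edges of $M$, and its fixed points are $S \setminus V(M)$. Under this bijection the term $\prod_{i} I(x_i, x_{\sigma(i)})$ is exactly the matching term, since each edge $\{i,j\}$ contributes $I(x_i,x_j) I(x_j,x_i) = I(x_i,x_j)^2 = I(x_i,x_j)$ in $\mathbb Z/2$. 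So the two formulas genuinely agree, and we may work with either.

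Fix $i \neq j$ with $x_i = x_j =: x$, and let $t = (i\,j)$ be the transposition. Consider the map $\sigma \mapsto t\sigma t$ on the set of involutions. It is itself an involution on that set. Moreover, it preserves the summand, because $\prod_k I(x_k, x_{t\sigma t(k)})$ is obtained from $\prod_k I(x_k,x_{\sigma(k)})$ by relabelling $k \mapsto t(k)$, and this relabelling does not change any value of $x$ since $x_i = x_j$. Hence non-fixed points of $\sigma \mapsto t\sigma t$ come in pairs with equal terms, and they cancel mod $2$.

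It remains to handle the fixed points, i.e.~involutions $\sigma$ commuting with $t$. Commuting means $\sigma(j) = t\sigma(i)$. This leaves three cases:
\begin{list}{$\cdot$}{}
\item If $\sigma(i) = k \notin \{i,j\}$, then $\sigma(j) = t(k) = k = \sigma(i)$, which is impossible for a bijection.
\item Otherwise, either $\sigma$ fixes both $i$ and $j$.
\item Or $\sigma$ swaps $i$ and $j$.
\end{list}
In the last two cases $\sigma$ restricts to an involution $\rho$ of $S \setminus \{i,j\}$. The two corresponding terms are $I(x,x)^2 \cdot P_\rho$ and $I(x,x) \cdot P_\rho$, where $P_\rho$ is the product over $S\setminus\{i,j\}$. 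These are equal because $a^2 = a$ in $\mathbb Z/2$. So the fixed points also cancel in pairs indexed by $\rho$, and $W^I_n(x_1,\ldots,x_n) = 0$.

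The only step requiring care is the classification of involutions commuting with $t$ (ruling out the case $\sigma(i)\notin\{i,j\}$), together with noticing that the idempotence $a^2=a$ in $\mathbb Z/2$ is exactly what makes the diagonal term $I(x,x)^2$ match the edge term $I(x,x)$. Everything else is bookkeeping.
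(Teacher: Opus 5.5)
Your proof is correct and is essentially the paper's argument in a different packaging. Conjugation by $t=(i\,j)$ on involutions is exactly the relabelling $i \leftrightarrow j$ on matchings, whose non-fixed orbits are precisely the paper's second and third cancelling families (moving the partner of $i$ over to $j$, and exchanging $\{i,c\},\{j,d\}$ for $\{i,d\},\{j,c\}$), while your pairing of the fixed points (fixing both $i,j$ versus swapping them, using $I(x,x)^2=I(x,x)$) is the paper's first family; the orbit/fixed-point viewpoint just derives the paper's three-case split instead of enumerating it.
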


    \begin{proof}
        Under the assumption that $x_a = x_b$ for some $a \neq b$, we argue that terms cancel in pairs. Namely, the terms corresponding to
        \begin{list}{$\cdot$}{}
            \item Matchings $M$ containing the edge $\{a, b\}$ as an element, and matchings $M'$ for which $a \notin V(M)$ and $b \notin V(M)$, via the bijective correspondence $M' = M \cup \{\{a, b\}\}$. This is due to $I(x_a, x_b) = I(x_a, x_a) I(x_b, x_b)$.
            \item Matchings $M$ satisfying $a \in V(M)$ and $b \notin V(M)$, and matchings $M'$ satisfying $a \notin V(M)$ and $b \in V(M)$ via the bijective correspondence $M' = M \setminus \{\{a, c\}\} \cup \{\{b, c\}\}$ where $c \in V(M)$ is the unique element satisfying $\{a, c\} \in M$. This is due to $I(x_a, x_c) I(x_b, x_b) = I(x_b, x_c) I(x_a, x_a)$.
            \item Matchings $M$ satisfying $a, b \in V(M)$ but $\{a, b\} \notin M$ can be paired up according to the rule
            \[ \{ \{a, c\}, \{b, d\} \} \sqcup N = M \leftrightarrow M' = N \sqcup \{ \{a, d\}, \{b, c\} \}, \]
            where $\sqcup$ indicates that the two operands are disjoint, and $c, d \in V(M), N \in \scr M(S \setminus \{a, b\})$ are the unique elements that make the equality on the left true. This is due to $I(x_a, x_c) I(x_b, x_d) = I(x_a, x_d) I(x_b, x_c)$. \qedhere
        \end{list}
    \end{proof}

	Now, we have the necessary language to state the formula of the Stiefel-Whitney classes:

    \begin{THM}\label{thm:wi}
        The tangent bundle to $\rsym^d_{d_1, \ldots, d_k}\Sigma^\circ$ has Stiefel-Whitney classes satisfying
        \[ w_d([\gamma_1] \wedge \ldots \wedge [\gamma_d]) = W^I_d([\gamma_1], \cdots, [\gamma_d]) \]
        under the isomorphism of {\sc Cor.~\ref{cor:cohsym}}, and the conventions of {\sc Constr.~\ref{constr:probe}}, where $I(-, -)$ is the intersection pairing on $H_1(\Sigma')$
    \end{THM}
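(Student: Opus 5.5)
Since $w_d$ is a degree-$d$ class, \textsc{Cor.}~\ref{cor:cohsym} identifies it with an alternating $d$-linear form on $H_1(\Sigma')$. By \textsc{Constr.}~\ref{constr:probe}, that form is determined by its values on tori $T = \gamma_1\times\cdots\times\gamma_d$ mapped into $\rsym^d_{d_1,\ldots,d_k}\Sigma^\circ$ (after stabilizing; the statement is in the stable range). The plan is therefore to compute $\langle w_d(T\rsym), [T]\rangle$ for the pullback of the tangent bundle to such a torus. We may take the $\gamma_i$ to be immersed curves in $\Sigma'$ in general position, with only transverse double points, disjoint from the branch image, and lifted via the transfer $p^{-1}$ of \textsc{Prop.}~\ref{prop:treq}. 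Along a point of $T$ where all the lifted points are distinct, the tangent space of $\rsym$ splits as $\bigoplus_i T_{\gamma_i(t_i)}\Sigma'$, at least after identifying the real tangent space at a conjugate pair $\{x,\tau x\}$ with $T_{p(x)}\Sigma'$. The stabilization points on $C$ contribute trivial summands. Thus, away from coincidences, the pullback bundle is $\bigoplus_i \pi_i^*\gamma_i^*T\Sigma'$, a sum of plane bundles each pulled back from one circle factor.

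The key step is to correct this splitting where two curves meet, or where one curve self-intersects. At a point where $\gamma_i(t_i)=\gamma_j(t_j)$, the divisor acquires a double point. There the local model is $\scr S^2\mathbb C\cong\mathbb C^2$ (coordinates $z_1+z_2$ and $z_1z_2$), rather than $\mathbb C\times\mathbb C$. The coordinate change between the product chart and the symmetric chart is the map $(a,b)\mapsto(a+b,ab)$ near a diagonal point. Transported around a small loop in the $(t_i,t_j)$-plane encircling the intersection, this acts on the fiber as a clutching, and I will check that it contributes one nontrivial $\mathbb Z/2$ twist, i.e.\ the class of a real line bundle. Concretely, I would show that
\[ T\rsym|_T \;\cong\; \bigoplus_i \pi_i^*\gamma_i^*T\Sigma' \;\oplus\; (\text{correction}) \]
stably. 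The correction is such that the total Stiefel-Whitney class of $T\rsym|_T$ becomes
\[ \prod_i \bigl(1 + I(\gamma_i,\gamma_i)\,\theta_i\bigr)\cdot\prod_{i<j}\bigl(1+I(\gamma_i,\gamma_j)\,\theta_i\theta_j\bigr), \]
where $\theta_i\in H^1(T^d;\mathbb Z/2)$ is the generator dual to the $i$-th circle. The diagonal factor has a separate justification. $w_1$ of the plane bundle $\gamma_i^*T\Sigma'$ over $S^1$ is the orientation character of $\Sigma'$ along $\gamma_i$, and this equals $I(\gamma_i,\gamma_i)$ mod 2. This holds both in the orientable case and when $\Sigma'$ is non-orientable, since it is the mod-2 self-intersection / Wu class relation on a surface. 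The off-diagonal terms come from counting the mod-2 number of intersection points of $\gamma_i$ and $\gamma_j$. Each such point contributes the class $\theta_i\theta_j$, obtained by Poincaré duality from the point $(t_i,t_j)$ in the $(i,j)$-subtorus.

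Once the total class is established, extracting the top-degree coefficient of $\theta_1\cdots\theta_d$ is pure combinatorics. Using $\theta_i^2=0$, each index must be covered either by a diagonal factor or by exactly one edge factor. This gives exactly the matching sum of \textsc{Def.}~\ref{def:symtoalt}, i.e.\ $W^I_d([\gamma_1],\ldots,[\gamma_d])$. The earlier proposition that $W^I_d$ is alternating provides a consistency check: it shows that the answer factors through $\Lambda^d H_1(\Sigma')$, as the left-hand side must.

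The main obstacle is the local clutching computation at a double point. The bundle is not naturally a direct sum there, and its precise contribution must be shown to be exactly $I(\gamma_i,\gamma_j)\theta_i\theta_j$ in $w$ (a $w_2$-type class), and not something that also interacts with the diagonal $w_1$ terms. I would handle this by working in the local model $\scr S^2\mathbb C\cong\mathbb C^2$ restricted to a $2$-torus neighborhood. There I would compare two framings of the real tangent bundle of $\rsym^2$, the product framing from $\gamma_i,\gamma_j$ and the polynomial-coefficient framing, and compute the obstruction as a degree of the map $(a,b)\mapsto b-a$ (winding once). This should show that the relative $w_2$ is the Poincaré dual of the intersection point. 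A possible subtlety is the case $i=j$, i.e.\ self-intersections of a single curve. I would absorb this into the $w_1$ factor via the identity $I(\gamma,\gamma)\equiv\#\text{self-intersections}+w_1(\Sigma')[\gamma]$. If that bookkeeping is awkward, I would instead perturb the curves to be embedded, since any class in $H_1(\Sigma')$ is a sum of embedded curve classes, and use multilinearity.
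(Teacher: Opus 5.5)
Your route is correct, but it is not the one the paper takes.

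\textbf{What the paper does.} The paper never analyzes the diagonal of the symmetric product. It first proves the reduction that you only gesture at: an open subset of $\rsym^d_{d_1,\ldots,d_k}\Sigma^\circ$, consisting of divisors that avoid $C$ except for the stabilization points, is diffeomorphic to a symmetric product of a trimmed $\Sigma'$ times $\mathbb R^\ell$. It then enlarges $\Sigma'$ by crosscaps, using that both sides are natural for the resulting injection on $H_1$, and chooses a basis of $H_1$ consisting of pairwise \emph{disjoint} embedded curves. On the probe tori of such curves the tangent bundle splits on the nose as $\bigoplus_i T\Sigma'|_{\gamma_i}$ plus a trivial bundle. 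The Cartan formula then gives both sides on basis wedges, and multilinearity together with the alternating property of $W^I_d$ finishes the proof.

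\textbf{What you do instead.} You take arbitrary curves in general position and correct the product splitting at each crossing. The Jacobian of $(z_1,z_2)\mapsto(z_1+z_2,z_1z_2)$ has determinant $z_1-z_2$, which winds once around the crossing. So the clutching map is the generator of $\pi_1 SO(4)$, as you anticipate.

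\textbf{Making the factor $(1+\theta_i\theta_j)$ rigorous.}
\begin{itemize}
\item Each clutching is supported near the codimension-two subtorus $\{(t_i,t_j)=(s,s')\}$.
\item It acts only on the $(i,j)$-summand, which is trivialized there because $\gamma_i(t_i),\gamma_j(t_j)$ lie in a small disk around the crossing point.
\item Hence $[T\rsym|_T]-[\bigoplus_i\pi_i^*\gamma_i^*T\Sigma']$ is a sum of classes pulled back from the $2$-tori $T^2_{ij}$, each with total Stiefel--Whitney class $1+\theta_i\theta_j$.
\item General position (no triple points) keeps the supports for pairs sharing an index disjoint.
\end{itemize}
With this, the Whitney formula gives your product, and the matching sum falls out as the top coefficient.

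\textbf{Comparison.} Your approach computes the entire total Stiefel--Whitney class on every probe torus. It works directly when $\Sigma'$ is orientable, where a disjoint basis of $H_1$ generally does not exist. It needs neither the enlargement step nor the alternating property of $W^I_d$ as inputs. It also explains the shape of $W^I_d$: unmatched indices come from $w_1$ of the factors, and edges come from crossings. The paper's approach gains simplicity: it uses only the Cartan formula on an honestly split bundle and avoids any local analysis at the diagonal.

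\textbf{Correction to your last paragraph.} Self-intersections of a single $\gamma_i$ never cause divisor points to collide along the probe torus, since the $i$-th factor contributes only one point. So no correction is needed there. Moreover, the identity $I(\gamma,\gamma)\equiv\#\{\text{double points}\}+w_1(\Sigma')[\gamma]$ is false: each double point contributes two intersections with a push-off. The correct statement is $I(\gamma,\gamma)\equiv w_1(\Sigma')[\gamma]$ mod $2$, which is exactly what your diagonal factor already uses.
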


    \begin{proof}
		We will need to refine the homotopy equivalence from {\sc Prop.~\ref{prop:treq}} in order to respect the tangent bundles, potentially up to copies of the trivial bundle. Recall that this homotopy equivalence is built by taking the transfer map, and then applying several stabilizations in order to get the appropriate degrees. However, this map is not a diffeomorphism (not even homeomorphism), so we cannot directly conclude that the tangent bundles are respected. Instead, let us find open subsets in both spaces that are actually homeomorphic up to taking Cartesian product with several copies of $\mathbb R$, such that their inclusion into the whole space is a homotopy equivalence; this suffices in order to prove the claim.

		To this end, consider the complement $\tilde \Sigma'$ of a collar neighborhood of the boundary in $\Sigma'$, which indeed satisfies the property that the inclusion $\tilde \Sigma' \hookrightarrow \Sigma'$ is a homotopy-equivalence, and hence induces a homotopy-equivalence on symmetric powers. The image thereof under the homotopy-equivalence from {\sc Prop.~\ref{prop:treq}} is given by the subset of $\rsym^d\Sigma^\circ$ of tuples that avoid a tubular neighborhood of $C := \Sigma^\tau$, except for the points that were artificially added by stabilization. We may enlarge this subset without changing its homotopy type by allowing these artificially added points to roam freely in the fixed-point set; the result on the level of diffeomorphism types is that of taking Cartesian product with $\mathbb R^\ell$, and the resulting subset is now open inside $\rsym^d\Sigma^\circ$. This proves the claim.

        This reduces the computation of Stiefel-Whitney classes of $\rsym^d_{d_1, \ldots, d_k} \Sigma^\circ$ to those of the non-equivariant $\scr{S}^{\tfrac 12(d - \ell)} \Sigma'{}^\circ$. First, note that by adding extra non-orientable handles to $\Sigma'$ induces an inclusion on $H_1$, and the Stiefel-Whitney classes, when viewed as alternating multilinear forms on $H_1$, respect this inclusion.  Therefore, we may assume that the surface $\Sigma'$ is non-orientable without loss of generality. Any such surface can be represented by attaching $n$-many (pairwise unlinked) non-orientable handles. The great advantage of this description (which does not exist for orientable surfaces) is that we can find a basis for $H_1$ consisting of \emph{disjointly embedded} $\gamma$-curves (cf. {\sc Constr.~\ref{constr:probe}}), in the manner presented in {\sc Fig.~\ref{fig:surfgens}} 
		\begin{figure}
			\centering\includegraphics{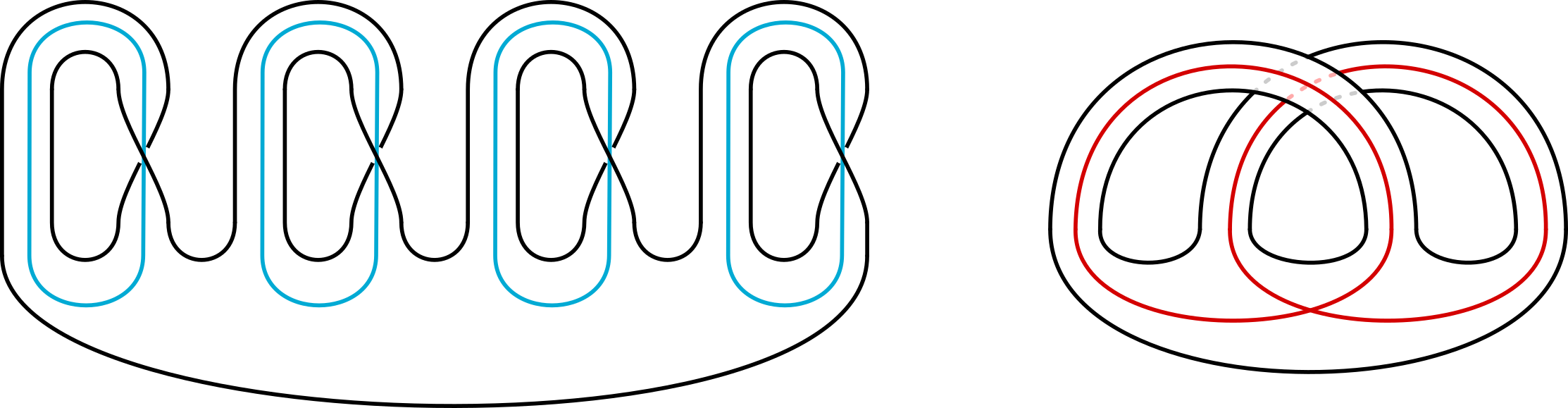}
			\caption{On the left side, we show that a set of disjoint embedded curves representing a basis for $H_1$ can be found in the non-orientable case; on the right we show an example of the contrary in the case of a punctured torus.}\label{fig:surfgens}
		\end{figure}
		such that the restriction of the tangent bundle to any torus $\gamma_1 \times \cdots \times \gamma_k$ simply breaks up as a disjoint union of $k$ line bundles, in addition to several copies of the trivial bundle:
        \[ \mathbb R^{2k - 2d} \oplus \bigoplus_{i = 1}^d T\Sigma'|_{\gamma_i}, \]
        where $\gamma_i$ are the disjointly embedded simple closed curves mentioned previously. By the Cartan formula, we find that $w_d$ is equal to 1 on each simple tensor. Since the simple tensors span the exterior power, all we have to do to deduce the desired conclusion is check that the alternating form $W^I_d$ defined in {\sc Def.~\ref{def:symtoalt}} agrees with $w_d$ on the simple tensors. Indeed, in this case because the simple closed curves are pairwise disjoint, the intersection numbers between various different $\gamma_i$ are zero. Thus, in the big sum defining $W^I_d$, all terms are zero, except potentially the one corresponding to the empty matching, namely $\prod_i I(\gamma_i, \gamma_i)$. Since all of the handles in our decomposition are non-orientable, each of these $I(\gamma_i, \gamma_i)$ is one, hence the product is also one, as desired.
    \end{proof}

\section{Real Heegaard diagrams and pathspaces}\label{sec:heeg}

	In this section, we review the prerequisites necessary to define $\widehat{\it HFR}$ with $\mathbb Z/2$-coefficients, as in \cite{GM25}, and introduce the associated pathspaces, which will play an important role in studying the index theory that determines gradings and orientations {\sc(\S\ref{sec:gror})}.

	\begin{DEF}
		A (multi-based) \keywd{Heegaard diagram} $\scr H = (\Sigma, \vec w, \vec \alpha, \vec \beta)$ consists of:
		\begin{list}{$\cdot$}{}
			\item a closed, connected Riemann surface $\Sigma$ of genus $g$;
			\item a non-empty collection $\vec w = \{w_1, \ldots, w_k\} \subset \Sigma$ of distinct basepoints;
			\item 1-dimensional embedded submanifolds $\vec \alpha = \alpha_1 \sqcup \cdots \sqcup \alpha_d$ and $\vec \beta = \beta_1 \sqcup \cdots \sqcup \beta_d$ of $\Sigma$, each consisting of \keywd{$d := g + k - 1$} many circle components, such that $\Sigma \setminus \vec \alpha$ and $\Sigma \setminus \vec \beta$ each have exactly one basepoint $w_i$ in each connected component. (The order of the $\alpha_i$ and $\beta_i$ is not part of the data.)
		\end{list}
		A \keywd{real involution} $\tau$ on $\scr H$ is a complex-conjugate biholomorphism $\tau : \Sigma \to \Sigma$ such that
		\begin{list}{$\cdot$}{}
			\item the fixed-point set $C := \Sigma^\tau$ is a disjoint union of $k > 0$ circles $C_1, \ldots, C_k$,
			\item $w_i \in C_i$ for each $1 \le i \le k$,
			\item $\tau(\vec \alpha) = \vec \beta$.
		\end{list}
		We usually refer to $(\scr H, \tau)$ as a \keywd{real Heegaard diagram}, and often suppress $\tau$ from the notation.
	\end{DEF}

	\begin{CONSTR}\label{constr:morse}
		From the data of a Heegaard diagram $\scr H$, we may canonically associate to it an oriented, connected, closed topological 3-manifold $Y$ as follows:
		\begin{list}{$\cdot$}{}
			\item Attach disks $D^\alpha_i$ and $D^\beta_i$ (formally defined as unreduced cones $C\alpha_i$ and $C\beta_i$ with tips \keywd{$a_i$} and \keywd{$b_i$}, respectively) to $\Sigma$, to obtain a 2-skeleton $Y^{[2]}$;
			\item Consider the surfaces with boundary $\hat \Sigma^\alpha := \Sigma | \vec \alpha$ and $\hat \Sigma^\beta := \Sigma | \vec \beta$ obtained by cutting $\Sigma$ along the 1-submanifolds $\vec \alpha$ and $\vec \beta$; their boundaries are given by $2d$ many circles $\alpha_i^\pm$ and $\beta_i^\pm$; then, attach disks $D^{\alpha, \pm}_i$ and $D^{\beta, \pm}_i$ along them to obtain a disjoint union $\vec S^\alpha := S_1^\alpha \sqcup \cdots \sqcup S_k^\alpha$ of topological 2-spheres $S_i^\alpha$ each containing $w_i$, and likewise $\vec S^\beta$;
			\item There are natural collapse maps $\vec S^\alpha \to Y^{[2]}$ and $\vec S^\beta \to Y^{[2]}$ that identify $D^{\alpha,+}_i$ with $D^{\alpha,-}_i$ and $D^{\beta,+}_i$ with $D^{\beta,-}_i$. We obtain a 3-dimensional CW complex by attaching 3-balls $B_i^\alpha$ and $B_i^\beta$ (formally defined as unreduced cones $CS^\alpha_i$ and $CS^\beta_i$ with tips \keywd{$p_i$} and \keywd{$q_i$}, respectively) via the attaching maps $S^\alpha_i \to \vec S^\alpha \to Y^{[2]}$ and $S^\beta_i \to \vec S^\beta \to Y^{[3]}$.
		\end{list}
		By using handles instead of cells, we obtain a smooth 3-manifold $Y = Y(\scr H)$ instead, at the cost of introducing some contractible spaces of choices of thickenings, and smoothings along boundary attachments and corners, which are all fairly standard. This manifold is naturally oriented, with the convention that $TY \cong T\Sigma \oplus N\Sigma$, where $T\Sigma$ is oriented from the Riemann surface structure, and $N\Sigma$ is oriented so that the positive direction points inside the $D^\beta_i$ and $B^\beta_i$, and hence outside of $D^\alpha_i$, $B^\alpha_i$.
		
		From the aforementioned smooth handle decomposition, one can reconstruct a self-indexing Morse function $f : Y \to [0, 3]$, such that $f^{-1}(3/2) = \Sigma$, and the critical points are as follows:
		\begin{list}{$\cdot$}{}
			\item $k$-many critical points $p_i$ of index 0, which are the centers of the $B^\alpha_i$;
			\item $d$-many critical points $a_i$ of index 1, which are the centers of the $D^\alpha_i$;
			\item $d$-many critical points $b_i$ of index 2, which are the centers of the $D^\beta_i$;
			\item $k$-many critical points $q_i$ of index 3, which are the centers of the $B^\beta_i$;
		\end{list}
		\begin{figure}
			\centering\includegraphics{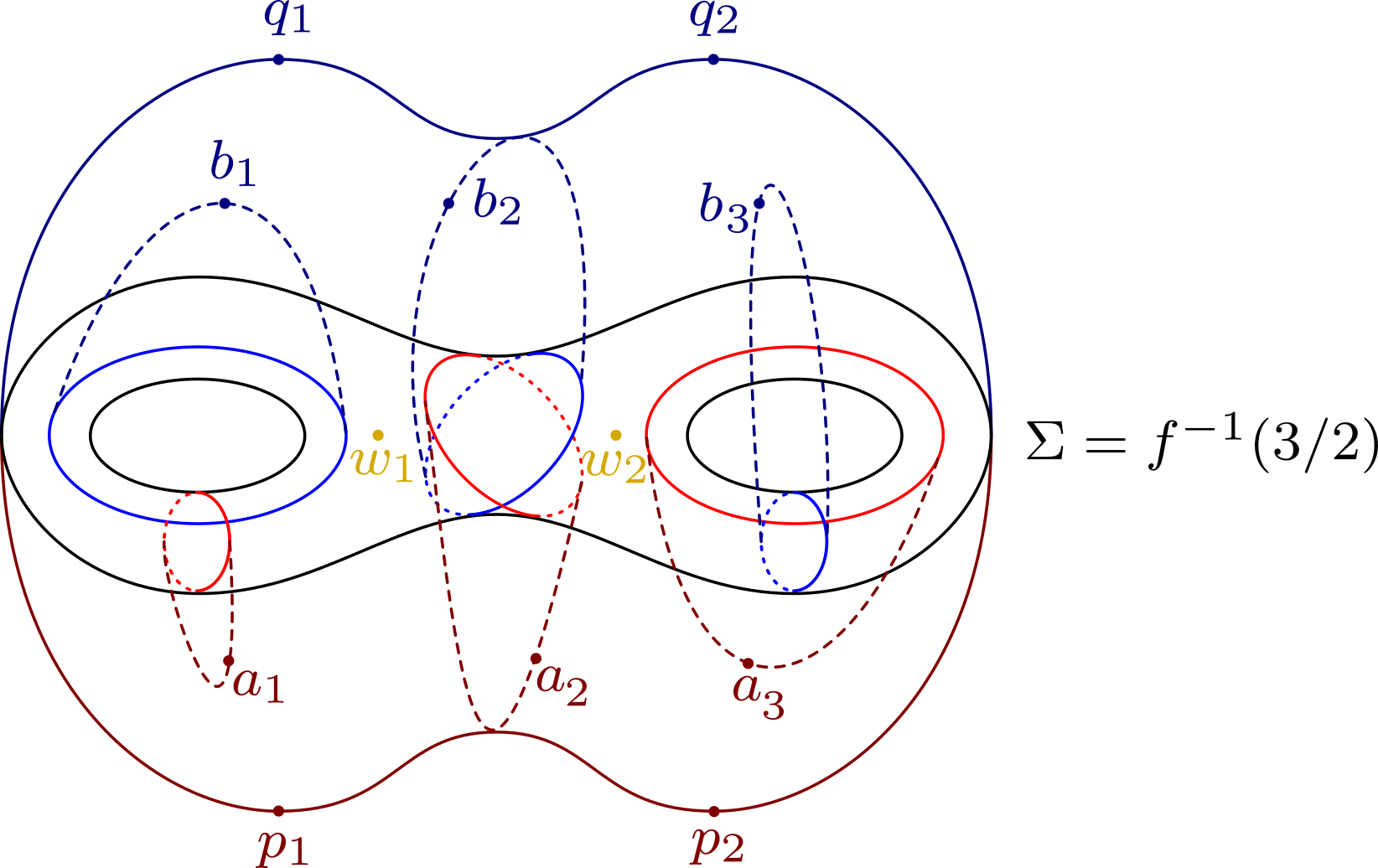}
			\caption{A depiction of the critical points of a Morse function reconstructed from $\alpha$ (red) and $\beta$ (blue) circles.}\label{fig:pabq}
		\end{figure}
		See {\sc Fig.~\ref{fig:pabq}} for a picture. The subdomains \keywd{$U_\alpha$}$~:= f^{-1}[0, 3/2]$ and \keywd{$U_\beta$} $~:= f^{-1}[3/2, 1]$ are both genus-$g$ handlebodies bounding the Heegaard surface $\Sigma$.

		When $\scr H$ is endowed with a real involution, the 3-manifold $Y$ inherits an orientation-preserving involution $\tau$ with $\tau(U_\alpha) = U_\beta$, and the Morse function can be arranged to be symmetric, i.e.~ $f(\tau(y)) = 3 - f(y)$, hence swapping all the objects labelled $\alpha$ with the corresponding ones labelled $\beta$. The fixed-point locus of $\tau$ on $Y$ is given by $C$, and the involution is locally modelled by the representation $\mathbb R_{\rm triv} \oplus \mathbb R_{\rm sign}^2$. Consequently, $Y' := Y / \tau$ is also a 3-manifold, and the projection map $\pi : Y \to Y/\tau$ is a double cover branched along a link $C \subset Y'$. The surface $\Sigma' := \Sigma / \tau$ bounds the link $C$ inside $Y'$, and its complement is a handlebody.
	\end{CONSTR}

	\begin{DEF}\label{def:realmoves}
		There are three kinds of operations that one can do to a (real) Heegaard diagram, without changing the (real) diffeomorphism type of the associated 3-manifold. The first one is given by \keywd{smooth isotopy} of the $\alpha$ and $\beta$ curves (if the diagram is to be real, the isotopy must be $\tau$-equivariant).
		
		The second one is called a \keywd{handle-slide}, which can be performed for either the $\alpha$ or the $\beta$ circles in the non-real case, or simultaneously for both in a $\tau$-equivariant way in the real case. Given two circles $\alpha_i, \alpha_j$, as well as an auxiliary curve $\alpha_i' \in \Sigma \setminus \vec \alpha$ such that $\alpha_i, \alpha_j, \alpha_i'$ cobound a pair of pants not containing any of the basepoints, we may replace $\alpha_i$ with $\alpha_i'$ (likewise for $\beta$-curves). Alternatively, one can think of $\alpha_i'$ as being an internal band sum of $\alpha_i$ and $\alpha_j$, cf. {\sc Fig.~\ref{fig:bsum}}. There is a 1-parameter family of Morse functions on $Y$ that interpolates between the Morse functions associated to the Heegaard diagram before and after the handle-slide, given by sliding the index-1 critical point $a_i$ over $a_j$ to arrive at $a_i'$ (the rest of the $a$'s stay fixed).
		\begin{figure}
			\centering\includegraphics{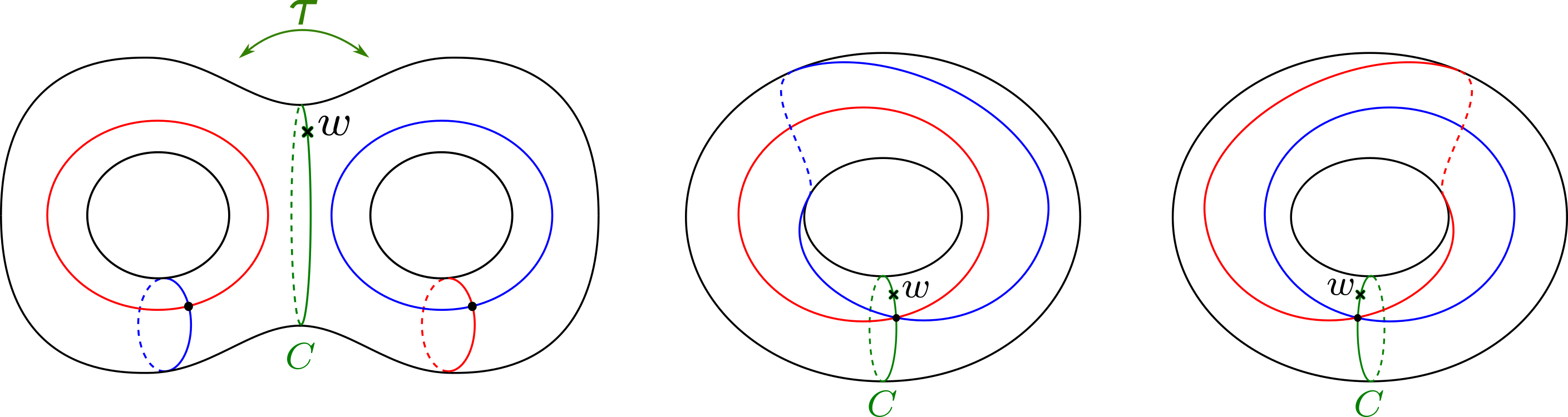}
			\caption{The three standard real Heegaard diagrams for $S^3$ from \cite[\sc Fig.~3.3]{GM25}, which under connect-sum of diagrams give rise to \emph{free, positive fixed-point,} and \emph{negative fixed-point stabilization,} respectively. The involution on the right is the obvious horizontal flip; the involution on the others is in fact determined by the $\alpha$ (red) and $\beta$ (blue) curves, and is the one whose quotient recovers the Klein bottle. See \cite{GM25} for more details.}\label{fig:rstab}
		\end{figure}
		The third kind of operation is called a \keywd{stabilization}, and comes from connect-summing a standard diagram for $S^3$ with one basepoint, near any of the $w_i$, and fusing this basepoint $w_i$ with the basepoint $w$ of the diagram for $S^3$; in the real case, there are three such standard diagrams, cf. \cite{GM25} redrawn for convenience here in {\sc Fig.~\ref{fig:rstab}}. There is a 1-parameter family of functions interpolating between the old and new Morse functions on $Y$, given by creation of one or two pairs of index 1 and 2 critical points happening near the basepoint (symmetrically if the stabilization is real).

		We use the adjective ``real'' whenever we are talking about the $\tau$-equivariant versions of the moves.
	\end{DEF}

	\begin{figure}
		\centering
		\includegraphics{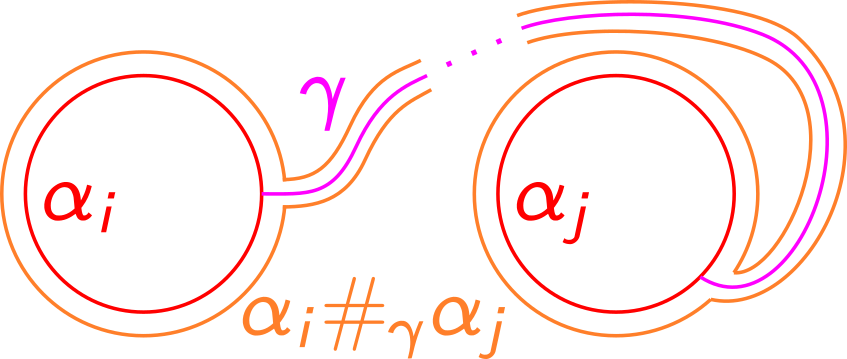}
		\caption{Pictorial description of a handle-slide, via choosing a band $\gamma$ to connect-sum $\alpha_i$ and $\alpha_j$.}\label{fig:bsum}
	\end{figure}

	Now, we recall a technical combinatorial condition on the diagram, which will ensure that $\widehat{\it HFR}$ is a well-defined invariant.

	\begin{DEF}
		Assuming that $\alpha_i \pitchfork \beta_j$ for all $i, j$ in a given Heegaard diagram $\scr H$, we define the ($\widehat{\it HF}$ version of the) \keywd{group of domains} $\hat \Delta(\scr H)$ to be the free abelian group generated by the connected components of $\Sigma \setminus (\vec \alpha \cup \vec \beta)$ that do not contain any basepoint $w_i$. A domain $D \in \hat \Delta(\scr H)$ is called a \keywd{periodic domain} if, for every intersection point $x \in \alpha_i \cap \beta_j$, the multiplicities $m_0, m_1, m_2, m_3$ of $D$ at the 4 components touching $x$, arranged in cyclic order, satisfy $m_0 + m_2 = m_1 + m_3$ (with the convention that $m_i = 0$ if the respective component contains one of the basepoints). These form a subgroup $\hat\varPi(\scr H) \subset \hat\Delta(\scr H)$.

		If $\tau$ is a real involution on $\scr H$, there are also real versions ${\it r}\hspace{-0.1em}\hat \varPi(\scr H, \tau) \subset {\it r}\hspace{-0.1em}\hat \Delta(\scr H, \tau)$ of domains that are invariant under the $\tau$-action that swaps elementary domains with the same multiplicity.
	\end{DEF}

	\begin{DEF}
		The Heegaard diagram $\scr H$ is called (weakly) \keywd{admissible} if every nonzero periodic domain $D \in \hat\varPi(\scr H)$ has both at least one negative and one positive multiplicity.
	\end{DEF}

	\begin{RMK}
		We refer to a real Heegaard diagram $\scr H$ as being admissible, though the real structure is not actually relevant in the definition of admissibility. That being said, it does suffice to check the admissibility condition on the subgroup or real periodic domains, since any non-zero non-negative domain $D$ can be made $\tau$-invariant by considering the sum $D + \tau D$.
	\end{RMK}

	Guth-Manolescu, extending on work of Nagase \cite{Na79}, prove that
	\begin{enumerate}
		\item[\sc i.] any real 3-manifold $(Y, \tau)$ as in our setup can be recovered from an admissible real Heegaard diagram;
		\item[\sc ii.] any two real Heegaard diagrams representing the same real 3-manifold $(Y, \tau)$ are related by a sequence of real isotopies, handleslides, and stabilizations, such that all intermediate diagrams are admissible.
	\end{enumerate}

	\begin{DEF}
		Given a multi-pointed Heegaard diagram $\scr H = (\Sigma, \vec w, \vec \alpha, \vec \beta)$, we denote $\mathbb T_\alpha := \alpha_1 \times \cdots \times \alpha_{d} \subset \scr S^{d} \Sigma^\circ$, and $\mathbb T_\beta$ analogously.
	\end{DEF}

	\begin{DEF} {(cf. \cite{GM25})}
		One defines $\widehat{\it HFR}(\scr H, \tau; \mathbb F_2) := {\it HL}(\mathbb T_\alpha, \rsym^d\Sigma^\circ)$ in ${\scr S^d \Sigma^\circ}$ for any \emph{admissible} real Heegaard diagram $(\scr H, \tau)$. The isomorphism type of this group is invariant under real Heegaard moves through admissible diagrams, and therefore gives a well-defined invariant $\widehat{\it HFR}(Y, \tau; \mathbb F_2)$.
	\end{DEF}

	We now introduce some objects of interest in the study of index theory:
	
	\begin{DEF}\label{def:path}
		We define the ($\widehat{\it HF}$ version of the) \keywd{pathspace $\hatP(\scr H)$} as the space of paths inside $\scr S^{d} \Sigma^\circ$ with one end on $\mathbb T_\alpha$ and the other end on $\mathbb T_\beta$. The \keywd{stable pathspace $\hatP_s(\scr H)$} is defined by replacing $\scr S^{d}\Sigma^\circ$ with $\scr D^{d}\Sigma^\circ$ in the definition.
		If moreover $\tau$ is a real involution on $\scr H$, there is an associated involution on both $\hatP(\scr H)$ and $\hatP_s(\scr H)$, given by reversing the direction of the path and applying the involution $\tau$. Let us define the \keywd{real pathspace ${\rhatP}(\scr H, \tau)$} and \keywd{stable real pathspace $\rhatP_s(\scr H, \tau)$} as the fixed points of the involution on the two pathspaces. These real versions can also be interpreted as spaces of paths, joining $\mathbb T_\alpha$ to $\rsym^{d}\Sigma^\circ$ or $\rdiv^{d} \Sigma^\circ$ inside $\scr S^{d} \Sigma^\circ$ or $\scr D^{d} \Sigma^\circ$ respectively, because a $\tau$-equivariant path is uniquely determined by its first half. When no ambiguity arises, we suppress $\tau$, or even $\scr H$ from the notation.
	\end{DEF}

	\begin{PROP}\label{prop:connsd}
		The connectivity of the inclusions $\hatP \hookrightarrow \hatP_s$ and $\rhatP \hookrightarrow \rhatP_s$ is bounded below by a function depending only on $d = g + k - 1$, which goes to $\infty$ as $d$ increases.
	\end{PROP}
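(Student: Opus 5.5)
The plan is to view both pathspaces as homotopy pullbacks and transfer the connectivity bound of {\sc Prop.~\ref{prop:sdconn}} through them. For a space $X$ with subspaces $T_0, T_1$, the space of paths in $X$ from $T_0$ to $T_1$ is the homotopy pullback of $T_0 \to X \leftarrow T_1$, which is (up to homotopy) the pullback of the endpoint fibration $X^{[0,1]} \to X \times X$ along $T_0 \times T_1 \hookrightarrow X \times X$. Thus $\hatP = {\rm holim}(\mathbb T_\alpha \to \scr S^d\Sigma^\circ \leftarrow \mathbb T_\beta)$ and $\hatP_s = {\rm holim}(\mathbb T_\alpha \to \scr D^d\Sigma^\circ \leftarrow \mathbb T_\beta)$. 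The inclusion $\hatP \hookrightarrow \hatP_s$ is the map of homotopy pullbacks induced by the identity on $\mathbb T_\alpha$ and $\mathbb T_\beta$ and by the inclusion $\iota : \scr S^d\Sigma^\circ \hookrightarrow \scr D^d\Sigma^\circ$ on the middle term. The real case is identical: by the remark in {\sc Def.~\ref{def:path}}, $\rhatP$ and $\rhatP_s$ are the spaces of paths from $\mathbb T_\alpha$ to $\rsym^d\Sigma^\circ$ inside $\scr S^d\Sigma^\circ$, respectively from $\mathbb T_\alpha$ to $\rdiv^d\Sigma^\circ$ inside $\scr D^d\Sigma^\circ$. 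So the map is now induced by the identity on $\mathbb T_\alpha$, by $\iota$ on the middle term, and by $\rsym^d\Sigma^\circ \hookrightarrow \rdiv^d\Sigma^\circ$ on the third corner.

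Next I would compare fibers. Fix basepoints and consider the fibration $\hatP \to \mathbb T_\alpha \times \mathbb T_\beta$ given by evaluating at the endpoints. Its fiber over $(a,b)$ is the space of paths from $a$ to $b$ in $\scr S^d\Sigma^\circ$, which is empty or homotopy equivalent to $\Omega \scr S^d\Sigma^\circ$; likewise for $\hatP_s$ with $\Omega \scr D^d\Sigma^\circ$. If $\iota$ is $n$-connected, the induced map of path spaces between two points is $(n-1)$-connected, including the statement on $\pi_0$ that the path components match when $n \ge 1$. Comparing the long exact sequences of the two fibrations over the common base $\mathbb T_\alpha \times \mathbb T_\beta$ with the five lemma then shows $\hatP \to \hatP_s$ is $(n-1)$-connected.

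In the real case the base is $\mathbb T_\alpha \times \rsym^d\Sigma^\circ$ versus $\mathbb T_\alpha \times \rdiv^d\Sigma^\circ$, so the bases differ. Here I would apply the general lemma that a map of homotopy pullback diagrams whose three component maps are $n$-connected induces an $(n-1)$-connected map on homotopy pullbacks, proved by the same Mayer--Vietoris-type comparison of the long exact sequences via the five lemma. By {\sc Prop.~\ref{prop:sdconn}}, both $\iota$ and $\rsym^d\Sigma^\circ \hookrightarrow \rdiv^d\Sigma^\circ$ have connectivity $n(d) \to \infty$, and the bound depends only on $d$ (together with the topology of $\Sigma^\circ$, which is controlled by $d = g+k-1$).

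The main obstacle is bookkeeping at low degrees, where $\pi_0$ and $\pi_1$ do not form groups in the five-lemma argument. I would handle this by checking surjectivity on $\pi_0$ and the injectivity statements by hand, in the usual way for fibration sequences of pointed sets with a $\pi_1$-action. A second point to verify is that the connectivity in {\sc Prop.~\ref{prop:sdconn}} is uniform in $g$ and $k$ once expressed through $d$. Its proof gives connectivity growing with $d$ minus the rank $\ell$ of $H_1$ of the relevant open surface, and since $\ell$ is bounded by about $2g+k-1 \le 2d$, this could fail to be uniform. If it does, I would phrase the bound as depending on the stabilization level and note that stabilizations increase $d$ while the excess $d - \ell$ can be made large, which is all that is used later.
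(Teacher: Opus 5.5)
Your main argument is the paper's: both pathspaces are homotopy pullbacks, and the connectivity from {\sc Prop.~\ref{prop:sdconn}} is pushed through with the five lemma. The paper fibers $X\times^h_Y Z$ over $Z$ with fiber ${\rm hofib}(X\to Y)$ and loses two degrees. You fiber over $X\times Z$ with fiber $\Omega Y$ and lose one. That difference is immaterial.

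The problem is your last paragraph. Your doubt about uniformity is well founded. The hyperplane argument in the proof of {\sc Prop.~\ref{prop:sdconn}} only covers symmetric degree $\ge \ell$, where the inclusion is an equivalence. Here, however, $\ell = 2g+k-1 > d = g+k-1$ as soon as $g \ge 1$.

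Your fallback does not repair this, because it is false. Stabilization raises $g$ with $k$ fixed, so $\ell - d = g$ grows rather than shrinks. In the real case the transferred degree $m=(d-\ell')/2$ likewise falls further behind ${\rm rk}\,H_1(\Sigma')=g$. The regime where that argument applies is therefore never reached, and re-indexing by stabilization level proves nothing.

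What you need is the uniform fact that $\scr S^m F\hookrightarrow \scr S^\infty F$ is $m$-connected for every number $\ell$ of punctures. Consider the affine hyperplanes $\{P : P(q_i)=0\}$ in the space $\mathbb C^m$ of monic degree-$m$ polynomials. They are in general position even when $\ell>m$:
\begin{enumerate}
\item any $j\le m$ of them meet in codimension $j$, by Vandermonde;
\item any $m+1$ of them have empty intersection, since $P$ has at most $m$ roots.
\end{enumerate}
By Hattori's theorem the complement is then homotopy equivalent to the $m$-skeleton of $T^\ell$, with $\scr S^mF\to\scr S^\infty F\simeq T^\ell$ corresponding to the skeleton inclusion. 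Equivalently, you can use Ong's computation of symmetric products of bouquets of circles.

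This gives connectivity at least $d$ in the non-real case. In the real case, {\sc Prop.~\ref{prop:treq}} reduces to degree $m=(d-\ell')/2\ge (d-k)/2=(g-1)/2$ on $\Sigma'$. Harnack's bound $k\le g+1$, which follows from $1-g=\chi(\Sigma')\le 2-k$, gives $d\le 2g$. Hence $m\ge d/4-1/2$, which is a bound depending only on $d$ and tending to $\infty$.
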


	\begin{proof}
		Spaces of paths can be interpreted as a model for homotopy fiber products. Associated to any such fiber product $X \times^h_Y Z$, we have the following diagram
		\[ \begin{tikzcd}
			F \dar[equals] \rar & X \times^h_Y Z \dar\rar & Z \dar \\
			F \rar & X \rar & Y
		\end{tikzcd} \] 
		whose rows are homotopy fiber sequences. Now, suppose that we have a map of such diagrams induced by a natural transformation $(X \to Y \leftarrow Z) \Longrightarrow (X' \to Y' \leftarrow Z')$ of diagrams, and that the individual maps $X \to X', Y \to Y', Z \to Z'$ all induce isomorphisms on $\pi_{* \le k}$ for some $k$. Then, $F \to F'$ also induces an isomorphism on $\pi_{* \le k-1}$ by the 5-lemma applied to the long exact sequence of the bottom row, which in turn implies that the map induced on homotopy-fiber products induces an isomorphism on $\pi_{* \le k-2}$ by the same 5-lemma argument, applied now to the first row. Our claim then follows from this general observation, and the connectivity results mentioned in {\sc Prop.~\ref{prop:sdconn}}.
	\end{proof}

	The upshot is that once we stabilize the Heegaard diagram enough times, we enter the stable range for any particular homotopy or (co)homology group of interest, so we might as well work with the stable version when studying the obstruction to $\mathbb Z/2$-gradings or $\mathbb Z$-coefficients. This is similar to Ozsv\'ath-Szab\'o's use of $\pi_2'$ to avoid edge cases for low genus. Another motivation to consider the stable pathspaces is that, unlike the unstable versions, their homotopy type only depends on the topological input data.

	We prove this claimed invariance up to handleslides in the following theorem, and up to stabilization later {\sc(Prop.~\ref{prop:pathstab})}, though a more straight-forward proof might be possible.
	
	\begin{THM}
		We have:
		\begin{enumerate}
			\item[\sc i.] Given diagrams $\scr H, \scr H'$ related by a handle-slide, there is a homotopy-equivalence $h : \hatP_s(\scr H) \overset\sim\to \hatP_s(\scr H')$ canonical up to a contractible space of choices. If $(\scr H, \tau), (\scr H', \tau)$ are real diagrams related by a real handle-slide (i.e.~ a composition of two $\tau$-symmetric ordinary handleslides for both $\alpha$ and $\beta$), then the composition $h : \hatP_s(\scr H, \tau) \to \hatP_s(\scr H', \tau)$ can be arranged to be a $\tau$-equivariant homotopy equivalence inducing a homotopy-equivalence $h^\tau : \rhatP_s(\scr H, \tau) \overset\sim\to \rhatP_s(\scr H', \tau)$.
			\item[\sc ii.] Given diagrams $\scr H_0, \scr H_1, \ldots, \scr H_n = \scr H_0$ such that $\scr H_{i+1 ~\text{mod}~n}$ is related to $\scr H_i$ by a handle-slide with induced homotopy-equivalence $h_i$, then there is a homotopy of the composite $h_{n-1} \circ \cdots \circ h_0$ auto-equivalence of $\hatP_s(\scr H_0)$ to the identity map, canonical up to contractible space. The analogous $\tau$-equivariant fact, as well as its corollary concerning $\rhatP_s$, are true if all diagrams are real, and all handle-slides are real.
		\end{enumerate}
	\end{THM}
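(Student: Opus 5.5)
The plan is to model the stable pathspace as a homotopy fiber product $\hatP_s(\scr H) \simeq \mathbb T_\alpha \times^h_{\scr D^d\Sigma^\circ} \mathbb T_\beta$, and to build the equivalence $h$ factor by factor. The middle term $\scr D^d\Sigma^\circ$ and the $\mathbb T_\beta$ factor are unchanged by an $\alpha$ handle-slide. So the whole content is a homotopy, canonical up to contractible choice, between the two maps $\mathbb T_\alpha \to \scr D^d\Sigma^\circ$ and $\mathbb T_{\alpha'} \to \scr D^d\Sigma^\circ$, together with a homotopy equivalence $\mathbb T_\alpha \simeq \mathbb T_{\alpha'}$ compatible with it.

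\textbf{Step 1: the local comparison.} Writing $\alpha_i' = \alpha_i \#_\gamma \alpha_j$ as the band sum along a band $\gamma$ (cf. {\sc Fig.~\ref{fig:bsum}}), the three curves $\alpha_i, \alpha_j, \alpha_i'$ cobound a pair of pants $P$ containing no basepoint. I will work inside the divisor group, where addition is available. The key map is the composite $\alpha_i \times \alpha_j \to \alpha_i' \times \alpha_j$, chosen as a homeomorphism of tori realizing the elementary matrix $\begin{pmatrix}1&1\\0&1\end{pmatrix}$ on $H_1$.

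\textbf{Step 2: the homotopy of divisors.} For the two divisors $x_i + x_j$ and $\phi(x_i,x_j)$, I construct a path in $\scr D^2 P$ connecting them. Since $\pi_{\ge 2}\scr D^2 P = 0$ and $\pi_1 = H_1(P)$ by {\sc Prop.~\ref{prop:divhom}}, the space of maps from the torus $T^2$ into $\scr D^2P$ in a fixed homotopy class has contractible components modulo the action of $H^1(T^2; H_1(P))$. I therefore only need the two maps $T^2 \to \scr D^2 P$ to induce the same map on $\pi_1$, which is a homology computation: $[\alpha_i'] = [\alpha_i] + [\alpha_j]$ in $H_1(P)$. This is exactly where stabilization to $\scr D$ is needed; in $\scr S^d$ the corresponding statement fails.

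Contractibility of the choices is the delicate point, because the mapping space component has $\pi_1 = H^0(T^2;H_1(P)) = H_1(P)$, which is not trivial. To remove this ambiguity I will rigidify by working relative to a basepoint and fixing the homotopy on a basepoint of $T^2$ via the band $\gamma$ itself. This reduces the space of choices to based maps with a fixed homotopy class, whose components are contractible since the target is a product of $K(\mathbb Z,1)$'s. Taking the product with the identity on the remaining $\alpha$-factors, and using that $\scr D$ is a topological group, gives $h$. The real case is handled by performing the symmetric construction on $\beta$ using $\tau$, and averaging or choosing the $\alpha$-half first, so that the fixed-point map $h^\tau$ is determined by the half-path description in {\sc Def.~\ref{def:path}}.

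\textbf{Step 3: loops of handleslides (part \textsc{ii}).} The composite $h_{n-1}\circ\cdots\circ h_0$ is a self-map of the fiber product built from a self-homotopy of $\mathbb T_\alpha \to \scr D^d\Sigma^\circ$ covering a self-equivalence of $\mathbb T_\alpha$. The self-equivalence is isotopic to the identity, since it acts trivially on $H_1$: the loop of diagrams returns to the same curves. The residual loop of homotopies lies in a mapping space component which, after the basepoint rigidification above, is contractible. Hence the composite is canonically homotopic to the identity.

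I expect this rigidification of the $\pi_1$-ambiguity to be the main obstacle, both here and in the equivariant version. It requires showing that the band data fixes a canonical basepoint homotopy consistently around the loop, and that this choice is compatible with $\tau$.
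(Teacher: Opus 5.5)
Part (i) of your plan is workable, but part (ii) has a genuine gap. It sits in the sentence ``the residual loop of homotopies lies in a mapping space component which, after the basepoint rigidification above, is contractible.''

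\begin{itemize}
\item Contractibility of each component only shows that the space of homotopies from the composite square to the identity square is \emph{either empty or contractible}. It says nothing about non-emptiness.
\item The space of homotopy-coherent self-squares of $\mathbb T_\alpha$ over $\scr D^d\Sigma^\circ$ consists of pairs $(\Phi,\mathcal H)$ with $\Phi$ allowed to vary. It is homotopy-discrete with $\pi_0 \cong H_1(\Sigma^\circ)/\langle[\alpha_1],\ldots,[\alpha_d]\rangle \cong H_1(U_\alpha)$, which is nonzero.
\item A loop of handleslides determines an element of this group: the concatenated basepoint tracks, closed up inside $\mathbb T_\alpha$, give a class in $H_1(\Sigma^\circ)$ modulo the $\alpha$-curves.
\item Rigidifying each square by its band fixes each individual step, but gives no control over this accumulated class.
\end{itemize}
So ``showing that the band data fixes a canonical basepoint homotopy consistently around the loop'' is not a technicality to add later. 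It is the entire content of part (ii), and your proposal has no idea for it.

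Two side remarks on part (i). For a single slide, the ambiguity is smaller than you fear: once $\phi$ may vary it is $H_1(U_\alpha)$. Any homotopy supported in $\scr D^2P$ is then already canonical, because $H_1(P)$ is spanned by $\alpha$-curves and so dies in $H_1(U_\alpha)$. Also, nothing fails in $\scr S^2 P$: it is itself homotopy equivalent to a torus.

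The paper supplies the missing idea through Lem.~\ref{lem:T}. That lemma exhibits $\mathbb T_\alpha$ as the homotopy fiber of $\scr D^d\Sigma^\circ \to \scr D^d U_\alpha$ over the divisor $\vec a$ of index-1 critical points, using the radial homotopies in the disks $D^\alpha_i$. This has two consequences:
\begin{itemize}
\item A handleslide becomes a path from $\vec a$ to $\vec a'$ in $\scr D^d U_\alpha$, supplied by the 1-parameter family of Morse functions.
\item The obstruction for a loop of handleslides becomes the class of a loop in $\pi_1\scr D^d U_\alpha = H_1(U_\alpha)$, which is exactly the group above.
\end{itemize}

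The paper then proves this class vanishes by a global relative Euler class argument:
\begin{itemize}
\item $\vec p - \vec a$ is the signed zero set of a gradient-like vector field on $U_\alpha$ with prescribed inward behaviour along $\Sigma$.
\item The space of such vector fields, not required to be nonvanishing, is contractible. So the loop of vector fields bounds a generic disk family.
\item The zero locus of that family is a branched cover of $D^2$, which defines a null-homotopy of the loop in $\scr D^{k-d}U_\alpha$ with $\vec p$ held fixed.
\end{itemize}
Only after this does the ``contractible components'' argument, which you do have, finish the proof.
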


	\begin{RMK}
		As we will show later, these stable pathspaces have vanishing $\pi_{* \ge 2}$, so in particular higher coherence is also guaranteed, since the space of order-$n$ homotopies between two such spaces is contractible, so all the obstructions vanish. Intuitively, this statement shows that the stable pathspaces satisfy ``higher naturality'' with respect to handleslides, which is the strongest possible statement of ``independence of Heegaard diagram'' concerning something that depends on contractible spaces of choices.
	\end{RMK}

	Since both the ordinary and real pathspaces are examples of homotopy fiber-products, which satisfy a strong universal property (i.e.~ the object satisfying the said property is unique up to a contractible space of homotopy equivalences), the theorem above reduces immediately to the following ``higher naturality'' statement about the map $\mathbb T_\alpha \to \scr D^d(\Sigma^\circ)$:

	\begin{PROP}\label{prop:torihslide} We have:
		\begin{enumerate}
			\item Given $\vec \alpha, \vec \alpha'$ related by a handle-slide, there is a homotopy-coherent diagram
			\[ \begin{tikzcd}
				\mathbb T_\alpha \rar["\sim"]\dar[hookrightarrow] & \mathbb T_{\alpha'} \dar[hookrightarrow] \\
				\scr D^d \Sigma^\circ \rar[equals] & \scr D^d \Sigma^\circ
			\end{tikzcd} \]
			(i.e.~ the top horizontal homotopy equivalence, together with a \emph{choice} of homotopy witnessing the commutativity of the diagram) canonical up to contractible space of choices.
			\item Given $\vec \alpha_{(0)}, \vec \alpha_{(1)}, \ldots, \vec \alpha_{(n)} = \vec \alpha_{(0)}$ such that each $\vec \alpha_{(i+1)}$ is related to $\vec \alpha_{(i)}$ by a handle-slide, the induced composition of homotopy-coherent squares
			\[ \begin{tikzcd}
				\mathbb T_{\vec \alpha_{(0)}} \dar[hookrightarrow] \rar["\sim"] & \mathbb T_{\vec \alpha_{(0)}} \dar[hookrightarrow] \\
				\scr D^d \Sigma^\circ \rar[equals] & \scr D^d \Sigma^\circ
			\end{tikzcd} \]
			is homotopic to the identity square relative the bottom edge, canonically up to contractible choice.
		\end{enumerate}
	\end{PROP}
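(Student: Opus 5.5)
The plan is to exploit that $\scr D^d \Sigma^\circ$ is a topological abelian group (a translate of $\scr D^0\Sigma^\circ$) whose components are $K(\pi,1)$'s with $\pi = H_1(\Sigma^\circ)$ free abelian, by {\sc Prop.~\ref{prop:divhom}}. The tori $\mathbb T_\alpha, \mathbb T_{\alpha'}$ are also $K(\pi,1)$'s, with $\pi_1 = \bb Z^d$ generated by the circles $\alpha_i$ (resp. $\alpha_i'$). Maps between aspherical spaces are determined up to homotopy by the induced map on $\pi_1$ (plus a choice of component). Homotopies between two such maps form a space whose components are a torsor over $\pi_1$ of the target (abelian here). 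Its higher homotopy vanishes, because the mapping space ${\rm Map}(T, K(\pi,1))$ has components that are $K(\pi,1)$'s. So the whole statement reduces to bookkeeping on $H_1$, once we keep track of the $\pi_1$ ambiguity of homotopies.

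For part (1), I would first construct the top map concretely. The handle-slide gives a pair of pants $P$ bounded by $\alpha_i, \alpha_j, \alpha_i'$ and avoiding basepoints, so $[\alpha_i'] = [\alpha_i] \pm [\alpha_j]$ in $H_1(\Sigma^\circ)$. Take the linear isomorphism $\bb Z^d \to \bb Z^d$ sending $e_i \mapsto e_i \pm e_j$ and fixing the rest, realized as an affine diffeomorphism $\mathbb T_\alpha \to \mathbb T_{\alpha'}$ after parametrizing all circles. Both composites $\mathbb T_\alpha \to \scr D^d\Sigma^\circ$ then induce the same map on $\pi_1 = H_1$, so they are homotopic. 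To make the homotopy canonical, and not merely exist, I would avoid choosing it up to the $\pi_1$-torsor ambiguity. Instead I would build it geometrically in the divisor group: the chain $P$ provides a 2-chain whose boundary is $\alpha_i' - \alpha_i - \alpha_j$. Using the group structure, sweep the point on $\alpha_i$ across $P$ to a point on $\alpha_i'$, compensated by a point sweeping along $\alpha_j$ and its negative at the basepoint of $\alpha_j$. Concretely, view the relevant factor as the map $\alpha_i \times \alpha_j \to \scr D$ given by $(x,y)\mapsto x+y$ versus $x' + y$. The contractibility claim would then come from observing that the space of choices, meaning parametrizations, the pair of pants structure and the sweep, is connected and maps to a single component of the homotopy space. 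This is because the component is pinned down by a fixed basepoint value, for instance a fixed path between the images of a basepoint of the tori. Since every component of the homotopy space has contractible-or-$\pi_1$ type, we must rigidify further. I would do this by working with homotopy-coherent squares \emph{up to equivalence} over the fixed bottom edge. The space of such data is equivalent to the space of lifts, and its components are exactly parametrized by $\pi_1(\scr D^d)$ acted on by the choice of basepoint path. Fixing the path canonically via the pants then leaves a contractible space.

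For part (2), a loop of handle-slides composes to an automorphism of $\mathbb T_{\alpha_{(0)}}$ together with a homotopy to the inclusion. I would first check on $\pi_1$ that the composite automorphism is the identity. This holds because the inclusion $\mathbb T_\alpha \to \scr D^d$ is injective on $\pi_1$: the classes $[\alpha_i]$ are linearly independent in $H_1(\Sigma^\circ)$, since $\Sigma\setminus\vec\alpha$ has one basepoint per component. So the automorphism is homotopic to the identity, canonically because the target is aspherical. The remaining datum is an element of $\pi_1(\scr D^d\Sigma^\circ)= H_1(\Sigma^\circ)$, namely the self-homotopy of the inclusion, and I need it to be zero. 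Here I expect the main obstacle. The homotopies from the pants sweeps accumulate a 2-chain whose boundary cancels around the loop, and the net loop traced by a basepoint is the boundary of a sum of pants. It is therefore null in $H_1(\Sigma^\circ)$ provided no pants contains a basepoint, which is guaranteed by the handle-slide definition. I would make this precise by tracking the trace of a single point of the torus through the sweeps and identifying it with $\partial\big(\sum \pm P_i\big)$. The equivariant versions follow by performing the symmetric construction for $\beta$ and restricting to fixed points, using $\pi_1(\rdiv)=H_1(\Sigma')$.
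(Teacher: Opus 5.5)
Part (1) of your proposal is workable but follows a different route from the paper. Part (2) has a genuine gap.

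\textbf{Part (1).} The paper uses {\sc Lem.~\ref{lem:T}}, viewing $\mathbb T_\alpha$ as the homotopy fibre of $\scr D^d\Sigma^\circ\to\scr D^dU_\alpha$ over $\vec a$. It then transports along the path $a_i\rightsquigarrow a_i'$ obtained by sliding the critical point. Your explicit Dehn-type map followed by a pants-supported homotopy can be made to work, but your count of the ambiguity is off. The space of homotopy-coherent squares is homotopy-discrete, and its $\pi_0$ is a torsor over $H_1(\Sigma^\circ)/\langle[\alpha_1],\ldots,[\alpha_d]\rangle\cong H_1(U_\alpha)$, not over $\pi_1\scr D^d\Sigma^\circ$: $\pi_1$ of the component of the top map absorbs $\langle\alpha\rangle$. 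That is also the real reason your sweep is canonical. Any two homotopies supported in $\scr D^2P$ differ by an element of $H_1(P)\subset\langle\alpha\rangle$.

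\textbf{Part (2).} First, the residual invariant does not live in $H_1(\Sigma^\circ)$. The homotopy $h\simeq\mathrm{id}$ is not canonical "because the target is aspherical": homotopies between two self-maps of $\mathbb T_\alpha$ form a space whose components are a $\mathbb Z^d$-torsor. So only the image of the self-homotopy of the inclusion in $H_1(\Sigma^\circ)/\langle\alpha\rangle\cong H_1(U_\alpha)$ is meaningful, and that is what must vanish.

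Second, your argument for the vanishing does not work. The trace of a point is a loop made of arcs crossing the pants $P_m$ and arcs running along $\alpha$-curves; it is not $\partial(\sum\pm P_m)$. That boundary is a sum of whole $\alpha$-curves, so it lies in $\langle\alpha\rangle$ for free and says nothing about the trace. Push the pieces radially into the $\alpha$-disks. Then the class of the trace in $H_1(U_\alpha)$ equals the class of the loop $\vec a_{(0)}\rightsquigarrow\vec a_{(1)}\rightsquigarrow\cdots\rightsquigarrow\vec a_{(n)}=\vec a_{(0)}$ in $\scr D^dU_\alpha$ traced by the centres of the $\alpha$-disks. Each leg moves inside the ball cobounded by $P_m$ and three disks, but the legs can a priori wind around handles of $U_\alpha$. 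Nothing in your local analysis forces the total to vanish, and this is exactly the content of (2).

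The paper supplies the missing global input as follows. The divisor $\vec p-\vec a$ is the signed zero divisor of a gradient-like vector field on $U_\alpha$ with fixed behaviour along $\Sigma$. The handle-slide families of Morse functions give a loop of such vector fields, which bounds a disk in the contractible space of vector fields rel boundary. The signed zero locus of a generic filling is a null-homotopy of $\vec p-\vec a$ in $\scr D^{k-d}U_\alpha$, and since $\vec p$ stays fixed, this null-homotopes $\vec a$. Only after this does the asphericity argument give canonicity: the homotopy fibre of $\mathrm{Map}^{\simeq}(\mathbb T,\mathbb T)\to\mathrm{Map}(\mathbb T,\scr D^d\Sigma^\circ)$ has contractible components.
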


	The key observation allowing us to prove this result is the following:

	\begin{LEM}\label{lem:T}
		In any Heegaard diagram $\scr H$, there is a natural homotopy fiber-product square
		\[ \begin{tikzcd}
			\mathbb T_\alpha \rar[hook]\dar & \scr D^d \Sigma^\circ \dar[hook] \\
			* \rar["\vec a"] & \scr  D^d U_\alpha
		\end{tikzcd} \]
		where the bottom inclusion picks out $\vec a = \{a_1, \ldots, a_d\}$ viewed as a divisor in $U_\alpha$, and the homotopy witnessing the commutativity of the square is the radial homotopy inside the disks $D^\alpha_i$, well-defined up to a contractible space of choices.
	\end{LEM}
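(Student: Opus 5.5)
The plan is to use the Dold--Thom description of $\scr D^d(-)$ from {\sc Prop.~\ref{prop:sdconn}} and {\sc Prop.~\ref{prop:divhom}}, under which all spaces in the square are (products of) Eilenberg--MacLane spaces $K(\pi,1)$. The claim then reduces to a computation on $\pi_1$ together with a check that the square is cartesian up to homotopy.

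\emph{Step 1: $\pi_1$ of the corners.} The handlebody $U_\alpha$ deformation retracts onto $\Sigma \cup \bigcup_i D^\alpha_i$, with the basepoints removed as appropriate. We use the open version $U_\alpha \setminus$ (arcs from $p_i$ to $w_i$), so that $\scr D^d U_\alpha$ is a homotopy functor of a space homotopy equivalent to a wedge of circles. By {\sc Prop.~\ref{prop:divhom}}, both $\scr D^d\Sigma^\circ$ and $\scr D^d U_\alpha$ are $K(\pi,1)$'s, with
\[ \pi_1 \scr D^d\Sigma^\circ \cong H_1(\Sigma^\circ), \qquad \pi_1 \scr D^d U_\alpha \cong H_1(U_\alpha^\circ), \]
where $U_\alpha^\circ$ denotes this open version. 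The inclusion map on $\pi_1$ is the map $H_1(\Sigma^\circ)\to H_1(U_\alpha^\circ)$ induced on homology. This map is surjective, and its kernel is freely generated by $[\alpha_1],\dots,[\alpha_d]$: attaching the $D^\alpha_i$ kills exactly these classes, and the classes are independent because each component of $\Sigma\setminus\vec\alpha$ contains a basepoint, so $H_2$ of the attached complex relative to $\Sigma^\circ$ injects. The torus $\mathbb T_\alpha$ is a $K(\mathbb Z^d,1)$, and its inclusion into $\scr D^d\Sigma^\circ$ sends the $i$-th generator to $[\alpha_i]$, by {\sc Constr.~\ref{constr:probe}} with $k=1$.

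\emph{Step 2: the homotopy and the comparison map.} We compose $\mathbb T_\alpha\to\scr D^d\Sigma^\circ\to\scr D^d U_\alpha$ with the radial contraction of each $\alpha_i$ through the cone $D^\alpha_i$ to its tip $a_i$. This homotopy is applied coordinatewise, so it is a homotopy from the composite to the constant map at $\vec a$. The data of a radial contraction (a parametrization of the cone) forms a contractible space, which gives the well-definedness claim. This produces a map
\[ \mathbb T_\alpha \to \mathrm{hofib}_{\vec a}\bigl(\scr D^d\Sigma^\circ\to\scr D^d U_\alpha\bigr). \]
Since $\scr D^d$ of either space is a topological abelian group (up to translation of components) and the map between them is a homomorphism, the homotopy fiber is equivalent to the kernel of the homomorphism. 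It is therefore a $K(\ker,1)$ on the relevant component, with $\pi_0$ given by the cokernel of $\pi_1$, which vanishes by surjectivity. Thus the fiber is connected and has $\pi_1 \cong \ker = \langle[\alpha_1],\dots,[\alpha_d]\rangle$. Because we are working in $\scr D$ rather than $\scr S$, there is no degree constraint.

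\emph{Step 3: conclusion and main obstacle.} Both $\mathbb T_\alpha$ and the homotopy fiber are aspherical, and the comparison map induces the isomorphism $\mathbb Z^d\to\ker$ from Step 1, so it is a weak equivalence by Whitehead's theorem. The main obstacle is to check that the map on $\pi_1$ into the fiber is really the inclusion of the $[\alpha_i]$. The boundary map in the long exact sequence involves the chosen homotopy, so one has to confirm that the radial homotopy does not twist generators by elements of $\pi_2\scr D^d U_\alpha$. This is automatic, since that group is $0$. Alternatively, one can verify the $\pi_1$ statement directly with the $\gamma$-probe loops from {\sc Constr.~\ref{constr:probe}}.
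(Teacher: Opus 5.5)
Your argument is correct and is essentially the paper's proof: you identify the homotopy fiber $F$ of $\scr D^d\Sigma^\circ\to\scr D^d U_\alpha$ as a connected torus with $\pi_1 F=\ker\big(H_1(\Sigma^\circ)\to H_1(U_\alpha)\big)=\langle[\alpha_1],\dots,[\alpha_d]\rangle$, using surjectivity on $\pi_1$ and vanishing $\pi_2$; you then note that $\mathbb T_\alpha\to F\to\scr D^d\Sigma^\circ$ is the inclusion on $\pi_1$, so $\mathbb T_\alpha\to F$ is a $\pi_1$-isomorphism of $K(\pi,1)$'s. One correction: the sentence ``the homotopy fiber is equivalent to the kernel of the homomorphism'' is false as stated, because $\scr D\Sigma^\circ\to\scr D U_\alpha$ is injective and $\vec a$ is not in its image (the point-set fiber is empty); the identification $F\simeq K(\ker,1)$ must instead come from the long exact sequence of homotopy groups, which is what your subsequent $\pi_0$/$\pi_1$ computation actually uses.
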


	\begin{proof}
		Instead of checking the universal property, it suffices to check that the induced map $\mathbb T_\alpha \to F := * \times^h_{\scr D^d \Sigma^\circ} \scr D^d U_\alpha$ is a homotopy-equivalence. To this end, we note that both $\scr D^d \Sigma^\circ$ and $\scr D^d U_\alpha$ are homotopy-equivalent to tori, and the inclusion of the former into the latter induces a surjection on $\pi_1$, isomorphic in the arrow category to $H_1(\Sigma^\circ) \to H_1(U_\alpha)$. As a result, the connecting homomorphism in the long-exact sequence in homotopy groups associated to the fiber sequence $F \to \scr D^d \Sigma^\circ \to \scr D^d U_\alpha$ is zero, so we have that $F$ is also homotopy-equivalent to a torus, with $\pi_1 F \to \pi_1 \scr D^d \Sigma^\circ$ given by inclusion of $\<[\alpha_1], \ldots, [\alpha_d]\>$ inside $H_1(\Sigma^\circ)$. Now, the composition $\mathbb T_\alpha \to F \to \scr D^d \Sigma^\circ$ induces the same inclusion on $\pi_1$, which implies that $\mathbb T_\alpha \to F$ is a map of tori inducing an isomorphism on $\pi_1$, i.e.~ it is a homotopy equivalence, hence proving the claim.
	\end{proof}

	\begin{proof}[Proof of {\sc Prop.~\ref{prop:torihslide}}]
		For part {\sc i}, we use the functoriality of the homotopy-pullback with respect to homotoping the legs of the span. In this case, the inclusion $\rdiv^d \Sigma^\circ \to \scr D^d \Sigma^\circ$ is always the same, whereas the inclusion $* \overset{\vec a}\to \scr D^d \Sigma^\circ$ will change because the $\alpha_i$ circle changed to a different one $\alpha_i'$. That being said, we obtain a natural path between $a_i$ and $a_i'$, because as we mentioned before the handle-slide induces a 1-parameter of Morse functions interpolating between the two endpoints. This produces the desired construction necessary for part {\sc i}.

		Part {\sc ii} then amounts to checking that the concatenation of all consecutive paths joining $\vec a_{(0)}, \vec a_{(1)}, \ldots, \vec a_{{n}} = \vec a_{(0)}$ is itself a contractible loop in $\scr D^d \Sigma^\circ$ up to contractible choice. To this end, we use the idea that $\vec p - \vec a \in \scr D^{d-k} \Sigma^\circ$ represents the Poincar\'e dual of the relative Euler class of $TY$ relative to the vector field along $\Sigma$ that always points into $U_\alpha$ and out of $U_\beta$. Indeed, the gradient flow of the Morse function has zeros precisely the $\vec a$ and $\vec p$ points, with the appropriate signs. The intermediary Morse functions between the consecutive $\vec \alpha_i, \vec \alpha_{i+1}$ also give well-defined gradient flows, whose zeros consist either of the same $p_i$ (which don't change in the process of handle-sliding), or of the intermediary $\vec a$ points. Now, since the space of vector fields rel a fixed vector field on the boundary is contractible, let us pick an arbitrary disk filling in the aforementioned loop. We may assume that the zeros of this 2-parameter family of vector fields are cut out transversely, giving us a surface $Z$ over $D^2$ whose restriction to the boundary points is given by the loop of ($\vec p - \vec a$)'s from before. The map $Z \to D^2$ can be arranged to be a branched cover, and hence have discrete fibers. This allows us to define a map $D^2 \to \scr D^{k-d} U_\alpha$ given by taking the preimage in $Z$ with multiplicity at a point given by the local degree of the map there. This gives a null-homotopy of the loop $\vec p - \vec a : S^1 \to \scr D^{k-d} U_\alpha$. Moreover, we can also ensure that the $\vec p$~'s stay the same throughout, by having the vector field be fixed on paths joining the $p_i$'s to $w_i$ inside $U_\alpha$. Thus, we may subtract $\vec p$ from the whole $D^2$-family of points in $\scr D^{k-d} U_\alpha$ and multiply by $-1$ in order to get a null-homotopy of $\vec a : S^1 \to \scr D^d U_\alpha$.

		Finally, we need to ensure canonicity of the homotopy back to the identity, up to contractible choice. We claim that this is automatic for reasons of vanishing homotopy groups. Indeed, the space of such homotopies takes the form of the homotopy-fiber
		\[ {\rm HoFib}\left[{\rm Map}^\simeq(\bb T_{\vec\alpha_{(0)}}, \bb T_{\vec\alpha_{(0)}}) \to {\rm Map}(\bb T_{\vec\alpha_{(0)}}, \scr D^d \Sigma^\circ)\right]. \]
		Both mapping spaces are disjoint unions of tori, and moreover the map from the first to the second induces an injection on $\pi_1$, which by the long-exact sequence associated to the fibration implies that the homotopy-fiber has contractible connected components, thus concluding the proof.
	\end{proof}

	Another convenient consequence of {\sc Lem.~\ref{lem:T}} is the following:

    \begin{COR}\label{cor:pathisfib}
    One has canonical homotopy equivalences
    \[ \hatP_s \cong \vec a \underset{\scr D^dU_\alpha}{\times^h} \bb T_\beta, \]
    \[ \rhatP_{s} \cong \vec a \underset{\scr D^dU_\alpha}{\times^h} \rdiv^d\Sigma^\circ. \]
    \end{COR}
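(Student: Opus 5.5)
The plan is to combine the homotopy fiber-product square of {\sc Lem.~\ref{lem:T}} with the definition of the stable pathspaces as homotopy fiber products, and then apply the pasting law for homotopy pullbacks. By {\sc Def.~\ref{def:path}}, $\hatP_s$ is the space of paths in $\scr D^d\Sigma^\circ$ from $\mathbb T_\alpha$ to $\mathbb T_\beta$, that is, a model for $\mathbb T_\alpha \times^h_{\scr D^d\Sigma^\circ} \mathbb T_\beta$. By {\sc Lem.~\ref{lem:T}}, we have $\mathbb T_\alpha \simeq * \times^h_{\scr D^d U_\alpha} \scr D^d\Sigma^\circ$, with the comparison map canonical up to contractible choice via the radial homotopy in the disks $D^\alpha_i$. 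Substituting this and using associativity of iterated homotopy pullbacks gives
\[ \mathbb T_\alpha \times^h_{\scr D^d\Sigma^\circ} \mathbb T_\beta \simeq \bigl(* \times^h_{\scr D^d U_\alpha} \scr D^d\Sigma^\circ\bigr) \times^h_{\scr D^d\Sigma^\circ} \mathbb T_\beta \simeq * \times^h_{\scr D^d U_\alpha} \mathbb T_\beta, \]
where $\mathbb T_\beta$ maps to $\scr D^d U_\alpha$ through the composite $\mathbb T_\beta \hookrightarrow \scr D^d\Sigma^\circ \hookrightarrow \scr D^dU_\alpha$. Concretely, I will write the explicit map: a path $\gamma$ from $x\in\mathbb T_\alpha$ to $y\in\mathbb T_\beta$ goes to the concatenation of the radial path from $\vec a$ to $x$ inside $\scr D^dU_\alpha$ (coning each point of $x$ on $\alpha_i$ to $a_i$) with $\gamma$. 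This is a pasting-of-squares statement, so the map is a homotopy equivalence by the two-out-of-three property for pasted homotopy pullback squares.

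For the real version, I will use the description from {\sc Def.~\ref{def:path}} of $\rhatP_s$ as the space of paths in $\scr D^d\Sigma^\circ$ from $\mathbb T_\alpha$ to $\rdiv^d\Sigma^\circ$. The restriction to the first half of a $\tau$-equivariant path is a homeomorphism onto this space, because the second half is forced to be $\tau$ applied to the reversed first half, and the midpoint lies in the fixed set $(\scr D^d\Sigma^\circ)^\tau = \rdiv^d\Sigma^\circ$. So $\rhatP_s \simeq \mathbb T_\alpha\times^h_{\scr D^d\Sigma^\circ}\rdiv^d\Sigma^\circ$. The same pasting argument with {\sc Lem.~\ref{lem:T}} then yields $\vec a\times^h_{\scr D^dU_\alpha}\rdiv^d\Sigma^\circ$.

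The only point needing care is the canonicity claim. The comparison depends on the homotopy in {\sc Lem.~\ref{lem:T}}, which is well-defined up to a contractible space of choices; pasting preserves this, since the induced map on pullbacks depends continuously on the chosen homotopy. The main potential obstacle is checking that the half-path model for $\rhatP_s$ is a homeomorphism at the midpoint, in particular continuity in the final topology on $\scr D^d$. I would handle this by restricting to the compactly generated filtration pieces $X^{\times a}\times X^{\times b}$, on which continuity is evident.
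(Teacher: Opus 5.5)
Your proposal is correct and follows the paper's proof, which is exactly this pasting-law argument. You model $\hatP_s$ (and, via the half-path description, $\rhatP_s$) as a homotopy fiber product over $\scr D^d\Sigma^\circ$ and paste it with the homotopy pullback square of {\sc Lem.~\ref{lem:T}}; your explicit comparison map (prepending the radial path from $\vec a$ inside the disks $D^\alpha_i$) and your remarks on canonicity and the midpoint homeomorphism spell out details the paper leaves implicit.
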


    \begin{proof}
        This follows from the pasting law for homotopy fiber-products.
    \end{proof}

    Now, the homotopy types of these pathspaces can be calculated way more easily, because we have turned them into homotopy-fibers:

    \begin{PROP}\label{prop:spcalc}
        If $\vec \mu'$ denotes a disjoint union of meridians for the toroidal components of $\partial N'$, then we have that
        \begin{list}{$\cdot$}{}
            \item $\hatP_s$ is a disjoint union of equivalent tori, with
			    \begin{list}{--}{}
					\item $\pi_0$ a torsor over $H^2(Y, \vec w; \bb Z) \cong H^2(Y; \bb Z)$;
					\item $\pi_1$ isomorphic to $H^1(Y, \vec w; \bb Z)$;
				\end{list}
            \item $\rhatP_{s}$ is a disjoint union of equivalent tori, with
			    \begin{list}{--}{}
					\item $\pi_0$ a torsor over $H^2(Y' \setminus N', \vec \mu'; \underline{\bb Z}^v)$;
					\item $\pi_1$ isomorphic to $H^1(Y' \setminus N', \vec \mu'; \underline{\bb Z}^v) \overunderset{\text{\sc Prop.}}{\text{\sc\ref{prop:bigA}-i}}= H^1(Y' \setminus N', \partial N'; \underline{\bb Z}^v)$, i.e.~ $A(Y, \tau)$ from {\sc Def.~\ref{def:A}}.
				\end{list}
        \end{list}
    \end{PROP}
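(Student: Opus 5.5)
We work from Cor.~\ref{cor:pathisfib}, which writes both stable pathspaces as homotopy fibers of maps between tori. For the ordinary case we have $\hatP_s \simeq \vec a \times^h_{\scr D^d U_\alpha} \bb T_\beta$. The same reasoning as in Lem.~\ref{lem:T} applies, using the homotopy fiber square for $\bb T_\beta$ over $\scr D^d U_\beta$. Pasting gives $\hatP_s \simeq \vec a \times^h_{\scr D^d U_\alpha} \scr D^d\Sigma^\circ \times^h_{\scr D^d U_\beta} \vec b$, which is in turn a homotopy fiber of a map of tori (Eilenberg--MacLane spaces) $\scr D^d \Sigma^\circ \to \scr D^d U_\alpha \times \scr D^d U_\beta$. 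By Prop.~\ref{prop:divhom} and Dold--Thom, on $\pi_1$ this map is $H_1(\Sigma^\circ) \to H_1(U_\alpha \setminus \vec w\text{-arcs}) \oplus H_1(U_\beta)$, suitably interpreted. A homotopy fiber of a homomorphism of free abelian $K(\pi,1)$'s (all $\pi_{\ge 2}=0$) is a disjoint union of copies of $K(\ker,1)$, indexed by $\mathrm{coker}$.

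The first step is therefore to note the general principle. If $f : T^m \to T^n$ is induced by $\phi : \bb Z^m \to \bb Z^n$, then the homotopy fiber over a point has $\pi_1 = \ker \phi$ and $\pi_0$ a torsor over $\mathrm{coker}\,\phi$. Since $\ker\phi$ is a subgroup of a free abelian group, each component is a torus. When the fiber point varies, as with $\vec a$ versus the base, the components stay equivalent because everything is a topological group.

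The second step identifies $\ker$ and $\mathrm{coker}$ with cohomology of $Y$. The map $H_1(\Sigma^\circ) \to H_1(U_\alpha)\oplus H_1(U_\beta)$, with punctures accounted for by relative terms, is the Mayer--Vietoris map for $Y = U_\alpha \cup_\Sigma U_\beta$ relative to the basepoints. Its kernel is $H_2(Y,\vec w)$ up to boundary terms, and its cokernel is $H_1(Y,\vec w)$. Poincar\'e--Lefschetz duality converts these to $H^1(Y,\vec w)$ and $H^2(Y)$; concretely, $H_2$ of the complement of arcs through $\vec w$ is dual to $H^1(Y,\vec w)$. I expect some bookkeeping with the punctures: $\Sigma^\circ$ versus $\Sigma$, and the arcs joining $w_i$ to $p_i$ and $q_i$. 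Equivalently, one can use the CW structure of Constr.~\ref{constr:morse} directly and read kernel and cokernel off the cellular cochain complex relative to the $0$- and $3$-cells.

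The real case follows the same pattern, using $\rhatP_s \simeq \vec a \times^h_{\scr D^d U_\alpha} \rdiv^d\Sigma^\circ$. By Prop.~\ref{prop:divhom}, $\rdiv^d\Sigma^\circ$ is a torus with $\pi_1 = H_1(\Sigma')$, mapping to $\pi_1(\scr D^d U_\alpha) = H_1(U_\alpha)$ by transfer followed by inclusion. The kernel and cokernel are then those of $H_1(\Sigma') \xrightarrow{\mathrm{tr}} H_1(\Sigma^\circ) \to H_1(U_\alpha)$. The main obstacle is identifying this with twisted cohomology of the quotient. Note that $Y' \setminus N'$ is obtained from $U_\alpha$, which maps homeomorphically onto $Y'$ minus $\Sigma'$ up to neighborhoods, by gluing along $\Sigma'$ minus a collar of $C$. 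The transfer realizes the twisted coefficient system: chains on $\Sigma'$ with $\underline{\bb Z}^v$ coefficients correspond to anti-invariant or invariant chains upstairs. So I would form the twisted Mayer--Vietoris / long exact sequence of the pair $(Y'\setminus N', \Sigma'\setminus\text{collar})$ with $\underline{\bb Z}^v$ coefficients. I would then identify the map above as the relevant connecting map, and use twisted Poincar\'e--Lefschetz duality, where the boundary splits as $\partial N'$ with the meridians $\vec\mu'$ coming from the punctures $w_i$. The result is $\ker = H^1(Y'\setminus N',\vec\mu';\underline{\bb Z}^v)$ and $\mathrm{coker} = H^2(\cdots)$. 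Finally, the equality with relative $\partial N'$ is deferred to Prop.~\ref{prop:bigA}-i, as stated.
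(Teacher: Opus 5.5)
Your treatment of $\hatP_s$ is correct and differs only mildly from the paper's. You paste both copies of Lem.~\ref{lem:T} and run Mayer--Vietoris on $Y\setminus\text{arcs}\simeq Y\setminus\vec w$, with intersection $\Sigma^\circ$. The paper instead keeps the one-sided fiber of $\bb T_\beta\to\scr D^dU_\alpha$ and uses the pair $(Y\setminus\vec w, U_\alpha)$, with $Y\setminus\vec w\simeq U_\alpha\cup D^{\vec\beta}$. Either way the kernel is $H_2(Y\setminus\vec w)\cong H^1(Y,\vec w)$ and the cokernel is $H_1(Y\setminus\vec w)=H_1(Y)\cong H^2(Y,\vec w)$. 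Your intermediate claim that the cokernel is $H_1(Y,\vec w)$ is off by $\bb Z^{k-1}$, although your final answer $H^2(Y)$ is right.

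Your $\pi_1\rhatP_s$ computation matches the paper's idea. The pair that makes ``transfer then inclusion'' literally a connecting map is $(Y'\setminus N',U_\alpha)$ with $\underline{\bb Z}^v$-coefficients. There, excision and the Thom isomorphism for $N\Sigma'$ give $H_2(Y'\setminus N',U_\alpha;\underline{\bb Z}^v)\cong H_1(\Sigma')$, so the kernel is $H_2(Y'\setminus N';\underline{\bb Z}^v)\cong H^1(Y'\setminus N',\partial N';\underline{\bb Z}^v)$.

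The genuine gap is $\pi_0\rhatP_s$. You treat $\rdiv^d\Sigma^\circ$ as a single torus, but by Def.~\ref{def:rdeg} it has $2^{k-1}$ components, indexed by the real degrees $(d_1,\dots,d_k)$ with $\sum d_i\equiv d$ mod $2$. The fiber computation on one component only sees the part of $\rhatP_s$ over that component. Its $\pi_0$ is a torsor over
\[ \mathrm{coker}\big(H_1(\Sigma')\to H_1(U_\alpha)\big)\cong H_1(Y'\setminus N';\underline{\bb Z}^v)\cong H^2(Y'\setminus N',\partial N';\underline{\bb Z}^v). \]
The long exact sequence of the triple $(Y'\setminus N',\partial N',\vec\mu')$, using $H^1(\partial N',\vec\mu';\underline{\bb Z}^v)=0$, $H^2(\partial N',\vec\mu';\underline{\bb Z}^v)\cong(\bb Z/2)^k$, $H^3(Y'\setminus N',\partial N';\underline{\bb Z}^v)\cong\bb Z/2$ and $H^3(Y'\setminus N',\vec\mu';\underline{\bb Z}^v)=0$, gives
\[ 0\to H^2(Y'\setminus N',\partial N';\underline{\bb Z}^v)\to H^2(Y'\setminus N',\vec\mu';\underline{\bb Z}^v)\to(\bb Z/2)^{k-1}\to 0. \]
So for $k>1$ your cokernel is an index-$2^{k-1}$ subgroup of the group in the statement. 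The missing $(\bb Z/2)^{k-1}$ is exactly $\pi_0\rdiv^d\Sigma^\circ$.

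To close the gap you need a group acting across components, for example $\rdiv^0\Sigma^\circ$ acting compatibly with $\scr D^0U_\alpha$. That exhibits $\pi_0\rhatP_s$ as a torsor over some extension of $(\bb Z/2)^{k-1}$ by the cokernel. You must then identify that extension with the one displayed above, which requires a separate argument you do not supply. The paper says explicitly that the fiber computation ``would only determine the group up to an unidentified extension''. It instead obtains $\pi_0\rhatP_s$ from Thm.~\ref{thm:comp}, which gives $\pi_0\rhatP_s\cong{\rm rSpin}^c(Y,\vec w)$. It combines this with Thm.~\ref{thm:rspinctors}: ${\rm rLine}(Y,\vec w)$ acts simply transitively on ${\rm rSpin}^c(Y,\vec w)$, and the real Chern class identifies it with $H^2(Y'\setminus N',\vec\mu';\underline{\bb Z}^v)$. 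Your argument is complete only when $k=1$.
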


    To prove the majority of this result, we use the following observation, which will be sufficient for all except $\pi_0 \rhatP_s$ in the case $k > 1$, whose proof will be provided later through different means.
    \begin{PROP}\label{prop:fibertori}
        Let $\bb T, \bb T'$ be two tori (i.e.~ $K(\pi, 1)$'s with $\pi$ free abelian groups). An unbased map $f : \bb T \to \bb T'$ is uniquely determined up to homotopy by its induced map $f_*$ on $H_1$. Its homotopy-fiber $F$ over any point in $\bb T'$ is homotopy-equivalent to a disjoint union of equivalent tori, with
        \begin{list}{$\cdot$}{}
            \item $\pi_0F$ a torsor over ${\rm Cok}(f_*)$;
            \item $\pi_1F$ isomorphic to ${\rm Ker}(f_*)$.
        \end{list}
    \end{PROP}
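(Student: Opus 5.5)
The plan is to model the tori as compact Lie groups $\bb T = V/\Lambda$ and $\bb T' = V'/\Lambda'$ with $\Lambda = \pi_1 \bb T$ and $\Lambda' = \pi_1 \bb T'$ free abelian. The proof then splits into a classification step and a fiber computation. For the first claim, recall that $\bb T'$ is a $K(\Lambda',1)$. So unbased homotopy classes $[\bb T, \bb T']$ correspond to ${\rm Hom}(\Lambda, \Lambda')$ modulo conjugation by $\Lambda'$. Since $\Lambda'$ is abelian, conjugation acts trivially, and the classes are exactly ${\rm Hom}(\Lambda, \Lambda') = {\rm Hom}(H_1 \bb T, H_1 \bb T')$. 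Concretely, $f$ is homotopic to the linear map induced by $f_*$, composed with a translation; translations are homotopic to the identity because $\bb T'$ is path-connected. Since both the homotopy fiber over a point and its homotopy type are invariant under homotopy of $f$ and under moving the point in the connected target, we may assume from now on that $f$ is the group homomorphism $\bar f : V/\Lambda \to V'/\Lambda'$ induced by a linear map $\phi : V \to V'$ with $\phi(\Lambda) \subseteq \Lambda'$, and that the fiber is taken over $0$.

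For the fiber computation, first apply the long exact sequence of the homotopy fibration $F \to \bb T \to \bb T'$. Because $\pi_{\ge 2}$ of both tori vanish, we get $\pi_{\ge 2} F = 0$, an exact sequence
\[ 0 \to \pi_1 F \to \Lambda \overset{f_*}\to \Lambda' \to \pi_0 F \to \pi_0 \bb T = * , \]
and a transitive action of $\pi_1 \bb T'$ on $\pi_0 F$ whose stabilizers are the image of $f_*$. Since $\Lambda'$ is abelian, the stabilizer is the same at every point, so $\pi_0 F$ is a torsor over ${\rm Cok}(f_*)$. For each component $F_0$, exactness gives $\pi_1(F_0) \cong {\rm Ker}(f_*)$ at the chosen basepoint. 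This group is free abelian as a subgroup of $\Lambda$, and in fact it is a direct summand of $\Lambda$, since $\Lambda / {\rm Ker}$ embeds in the free group $\Lambda'$.

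It remains to show that each component is aspherical with abelian $\pi_1$, hence a torus, and that all components are equivalent. Here I would use the explicit homomorphism model rather than the long exact sequence, since the latter only gives $\pi_1$ at one basepoint. The homotopy fiber of the fibration-replacement is equivalent to the fiber of the homomorphism $\bar f$ composed with the path-space fibration. Equivalently, it is the homotopy fiber of the Lie group homomorphism $\bar f$, which is the quotient $(\ker$ of $V \times V' \to V'$, $(x, y) \mapsto \phi(x) - y) / (\Lambda \times \Lambda')$, mapping to ${\rm Map}$ data. More cleanly, I expect to present $F \simeq \{(x, y) \in V/\Lambda \times V' : \phi(x) = y \bmod \Lambda'\}$ modulo appropriate identifications; this is a topological group (the homotopy fiber of a homomorphism between topological abelian groups is again one). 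Any topological group has homotopy-equivalent components, since translation permutes them, and its identity component has abelian $\pi_1$. Combined with the vanishing of $\pi_{\ge 2}$, each component is a $K({\rm Ker} f_*, 1)$ and therefore a torus.

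The main obstacle is this last step: making rigorous that the homotopy fiber carries a group structure, or otherwise verifying that all components are equivalent. The fallback is to show directly that the loop action of $\Lambda'$ on $F$, which comes from the path lifting, gives homotopy equivalences between components, and that $\pi_1$ of each component is abelian because its action factors through the abelian group $\Lambda$.
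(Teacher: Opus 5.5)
Your proposal is correct and follows essentially the paper's argument. Maps into a $K(\pi,1)$ with abelian $\pi$ are classified by their effect on $H_1$, $\pi_1\mathbb T'$ acts transitively on $\pi_0F$ with stabilizer ${\rm Im}(f_*)$, and the long exact sequence handles $\pi_1$ and $\pi_{\ge 2}$; your fallback (the loop action giving equivalences between components) is exactly how the paper handles equivalence of components.

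The step you flag as the main obstacle is not really one. The long exact sequence holds with basepoint in any component of $F$, and $\pi_1$ of each component injects into the abelian group $\pi_1\mathbb T$ with image ${\rm Ker}(f_*)$, so every component is directly a $K({\rm Ker}(f_*),1)$. Your group model also works, provided you state it as the pullback $\{(x,y)\in V/\Lambda\times V' : \bar f(x)=y \bmod \Lambda'\}$ of the universal cover, which is an abelian Lie group. Your first description, as a quotient of the graph of $\phi$ by $\Lambda\times\Lambda'$, is not well-defined.
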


    \begin{proof}
        Let us pick a $\bb Z$-basis $x^1, \ldots, x^k$ for $H^1(\bb T')$, inducing an equivalence $\bb T' \to (S^1)^{\times k}$, since $S^1$ is the classifying space for $H^1$. One has
        \[ [\bb T, \bb T'] \cong \prod_{i=1}^k [\bb T, S^1] \cong \prod_{i=1}^k H^1(\bb T; \bb Z). \]
        The equivalence sends a map $f : \bb T \to \bb T'$ to the tuple $(f^* x^1, \ldots, f^* x^k)$. This tuple uniquely determines $f^* : H^1(\bb T') \to H^1(\bb T)$, hence also its dual $f_* : H_1(\bb T) \to H_1(\bb T')$, thus concluding the proof of the first assertion.

        Next, we move on to proving the claim about $\pi_0$. We model the homotopy-fiber of $f$ over a point $t' \in \bb T'$ as parameterizing pairs consisting of a point $t \in \bb T$, together with a path $\gamma$ connecting $f(t)$ to $t'$ inside $\bb T'$. There is an action of $\pi_1(\bb T') = H_1(\bb T'; \bb Z)$ on $\pi_0$ of the homotopy-fiber by post-composing the path with a $t'$-based loop in $\bb T'$. This acts transitively, because given any two different paths $\gamma^{(1)}, \gamma^{(2)}$ connecting $f(t^{(1)})$ and $f(t^{(2)})$ to $t'$, we can homotope one of them to have identical points $t^{(1)} = t^{(2)}$ (since $\bb T$ is path connected), after which $\gamma^{(2)}$ is obtained from $\gamma^{(1)}$ by composing with the loop $\gamma^{(2)} \star (\gamma^{(1)})^{-1} \in \pi_1(\bb T')$. It remains to show that the kernel of the action is ${\rm Im}(f_*)$, so that this action descends to a free and transitive action of ${\rm Cok}(f_*)$ on $\pi_0F$. So assume that $t \in \bb T$ and $\gamma$ is a path from $f(t)$ to $t'$, and let $\delta \in \pi_1(\bb T')$ be a loop. To say that $\delta \star \gamma$ represents the same element in $\pi_0F$ of the homotopy-fiber as $\gamma$ translates into having a square of the form illustrated in {\sc Fig.~\ref{fig:square}}
		\begin{figure}
            \includegraphics[scale=.75]{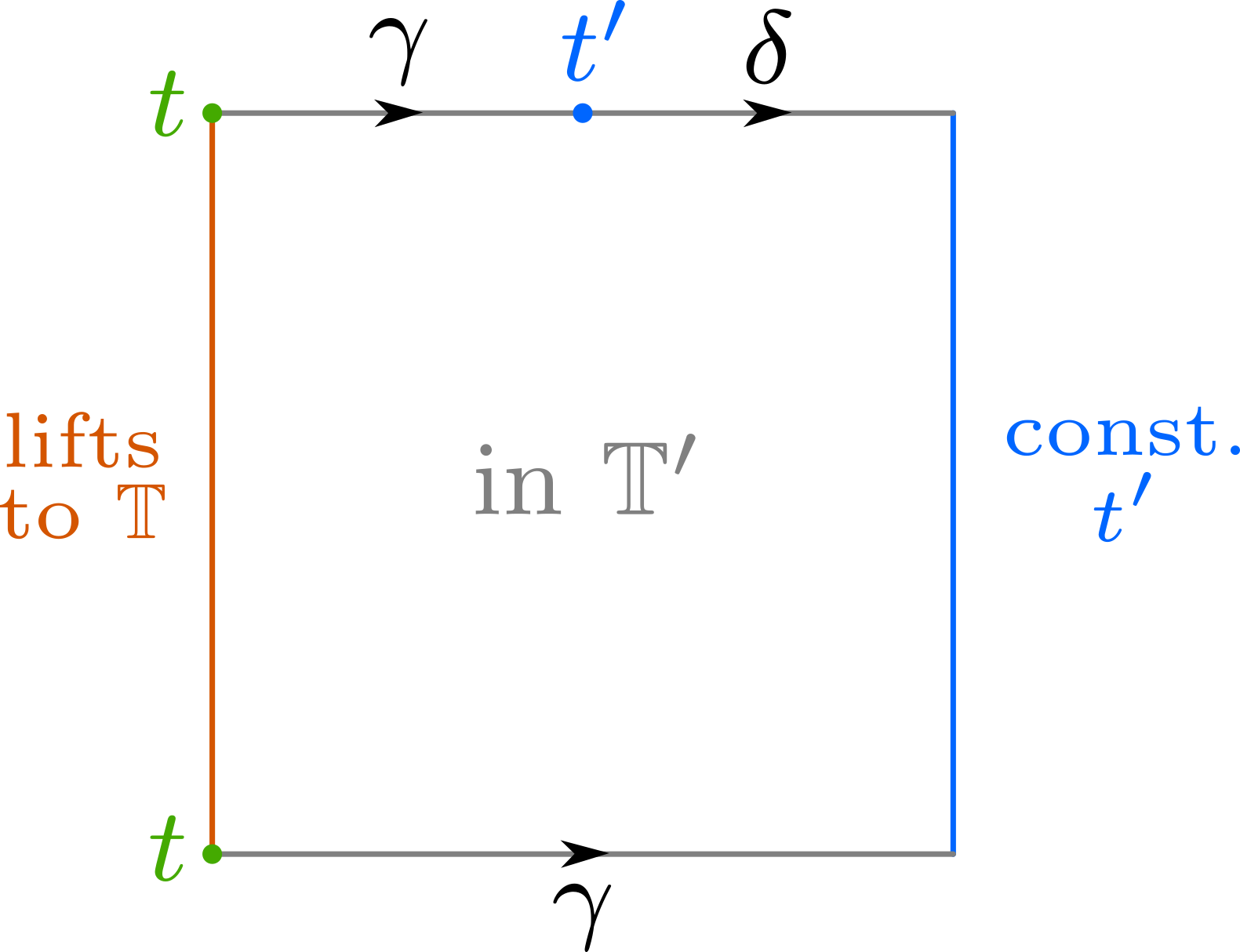}
			\caption{Illustration of a 1-parameter family of paths.}\label{fig:square}
		\end{figure}
        where the square is inside $\bb T'$ but the left side is equipped with a lift to $\bb T$. Alternatively, this diagram could be read as saying that $\delta$ is the conjugate of $f$ applied to this lift path in $\bb T$, conjugated by $\gamma$ to change basepoints. This is precisely the condition that $[\delta] \in {\rm Im}(f_*)$, thus concluding the proof that $\pi_0F$ is a torsor over ${\rm Cok}(f_*)$.
        
        Since the proof of transitivity of the action comes from a space-level construction, the argument can be extended to show that all connected components of the homotopy-fiber are homotopy-equivalent. Thus, it suffices to prove the result about $\pi_1F$ for only one of these connected components. Also, any two different choices of basepoint in $\bb T'$ lead to homotopy equivalent homotopy-fibers, since we may exploit path-connectedness of $\bb T'$ to join the two basepoints. Thus, we may assume $f$ to be a based map, i.e.~ we let $t \in \bb T$ a basepoint, and set $t' := f(t)$ a basepoint for $\bb T'$, and consider the connected component of the homotopy-fiber at the constant path. Only now may we use the long-exact sequence on homotopy groups:
        \[ 0 = \pi_2(\bb T') \to \pi_1 F \to \pi_1 \bb T \overset{f_*}\to \pi_1 \bb T', \]
        to conclude the computation of $\pi_1 F$.

        Lastly, the higher homotopy groups are seen to be zero by the same argument, which proves that the connected components of $F$ are indeed tori, up to homotopy equivalence.
    \end{proof}

    \begin{proof}[Proof of {\sc Prop.~\ref{prop:spcalc}}]
        By the Dold-Thom theorem, the homotopy groups $\pi_k$ of $\scr D^d U_\alpha$ and $\scr D^d \Sigma^\circ$ are equal to $\tilde H_k(U_\alpha; \bb Z)$ and $\tilde H_k(\Sigma^\circ; \bb Z)$ respectively. In particular, we are in position to apply {\sc Prop.~\ref{prop:fibertori}} for $\hatP_s$ and $\rhatP_s$, to the maps $\bb T_\beta \to \scr D^d U_\alpha$ and $\rdiv^d \Sigma^\circ \to \scr D^d U_\alpha$, cf. {\sc Cor.~\ref{cor:pathisfib}}.

        Beginning with $\hatP_s$, to understand the induced map on the level of $H_1$, let us note that the map in question on the level of spaces factors as
        \[ \bb T_\beta \to \scr D^d \vec \beta \to \scr D^d \Sigma^\circ \to \scr D^d U_\alpha, \]
        and the first map is furthermore the product of all the individual inclusions $\beta_i \to \scr D^1 \beta_i$. These are all equivalences, hence the first map is an equivalence, and so in effect the induced map on $H_1$ is just the composite
        \begin{equation}\label{eq:composite1}
            \mathbb Z\<\beta_i\> \hookrightarrow H_1(\Sigma) \twoheadrightarrow H_1(U_\alpha).
        \end{equation} 
        We wish to show that the kernel and cokernel of this map are isomorphic to $H_2$ and $H_1$ of $Y^\circ := Y \setminus \vec w$, respectively (this would imply the desired claim via Poincar\'e duality $H_*(Y \setminus \vec w) \cong H^{3-*}(Y, \vec w)$). We know that $Y^\circ$ deformation retracts onto $U_\alpha \cup D^{\vec \beta}$, and hence we get a long-exact sequence associated to the pair $(Y^\circ, U_\alpha)$:
        \[ 0 = H_2(U_\alpha) \to H_2(Y^\circ) \to H_2(Y^\circ, U_\alpha) = \bb Z\<\beta\> \overset\partial\to H_1(U_\alpha) \to H_1(Y^\circ) \to H_1(Y^\circ, U_\alpha) = 0. \] 
        The boundary map $\partial$ is precisely the composite (\ref{eq:composite1}) in question, hence proving the first part of the proposition, after Poincar\'e duality.

        Moving on to $\rhatP_{s}$, we recall that $\rdiv^d\Sigma^\circ$ is once again homotopy equivalent to a disjoint union of tori, each satisfying $\pi_1 = H_1(\Sigma'^\circ)$, whose inclusion in $\scr D^d \Sigma^\circ$ induces the obvious transfer map on the level of $\pi_1$. Unfortunately, we cannot apply the Lemma directly for the computation of $\pi_0$, due to there being several components of $\rdiv^d\Sigma^\circ$ when $k > 1$, and doing so would only determine the group up to an unidentified extension; for the proof of the statement on $\pi_0 \rhatP_s$, see the very last statements of both {\sc Thms.~\ref{thm:rspinctors},~\ref{thm:comp}} that we will prove later on. However, we may apply this lemma perfectly fine on $\pi_1$; indeed, this transfer map is always injective, because the pushforward along the projection provides an inverse upon inverting 2, and the groups in question are free abelian, hence not having any 2-torsion. Thus, we may once again apply {\sc Prop.~\ref{prop:fibertori}}, to reduce the computation of $\pi_1$ to computing the kernel of the composite map
        \begin{equation}\label{eq:composite2}
            H_1(\Sigma'^\circ) \hookrightarrow H_1(\Sigma^\circ) \twoheadrightarrow H_1(U_\alpha).
        \end{equation}
		\begin{figure}
			\centering\includegraphics{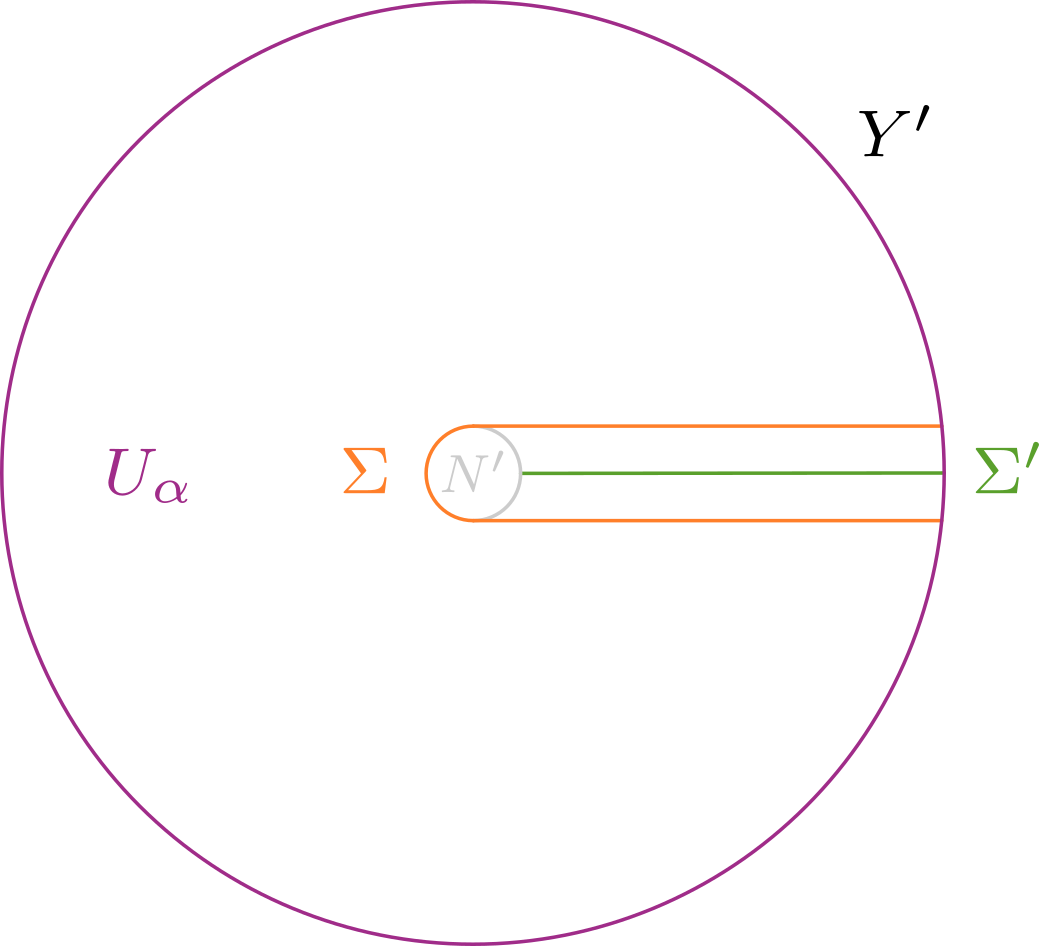}
			\caption{Depiction of a decomposition of $Y' \setminus N'$ as a union of $U_\alpha$ and $N\Sigma'$ over $\Sigma$. The natural map $U_\alpha \hookrightarrow Y \twoheadrightarrow Y'$ is not injective, but by trimming a small collar neighborhood of $U_\alpha$ we do not affect its homotopy type, and we obtain precisely this picture.}\label{fig:Ypdec}
		\end{figure}
        As before, we would like to identify this composite with the boundary map in the long-exact sequence associated to a pair; in this case, we will use $(Y' \setminus N', U_\alpha)$ illustrated and explained in {\sc Fig.~\ref{fig:Ypdec}}:
        \begin{equation*}
            \begin{gathered}
                H_2(U_\alpha; \underline{\bb Z}^v) \to H_2(Y' \setminus N'; \underline{\bb Z}^v) \to H_2(Y' \setminus N', U_\alpha; \underline{\bb Z}^v) \overset\partial\to \\
                \overset\partial\to H_1(U_\alpha; \underline{\bb Z}^v) \to H_1(Y' \setminus N'; \underline{\bb Z}^v) \to H_1(Y' \setminus N', U_\alpha; \underline{\bb Z}^v).
            \end{gathered}
        \end{equation*}
        The first term is zero, so in particular $H_2(Y' \setminus N'; \underline{\bb Z}^v) \cong H^1(Y' \setminus N', \partial N'; \underline{\bb Z}^v)$ is the kernel of the connecting homomorphism labelled $\partial$. For the third term, we apply excision and the homology Thom isomorphism  with twisted coefficients for the normal bundle $N\Sigma' := T Y' / T \Sigma'$ to get
        \begin{equation}\label{eq:excth} \begin{aligned}
			H_*(Y' \setminus N', U_\alpha; \underline{\bb Z}^v) \underset{\rm exc.}{\cong} H_*(\bb D(N\Sigma'), \bb S(N \Sigma'); \underline{\bb Z}^v) \underset{\rm Thom}{\cong} H_{*-1}(\Sigma'; \bb Z).
		\end{aligned}\end{equation}
        It remains to show that, under the identification (\ref{eq:excth}), the map $\partial$ agrees with (\ref{eq:composite2}). Indeed, given a submanifold $V$ in $\Sigma'$ representing a certain primitive homology class, its inverse image under the Thom isomorphism is given by the fundamental class of $\bb D(N\Sigma'|_V)$, which can also be viewed as a class in $H_*(Y' \setminus N', U_\alpha; \underline{\bb Z}^v)$ under excision. Its image under the connecting homomorphism is given by the boundary, i.e.~ $\bb S(N\Sigma'|_V)$, i.e.~ precisely the transfer map on $V$. This finishes the proof, after applying Poincar\'e duality.
    \end{proof}

    \begin{PROP}
        The projection map
        \[ \rhatP_s \cong \bb T_\alpha \underset{\scr D^d\Sigma^\circ}{\times^h} \rdiv^d\Sigma^\circ \twoheadrightarrow \rdiv^d\Sigma^\circ \]
        induces a homomorphism on fundamental groups which, after identifying the domain with $H^1(Y' \setminus N', \partial N'; \underline{\bb Z}^v)$, and the codomain with $H^1(\Sigma', \partial \Sigma'; \bb Z)$, is given simply by restriction.
    \end{PROP}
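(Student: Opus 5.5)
We will trace through the identifications built in the proof of {\sc Prop.~\ref{prop:spcalc}}. By {\sc Prop.~\ref{prop:fibertori}} applied to $\rdiv^d\Sigma^\circ \to \scr D^d U_\alpha$ (via {\sc Cor.~\ref{cor:pathisfib}}), $\pi_1\rhatP_s$ is the kernel of the composite (\ref{eq:composite2}), and the projection $\rhatP_s \to \rdiv^d\Sigma^\circ$ induces on $\pi_1$ precisely the inclusion of this kernel into $H_1(\Sigma'^\circ) \cong \pi_1 \rdiv^d\Sigma^\circ$ (using {\sc Prop.~\ref{prop:divhom}}). This follows from the long exact sequence of the homotopy fibration $F \to \rdiv^d\Sigma^\circ \to \scr D^d U_\alpha$, since $\pi_2 \scr D^dU_\alpha = 0$. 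Thus the claim reduces to a purely homological statement: under the identification
\[ \ker\bigl(H_2(Y'\setminus N', U_\alpha;\underline{\bb Z}^v) \overset\partial\to H_1(U_\alpha;\underline{\bb Z}^v)\bigr) = {\rm Im}\bigl(H_2(Y'\setminus N';\underline{\bb Z}^v)\bigr), \]
the inclusion into $H_2(Y'\setminus N', U_\alpha;\underline{\bb Z}^v) \cong H_1(\Sigma'^\circ;\bb Z)$ (excision plus Thom) corresponds, after Poincar\'e--Lefschetz duality on both ends, to restriction $H^1(Y'\setminus N', \partial N';\underline{\bb Z}^v) \to H^1(\Sigma',\partial\Sigma';\bb Z)$.

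The key step is a naturality square. The map $H_2(Y'\setminus N';\underline{\bb Z}^v) \to H_2(Y'\setminus N', U_\alpha;\underline{\bb Z}^v) \cong H_2(\bb D(N\Sigma'),\bb S(N\Sigma');\underline{\bb Z}^v)$, followed by the inverse Thom isomorphism, is geometrically ``intersect a relative 2-cycle with $\Sigma'$''. Concretely, represent a class by a surface $S$ transverse to $\Sigma'$. Its image in the pair $(\bb D N\Sigma', \bb S N\Sigma')$ is $S\cap \bb D(N\Sigma')$, which is the disk bundle over the curve $S\cap\Sigma'$. So the Thom isomorphism returns $[S\cap\Sigma'] \in H_1(\Sigma'^\circ) \cong H_1(\Sigma',\partial\Sigma')$; here we use that $\Sigma'^\circ$ is homotopic to $\Sigma'$ minus the collar, and that cycles with boundary on $\partial N'$ meet $\Sigma'$ in arcs ending on $\partial \Sigma'$. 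On the cohomological side, it is standard that Poincar\'e duality intertwines restriction to a codimension-one submanifold with the transverse intersection of cycles. Hence $PD_{Y'\setminus N'}(x)\cap \Sigma' = PD_{\Sigma'}(x|_{\Sigma'})$, where the twisted coefficients $\underline{\bb Z}^v$ on $Y'\setminus N'$ restrict along $\Sigma'$ to twisting by the orientation character of $\Sigma'$. This makes the twisted duality on $\Sigma'$ land in untwisted $H^1(\Sigma',\partial\Sigma';\bb Z)$, matching the untwisted Thom isomorphism in (\ref{eq:excth}).

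The main obstacle is this coefficient and orientation bookkeeping. We must check three things: that $v|_{Y'\setminus N'}$ restricted to a neighborhood of $\Sigma'$ agrees with $w_1$ of the normal line bundle $N\Sigma'$ (equivalently, $w_1(\Sigma')$ since $Y'$ is orientable); that the twisted Thom class is the one used in (\ref{eq:excth}); and that the signs in Poincar\'e--Lefschetz duality for twisted coefficients are consistent. We would handle this locally. Over $\Sigma'$, the double cover $Y\setminus C\to Y'\setminus C$ restricted to $N\Sigma'$ is the sphere bundle $\bb S(N\Sigma')\cong \Sigma^\circ$, so $v|_{N\Sigma'}$ is the orientation character of $N\Sigma'$. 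We then check the sign on a single primitive curve $V\subset \Sigma'$, as in the last paragraph of the proof of {\sc Prop.~\ref{prop:spcalc}}. Finally, we confirm that the identification of $\pi_1\rdiv^d\Sigma^\circ$ with $H^1(\Sigma',\partial\Sigma';\bb Z)$ is the Lefschetz dual of $H_1(\Sigma'^\circ)$, so that the two ends match.
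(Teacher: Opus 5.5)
Your proposal follows the paper's proof, with more detail. The map on $\pi_1$ is the inclusion of the kernel of (\ref{eq:composite2}). Under the identifications in the proof of {\sc Prop.~\ref{prop:spcalc}}, this becomes $H_2(Y'\setminus N';\underline{\bb Z}^v)\to H_2(Y'\setminus N',U_\alpha;\underline{\bb Z}^v)\cong H_1(\Sigma';\bb Z)$. The excision--Thom isomorphism (\ref{eq:excth}) amounts to ``intersect with $\Sigma'$'', and intersection with $\Sigma'$ is dual to restriction. The structure is right, but the coefficient bookkeeping, which you single out as the main obstacle, contains two errors.

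First, the classes you intersect live in the \emph{absolute} group $H_2(Y'\setminus N';\underline{\bb Z}^v)\cong H^1(Y'\setminus N',\partial N';\underline{\bb Z}^v)$. So $S$ is a closed twisted 2-cycle, and $S\cap\Sigma'$ is a union of closed curves in the interior of $\Sigma'$. It defines a class in $H_1(\Sigma'^\circ)\cong H_1(\Sigma')$; there are no arcs ending on $\partial\Sigma'$. Moreover $H_1(\Sigma')\not\cong H_1(\Sigma',\partial\Sigma')$ in general: for a M\"obius band they are $\bb Z$ and $\bb Z/2$. A relative class would in any case be dual to absolute rather than relative cohomology, so your identification was internally inconsistent.

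Second, you correctly observe that $v|_{\Sigma'}=w_1(\Sigma')$. You should then conclude that restricting a $\underline{\bb Z}^v$-class lands in $H^1(\Sigma',\partial\Sigma';\underline{\bb Z}^{v})$. It is this twisted group that is Lefschetz dual to the untwisted $H_1(\Sigma';\bb Z)\cong\pi_1\rdiv^d\Sigma^\circ$. The untwisted group $H^1(\Sigma',\partial\Sigma';\bb Z)$ is instead dual to $H_1(\Sigma';\underline{\bb Z}^{w_1})$, and for non-orientable $\Sigma'$ it is the wrong group. For the M\"obius band of {\sc\S\ref{ssec:rp3}} it vanishes, whereas $\pi_1\rdiv^d\Sigma^\circ\cong\bb Z$.

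So the codomain in the statement has to be read with $\underline{\bb Z}^v$-coefficients; the paper's one-line appeal to Poincar\'e duality passes over this point as well. With that reading and the first correction, your argument goes through.
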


    \begin{proof}
        Recall from the proof of the computation of $\pi_1(\rhatP_s)$ that we have produced a commutative diagram
        \[ \begin{tikzcd}[row sep = small]
            & H_2(Y' \setminus N', U_\alpha; \underline{\bb Z}^v) \ar[dd,"\sim"] \\
            H_2(Y' \setminus N'; \underline{\bb Z}^v) \ar[ru]\ar[rd] & \\
            & H_1(\Sigma'; {\bb Z}),
        \end{tikzcd} \]
        where the bottom map is the one we are trying to interpret (up to Poincar\'e duality), the top map is given by the standard inclusion, and the vertical isomorphism is given by (\ref{eq:excth}). This latter isomorphism can alternatively be thought of as intersecting with $\Sigma'$ inside $Y' \setminus N'$, because excision is performed away from $\Sigma'$, and because intersecting with the 0-section in the disk bundle inverts the Thom isomorphism. After applying Poincar\'e duality to both sides, this boils down to mere restriction on the level of cohomology, as claimed.
    \end{proof}

\section{Pathspaces and real \texorpdfstring{${\rm Spin}^c$}{Spin-c}-structures}\label{sec:rs}

    In this section we recall the notion of (relative, real) ${\rm Spin}^c$-structure, with a focus on dimension $n = 3$, and its crucial relation to the stable (real) pathspace associated to a (real) Heegaard diagram. When it comes to ${\rm Spin}^c$-structures, there is a multitude of equivalent definitions that are all relevant to the subject, so we recall them here.

    \subsection{Various equivalent notions of real \texorpdfstring{${\rm Spin}^c$}{Spin-c}-structures} For a very thorough analysis of all the different flavors of real ${\rm Spin}^c$-structures, see Baraglia \cite[\S6]{Ba25}; the one relevant for us will be the $k = 2$ case in his terminology.
    
    The most straightforward definition in the non-equivariant setting, motivated by Seiberg-Witten and monopole theory, goes as follows:
    \begin{DEF}
        Let $V^n$ be a rank-$n$ Euclidean $\bb R$-vector bundle over a space $X$. A \keywd{complex spinor} bundle $(S, \rho)$ for $(V, X)$ of \keywd{positive}, resp. \keywd{negative type} is a complex Hermitian vector bundle $S$, together with a homomorphism
        \[ \rho : V \to \scr E{\it nd}_\bb C(S) \]
        of $\bb R$-vector bundles, such that $\rho(v)$ is an isometry satisfying $\rho(v)^2~=~$ \keywd{$+{\rm id}_S$}, resp. \keywd{$-{\rm id}_S$} for all $v \in V$ of norm $\|v\| = 1$.
    \end{DEF}

    \begin{RMK}
        If $(S, \rho^+)$ is a spinor bundle of \emph{positive type}, then $(S, \rho^-)$ with $\rho^- := i \cdot \rho^+$ is a spinor bundle of \emph{negative type}, thus proving that the two notions are easily interchangeable; this will cease to be the case when we introduce the real notions, due to sign issues stemming from complex conjugation.

        We sometimes write $\rho_v$ for $\rho(v)$, or $\rho(v, \Phi)$ for $\rho_v\Phi$ for any spinor $\Phi \in \Gamma(S)$. Another way to think about a spinor bundle is via \emph{Clifford algebras}: define the Clifford bundle
        \[ {\rm Cl}_\pm^\bb R(V) := \scr T^* V / (v \otimes v \mp \|v\|^2)_{v \in V} \]
        of algebras ($\scr T^*$ stands for the tensor algebra); then, a complex spinor bundle is simply a bundle of complex representations of this real algebra bundle. The observation at the beginning of the remark translates into the fact that the complexifications ${\rm Cl}^\bb C_\pm := \bb C \otimes {\rm Cl}^\bb R_\pm$ corresponding to the two sign choices become isomorphic to each other, but this is in fact not true if we do not complexify.
    \end{RMK}

    \begin{DEF}\label{def:spinc}
        Assume further that the bundle $V^n$ is equipped with an orientation. In the case that $n$ is even, the standard theory of Clifford algebras shows that ${\rm Cl}^\bb C(\bb R^n)$ is isomorphic to a matrix algebra over $\bb C$, and therefore it admits only one isomorphism class of irreducible representation (from now on, ``irrep''). In the case that $n$ is odd, the algebra ${\rm Cl}^\bb C(\bb R^n)$ breaks up as a direct product of two matrix algebras over $\bb C$ of equal size, so there are two distinct isomorphism classes of irreps. Given any oriented orthonormal basis $e_1, \ldots, e_n$ for a fiber $V_x$, there is a well-defined volume element $\omega_\bb C := i^{\frac{n(n+1)}2} e_1 \cdots e_n$ that lives in the center of the algebra, and that squares to $1$, inducing the two expected idempotents $\tfrac 12({\rm id} \pm \omega_\bb C)$. Thanks to the orientation on $V$, this $\omega_\bb C$ is well-defined globally, thus giving us a global choice of which irrep to be considered as standard, e.g.~ without loss of generality the kernel of $\tfrac 12({\rm id} - \omega)$.

        We may now define a \keywd{${\rm Spin}^c$-structure} of positive, resp. negative type on $(X, V)$ to be a complex spinor bundle $(S, \rho)$ of positive, resp. negative type, such that $S$ is a Clifford irrep at every fiber, and in the odd case given by the choice induced by the orientation. As mentioned previously, the positive and negative notions are equivalent in this yet non-equivariant setting.
    
        This is the definition that is most natural from the point of view of Seiberg-Witten and monopole theories. A more algebraic definition comes form understanding the structure group of this data. Indeed, we may define the \keywd{${\rm Spin}^c_n$ Lie group} as the automorphism group of the datum $(V_0, S_0, \rho_0)$, consisting of one's favorite choice of rank-$n$ oriented Euclidean $\bb R$-vector space $V_0$, and irreducible complex Clifford module $(S_0, \rho_0)$ (whose isomorphism class is that induced by the orientation on $V_0$ in the odd-dimensional case). That is,
        \[ {\rm Spin}^{\rm c}_n(V_0, S_0, \rho_0) := \{ (f, \sigma) \in {\rm SO}(V_0) \times {\rm U}(S) : \rho_0(f v, \sigma \Phi) = \sigma\rho_0(v, \Phi), \forall v \in V_0, \forall \Phi \in S_0\}. \]
        Given an isomorphism $\varphi : (V_0, S_0, \rho_0) \to (V_1, S_1, \rho_1)$, there is an induced conjugation map
        \[ \varphi \circ - \circ \varphi^{-1} : {\rm Spin}^{\rm c}_n(V_0, S_0, \rho_0) \to {\rm Spin}^{\rm c}_n(V_1, S_1, \rho_1), \]
        so in particular the structure groups are independent of the choice of $(V_0, S_0, \rho_0)$, which justifies dropping it from the notation. There is a central extension
        \begin{equation}\label{eq:sesspinc}
            1 \to S^1 \to {\rm Spin}^c_n \to {\rm SO}_n \to 1,
        \end{equation}
        where the projection is given by $(f, \sigma) \mapsto f$, and the kernel is isomorphic to $S^1$ due to the fact that the endomorphism algebra of any complex irrep is isomorphic to $\bb C$. We will say a bit more about this extension shortly.
    \end{DEF}

    \begin{DEF} \cite{Li22}
        Now, we turn to real structures, for which the positive and negative type distinction really does matter. Assume further that the Euclidean bundle $(V, X)$ is endowed with an involution $\tau$ (denoted the same both on the base and on the total space). A \keywd{real ${\rm Spin}^c$-structure} of positive, resp. negative type is an irreducible spinor bundle $(S, \rho)$ of either positive or negative type as in the previous definition, which is additionally endowed with an involution $I : S \to S$ covering $\tau$, that is conjugate-linear with respect to the complex structure on $S$, and furthermore such that it respects the Clifford multiplication $\rho$:
        \[ \rho(\tau v, I \Phi) = I\rho(v, \Phi), \quad \forall v \in V, \forall \Phi \in S. \]
        Such a structure in particular implies that the original spinor bundle $(S, \rho)$, as well as the $\tau$-pullback of the conjugate $(\bar S, \rho)$ (with the underlying $S$ and $\rho$ are the same, but the complex structure on $S$ conjugated) are isomorphic as Clifford module bundles, and hence it would imply that the complex irreps of ${\rm Cl}_\pm (\bb R^n)$ are \emph{not} swapped by the process of conjugation of complex representations. By the classification of real Clifford algebras, it follows that:
        \begin{list}{$\cdot$}{}
            \item $n \not\equiv 3 ~ {\rm mod} ~ 4$ for the case of \emph{positive} type;
            \item $n \not\equiv 1 ~ {\rm mod} ~ 4$ for the case of \emph{negative} type;
        \end{list}
        in order for such a structure to exist. In particular, in the case $n = 3$ of interest of the present paper, \emph{a real ${\it Spin}^c_\textit{3}$-structure must be negative}, and hence we will subsequently drop the adjective ``negative''.

        We also seek to understand real ${\rm Spin}^c$-structures from the point of view of structure groups, an approach that has already appeared in the literature in a few places, e.g.~ \cite{BH24, GM25}. First, we observe that there is a naturally occurring \keywd{conjugation map} on ${\rm Spin}^c_n$, defined as the group-theoretic conjugation by any conjugate-linear automorphism $I_0$ of $(S_0, \rho_0)$ as a Clifford module over $V_0$ (one again needs to ensure the respective congruence mod 4 condition above for such an automorphism to exist):
        \[ \overline{(f, \sigma)} := (f, I_0 \circ \sigma \circ I_0^{-1}). \]
        Indeed, any two such conjugate-linear automorphisms $I_0, I_1$ are related by a scalar $\lambda \cdot {\rm id} = I_1 \circ I_0^{-1}$, which does not affect the induced conjugation map. This conjugation respects the short exact sequence (\ref{eq:sesspinc}), inducing the identity on the ${\rm SO}_n$-quotient, and the standard complex conjugation map on $S^1$. Its fixed-points induce a short-exact sequence
        \[ 1 \to \pm 1 \to {\rm Spin}_n \to {\rm SO}_n \to 1 \]
        from which one can easily deduce the formula
        \[ {\rm Spin}^c_n \cong ({\rm Spin}_n \times S^1) / (-1, -1), \]
        where the conjugation map acts trivially on ${\rm Spin}_n$, and via the standard complex conjugation map on $S^1$.

        Now, we may give a principal bundle definition of a real ${\rm Spin}^c$-structure. The original oriented vector bundle $V$ can be encoded as a (right) principal ${\rm SO}_n$-bundle ${\rm Fr}(V)$, which inherits an involution also called $\tau$ from $V$. The spinor bundle $(S, \rho)$ gives us a principal ${\rm Spin}^c_n$-bundle $P$ via the fiber-wise construction
        \[ P_x := {\rm Iso}_{\bb C\text{-lin}}\big((V_0, S_0, \rho_0), (V_x, S_x, \rho|_x)\big) \]
        endowed with the right-action given by pre-composition with automorphisms of $(V_0, S_0, \rho_0)$. Now, post-composition of any element $\psi \in P_x$ with the conjugate-linear isomorphism $I : (V_x, S_x, \rho|_x) \overset\sim\to (V_{\tau x}, S_{\tau x}, \rho|_{\tau x})$ yields a \emph{conjugate}-linear isomorphism $I \circ \psi$, which factors uniquely through $I_0$:
        \[ \begin{tikzcd}
            (V_0, S_0, \rho_0) \rar["\psi"]\dar["I_0"] & (V_x, S_x, \rho|_x) \dar["I"] \\
            (V_0, S_0, \rho_0) \rar["I \circ \psi \circ I_0^{-1}"] & (V_{\tau x}, S_{\tau x}, \rho|_{\tau x})
        \end{tikzcd} \]
        resulting in an element $I \circ \psi \circ I_0^{-1} \in P_{\tau x}$. We denote this process $\psi \mapsto I \circ \psi \circ I_0^{-1}$ by the map $I_* : P \to \tau^*P$; this is not a map of ${\rm Spin}^c_n$-bundles, but rather covers the conjugation automorphism on ${\rm Spin}^c_n$. The condition $I \circ I = {\rm id}_S$ translates into $I_* \circ \tau^* I_* = \lambda^{-1} \cdot {\rm id}_P$, where $\lambda \in S^1 \subset \bb C^\times$ is the unique scalar such that 
        \begin{equation}\label{eq:sqtolam}
            I_0 \circ I_0 = \lambda \cdot {\rm id}_S,
        \end{equation}
        (such a scalar must exist, because unitary automorphisms of a complex unitary representation are always given by such scalars.)

        In the special case that $n = 3$, we can see concretely that \keywd{$\lambda = -1$}. Indeed, an explicit model of the Clifford algebra action of standard-oriented $\bb R^3 = \bb R\<e_1, e_2, e_3\>$ on the corresponding irreducible Clifford module $\bb C^2$ is given by the \emph{Pauli matrices}
        \[ E_1 = \left[\begin{matrix}
            -i & 0 \\ 0 & i
        \end{matrix}\right], E_2 = \left[\begin{matrix}
            0 & -1 \\ 1 & 0
        \end{matrix}\right], E_3 = \left[\begin{matrix}
            0 & -i \\ -i & 0
        \end{matrix}\right]. \]
        Indeed, $E_i^2 = -{\rm id}, E_i E_j = -E_j E_i$, and $E_1 E_2 E_3 = +{\rm id}$ (this latter condition is to ensure that we have picked the correct isomorphism class of irreducible Clifford module, corresponding to the choice of oriented basis $e_1, e_2, e_3$). A conjugate-linear automorphism of $\bb C^2$ is uniquely represented by the map $\Phi \mapsto A \bar \Phi, ~\forall \Phi \in \bb C^2$ for some matrix $A \in M_2(\bb C)$, and the condition of respecting the Clifford action is given by
        \[ A\bar E_1 = E_1 A, \quad A\bar E_2 = E_2 A, \quad A\bar E_3 = E_3 A,    \]
        which when solved explicitly give that $A$ must be a multiple of $\left[\begin{smallmatrix}
            0 & -1 \\ 1 & 0
        \end{smallmatrix}\right]$. Thus $I_0$ squares to $-{\rm id}$, as promised. The big conclusion is that the following group-theoretic definition is equivalent to the spinor definition of a real ${\rm Spin}^c_3$-structure:
    \end{DEF}

    \begin{DEF}
        Given an $SO_3$-vector bundle $V$, with oriented frame bundle ${\rm Fr}(V)$, and with involution denoted $\tau$ on both the total spaces and on the base $X$, a \keywd{real ${\rm Spin}^c_3$-structure} on $V$ is given by a principal ${\rm Spin}^c_3$-bundle $P$, with an identification $P \times_{{\rm Spin}^c_3} {\rm SO}_3 \cong {\rm Fr}(V)$, equipped with a map $I_* : P \to \tau^* P$ covering the conjugation automorphism on ${\rm Spin}^c_3$, and covering the structural identification ${\rm Fr}(V) \cong \tau^* {\rm Fr}(V)$ on the induced $SO_3$-bundle, and further satisfying that the composite
        \[ P \overset{I_*}\longrightarrow \tau^*P \overset{\tau^* I_*}\longrightarrow P \] 
        is given by multiplication by $-1 \in S^1 \subset {\rm Spin}^c_3$.
    \end{DEF}

    \begin{PROP}
        The natural homomorphism of Lie groups
        \[ {\rm Spin}^c_3 \cong {\rm Aut}(V_0, S_0, \rho_0) \overset{\rm forg}\longrightarrow {\rm Aut}(S_0) \cong U_2 \]
        is an isomorphism.
    \end{PROP}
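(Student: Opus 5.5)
The plan is to prove injectivity and surjectivity of the forgetful map $(f,\sigma)\mapsto\sigma$ directly. Both come from the fact that in dimension $3$ the Clifford image of $V_0$ is exactly the traceless skew-Hermitian endomorphisms $\mathfrak{su}(S_0)$. After fixing the model of {\sc Def.~\ref{def:spinc}}, we may take $V_0=\bb R^3$ and $S_0=\bb C^2$ with $\rho_0(e_j)=E_j$ the Pauli-type matrices. Each $E_j$ is skew-Hermitian and traceless, and the three are $\bb R$-linearly independent. Hence $\rho_0$ is an $\bb R$-linear isomorphism $V_0\overset\sim\to\mathfrak{su}_2$. Under it, $\|v\|^2$ corresponds to $\det\rho_0(v)$, since $\rho_0(v)^2=-\|v\|^2\,{\rm id}$.

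\emph{Injectivity.} If $(f,{\rm id})$ lies in the group, then $\rho_0(fv)=\rho_0(v)$ for all $v$. Since $\rho_0$ is injective, $f={\rm id}$.

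\emph{Surjectivity.} Given $\sigma\in U_2$, conjugation $X\mapsto\sigma X\sigma^{-1}$ preserves $\mathfrak{su}_2$ and preserves $\det$. So $f:=\rho_0^{-1}\circ{\rm Ad}_\sigma\circ\rho_0$ is a linear isometry of $V_0$ satisfying $\rho_0(fv)\sigma=\sigma\rho_0(v)$, which is exactly the defining relation. It remains to check $f\in{\rm SO}(V_0)$ rather than merely ${\rm O}(V_0)$. I see two ways to do this. One is connectedness: $U_2$ is connected and $f$ depends continuously on $\sigma$, with $f={\rm id}$ at $\sigma={\rm id}$, so $\det f=1$. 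The other is via the volume element: $\rho_0(fe_1)\rho_0(fe_2)\rho_0(fe_3)=\sigma E_1E_2E_3\sigma^{-1}={\rm id}$. Orientation-reversing isometries would send this product to $-{\rm id}$, because the product for a negatively oriented basis is $-E_1E_2E_3$.

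Finally, the map is a continuous bijective homomorphism between compact Lie groups. It is therefore a homeomorphism, and hence a Lie group isomorphism. Alternatively, the inverse $\sigma\mapsto(\rho_0^{-1}{\rm Ad}_\sigma\rho_0,\sigma)$ is visibly smooth.

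I expect no step to be a real obstacle. The only point needing care is the orientation check in the surjectivity step, that is, confirming that ${\rm Ad}_\sigma$ cannot induce an orientation-reversing map. The connectedness argument settles this without any sign bookkeeping.
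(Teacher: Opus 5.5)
Your proof is correct. Your injectivity step is essentially the paper's: the paper writes $\rho_{fv-v}=0$ and notes that nonzero vectors are Clifford units, which is the same as your injectivity of $\rho_0$.

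The routes differ for surjectivity. The paper never constructs preimages. It argues softly: ${\rm Spin}^c_3$ and $U_2$ are both compact $4$-dimensional Lie groups, so an injective continuous map between them is open by invariance of domain and closed by compactness. It is therefore onto the connected group $U_2$. The submersion property then comes from Sard's lemma combined with $U_2$-equivariance of the set of regular values.

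You instead use the dimension-$3$ coincidence $\rho_0(V_0)=\mathfrak{su}(S_0)$ to write the inverse $\sigma\mapsto(\rho_0^{-1}\circ{\rm Ad}_\sigma\circ\rho_0,\sigma)$ explicitly. You check the isometry property via $\det\rho_0(v)=\|v\|^2$, and the orientation either by connectedness of $U_2$ or by the volume element $E_1E_2E_3={\rm id}$.

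Your route is more elementary and self-contained. It does not need to know in advance that ${\rm Spin}^c_3$ is $4$-dimensional; in the paper that fact rests on the central extension by $S^1$ over ${\rm SO}_3$, that is, on every rotation lifting. Your inverse is visibly smooth, so neither invariance of domain nor Sard is needed. It also recovers, as a byproduct, that the projection ${\rm Spin}^c_3\to{\rm SO}_3$ is onto.

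The paper's route, in exchange, needs only the dimension-independent fact that nonzero vectors act invertibly plus a dimension count. It thereby avoids identifying the Clifford image with $\mathfrak{su}_2$ and the orientation bookkeeping.
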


    \begin{proof}
        Both the domain and the target are 4-dimensional real Lie groups, hence it suffices to show that the map is a bijective submersion. In fact, the submersion claim is automatic from bijectivity, since the set of regular values is non-empty by Sard's lemma, and in addition it is invariant under the $U_2$ action, hence must be the whole codomain. Bijectivity in turn is implied by mere injectivity, due to Brouwer's open mapping theorem and the closedness of maps between compact spaces.

        So, let us assume towards a contradiction that there is an element $(f, \sigma) \in {\rm Spin}^c_3$ that maps to the identity in ${\rm Aut}(S_0)$, i.e.~ that $\sigma = {\rm id}_S$. This means that
        \[ \rho_{f v} \Phi = \rho_v \Phi, \quad \forall v \in V_0, \Phi \in S_0.  \]
        If we assume towards a contradiction that $f \neq {\rm id}_{V_0}$, it means that there is some $v \in V_0$ such that $fv - v \neq 0$. But then, this means that $\rho_{fv - v}$ is nonzero, since nonzero elements of $V_0$ are units in the Clifford algebra. This concludes the proof of our claim.
    \end{proof}

    The following corollary gives us a very nice geometric way to visualize Clifford multiplication, and hence an alternative definition of real ${\rm Spin}^c_3$-structure without mentioning Clifford multiplication explicitly.

    \begin{COR} The following are true:
        \begin{enumerate}
            \item[\sc i.] There is a diffeomorphism
            \[ {\rm Ker}(\rho_v - i \cdot {\rm id}_S) : \bb S(V^3) \to \bb P(S^2) \]
            from the unit sphere bundle of the ${\rm SO}_3$-bundle $V$ to the projectivization of the spinor bundle $S$, which is also an isometry after scaling the metric uniformly (e.g.~ by a factor of $1/2$ on the domain to make the curvatures agree).
            \item[\sc ii.] Given any $\bb C$-linear automorphism $\sigma : S \overset\sim\to S$, there is a unique extension to an automorphism $(f, \sigma)$ of $(V, S, \rho)$, and moreover the identification in part {\sc i} respects the maps induced by $f$ and $\sigma$, respectively. Similarly, if $I : S \overset\sim\to S$ is a conjugate-linear automorphism, there is a unique automorphism $f : V \to V$ such that the identification in part {\sc i} respects the maps induced by $-f$ and $I$, respectively (note the sign change).
        \end{enumerate}
    \end{COR}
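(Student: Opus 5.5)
Both parts will be checked on the fixed model $V_0=\bb R^3$, $S_0=\bb C^2$ with the Pauli matrices $E_1,E_2,E_3$, and then globalized. The globalization is automatic: every construction below is natural under ${\rm Spin}^c_3\cong U_2$ (previous proposition) and under conjugate-linear Clifford automorphisms, so the fiberwise maps assemble into bundle maps.

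\textbf{Part i.} For a unit $v$, $\rho_v$ is a skew-Hermitian isometry with $\rho_v^2=-{\rm id}$, so its eigenvalues lie in $\{\pm i\}$. Since $\rho_v$ anticommutes with $\rho_w$ for any $w\perp v$, and $\rho_w$ is invertible, $\rho_w$ swaps the two eigenspaces. Hence each eigenspace is a line, and $L_v:={\rm Ker}(\rho_v-i)$ is a point of $\bb P(S_0)$.

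The map $v\mapsto L_v$ is injective because $\rho$ is linear in $v$. Suppose $L_v=L_{v'}$. Since $\rho_v$ is normal, ${\rm Ker}(\rho_v+i)=L_v^\perp$, so both $\rho_v$ and $\rho_{v'}$ act by $i$ on $L$ and by $-i$ on $L^\perp$. Thus $\rho_{v-v'}=0$, and since nonzero vectors act invertibly, $v=v'$.

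A continuous injection $S^2\to\bb CP^1=S^2$ is surjective by invariance of domain and compactness. It is smooth because spectral projections vary smoothly. To get a diffeomorphism and the isometry claim, I will compute explicitly. In the model, $L_{e_1}=\bb C\cdot(0,1)$, and the map is ${\rm SU}_2$-equivariant, with ${\rm SU}_2$ acting on $S^2\subset\bb R^3$ through ${\rm SO}_3$. Transitivity together with equivariance then reduces everything to checking the differential at one point. That differential is nonzero because $v\mapsto\rho_v$ is injective. An equivariant map between two round spheres, homogeneous under a common isotropy group ${\rm SO}_2$, is conformal at the basepoint with a constant factor, and therefore a homothety. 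Computing $d\rho$ at $e_1$ in the Pauli model pins the constant as $1/2$ after scaling: $S^2$ of radius $1$ against Fubini–Study $\bb CP^1$ of radius $1/2$.

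\textbf{Part ii, linear case.} By the proposition, ${\rm Spin}^c_3\to U(S_0)$ is an isomorphism, so each $\sigma$ has a unique $f$ with $\sigma\rho_v\sigma^{-1}=\rho_{fv}$. It follows that $\sigma(L_v)$ is the $i$-eigenline of $\sigma\rho_v\sigma^{-1}=\rho_{fv}$, i.e. $\sigma L_v=L_{fv}$, as required.

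\textbf{Part ii, conjugate-linear case.} This is the main step. Write $I=\sigma\circ I_0$ with $\sigma$ linear, where $I_0$ is the quaternionic structure of the previous definition, which commutes with every $\rho_v$. Then
\[I\rho_vI^{-1}=\sigma\rho_v\sigma^{-1}=\rho_{fv}.\]
The sign comes from conjugate-linearity. If $\rho_v\Phi=i\Phi$, then
\[\rho_{fv}I\Phi=I\rho_v\Phi=I(i\Phi)=-iI\Phi,\]
so $I\Phi\in{\rm Ker}(\rho_{fv}+i)={\rm Ker}(\rho_{-fv}-i)$. Hence $I L_v=L_{-fv}$.

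Uniqueness of $f$ follows from injectivity in part i. The delicate point is the sign bookkeeping. It hinges on $I_0$ commuting with $\rho$, as in the definition, rather than anticommuting, which would be the positive type. I would double-check this directly with $A=\left[\begin{smallmatrix}0&-1\\1&0\end{smallmatrix}\right]$ acting on the Pauli model.
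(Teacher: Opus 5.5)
Your proposal is correct and follows essentially the same route as the paper. Both arguments use three ingredients: the $\pm i$ eigenline argument; equivariance under ${\rm Spin}^c_3\cong U_2$, which acts transitively on both spheres, giving the diffeomorphism and the homothety; and the fact that linear maps preserve the $\pm i$ eigenlines while conjugate-linear maps swap them, with $L_{-v}=L_v^\perp$ producing the sign in part ii. You additionally make explicit some steps the paper only sketches, namely injectivity of $v\mapsto L_v$ via linearity of $\rho$, and the factorization $I=\sigma\circ I_0$ through the quaternionic structure commuting with $\rho$.
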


    \begin{proof}
        Let us begin with part {\sc i}. Since we are in the negative case, let us recall that $\rho_v$ is an automorphism of $S^2$ that squares to $-1$, hence it must be diagonalizable with eigenvalues belonging to $\{-i, +i\}$. In fact, each choice of $\pm i$ appears with multiplicity 1, since otherwise $\rho_v$ would commute with the rest of the Clifford action, which is a contradiction. Thus, ${\rm Ker}(\rho_v - i \cdot {\rm id}_S)$ is a 1-dimensional complex vector subspace of $S$, hence confirming that the codomain of the map is indeed correct.

        To see that is a diffeomorphism, we appeal to the proposition we just proved: the group ${\rm Spin}^c_3 \cong U_2$ acts transitively on both sides, with kernel given by the center $S^1 \subset {\rm Spin}^c_3$ of the group, hence proving the desired diffeomorphism. If we endow both sides with the homogeneous space metric, we get the desired isometry; this agrees with the more standard metric in both sides up to a scaling factor, e.g.~ if we take the metric on $\bb P(S^2) \cong \bb{CP}^1$ making the Hopf fibration $S^3 \twoheadrightarrow \bb CP^1$ a metric submersion, then the latter is a sphere of radius $1/2$. This concludes our proof of {\sc i}.

        Part {\sc ii} is quite straightforward, and the point is that a $\bb C$-linear automorphism preserves the $+i$ and $-i$ eigenspaces, whereas a conjugate-linear automorphism swaps them; the operation that takes a complex line bundle in $S$ to its orthogonal complement corresponds precisely to taking the antipode on the level of the projectivization, viewed as a metric 2-sphere of radius $1/2$, which is responsible for the minus sign in the conjugate-linear case.
    \end{proof}

    \begin{DEF}\label{def:sphproj}
        In particular, this allows us to reinterpret the notion of real ${\rm Spin}^c_3$-structure in a geometric way: such a structure consists of a rank-2 Hermitian complex vector bundle $S$, together with a metric identification $\bb S_{1/2}(V) \cong \bb P(S)$, and further equipped with a conjugate-linear involution $I : S \overset\sim\to S$ covering the involution $\tau$ on the base, such that the map $\bb P(I)$ induced on $\bb P(S)$ corresponds to the map $\bb S_{1/2}(-\tau)$ induced by $-\tau$.
    \end{DEF}

    If $A \subset X$ is a closed $\tau$-equivariant subset, and a preferred choice of real ${\rm Spin}^c$-structure on the bundle $X$ is already provided on $V|_A$, then we call any extension thereof to the whole of $V$ a \keywd{relative} real ${\rm Spin}^c$-structure. The case of interest is given by $X = Y^3$ an oriented closed 3-manifold, with the involution $\tau$ on $Y$ arising as a branching involution, the bundle $E = TY$ with involution given by $d\tau$, and the closed subset $A = \vec w \subset X$ consisting of the set of basepoints (the real ${\rm Spin}^c$-structure on said basepoints can be chosen arbitrarily, since any two choices are isomorphic). We denote by \keywd{${\rm Spin}^c(Y, \vec w)$} and \keywd{${\rm rSpin}^c(Y, \vec w)$} the sets of \emph{isomorphism classes} of relative ${\rm Spin}^c$ and relative real ${\rm Spin}^c$-structures on $Y$ rel $\vec w$. Later in this section we will refine these sets of isomorphism classes to actual spaces, and for this we will have to be more specific concerning the ${\rm Spin}^c$-structure on the basepoints.

    \begin{DEF}\label{def:rvect}
        In the context of 3-manifolds, yet another definition of (real, relative) ${\rm Spin}^c$-structure, that is easier to work with in the context of Heegaard Floer theory, is based on vector fields, cf. \cite{Tu97, GM25}. Two nowhere-vanishing vector fields on a 3-manifold $Y$ are said to be \emph{cohomologous} if and only if they are homotopic away from an embedded ball. It is a standard fact that the set of cohomology classes of nowhere vanishing vector fields on $Y$ is in canonical bijection with ${\rm Spin}^c(Y)$, and likewise if the value of the vector field is fixed on $\vec w$, the same is true regarding the relative ${\rm Spin}^c(Y, \vec w)$; we will offer a proof of this fact in the upgraded form on the level of spaces of choices, not just equivalence classes.

        When it comes to real ${\rm Spin}^c$-structures, the story is a bit more subtle. One now imposes the condition that the nowhere-vanishing vector fields be \emph{real}, i.e.~ that $\tau^* v = -v$. However, the naive modification of the condition of being cohomologous is not adequate for describing real ${\rm Spin}^c$-structures. Instead, following \cite{GM25}, two real vector fields are said to be \emph{real cohomologous} if the underlying vector fields are cohomologous, and additionally the orthogonal complements are isomorphic as Atiyah-real line bundles, whose definition we now recall:
    \end{DEF}

    \begin{DEF}
        An \keywd{Atiyah-real line bundle} on a space $X$ with involution $\tau$ is a complex line bundle $\scr L$ with an involution $I$ on its total space, covering $\tau$ on the base $X$, and which in addition must be \emph{conjugate}-linear, i.e.~ $I(\lambda \cdot s) = \bar\lambda \cdot I(s)$.
    \end{DEF}
    
    Going back to the notion of real cohomology of real vector fields, we note that the orthogonal complement of a nowhere vanishing vector field has the structure group $SO_2 \cong U_1$, and hence it is a complex line bundle; moreover, if the vector field is real, then the involution is orientation-reversing, and therefore it is conjugate-linear in the line bundle language, hence inducing an Atiyah-real structure on the orthogonal complement. It is a theorem of \cite[\sc\S3.6]{GM25} that the set of real cohomology classes of real vector fields is in natural bijection with real ${\rm Spin}^c$-structures on $Y$, and likewise in the relative case. We will also produce a space-level analogue of this theorem in this section.

    Before addressing the more sophisticated space-level constructions, we introduce here some important ideas that can be employed in the study of real ${\rm Spin}^c$-structures, which to our knowledge do not directly follow from \cite{GM25}'s work.

    \begin{DEF}
        Given a real ${\rm Spin}^c$-structure $\mathfrak s$ on a $\tau$-equivariant bundle $(E, X, \tau)$, and an Atiyah-real line bundle $\scr L$ on $(X, \tau)$, one can form the \emph{tensor product} real ${\rm Spin}^c$-structure $\mathfrak s \otimes \scr L$, induced via the homomorphism of structure groups ${\rm Spin}^c_n \times S^1 \to {\rm Spin}^c_n$, defined in the following manner:
        \[ ([u, z], w) \longmapsto [u, z \cdot w]. \]
        This map of structure groups is associative with respect to multiplying by several $S^1$-tensor factors on the right, thus making the tensor product itself associative, i.e.~ $\mathfrak s \otimes \scr L \otimes \scr L'$ is unambiguous.

        Dually, given two real ${\rm Spin}^c$-structures $\mathfrak s, \mathfrak s'$, there is a \emph{Hom construction} producing an Atiyah-real line bundle $\scr H(\mathfrak s, \mathfrak s')$, defined fiberwise as the space of ${\rm Spin}^c_n$-equivariant maps $P|_x \to P'|_{x'}$ that induce the identity on ${\rm Fr}(E)|_x$, where $P$ and $P'$ are the principal bundles that define the real ${\rm Spin}^c$-structures $\fk s, \fk s'$. This space is an $S^1$-torsor via the $S^1$-factor in the definition of ${\rm Spin}^c$, and can be interpreted as an Atiyah-real line bundle. There is a tautological isomorphism
        \[ \mathfrak s \otimes \scr H(\mathfrak s, \mathfrak s') \cong \mathfrak s' \]
        given by evaluation, which establishes the proof of the following fact:
    \end{DEF}

    \begin{PROP}
        The set ${\rm rSpin}^c(Y, \vec w)$ is naturally a torsor over the set ${\rm rLine}(Y, \vec w)$ of isomorphism classes of Atiyah-real line bundles trivialized at $\vec w$.
    \end{PROP}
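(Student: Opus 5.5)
The plan is to show directly that the tensor product construction gives a free and transitive action of ${\rm rLine}(Y, \vec w)$ on ${\rm rSpin}^c(Y, \vec w)$, using the tensor product and the Hom construction just introduced. The relative data is handled uniformly. An element of ${\rm rLine}(Y,\vec w)$ is an Atiyah-real line bundle $\scr L$ together with a trivialization over $\vec w$ that is compatible with $I$. An element of ${\rm rSpin}^c(Y,\vec w)$ is a real ${\rm Spin}^c$-structure with an identification over $\vec w$ with the fixed reference structure. Tensoring the identification with the trivialization gives $\fk s \otimes \scr L$ its relative structure at $\vec w$.

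First, I check that $(\fk s, \scr L) \mapsto \fk s \otimes \scr L$ is well defined on isomorphism classes. It must also produce a real structure. The conjugation on ${\rm Spin}^c_3 \cong ({\rm Spin}_3\times S^1)/(-1,-1)$ acts by complex conjugation on the $S^1$ factor. Hence the homomorphism $([u,z],w)\mapsto [u,zw]$ intertwines conjugation on ${\rm Spin}^c_3\times S^1$ with conjugation on ${\rm Spin}^c_3$. So $I_*\otimes I_{\scr L}$ covers the conjugation automorphism. The composite $P\to\tau^*P\to P$ becomes multiplication by $(-1)\cdot 1=-1$, because $I_{\scr L}$ squares to the identity. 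This is exactly the defining condition for a real ${\rm Spin}^c_3$-structure. Associativity of the tensor product, already noted, together with $\fk s\otimes\underline{\bb C}\cong\fk s$ for the trivial Atiyah-real bundle $\underline{\bb C}$ with standard conjugation, shows that we have a group action of the abelian group ${\rm rLine}(Y,\vec w)$, whose group operation is the tensor product.

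Transitivity follows from the tautological isomorphism $\fk s\otimes\scr H(\fk s,\fk s')\cong\fk s'$. I need to confirm that $\scr H(\fk s,\fk s')$ really is Atiyah-real. The involution sends $\phi$ to $I'_*\circ\phi\circ I_*^{-1}$. This map is conjugate-linear for the $S^1$-torsor structure, since both $I_*$ and $I'_*$ cover conjugation. It squares to the identity, because the two factors of $-1$ cancel. The reference identifications at $\vec w$ give a trivialization of $\scr H$ over $\vec w$.

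Freeness is the step needing the most care. Suppose $\fk s\otimes\scr L\cong\fk s$ relative to $\vec w$. Then $\scr L\cong\scr H(\fk s,\fk s\otimes\scr L)$ as trivialized Atiyah-real bundles. Composing with the given isomorphism, this is $\cong\scr H(\fk s,\fk s)$, and $\scr H(\fk s,\fk s)$ carries the canonical real section given by the identity. So $\scr L$ is trivial. The main obstacle is verifying that the identification $\scr L\cong\scr H(\fk s,\fk s\otimes\scr L)$ respects the real structures and the trivializations at $\vec w$. I would check this fiberwise: a vector $\ell\in\scr L_x$ gives the map $p\mapsto p\otimes\ell$, and $I$-equivariance follows directly from the definition of $I_*\otimes I_{\scr L}$.
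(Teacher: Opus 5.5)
Your proposal is correct and takes essentially the same route as the paper. The paper obtains the torsor structure directly from the tensor product $\mathfrak s \otimes \scr L$ together with the tautological evaluation isomorphism $\mathfrak s \otimes \scr H(\mathfrak s, \mathfrak s') \cong \mathfrak s'$. Your extra checks fill in details the paper leaves implicit: that the tensor product and $\scr H(\mathfrak s,\mathfrak s')$ carry real structures with the correct squares, and that the reverse isomorphism $\scr L \cong \scr H(\mathfrak s, \mathfrak s \otimes \scr L)$ gives freeness.
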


    Atiyah-real line bundles can be understood in terms of equivariant sheaf cohomology and Borel cohomology \cite{St78} in the general case, as pointed out by Baraglia \cite[\sc\S2]{Ba26}. When it comes to our specific case of $Y^3$ with branching involution, it can be understood in simpler terms of an equivariant notion of Chern class not relying on any equivariant form of cohomology, which we now introduce:

    \begin{PROP}\label{prop:rc1}
        We have the following:
        \begin{enumerate}
            \item Given a space $X$ endowed with a \emph{free} involution $\tau$, and an Atiyah-real line bundle $\scr L$, there is a well-defined \keywd {real Chern class} $c_1^R(\scr L) \in H^2(X / \tau; \underline{\bb Z}^\pm)$, where the latter signifies the local system with fiber $\mathbb Z$ and monodromy given by the 1st Stiefel-Whitney class of the $C_2$-principal bundle $X \to X/\tau$. 
            \item This real Chern class $c_1^R(\scr L)$ has the property that its reduction mod 2 recovers $w_2(\scr L/I)$, and also that when pulled back to the total space, it recovers the ordinary Chern class $c_1(\scr L)$ of the line bundle.
            \item The real Chern class establishes an abelian group isomorphism 
            \[ c_1^R : {\rm rLine}(X, A) \overset\sim\to H^2(X/\tau, A/\tau; \underline{\bb Z}^\pm) \]
            for any closed $\tau$-invariant subset $A \subset X$, where the former is the set of isomorphism classes of Atiyah-real line bundles on $X$ trivialized on $A$, with group operation given by tensoring.
        \end{enumerate}
    \end{PROP}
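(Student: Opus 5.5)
The plan is to pass from Atiyah-real line bundles on $X$ to ordinary real-line-bundle-twisted data on the quotient $X/\tau$. Since $\tau$ is free, an Atiyah-real line bundle $(\scr L, I)$ descends to a rank-2 real vector bundle $E := \scr L/I$ over $X' := X/\tau$. The complex structure $J$ on $\scr L$ anticommutes with $I$, so it descends to an orientation on $E$ only after twisting by the sign local system $\underline{\bb Z}^\pm$. Concretely, $E$ is an $O_2$-bundle whose determinant line $\det E$ is canonically identified with the real line bundle $\lambda$ associated to the double cover $X \to X'$. The identification comes from the fact that $J$ gives an orientation of each fiber of $\scr L$, and $I$ reverses it.

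I would then define $c_1^R(\scr L)$ as the twisted Euler class $e(E) \in H^2(X'; \underline{\bb Z}^{\det E}) = H^2(X'; \underline{\bb Z}^\pm)$. This is the standard Euler class for bundles oriented by a local system, namely the primary obstruction to a nonvanishing section, or the Thom class pulled back along the zero section.

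Part 2 then follows from standard properties. The mod-2 reduction of a twisted Euler class is the top Stiefel-Whitney class $w_2(E)$. Pulling back along the covering $X \to X'$ untwists the local system, and $\pi^*E \cong \scr L$ as oriented bundles, so naturality of the Euler class gives $c_1(\scr L)$.

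For part 3, I would classify the structure group. Atiyah-real line bundles on $X$ correspond bijectively to rank-2 bundles $E$ on $X'$ together with an isomorphism $\det E \cong \lambda$. Equivalently, they are lifts of the classifying map $X' \to BO_1 = B\bb Z/2$ (classifying $\lambda$) along $\det : BO_2 \to BO_1$. The homotopy fiber of this map is $BSO_2 = K(\bb Z, 2)$, and the monodromy of $\pi_1 BO_1$ on it is by $-1$. Hence $BO_2 \to BO_1$ is the twisted Eilenberg--MacLane fibration that represents $H^2(-; \underline{\bb Z}^\pm)$ over $B\bb Z/2$. Homotopy classes of lifts relative to $A'$ are therefore in bijection with $H^2(X', A'; \underline{\bb Z}^\pm)$. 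The trivialization on $A$ gives the relative version, and under this bijection the map is $e(E)$, since the fundamental class of the fiber is the Euler class.

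The step I expect to need the most care is checking that tensor product of Atiyah-real line bundles corresponds to addition in twisted cohomology. Here I would use the fiber $K(\bb Z,2)$ with its $H$-space structure $BU_1 \times BU_1 \to BU_1$, which is compatible with conjugation. This makes the lifting space a torsor for twisted cohomology, and the Whitney-type formula $e(E_1 \otimes_{\bb C} E_2) = e(E_1) + e(E_2)$ can be checked fiberwise after pulling back to $X$, using injectivity on a universal example.

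I also need to make sure the bijection with lifts respects isomorphism classes rather than merely homotopy classes of lifts over a fixed map. This holds because isomorphisms of Atiyah-real line bundles preserve the identification $\det E \cong \lambda$.
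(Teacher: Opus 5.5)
Your route is different from the paper's and works, but one inference as written does not follow. You argue that because the fiber of $\det\colon BO_2\to BO_1$ is $K(\bb Z,2)$ with monodromy $-1$, \emph{hence} it is the twisted Eilenberg--MacLane fibration representing $H^2(-;\underline{\bb Z}^\pm)$.

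Fiber and monodromy only determine such a fibration up to its $k$-invariant in $H^3(B\bb Z/2;\underline{\bb Z}^\pm)$. That group is $\bb Z/2$, not zero. For example, $B\mathrm{Pin}(2)\to BC_2$ has the same fiber and the same monodromy but admits no section, so its lifts are not classified by twisted $H^2$.

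What you need is that the transgression of the fiber class vanishes; this is exactly the paper's $d_3\colon E_3^{0,2}\to E_3^{3,0}\cong\bb Z/2$. There are two ways to get it.
\begin{enumerate}
\item Use the splitting $O_2=SO_2\rtimes O_1$, i.e.\ the section $\lambda\mapsto\lambda\oplus\underline{\bb R}$. This is precisely what the paper uses.
\item Use your own universal twisted Euler class. It restricts to the generator $c_1$ of $H^2(BSO_2;\bb Z)$, so the generator survives to $E_\infty^{0,2}$ and the transgression must vanish.
\end{enumerate}
Then the map $BO_2\to L_\pm(\bb Z,2)$ over $BO_1$ classifying $e$ is a fiber homotopy equivalence, and your representability argument goes through. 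For the relative version, the base section is the trivial Atiyah-real bundle $X\times\bb C$: its quotient is $\underline{\bb R}\oplus\lambda$ and its Euler class is $0$.

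With that fixed, here is how the two routes compare.
\begin{itemize}
\item \emph{Definition.} The paper defines $c_1^R$ as the unique preimage of $c_1$ under $H^2(BO_2;\underline{\bb Z}^{w_1})\to H^2(\bb{CP}^\infty;\bb Z)$, an isomorphism it extracts from the Serre spectral sequence. Your twisted Euler class is the same class, since that restriction is injective.
\item \emph{Part ii.} The paper gets the mod-$2$ statement by elimination between $w_2$ and $w_2+w_1^2$, decided by pulling back along the section. For you, part ii is an instance of standard Euler-class properties.
\item \emph{Part iii.} The paper uses equivariant cell-by-cell obstruction theory, with additivity coming from multiplicativity of degrees on $2$-cells. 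For you, part iii is a representability statement. Additivity is checked on the universal example $BO_2\times_{BO_1}BO_2$, where restriction to the fiber is injective on twisted $H^2$ because $E_2^{2,0}=H^2(\bb{RP}^\infty;\underline{\bb Z}^\pm)=0$ and $E_2^{1,1}=0$.
\end{itemize}

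What each buys: yours is shorter, given the standard theory of twisted Thom classes and twisted Eilenberg--MacLane spaces. Your tracking of the identification $\det E\cong\lambda$ also makes explicit what the paper's ``classified by a map $X/\tau\to BO_2$'' leaves implicit: $(\scr L,I)$ and $(\bar{\scr L},I)$ have the same quotient $O_2$-bundle but opposite $c_1^R$. The paper's argument is more elementary and self-contained, and its cellular form gives the relative statement and the homomorphism property directly, without a representability theorem or a universal computation.
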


    \begin{proof}
        For part {\sc i}, let us notice that the data of $(X, \tau, \scr L, I)$ is uniquely determined by $(X / \tau, \scr L / I)$ as an $O_2$-bundle. Indeed, we may recover the covering space $\pi : X \to X/\tau$ from the first Stiefel-Whitney class of the rank-2 Euclidean vector bundle $\scr L / I$ over $\mathbb R$, and then the pull-back of $\scr L / I$ along $\pi$ naturally becomes an $SO_2 \cong U_1$-bundle, with the deck involution being conjugate-linear. Consequently, an Atiyah-real line bundle over a space with free involution can be classified by a map $X / \tau \to BO_2$. We define $c_1^R(\scr L)$ as the pull-back of a universal element $c_1^R \in H^2(BO_2; \underline{\mathbb Z}^{w_1})$ which we now describe. Running the Leray-Serre spectral sequence for the fibration $(\mathbb {CP}^\infty \cong BSO_2, \mathbb Z) \to (BO_2, \underline{\mathbb Z}^{w_1}) \overset{\rm det}\longrightarrow BO_1$ gives us
        \[ E_2^{pq} = H^p(\mathbb{RP}^\infty; \underline H^q(\mathbb {CP}^\infty; \mathbb Z)) \]
        where $\underline H$ signifies that we are considering the corresponding cohomology group as a local system over $\mathbb {RP}^\infty$, i.e.~ as a $C_2$-representation. The involution on $H^*(\mathbb {CP}^\infty; \mathbb Z)$ is given by \emph{minus} the map induced by the conjugation on $\mathbb {CP}^\infty$, where the minus sign comes from the fact that the trivialization $\pi^* \underline{\mathbb Z}^{w_1} \cong \mathbb Z$ of the local system anti-commutes with the deck involution. In particular, we get the following relevant entries of the spectral sequence:
        \[ E_2^{02} = \mathbb Z, E_2^{11} = 0, E_2^{20} = 0, \quad E_2^{21} = 0, E_2^{30} = \mathbb Z/2. \]
        Hence, the only differential that could affect the entries on the diagonal $p + q = 2$ is $d_3 : E_3^{02} \to E_3^{30}$. This cannot be nonzero, due to the existence of a section of the determinant map $BO_2 \to BO_1$, namely the map that classifies direct summing with a trivial copy of $\mathbb R$, which is compatible with the local system $\underline{\mathbb Z}^{w_1}$. Indeed, this would provide a map of spectral sequences from the one above to the new spectral sequence $^\prime E_*^{**}$ corresponding to $* \to \mathbb{RP}^\infty \to \mathbb{RP}^\infty$, forcing ${\rm id} \circ d_3^{02}$ to factor through the new $^\prime E_3^{02} = 0$.
    
        Consequently, we learn that $H^2(BO_2; \mathbb \underline{\bb Z}^{w_1}) \to H^2(\mathbb {CP}^\infty; \mathbb Z)$ is an isomorphism and we define $c_1^R$ to be the preimage of $c_1$. From this, it also follows that the reduction mod 2 of $c_1^R$ is $w_2$, as expected from the first part of claim {\sc ii}: first of all, thanks to the commutativity of the inclusion of the fiber $\mathbb {CP}^\infty \to BO_2$ with mod-2 reduction, we deduce that $c_1^R~{\rm mod}~2$ is either $w_2$ or $w_2 + w_1^2$; secondly, we may exclude the second option by instead pulling-back along the section $BO_1 \to BO_2$ mentioned earlier. The rest of part {\sc ii} follows from the very definition of $c_1^R$ as the preimage of $c_1$ under the pull-back map.

        Part {\sc iii} is proven by obstruction theory, in a manner analogous to the classical case: by putting a cell structure on the pair $(X/ \tau, A/\tau)$ and pulling it back to a $C_2$-equivariant cell structure on $(X, A)$, it follows that any Atiyah-real line bundle rel $A$ is trivial on the relative 1-skeleton $X^{[1]} \cup A$, and the obstruction to trivialization lies in cellular $H^2$ with $\mathbb Z^\pm$-coefficients, given by $c_1^R$ of the two Atiyah-real line bundles. This establishes injectivity of the map. Surjectivity is built by representing $c_1^R$ as a cellular co-chain and filling in the line bundle cell by cell with the appropriate degree dictated by the said co-chain. That the map is a group homomorphism follows from noting that this aforementioned co-chain is additive with respect to the tensor product, by multiplicativity of the degree.
    \end{proof}

    The main upshot of the work we have done so far can be summarized in the following observation:

    \begin{THM}\label{thm:rspinctors}
        There is a canonical isomorphism induced by the real Chern class
        \begin{equation}\label{eq:rline}
            {\rm rLine}(Y, \vec w) \cong H^2(Y' \setminus N', \vec \mu'; \underline{\bb Z}^v),
        \end{equation}
        where $\vec \mu'$ is the disjoint union of meridional circles in $\partial N'$ through $\vec w$. Consequently, the set ${\rm rSpin}^c(Y, \vec w)$ is naturally a torsor over this group.
    \end{THM}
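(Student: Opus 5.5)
The plan is to reduce the branched situation to the free one, where {\sc Prop.~\ref{prop:rc1}} applies directly. The torsor statement is then immediate: it is the preceding proposition (${\rm rSpin}^c(Y,\vec w)$ is a torsor over ${\rm rLine}(Y,\vec w)$) combined with (\ref{eq:rline}). So the real work is the isomorphism (\ref{eq:rline}).

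\emph{Step 1: restrict to the free part.} Set $Y^\circ := Y\setminus N$. Its quotient is $Y'\setminus N'$, the involution is free, and the double cover is classified by $v$. I will show that restriction
\[ {\rm rLine}(Y,\vec w) \longrightarrow {\rm rLine}(Y^\circ, \vec\mu) \]
is a bijection, where $\vec\mu\subset\partial N$ is the $\tau$-invariant preimage of $\vec\mu'$. Since $\mu'$ has odd $v$-monodromy, its preimage is a single circle double-covering $\mu'$. Granting this, {\sc Prop.~\ref{prop:rc1}~iii} applied to the pair $(Y^\circ,\vec\mu)$ gives ${\rm rLine}(Y^\circ,\vec\mu)\cong H^2(Y'\setminus N',\vec\mu';\underline{\bb Z}^v)$. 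Additivity of $c_1^R$ under tensor product makes this a group isomorphism.

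\emph{Step 2: analyze the neighborhood $N$.} Each component of $N$ is $\tau$-equivariantly a solid torus $S^1\times D^2$, with $\tau$ acting as the identity on $S^1$ and by rotation by $\pi$ on $D^2$. On the fixed circle $C_i$, an Atiyah-real line bundle restricts to a complex line with a conjugate-linear involution, which is the same as a real line bundle $\scr L^I$. That real line bundle is trivialized at $w_i$.
\begin{list}{$\cdot$}{}
\item I will argue that Atiyah-real line bundles on $N_i$ trivialized at $w_i$ are classified by $w_1(\scr L^I|_{C_i})\in\bb Z/2$. Equivalently, by equivariant homotopy, they are classified by the restriction to $C_i$, which retracts equivariantly onto $C_i$.
\item Restricted to $\partial N_i$, a torus with free involution whose quotient is again a torus, this class should correspond to the evaluation of $c_1^R$ on a surface class in $H^2$ of the quotient.
\end{list}
Rather than using this classification directly, I will run an equivariant Mayer--Vietoris argument, or equivalently equivariant obstruction theory, on the pair $(Y, Y^\circ\cup\vec w)$.

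\emph{Step 3: the main obstacle.} The hard step is showing that extension over $N$ relative to $\vec w$ is \emph{unique and always possible} once we restrict to the pair $(Y^\circ,\vec\mu)$ rather than $(Y^\circ,\emptyset)$.
\begin{list}{$\cdot$}{}
\item \emph{Existence.} An Atiyah-real bundle on $\partial N_i$ with quotient $c_1^R$ arbitrary need not extend. Trivializing along $\mu$ is what fixes the $\bb Z/2$ ambiguity coming from $H^1(C_i;\bb Z/2)$, i.e.\ the real line bundle on $C_i$ measured by twisting along the longitude. The fact that $\mu$ passes through $w_i$ matches the trivialization at the basepoint.
\item \emph{Computation.} I would compute using the cofiber sequence relating $H^*(Y'\setminus N',\vec\mu';\underline{\bb Z}^v)$, $H^*(Y'\setminus N',\partial N';\underline{\bb Z}^v)$ and $H^*(\partial N',\vec\mu';\underline{\bb Z}^v)$. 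For the last term, $\partial N'_i$ is a torus on which $v$ is nontrivial only along $\mu'$, so $H^*(\partial N'_i,\mu';\underline{\bb Z}^v)$ should be computed to match exactly the data of Atiyah-real bundles on $N_i$ rel $w_i$.
\item \emph{Cell-by-cell check.} I will verify this by working cell by cell on an equivariant cell structure of $N_i$ adapted to $C_i$, comparing local models as in the proof of {\sc Prop.~\ref{prop:rc1}~iii}.
\end{list}

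Finally, naturality of $c_1^R$ with respect to restriction shows that the composite isomorphism is canonical.
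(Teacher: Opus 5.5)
Your Step~1 is the paper's own reduction, with one bookkeeping step to add. Since $\mu_i\subset\partial N_i$ does not contain $w_i$, you must first identify ${\rm rLine}(Y,\vec w)\cong{\rm rLine}(Y,\vec\Delta)$, where $\vec\Delta$ are the $\tau$-invariant meridian disks (they retract equivariantly onto $\vec w$). Only then can you restrict to $(Y\setminus N,\vec\mu=\partial\vec\Delta)$.

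The genuine gap is that bijectivity of this restriction, which is the whole content of the theorem, is left as an unexecuted ``cell-by-cell check''. Moreover, the local picture you predict for that check is wrong:
\begin{itemize}
\item An Atiyah-real bundle on $\partial N_i$ always extends over $N_i$. Trivializing along $\mu$ also does not remove any $\mathbb Z/2$ ambiguity from $H^1(C_i;\mathbb Z/2)$. To see this, take the complexified M\"obius bundle on $C_i$ and pull it back to $N_i$. Its restriction to $\partial N_i$ has quotient $\mathcal M\oplus(\mathcal M\otimes\underline{\mathbb R}^v)$, where $\mathcal M$ is the pulled-back M\"obius bundle and $m=w_1(\mathcal M)$, so $w_2=m\smile v\neq 0$ on $\partial N_i'$. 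Hence the restriction ${\rm rLine}(N_i,\Delta_i)\cong\mathbb Z/2\to{\rm rLine}(\partial N_i,\mu_i)\cong H^2(\partial N_i',\mu_i';\underline{\mathbb Z}^v)\cong\mathbb Z/2$ is an isomorphism.
\item So the class in $H^1(C_i;\mathbb Z/2)$ is \emph{detected} on the boundary, as your second bullet in Step~2 suggests; it is not an ambiguity. The role of $\mu$ is only to carry the relative condition at $\vec w$ across the removal of $N$.
\item Your proposed cofiber sequence lives entirely on the free part $Y'\setminus N'$. It cannot see the extension problem over $N$, where $\tau$ has fixed points and Prop.~\ref{prop:rc1} does not apply.
\item Matching the orders of groups is not enough in any case. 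Injectivity of the global restriction also needs any two extensions of the same boundary data to be isomorphic rel $\partial N_i\cup\Delta_i$.
\end{itemize}

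The missing ingredient is the paper's local lemma: the space of $C_2$-equivariant fillings of an Atiyah-real line bundle from $\partial D^2$ to $D^2$, with $\tau$ the rotation by $\pi$, is contractible.
\begin{itemize}
\item First extend along a radius, which forces the antipodal radius. Since the fiber at the center must be a real line, this choice ranges over ${\rm HoFib}(\mathbb{RP}^\infty\to\mathbb{CP}^\infty)\simeq S^1$.
\item Then fill one half-disk, which forces the other. This is a choice in $\Omega^2\mathbb{CP}^\infty\simeq\mathbb Z$.
\item Going once around the $S^1$ shifts this degree by one. So the total space of choices is the universal cover of $S^1$, hence contractible.
\end{itemize}
Because $\tau$ acts trivially on the core direction, $N_i$ is an $S^1$-family of such meridian disks, pinned to the trivial filling at $\Delta_i$. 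The space of extensions from $\partial N_i\cup\Delta_i$ to $N_i$ is therefore a section space of a fibration with contractible fibers, and is itself contractible. This gives existence and uniqueness of extensions at once, hence the bijection ${\rm rLine}(Y,\vec\Delta)\to{\rm rLine}(Y\setminus N,\vec\mu)$. After that, your Step~1 and the torsor conclusion go through as written.
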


    \begin{proof}
        Let $\vec \Delta$ be the disjoint union of meridional disks with boundary $\vec \mu$, used to perform the Dehn filling from $Y \setminus N$ to $Y$. In virtue of {\sc Prop.~\ref{prop:rc1}-iii} applied to the pair $(Y \setminus N, \vec \mu)$, the desired statement reduces to proving that the restriction map
        \[ {\rm rLine}(Y, \vec w) \underset{\text{by~h.e.}}\cong {\rm rLine}(Y, \vec \Delta) \to {\rm rLine}(Y \setminus N, \vec \mu) \]
        is an isomorphism. To this end, it suffices to show that any Atiyah-real line bundle on $Y \setminus N$ already trivialized on $\vec \mu \subset \partial N$ extends up to contractible space of choices to the Dehn filling used to produce $Y$, in such a way that the line bundle is also trivialized on the meridional disks $\vec \Delta$. For each toroidal component of $\partial N$, this 3-dimensional filling can be thought of as an $S^1$-parameter family of 2-dimensional fillings of meridional disks sweeping out the torus, where at the basepoint in $S^1$ the filling is forced to be the trivial one. Thus, it suffices to show that the space of fillings of an Atiyah-real line bundle on $\partial D^2$ to the whole of $D^2$ (with real involution given by $180^\circ$ rotation around the center) has $\pi_{* \ge 1} \equiv 0$. By filling in first a ray going out of the origin to some point on the boundary, we have a space homotopy-equivalent to the homotopy-fiber of the inclusion $\bb{RP}^\infty \to \bb{CP}^\infty$, which is widely known to be $\bb{RP}^1 \cong S^1$ due to the Bockstein fiber sequence
        \[ S^1 \overset{\cdot2}\to S^1 \to \bb{RP}^\infty \overset{\beta}\to \bb{CP}^\infty \overset{\cdot 2}\to \bb{CP}^\infty \to \cdots. \]
        Filling in one ray also fills in the antipodal one; to fill in the rest of the disk equivariantly, it remains to fill in the complex line bundle on the perimeter of each half-disk to the interior of the half-disk. This is a $\mathbb Z \cong \Omega^2 \bb{CP}^\infty$-choice. So we deduce that the total space of choices fibers over $S^1$, with fiber $\bb Z$. We would like to show that this is the universal cover of $S^1$, which amounts to showing that the monodromy of the fibration over $n \in \bb Z \cong \pi_1 S^1$ is non-trivial for every $n > 0$. Indeed, going $n$ times around $S^1 \cong {\rm Fib}(\bb{RP}^\infty \to \bb{CP}^\infty)$  amounts to adding $n$ to the degree of the filling of the boundary of the half-disk to the interior of the half-disk.
    \end{proof}

    Before moving on to the spaces of real ${\rm Spin}^c$-structures and their relationship to the pathspaces, let us mention another variation of the real Chern class, which can be used to reformulate the obstruction to $\mathbb Z/2$-gradings, and to offer a conjectural obstruction for $\mathbb Z$-gradings, cf. {\sc Prop.~\ref{prop:obsc1},~Rmk.\ref{rmk:conjzgr}}.

    \begin{DEF}\label{def:rc1}
        Define the \keywd{real Chern class} $c_1^R(\fk s)$ of a real ${\rm Spin}^c$-structure $\fk s$ as the real Chern class of the Atiyah-real line bundle obtained from the map of structure groups
        \[ {\rm Spin}^c_n = ({\rm Spin}_n \times S^1)/(-1, -1) \to S^1/(-1) \cong S^1. \]
        We have $c_1^R(\fk s \otimes \scr L) = c_1^R(\fk s) + 2 \cdot c_1^R(\scr L)$. In particular, the image of $c_1^R(\fk s)$ mod 2 does not depend on the particular choice of real ${\rm Spin}^c$-structure $\fk s$. We will show later in {\sc Prop.~\ref{prop:obsc1}} that this image mod 2 can be recovered as $v^2$.
    \end{DEF}

    \subsection{A space parameterizing  \texorpdfstring{${\rm Spin}^c$}{Spin-c}-structures on a 3-manifold} In Heegaard Floer, as well as Seiberg-Witten/monopole theory, it has become customary to use the terminology ``(real, relative) ${\rm Spin}^c$ structure'' to really refer to an \emph{isomorphism class} of such structures, the set of which we have just studied in the last subsection. Now, we introduce a \emph{space of (real, relative) ${\it Spin}^c_\textit3$-structures} whose $\pi_0$ recovers the set of isomorphism classes, which we show in the next subsection to be naturally homotopy-equivalent to the stable pathspace associated to the (real) Lagrangian Floer problem.

    The biggest challenge is that, while there are abstract approaches to this question that are quite standard (e.g.~ simplicial sets, or classifying spaces), we additionally want the following extra features:
    \begin{list}{$\cdot$}{}
        \item This (real, relative) space should receive a map from the space of (real, relative) vector fields {\sc(Def.~\ref{def:rvect})};
        \item There should be an honest involution on the non-real space, whose fixed points give rise to the real space.
        \item Both the real and non-real spaces of ${\rm Spin}^c_3$-structures should be torsors over topological abelian groups.
    \end{list}
    
    The definition of ${\rm Spin}^c$-structure that we will be space-upgrading is that presented in {\sc Def.~\ref{def:sphproj}}: A ${\rm Spin}^c_3$-structure on $V$ is a $U_2$-vector bundle $S$, with a metric identification of $\bb S_{1/2}(V)$ with $\bb P(S)$; and a real structure is given by an involution $I$ on $S$ such that $\bb S(-\tau)$ agrees with $\bb P(I)$ under the correspondence. We seek to encode the spinor bundle $S$ via a geometric gadget on \keywd{$E := \bb S_{1/2}(V)$} with the structure group $SO_3$ as $V$ (a.k.a.~ K\"ahler isometries of $S^2 \cong \bb P^1$), and this can be achieved via algebraic geometry in the following way:

    The projectivization $\mathbb P(S)$ of a rank-2 vector bundle $S$ naturally carries a complex line bundle \keywd{$\scr O_S(1)$} over its total space, defined as the dual of the tautological bundle. Standard complex/algebraic geometry shows that one can recover the rank-2 vector bundle $S$ via the isomorphism of complex vector spaces
    \[ S^* \cong H^0(\mathbb P(S), \scr O_S(1)). \]
    If $S$ is endowed with a Hermitian inner product, then its tautological line bundle inherits a canonical metric as well (by restriction), and hence so does its dual $\scr O_S(1)$. There is a way to put an inner product on the right side of the identity above so that the isomorphism respects this inner product, namely
    \[ \<f, g\> = \frac 2\pi \int_{\mathbb P(S)} \<f(z), g(z)\> {\rm~dvol}.  \]
    The upshot of this discussion is that the data of the spinor $S$ can be recovered canonically from any degree-1 holomorphic line bundle on the K\"ahler sphere $E \cong \bb P(S)$. Such holomorphic line bundles can be encoded by means of divisors, which we have seen before in {\sc\S\ref{sec:sym}} in an unrelated context:

    \begin{DEF}
        Recall that, given a Riemann surface $\Sigma$, we define $\scr D\Sigma$ to be the \emph{divisor group}, i.e.~ the free abelian group on $\Sigma$ appropriately topologized, cf. {\sc Def.~\ref{def:symdiv}}. This has a degree map $\scr D\Sigma \to \mathbb Z$, which splits $\scr D\Sigma$ into the preimages $\scr D^d\Sigma := {\rm deg}^{-1}(d)$. Since $\Sigma$ has a holomorphic structure, one has a complex line bundle \keywd{$\underline{\scr O}(d)$} on $\scr D^d\Sigma) \times \Sigma$ whose restriction $\scr O(D)$ to each fiber $\{D\} \times \Sigma$ is a degree-$d$ line bundle.

        This construction can be adapted to work parametrically: if $E \to B$ is a K\"ahler $\mathbb P^1$-bundle, we may define the \keywd{divisor bundle $\scr D(E)$} on $B$, given fiber-wise by the space of divisors on the particular fiber.
    \end{DEF}
    
    It is clear by what we have just said that any section $D$ of $\scr D^1(E)$ endows $E$ with a structure group lift to ${\rm GL}_2$ via the rank-$2$ associated bundle
    \[ S(D) := H^0(E; \scr O(D))^*. \]
    We would like to see that any ${\rm Spin}^c$-lift of $E$ is isomorphic to the induced ${\rm Spin}^c$-lift from a section of $\scr D^1(E)$, up to a contractible space of choices. Indeed, to obtain a divisor from a line bundle one has to choose a nonzero meromorphic section of it, and record its zeros and poles; the space of meromorphic functions is infinite-dimensional, and hence the complement of the origin is contractible, so fiber-wise the space of choices is contractible. The same holds true globally, if the base admits a CW structure, which is the case for manifolds.

    \begin{RMK}
        We already know that the homotopy type of $\scr D^1(\mathbb P^1)$ is $K(\mathbb Z, 2)$, by the Dold-Thom theorem. However, the observation that the space of nonzero meromorphic functions is contractible provides a more direct proof, since a divisor is uniquely determined by its meromorphic section up to a $\mathbb C^\times$-scalar, hence proving that the space $\scr D^1(\mathbb P^1)$ is a classifying space for $\mathbb C^\times$.

        In particular, the divisor spaces $\scr D^1(E_x)$ of the fibers of $E$ are $K(\mathbb Z, 2)$'s. Moreover, these fibers are torsors over $\scr D^0(E_x)$, which is also a topological abelian group homotopy equivalent to $K(\bb Z, 2)$.
    \end{RMK}

    \begin{DEF}
        We define the \keywd{space of ${\rm Spin}^c$}-structures on a metric $S^2$-bundle $E \to B$ with structure group ${\rm SO}_3$ to be the space of sections of $\scr D^1(E)$, which is naturally a torsor over the topological abelian group of sections of $\scr D^0(E)$. We use the notation \keywd{$\spinc(E)$} for the space of such structures (note the italicized style, to distinguish it from the set of equivalence classes, written in roman style ${\rm Spin}^c(E)$). If $V^3$ is a rank-3 vector bundle with structure group ${\rm SO}_3$, we also write $\spinc(V)$ for $\spinc(E)$, where $E = \bb S_{1/2}(V)$; we also use the notation $\spinc(Y^3, \vec w)$ for structures on $TY$ rel $\vec w$.
    \end{DEF}

    Next, we investigate conjugation and real ${\rm Spin}^{\rm c}$-structures. A $\mathbb P^1$-bundle $E \to B$ with structure group ${\rm SO}_3$ has a natural \emph{antipodal map} $\alpha : E \to E$: indeed, standard $\mathbb P^1 \cong S^2$ with the standard K\"ahler metric has an antipodal map given by the action of the element $-{\rm id} \in {\rm O}_3$; this antipodal map is coordinate-independent because $-{\rm id}$ commutes with all elements in ${\rm SO}_3$.

    Given a divisor section $D : B \to \scr D^1(E)$, we may form the \keywd{antipodal divisor} $\alpha^* D := \alpha \circ D$. There is a \emph{conjugate-linear} isomorphism of global sections
    \[ S(D) := H^0(E, \scr O(D)) \overset\sim{\underset{\rm cj.}\longrightarrow} H^0(E, \scr O(\alpha^* D)) =: S(\alpha^* D) \]
    given by $f \mapsto \overline{f \circ \alpha}$; it is necessary to introduce the conjugation in the definition of this map because otherwise $f \circ \alpha$ would be an anti-holomorphic meromorphic map. This conjugation is the reason why the induced map on global sections is a conjugate-linear isomorphism of complex vector spaces. The composite $S(D) \to S(\alpha^*D) \to S(D)$ of this process applied twice is the identity.

    Thus, there is an induced complex-conjugate involution between the associated rank-2 complex vector bundles $S(D)$ and $S(\alpha^*D)$. 

    It is clear that if $(E, \tau)$ is a real ${\rm SO}_3$-bundle with $\mathbb P^1$-fibers, then every section $D$ of $\scr D^1(E)$ which satisfies $\tau^* D = \alpha ^* D$ gives rise to a real ${\rm Spin}^{\rm c}$-structure. This motivates us to define the \keywd{space of real ${\rm Spin}^{\rm c}$-structures} on $(E, \tau)$ to be the subspace \keywd{$\rspinc(E, \tau)$} $\subset \spinc(E)$, consisting of sections $D$ satisfying $\tau^*D = \alpha^* D$. Likewise we define $\rspinc(V, \tau)$ and $\rspinc(Y, \vec w, \tau)$ as in the non-equivariant context, and sometimes drop $\tau$ from notation.

    \begin{PROP}
        Every real ${\rm Spin}^c$-structure on $(E, \tau)$ is canonically isomorphic to one coming from a real section $D$ of $\scr D^1(E)$, up to a contractible space of choices.
    \end{PROP}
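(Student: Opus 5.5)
The plan is to mimic the non-equivariant argument just given, where a ${\rm Spin}^c$-lift is recovered from a section of $\scr D^1(E)$ by choosing a nonzero meromorphic section of $\scr O_S(1)$. I will make every step $\mathbb Z/2$-equivariant. Let a real ${\rm Spin}^c$-structure be given as in {\sc Def.~\ref{def:sphproj}}: a Hermitian rank-2 bundle $S$, a metric identification $\bb P(S) \cong E$, and a conjugate-linear involution $I$ on $S$ covering $\tau$ with $\bb P(I)$ corresponding to $-\tau$, i.e.~ to $\alpha \circ \tau$ on $E$.

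First, $I$ induces a conjugate-linear map $I^\sharp$ on $\scr O_S(1)$ covering $\alpha\circ\tau : E \to E$. The induced map on fiberwise meromorphic sections is $f \mapsto \overline{f \circ (\alpha\tau)}$ suitably twisted by $I^\sharp$. Next I check that this squares to the identity. This is the delicate sign point: $I$ itself has $I \circ I = {\rm id}$, and the $\lambda = -1$ from (\ref{eq:sqtolam}) only enters the principal-bundle formulation. Even so, I will verify directly that the induced operation on sections of the \emph{degree-1} line bundle is an honest involution $\theta$. A homogeneous degree-1 scalar $-1$ acts on $\scr O(1)$ as $-1$, and conjugating twice returns it, so $\theta^2 = {\rm id}$.

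Given this, the relevant space is the space $\mathcal M$ of fiberwise nonzero meromorphic sections of $\scr O_S(1)$ over $B$. Taking divisors gives a section $D$ of $\scr D^1(E)$. A section that is fixed by $\theta$ yields a divisor with $\tau^* D = \alpha^* D$, i.e.~ a point of $\rspinc(E,\tau)$. Moreover, the conjugate-linear isomorphism $S(D) \to S(\alpha^*D)$ constructed above agrees, under $S^* \cong H^0(\bb P(S), \scr O_S(1))$, with the one induced by $I$. This gives the required isomorphism of real ${\rm Spin}^c$-structures. The compatibility is checked fiberwise, using that both maps are $f \mapsto \overline{f\circ\alpha}$ after identification.

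It remains to show that the space of choices, namely $\mathcal M^\theta$ (possibly modulo positive real scalars, if normalization is wanted), is contractible. Fiberwise, over a non-fixed point $x$ the condition just pairs the fiber over $x$ with the fiber over $\tau x$. Over a fixed point $x \in B^\tau$, $\theta$ is a conjugate-linear involution of the infinite-dimensional complex vector space of meromorphic sections. Its fixed set is therefore a real form, an infinite-dimensional real vector space, and removing the origin leaves a contractible space. I would then pass to $B$ using an equivariant CW structure, pulled back as in the proof of {\sc Prop.~\ref{prop:rc1}}. Free cells come in pairs, on which an equivariant section is just an arbitrary section over one cell of the pair. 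Fixed cells carry the $\theta$-fixed fiber. Extending cell by cell into contractible fibers shows the total space of choices is contractible.

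I expect the main obstacle to be the sign and involution check in the second step. One has to confirm that the squaring scalar $\lambda=-1$ from (\ref{eq:sqtolam}) does not obstruct $\theta$ from being an involution on $\scr O_S(1)$-sections. Otherwise the fixed set over $B^\tau$ would be empty: it would be a quaternionic rather than a real structure. I would handle this by computing explicitly on a single fiber with the Pauli model, where $I_0\Phi = \left[\begin{smallmatrix}0&-1\\1&0\end{smallmatrix}\right]\bar\Phi$. That computation shows $\bb P(I_0)$ is the antipodal map, and that the induced action on linear forms, i.e.~ on $H^0(\scr O(1))$, squares to $(-1)(-1)^{-1}$-type cancellation, giving the identity on the level of divisors. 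In particular, antipodal-invariant divisors of degree one exist fiberwise, for instance $[p]$ with $\alpha^* [p]$ related through the real structure on the line bundle rather than through a fixed point.
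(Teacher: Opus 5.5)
Your overall route is the paper's. You choose a nonzero meromorphic section of $\scr O_S(1)$ satisfying the reality condition and build it cell by cell over a $\tau$-equivariant CW structure. Free orbits reduce to the non-equivariant case, and over $B^\tau$ you use that the nonzero vectors of an infinite-dimensional real vector space form a contractible space. Your fixed-point justification differs slightly from the paper's and is cleaner. Since $I_x^2=\mathrm{id}$ by definition, the induced conjugate-linear map $\theta_x$ on meromorphic sections is an involution. Its fixed set is therefore a real form of an infinite-dimensional complex space, which gives non-emptiness and infinite-dimensionality at once. The paper instead argues geometrically, by exhibiting real degree-one divisors.

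Your treatment of what you call the main obstacle, however, is wrong and should be discarded.
\begin{itemize}
\item The sentence about ``a homogeneous degree-1 scalar $-1$'' points the wrong way. Because $\scr O_S(1)$ has odd degree, a scalar $-1$ on $S$ acts by $-1$ on its sections. So $I^2=-\mathrm{id}$ would force $\theta^2=-\mathrm{id}$, not $\theta^2=\mathrm{id}$.
\item The Pauli computation computes the wrong map. $I_0$ covers the \emph{identity} of $V_0$. With $I_0\Phi=A\bar\Phi$ and $A=\left[\begin{smallmatrix}0&-1\\1&0\end{smallmatrix}\right]$, one gets $I_0^2=A\bar A=-\mathrm{id}$. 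Hence the induced map on $H^0(\bb P^1,\scr O(1))$ squares to $-\mathrm{id}$: a quaternionic structure with no nonzero fixed sections.
\item Your final claim is false. $\bb P(I_0)=\alpha$ is fixed-point free, so $\alpha$-invariant divisors have even degree and no degree-one one exists.
\end{itemize}

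The correct point, which is exactly the paper's, is that at $x\in B^\tau$ the map $I_x$ covers $\tau_x$, and $\tau_x$ cannot be the identity. Otherwise $I_x$ would be a conjugate-linear Clifford automorphism, hence of the form $cI_0$, with square $-|c|^2\,\mathrm{id}\neq\mathrm{id}$. So $\tau_x$ is a $180^\circ$ rotation. Then $I_x$ equals $I_0$ composed with a lift of that rotation, which also squares to $-1$, and this is why $I_x^2=+\mathrm{id}$ is consistent. Moreover $\bb P(I_x)=\alpha\circ\tau_x$ is a reflection fixing a great circle. The condition on $D(x)$ is therefore invariance under $\alpha\circ\tau_x$, not under $\alpha$. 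Replace your last paragraph, and the degree-1 sentence in the second, with this observation and the argument goes through.
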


    \begin{proof}
        As before, a real ${\rm Spin}^c$-structure $(S, I)$ induces a fiberwise holomorphic line bundle $\scr O_S(1)$ on $\mathbb P(S) \cong E$; to obtain a divisor section $D$ that satisfies the reality condition $\tau^*D = \alpha^* D$, we need to pick a meromorphic section $f \in \scr O^*(D)$ that satisfies $\tau^*f = \overline{f \circ \alpha}$. It therefore suffices to ensure that the space of such functions is contractible. To this end, let us pick a $\tau$-equivariant CW structure on the base (i.e.~ such that $\tau$ is cellular, and if $e \cap \tau(e) \neq \0$ for some cell $e$, then $\tau|_e = {\rm id}_e$), and inductively show that the space of sections is contractible skeleton by skeleton. This follows once we show that the space of choices is contractible over each $\tau$-orbit on the base. When the orbit is free, i.e.~ of the form $\{x, \tau x\}$, the reality condition on the section implies that its value on $\tau x$ is uniquely determined by that on $x$, and so the space is contractible as we discussed in the non-equivariant setting. Over a fixed point, the story is not very different: the space of (potentially zero) sections is still an infinite-dimensional vector space, and so the space of nonzero sections is contractible. To verify the claim that the space of sections is infinite-dimensional, let us first verify that it is non-trivial. First, let us note that $\alpha \circ \tau$ is an orientation reversing isometry other than the antipode $\alpha$ itself, because if $\tau = {\rm id}_E$, then the conjugate-linear isomorphism $I$ of the spinor bundle would be forced to square to $-{\rm id}_S$, cf. (\ref{eq:sqtolam}), a contradiction. By standard linear algebra, the fixed-point locus of $\alpha \circ \tau$ must then be a great circle in the $\mathbb P^1$-fibers. Thus, picking one point in this great circle, and then arbitrarily many pairs $p, \alpha(\tau(p))$ in the $\mathbb P^1$-fiber, we may form infinitely many real divisors, and consequently infinitely many linearly independent divisors.
    \end{proof}

    \begin{COR}
        The set of isomorphism classes of real ${\rm Spin}^{\rm c}$-structures on $(E, \tau)$ is in canonical bijection with $\pi_0 \rspinc(E, \tau)$.
    \end{COR}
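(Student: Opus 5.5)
The plan is to deduce the corollary from the preceding proposition, which says that every real ${\rm Spin}^c$-structure on $(E, \tau)$ is isomorphic to one of the form $S(D)$ for some real section $D$ of $\scr D^1(E)$, up to a contractible space of choices. The map in question sends a point $D \in \rspinc(E, \tau)$ to the isomorphism class of the real ${\rm Spin}^c$-structure $(S(D), I_D)$, where $I_D$ is the conjugate-linear involution $f \mapsto \overline{f \circ \alpha}$, transported through the identification $\tau^*D = \alpha^*D$. Three things need to be checked: this map is well defined on $\pi_0$, it is surjective, and it is injective.

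\textbf{Well-definedness.} A path $D_t$ in $\rspinc(E, \tau)$ is the same as a real section of $\scr D^1$ of the pulled-back bundle over $B \times [0,1]$, with trivial involution on $[0,1]$. The associated bundle $S(D_t)$ over $B \times [0,1]$ is a real ${\rm Spin}^c$-structure on that product. By the usual homotopy invariance of principal bundles, applied to equivariant principal bundles over $\tau$-CW complexes, its restrictions to $t = 0$ and $t = 1$ are isomorphic as real ${\rm Spin}^c$-structures. Equivalently, one can argue directly: the family $S(D_t)$ is a continuous family of real structures, so parallel transport along $[0,1]$ gives the isomorphism.

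\textbf{Surjectivity.} This is exactly the content of the proposition: every real ${\rm Spin}^c$-structure $(S, I)$ is isomorphic to $S(D)$ for some real section $D$, obtained by choosing a nonzero meromorphic section $f$ of $\scr O_S(1)$ with $\tau^* f = \overline{f \circ \alpha}$.

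\textbf{Injectivity.} This is the main step. Suppose $S(D_0) \cong S(D_1)$ via an isomorphism $\phi$ of real ${\rm Spin}^c$-structures. Then $\phi$ identifies the two line bundles $\scr O(D_0)$ and $\scr O(D_1)$ on $E$ compatibly with the real structures. So $D_0$ and $D_1$ both arise from a single real structure $(S, I)$, through two admissible real meromorphic sections $f_0$ and $f_1$ of $\scr O_S(1)$. The proposition shows that the space of admissible sections is contractible, in particular path connected. Choose a path $f_t$ from $f_0$ to $f_1$ in that space; taking divisors then gives a path $D_t$ in $\rspinc(E, \tau)$ joining $D_0$ to $D_1$.

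The obstacle in this step is the parametrized version: the contractibility argument of the proposition has to be run over $B \times [0,1]$ relative to $B \times \{0,1\}$. This works by the same skeleton-by-skeleton induction over a $\tau$-equivariant CW structure. Over each cell the space of admissible sections is still an infinite-dimensional vector space minus the origin, hence contractible, so the relative extension problems are unobstructed. Finally, the divisor map has to be continuous into the topology of $\scr D^1$. This holds provided the sections $f_t$ vary continuously and their zeros and poles never collide in a way that escapes the final topology; it is enough to allow cancellation of a zero against a pole, which $\scr D$ permits, since it is a topological group.
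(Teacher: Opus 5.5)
Your proposal is correct and uses the same two ingredients as the paper: equivariant homotopy invariance of bundles, and the contractibility of the space of admissible real meromorphic sections from the preceding proposition. The only difference is that the map runs the other way: the paper sends an isomorphism class to the component of the divisor of an admissible section, so its injectivity is your well-definedness and its surjectivity is your $D\mapsto S(D)$. Also, the parametrized rerun you flag as the main obstacle in your injectivity step is unnecessary, since contractibility of the space of admissible sections over $B$ already supplies the path $f_t$.
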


    \begin{proof}
        The previous proposition gives a map from the first to the second set. Because homotopy-invariance of bundles still works equivariantly, injectivity is clear. Finally, the construction going from divisors to ${\rm Spin}^{\rm c}$-structures guarantees surjectivity.
    \end{proof}

    The following observation will prove to be quite useful in extending partially-defined (real) ${\rm Spin}^{\rm c}$-structures:
    \begin{PROP}\label{prop:exts}
        Let $B$ be a 3-ball with boundary $S = \partial B$, and let $E \to B$ be a trivialized $\mathbb P^1$-bundle. The associated divisor bundle $\scr D^1(E)$ with fiber $K(\mathbb Z, 2)$ also inherits a trivialization. Any ${\rm Spin}^{\rm c}$-structure on $E|_S$ has a well-defined degree given by the composite of the defining divisor section $D : S \to \scr D^1(E|_S)$ with the trivialization $\scr D^1(E|_S) \cong K(\mathbb Z, 2) \times S$, and then projection onto the $K(\mathbb Z, 2)$-factor. The ${\rm Spin}^{\rm c}$-structure extends over $B$ if and only if this degree is zero, and in this case the space of such fillings is contractible.
    \end{PROP}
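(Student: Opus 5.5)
The plan is to reduce everything to a statement about maps from spheres into $K(\mathbb Z,2)$, using the trivialization of $E$. Since $E \cong B \times \mathbb P^1$ as an ${\rm SO}_3$-bundle, the divisor bundle is trivialized as $\scr D^1(E) \cong B \times \scr D^1(\mathbb P^1)$. Under this trivialization, a section of $\scr D^1(E)$ over a subset $X \subseteq B$ is the same as a map $X \to \scr D^1(\mathbb P^1)$. By the Remark preceding the definition of $\spinc(E)$, $\scr D^1(\mathbb P^1)$ is a $K(\mathbb Z,2)$: it is a classifying space for $\mathbb C^\times$, being a quotient of the contractible space of nonzero meromorphic sections of a degree-one line bundle.

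First I would make the degree precise. Given $D : S \to \scr D^1(\mathbb P^1)$, I define its degree as $D_*[S] \in H_2(K(\mathbb Z,2);\mathbb Z) \cong \mathbb Z$, or equivalently $\pi_2$ after choosing a basepoint. Because $K(\mathbb Z,2)$ is simply connected, unbased and based homotopy classes agree, so the degree is well-defined. Moreover, the degree is a complete invariant of the free homotopy class: $[S^2, K(\mathbb Z,2)] \cong H^2(S^2;\mathbb Z) \cong \mathbb Z$.

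Next comes the extension criterion. A map from $S = \partial B \cong S^2$ extends over $B \cong D^3$ if and only if it is null-homotopic, and by the classification above this holds if and only if the degree is zero. Since ${\rm Spin}^c$-structures on $E|_X$ are by definition sections of $\scr D^1(E|_X)$, this gives the "if and only if" part of the statement directly.

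Finally, for contractibility I would use the restriction map ${\rm Map}(B, K) \to {\rm Map}(S, K)$. This is a Hurewicz fibration, because $(B, S)$ is a cofibration pair. Its fiber over $D$ is the space of fillings, and it is homotopy equivalent to a component of the based mapping space ${\rm Map}_*(B/S, K) = \Omega^3 K(\mathbb Z,2)$ (more precisely, a torsor over it once one filling is fixed). This follows since ${\rm Map}(B,K) \simeq K$ and ${\rm Map}(S,K)$ has components with homotopy groups computable from the evaluation fibration. Then $\pi_i\Omega^3 K(\mathbb Z,2) = \pi_{i+3}K(\mathbb Z,2) = 0$ for all $i \ge 0$, so each nonempty fiber is weakly contractible, and hence contractible since it has the homotopy type of a CW complex (Milnor).

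The main point needing care is the identification of the fiber with a torsor over $\Omega^3 K$. To do this cleanly, I would exploit the fact that $\scr D^0(\mathbb P^1)$ is a topological abelian group acting freely and transitively on $\scr D^1(\mathbb P^1)$. Fixing one filling $\tilde D$, any other filling is $\tilde D + f$ with $f : (B, S) \to (\scr D^0(\mathbb P^1), 0)$. This identifies the fiber with ${\rm Map}_*(S^3, \scr D^0(\mathbb P^1))$, which is contractible by the same homotopy-group count, since $\scr D^0(\mathbb P^1) \simeq K(\mathbb Z,2)$ as well.
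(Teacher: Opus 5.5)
Your proposal is correct and follows essentially the same route as the paper. Both trivialize $\scr D^1(E)$ to reduce to maps $S \to K(\mathbb Z,2)$, read off extendability from $[S^2, K(\mathbb Z,2)] \cong H^2(S^2;\mathbb Z) \cong \mathbb Z$, and deduce contractibility from the vanishing of the higher homotopy groups of $K(\mathbb Z,2)$. The paper phrases the last step as obstruction theory for the relative extension problems from $(S^{k-1}\times B)\cup(D^k\times S)$ to $D^k\times B$, with obstructions in $\pi_{k+2}K(\mathbb Z,2)=0$; this is the same computation as your identification of the space of fillings with $\Omega^3 \scr D^0(\mathbb P^1)\simeq\Omega^3 K(\mathbb Z,2)$.
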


    \begin{proof}
        Once $\scr D^1(E)$ is trivialized, an extension of the ${\rm Spin}^{\rm c}$-structure over $B$ is the same as an extension of the map $S \to K(\mathbb Z, 2)$ over $B$. Since $[S, K(\mathbb Z, 2)] \cong H^2(S; \mathbb Z) \cong \mathbb Z$, with the latter identification induced by the degree, this proves the extendability claim. The contractiblity claim follows because the higher-order extension problems from $(S^{k-1} \times B) \cup (D^k \times S)$ to $D^k \times B$ have their obstruction in $\pi_{k+2}K(\mathbb Z, 2) = 0$.
    \end{proof}

    We also need a 2-dimensional version, including the real analogue:

    \begin{PROP}\label{prop:rext}
        Consider the standard disk $D^2$ with the standard orientation, endowed with the trivial ${\rm SO}_3$-bundle, and further equipped with the ${\rm Spin}^{\rm c}$-structure given by the upward pointing vector field along the boundary, viewed as a divisor section. Let us equip $D^2$ with the standard involution with fixed-point set $D^1$, and equip the trivial ${\rm SO}_3$-bundle with the real structure given by sending $v \longmapsto -v$, so that the aforementioned ${\rm Spin}^{\rm c}$-structure on the boundary is real. Any extension of the ${\rm Spin}^c$-structure to the interior still has an obvious notion notion of degree as before. With this setup, we have the following two facts:
        \begin{enumerate}
            \item[\sc i.] the space of ${\rm Spin}^{\rm c}$-structures of any particular degree $d$, extending the given structure on the boundary, is contractible.
            \item[\sc ii.] likewise, the space of \emph{real} ${\rm Spin}^{\rm c}$-structures extending the given structure on the boundary, with prescribed degree $d$ of the underlying ordinary ${\rm Spin}^{\rm c}$-structure, is contractible.
        \end{enumerate}
        As a corollary, if $(B, E)$ from {\sc Prop.~\ref{prop:exts}} has degree zero, is equipped with the $180^\circ$-rotation involution along an axis on both $B$ and $E \cong \mathbb R^3$, and the ${\rm Spin}^{\rm c}$-structure on $\partial B$ is already real, then
        \begin{enumerate}
            \item[\sc iii.] the space of \emph{real} ${\rm Spin}^{\rm c}$-structures extending the given structure on the boundary is also contractible.
        \end{enumerate}
    \end{PROP}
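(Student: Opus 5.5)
Part {\sc i} reduces to an obstruction-theoretic statement about maps into $K(\mathbb Z,2)$. Trivialize the ${\rm SO}_3$-bundle over $D^2$, so that $\scr D^1(E)$ becomes $K(\mathbb Z,2)\times D^2$. A ${\rm Spin}^c$-structure extending the given one on $\partial D^2$ is then a map $D^2 \to K(\mathbb Z,2)$ extending a fixed map on $S^1$. Such maps are classified by a torsor over $\pi_2 K(\mathbb Z,2) = \mathbb Z$, namely the relative degree. Fixing the degree selects one component of the extension space. I will show that each component is contractible by exhibiting the space as the fiber of the restriction fibration ${\rm Map}(D^2,K)\to{\rm Map}(S^1,K)$. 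Its homotopy groups are then computed from $\pi_{k+2}K(\mathbb Z,2)$ with $k\ge1$, all of which vanish. This is the same argument as in {\sc Prop.~\ref{prop:exts}}, with the boundary sphere replaced by a circle.

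Part {\sc ii} is the step I expect to be the main obstacle. I will choose a $\tau$-equivariant CW structure on $D^2$ with the fixed arc $D^1$ as a subcomplex, and split the half-disk into cells. A real section is determined by three pieces of data: its values on $D^1$, subject to the reality condition $\tau^*D=\alpha^*D$ pointwise, together with its values on one half-disk $D^2_+$, with no constraint there since the other half is determined. Over a fixed point, the fiberwise space of real divisors is a space of sections of a fiberwise real structure. By the argument in the preceding proposition, this is homotopy equivalent to the space of real nonzero meromorphic sections modulo real scalars $\mathbb R^\times$, that is, $\mathbb{RP}^\infty \simeq K(\mathbb Z/2,1)$. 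The space of real extensions along $D^1$ rel its two endpoints is therefore homotopy equivalent to $\Omega \mathbb{RP}^\infty \simeq \mathbb Z/2$.

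The key check is that the map to degrees separates these components. I will argue by the Bockstein fiber sequence, exactly as in the proof of {\sc Thm.~\ref{thm:rspinctors}}. For a choice on $D^1$, filling $D^2_+$ rel $\partial D^2_+$ is a $\mathbb Z$-torsor of choices, with components of the filling space contractible by part {\sc i}. The total degree is twice the half-disk degree plus a correction depending on the $\mathbb Z/2$ choice along $D^1$. This correction is the image under $\beta:\mathbb{RP}^\infty\to\mathbb{CP}^\infty$, which is $\pm1$ on the generator, so that both parities of total degree occur. The resulting space is thus a disjoint union, indexed by $d\in\mathbb Z$, of contractible pieces. The risk is whether the parity bookkeeping is correct, i.e.~whether the nontrivial loop in $\mathbb{RP}^\infty$ shifts the degree by an odd amount. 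I would verify this directly with the explicit model: the boundary condition is the upward vector field, and the nontrivial real loop rotates the divisor point once around the fixed great circle of $\alpha\circ\tau$.

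For {\sc iii}, I will slice $B$ equivariantly into a one-parameter family of disks orthogonal to the rotation axis, parametrized by an interval. Alternatively, I could sweep it by half-planes containing the axis, reducing to a family of instances of {\sc ii}. Sweeping by meridional disks through the axis, each disk carries the involution of {\sc ii}. Degree zero on $\partial B$ together with part {\sc ii} shows that the space of fillings fibers over a contractible parameter space with contractible fibers. The remaining concern is matching the induced degrees, which is forced by degree zero together with {\sc Prop.~\ref{prop:exts}}.
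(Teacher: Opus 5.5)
Your parts (i) and (ii) follow the paper's proof. The paper also reduces (ii) to two pieces: a homotopy-discrete two-point choice along $D^1$ (a path in $K(\mathbb Z/2,1)$ rel endpoints), and a $\mathbb Z$-torsor of fillings of one half-disk. The total degree is then even for the trivial loop and odd for the essential one.

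One correction to your parity heuristic. The map $\beta:\mathbb{RP}^\infty\to\mathbb{CP}^\infty$ induces zero on $\pi_1$, so ``$\pm1$ on the generator'' is not the right statement. What you need is the following:
\begin{itemize}
\item Let $\sigma$ be induced by $D\mapsto\alpha^*\tau^*D$. A null-homotopy $H$ of the image of the essential loop, doubled along its fixed boundary to $H\cup\sigma H$, has odd degree. For example, a hemisphere bounded by the fixed great circle of $\alpha\circ\tau$ doubles to the whole fiber $\mathbb P^1$.
\item The domain flip and $\sigma$ each reverse orientation, so changing $H$ by $k$ changes the total degree by $2k$.
\end{itemize}
The paper packages this via an explicit $\mathfrak s_0$ of degree $-1$ and the decomposition $\mathfrak s=\mathfrak s_0-\mathfrak t+\mathfrak s'$.

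The genuine gap is in (iii): none of the slicings you propose reduces the problem to (ii).
\begin{itemize}
\item Disks orthogonal to the axis are $\tau$-invariant, but $\tau$ acts on each by a $180^\circ$ rotation with an isolated fixed point. That is not the reflection setup of (ii).
\item Half-planes through the axis are not invariant: $\tau$ sends the half-plane at angle $\theta$ to the one at $\theta+\pi$.
\item Full meridional disks through the axis do carry the involution of (ii). However, they form an open book with binding the axis, parametrized by $\mathbb{RP}^1$ and all sharing the fixed arc. Their fillings are therefore not independent, and you do not get a fibration over a contractible base with copies of (ii) as fibers.
\end{itemize}

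What works, and what the paper does, is to use a \emph{single} invariant meridional disk $D$. It cuts $B$ into half-balls $B_\pm$ interchanged by $\tau$. Since (ii) only gives contractibility at a fixed degree (the full space of real fillings of $D$ is $\simeq\mathbb Z$), you must pin the degree:
\begin{itemize}
\item choose the unique $d$ for which the sphere $\partial B_+=H_+\cup D$ has degree zero, and fill $D$ via (ii) at that degree;
\item fill $B_+$ contractibly via {\sc Prop.~\ref{prop:exts}};
\item the filling of $B_-$ is then forced by reality, and $\partial B_-$ has degree $\deg\partial B-\deg\partial B_+=0$, consistent with this.
\end{itemize}
Each stage is contractible, which gives (iii).
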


    \begin{proof}
        The first fact is proved exactly the same as {\sc Prop.~\ref{prop:exts}}. For the second fact, we note that the first choice to be made is on the fixed-point locus $D^1 \subset D^2$, where the space of choices at each point is canonically identified with $K(\mathbb Z/2, 1)$. Thus, the space of real ${\rm Spin}^{\rm c}$-structures on $D^1$ extending the one on $\partial D^1 = D^1 \cap \partial D^2$ has the homotopy type of a discrete two-point set, according to whether the loop in $K(\mathbb Z/2, 1)$ has degree zero or one. The only remaining choice is extending the ${\rm Spin}^{\rm c}$-structure on one of the two sides of $D^1 \subset D^2$, since the other side will be uniquely determined by the reality condition. Indeed, this space is abstractly homotopy-equivalent to $\Omega^2 K(\mathbb Z, 2) \cong \mathbb Z$, hence the choice here is also homotopy-discrete. It remains to compute the degree of the resulting map given the two choices mentioned before.

        Let us consider first the case of an inessential loop inside $K(\mathbb Z/2, 1)$. We may assume without loss of generality that this loop is the constant one, so that the ${\rm Spin}^c$-structure on $D^1 \subset D^2$ is also given by the upward-pointing vector field. We claim that the degree of the resulting ${\rm Spin}^{\rm c}$-structure is always twice the relative degree on one half-disk. Indeed, if we think of the resulting ${\rm Spin}^{\rm c}$-structure as a map $S^2 \cong D^2 / \partial D^2 \to K(\mathbb Z, 2)$, the ${\rm Spin}^{\rm c}$-structure on the whole disk is given by the $\pi_2$-sum of the restriction of the two half-disks. Since both the process of flipping the domain of $D$, and the process of applying $D \mapsto \alpha^*\tau^*D$ on the codomain are orientation-reversing, the combined contribution is orientation-preserving, and the claim follows, so in particular this shows that we can hit all the even degrees.
        
        We claim that in the case that the real ${\rm Spin}^{\rm c}$-structure on $D^1$ is given by the essential loop in $K(\mathbb Z/2, 1)$, we hit all the odd degrees instead. To this end, note that real ${\rm Spin}^{\rm c}$-structures form an affine space over a topological abelian group; thus, let us fix a particular example of real ${\rm Spin}^{\rm c}$-structure $\fk s_0$ representing the essential loop in $K(\mathbb Z/2, 1)$, e.g.~ {\sc Fig.~\ref{fig:loctw}}, and observe that any other real ${\rm Spin}^{\rm c}$ structure $\fk s$ representing the nontrivial element of $\pi_1 K(\mathbb Z/2, 1)$ can be uniquely written as
        \( \fk s = \fk s_0 - \fk t + \fk s', \)
        where $\fk t$ is the trivial real ${\rm Spin}^{\rm c}$-structure given by the upward-pointing vector field everywhere, and $\fk s'$ is a real ${\rm Spin}^{\rm c}$-structure representing the inessential loop in $K(\mathbb Z/2, 1)$. The degree of $\fk s_0$ can be checked explicitly to be $-1$ in the case shown in {\sc Fig.~\ref{fig:loctw}}. Then, we have an equality of degrees $d(\fk s) = d(\fk s') - 1$, hence we hit all the odd degrees, as desired.

        The third claim follows from the first two by first filling in the real ${\rm Spin}^{\rm c}$-structure over $D^2 \subset D^3$ with the appropriate degree via {\sc i}, and then filling in the two remaining chambers equivariantly via {\sc ii} on one of them.
    \end{proof}

    \subsection{Comparison with the stable pathspace} Now that we have a good notion of the space of (real, relative) ${\rm Spin}^c_3$-structures, and enough tools to build such structures, we finally arrive at the main result of this section, comparing it to the (real) stable pathspace of {\sc\S\ref{sec:heeg}}:

    \begin{THM}\label{thm:comp}
        Let $\scr H$ be a Heegaard diagram with basepoints $\vec w$, and let $\spinc(Y, \vec w)$ denote the space of ${\rm Spin}^{\rm c}$-structures on $Y = Y(\scr H)$ whose value at each $w_i$ is equal to the vector pointing into $U_\alpha$. Then, there is a natural homotopy-equivalence
        \[ \hatP_s(\scr H) \overset\sim\to \spinc(Y, \vec w) \]
        depending only on a contractible space of choices. If $\tau$ is a real involution on $\scr H$, then this homotopy-equivalence can be arranged to be equivariant with respect to the obvious involutions on both sides of the map. Moreover, the induced map
        \[ \rhatP_s(\scr H, \tau) \overset\sim\to \rspinc(Y, \vec w, \tau) \]
        on the fixed points is also a homotopy equivalence. In particular, these maps induce isomorphisms in $\pi_0$.
    \end{THM}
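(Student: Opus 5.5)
The plan is to build the comparison map from the Morse function $f$ of {\sc Constr.~\ref{constr:morse}}, using the homotopy fiber-product description of {\sc Cor.~\ref{cor:pathisfib}}, and then to check that it is an equivalence on $\pi_0$ and $\pi_1$. Since both sides are disjoint unions of tori, that suffices. The first step is to give an analogous fiber description of $\spinc(Y,\vec w)$. The gradient $\nabla f$ is a nowhere-vanishing field on $Y$ away from the critical points $\vec p,\vec a,\vec b,\vec q$; near $\vec w$ it points into $U_\alpha$ once we arrange $f$ suitably. It therefore defines a ${\rm Spin}^c$-structure, i.e.~a section of $\scr D^1(E)$, on the complement of small balls around the critical points.

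On each ball boundary this structure has a degree as in {\sc Prop.~\ref{prop:exts}}, namely the signed local index. By {\sc Prop.~\ref{prop:exts}}, the structure extends over a ball, in a contractible space of ways, exactly when the degree vanishes. I would therefore model $\spinc(Y,\vec w)$ as a space of ``charge cancellations''. Concretely, a ${\rm Spin}^c$-structure agreeing with $\nabla f$ outside a neighbourhood of a 1-chain is the same, up to contractible choice, as a path in $\scr D(Y\setminus\vec w)$ from the zero divisor $\vec p-\vec a+\vec b-\vec q$ of $\nabla f$ to $0$. The corresponding structure is obtained by modifying the field in a tube around the swept-out track, exactly as in Turaev's and Ozsv\'ath--Szab\'o's construction of $\mathfrak s_w(\mathbf x)$. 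The $\vec p,\vec q$ charges are cancelled canonically along the flowlines through $\vec w$. The statement then reduces to the following: paths in $\scr D^d\Sigma^\circ$ from $\bb T_\alpha$ to $\bb T_\beta$, concatenated with the radial homotopies into $\vec a$ and $\vec b$ (those of {\sc Lem.~\ref{lem:T}} for $U_\alpha$ and its mirror for $U_\beta$), give paths in $\scr D(Y\setminus\vec w)$ from $\vec a$ to $\vec b$.

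The second step defines the map $\hatP_s\to\spinc(Y,\vec w)$ by this concatenation. It depends only on the contractible choices of tubes and of the fillings from {\sc Prop.~\ref{prop:exts}}. To show it is an equivalence, I would compare invariants. On the target, $\pi_1$ of each component is $\pi_1$ of the group of sections of $\scr D^0(E)$ rel $\vec w$, which is ${\rm Map}((Y,\vec w),K(\bb Z,2))$, so $\pi_1=H^1(Y,\vec w)$ and $\pi_0$ is an $H^2(Y,\vec w)$-torsor. These match {\sc Prop.~\ref{prop:spcalc}}, so it remains to check that the map is equivariant for the torsor actions. For $\pi_1$ I would use the probes of {\sc Constr.~\ref{constr:probe}}: a loop of paths sweeping a periodic domain should map to the loop of ${\rm Spin}^c$-structures twisted by the Poincar\'e dual class.

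The third step is the real case. Since $f\circ\tau=3-f$, the gradient satisfies $\tau^*\nabla f=-\nabla f$, and the whole construction can be made $\tau$-symmetric by choosing the tubes and fillings equivariantly. The equivariant fillings are supplied by {\sc Prop.~\ref{prop:rext}-iii} at the fixed points on $C$. The map therefore restricts to fixed points. On $\pi_1$, both sides are $A$: the source by {\sc Prop.~\ref{prop:spcalc}}, and the target as the $(-\tau^*)$-invariant part computed via $H^1(Y'\setminus N',\partial N';\underline{\bb Z}^v)$, by the same argument as in {\sc Thm.~\ref{thm:rspinctors}}.

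The main obstacle I expect is $\pi_0$ in the real case when $k>1$, which {\sc Prop.~\ref{prop:spcalc}} left open. There I would show directly that the real degrees ${\rm rdeg}_i$ of a path's endpoint in $\rdiv^d\Sigma^\circ$ correspond to the $K(\bb Z/2,1)$ twisting choice along $C_i$ from the proof of {\sc Prop.~\ref{prop:rext}-ii}. Together with the torsor action of ${\rm rLine}(Y,\vec w)$ from {\sc Thm.~\ref{thm:rspinctors}}, this should let me show bijectivity on $\pi_0$ component by component.
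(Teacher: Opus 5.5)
Your route is genuinely different from the paper's and is workable in outline. However, it is thinnest exactly where the paper's argument is built to do the work.

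The paper defines the map by twisting slice by slice in a collar $[-1,1]\times\Sigma$. At time $t$ it adds, via the torsor action of degree-$0$ divisors, a local degree-$1$ twist around each point of $\gamma(t)$ in $\{t\}\times\Sigma$. This handles collisions and is continuous on all of $\hatP_s$. Your ``tube around the swept-out track'' does not obviously give a continuous map on $\hatP_s$, since tracks in $\Sigma$ are non-embedded, so you should use the collar coordinate as time in the same way.

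The paper then never compares the total spaces. It writes $\spinc(Y,\vec w)$ and $\rspinc(Y,\vec w)$ as homotopy fiber products over the Heegaard splitting, matching {\sc Cor.~\ref{cor:pathisfib}}. This reduces the problem to the three maps $\bb T_\alpha\to\spinc(U_\alpha,\vec w)$, $\scr D\Sigma^\circ\to\spinc(TY|_\Sigma,\vec w)$ and $\rdiv\Sigma^\circ\to\rspinc(TY|_\Sigma,\vec w)$. For these, $\pi_0$ is explicit degree data (for the last one, the group $\{(d,d_1,\dots,d_k): d\equiv\sum_i d_i\}$ on both sides), and $\pi_1$ is Poincar\'e duality checked by a local computation. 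Hence the $k>1$ real $\pi_0$ problem you flag never arises, and the paper in fact uses the theorem to finish {\sc Prop.~\ref{prop:spcalc}}.

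Your global comparison has to pay for this. You must show that the $\bb Z/2$-twistings along the $C_i$ split the torsor $\pi_0\rspinc(Y,\vec w)$ over ${\rm rLine}(Y,\vec w)\cong H^2(Y'\setminus N',\vec\mu';\underline{\bb Z}^v)$ into $2^{k-1}$ sectors. Each sector must be a torsor over $H^2(Y'\setminus N',\partial N';\underline{\bb Z}^v)\cong H_1(Y'\setminus N';\underline{\bb Z}^v)$; the triple $(Y'\setminus N',\partial N',\vec\mu')$ gives an exact sequence $0\to H^2(Y'\setminus N',\partial N')\to H^2(Y'\setminus N',\vec\mu')\to(\bb Z/2)^k\to\bb Z/2\to 0$ with $\underline{\bb Z}^v$-coefficients. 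You must then identify that group with ${\rm Cok}(H_1(\Sigma')\to H_1(U_\alpha))$ compatibly with your map. This is true and checkable, but it is the real content of that case.

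Likewise, ``both sides are $A$'' does not give a $\pi_1$-isomorphism. Deduce it from the non-real case, using that both real $\pi_1$'s inject into $H^1(Y,\vec w)$ onto the $(-\tau^*)$-invariants.

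Finally, your Euler-chain model $\spinc(Y,\vec w)\simeq\Omega_{Z,0}\scr D(Y\setminus\vec w)$, with $Z=\vec p-\vec a+\vec b-\vec q$, is essentially the theorem itself. By {\sc Lem.~\ref{lem:T}} and the fact that $\scr D$ turns $Y\setminus\vec w=U_\alpha\cup_{\Sigma^\circ}U_\beta$ into a homotopy pullback, your concatenation $\hatP_s\to\Omega_{\vec a,\vec b}\scr D^d(Y\setminus\vec w)$ is already an equivalence. So the model is good motivation, giving a diagram-free, Turaev-style picture of the pathspace. But your proof has to rest on the invariant comparison, whereas the paper's decomposition keeps every check local to $\Sigma$ or a handlebody.
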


    \begin{figure}\label{fig:loctw}
        \centering\includegraphics{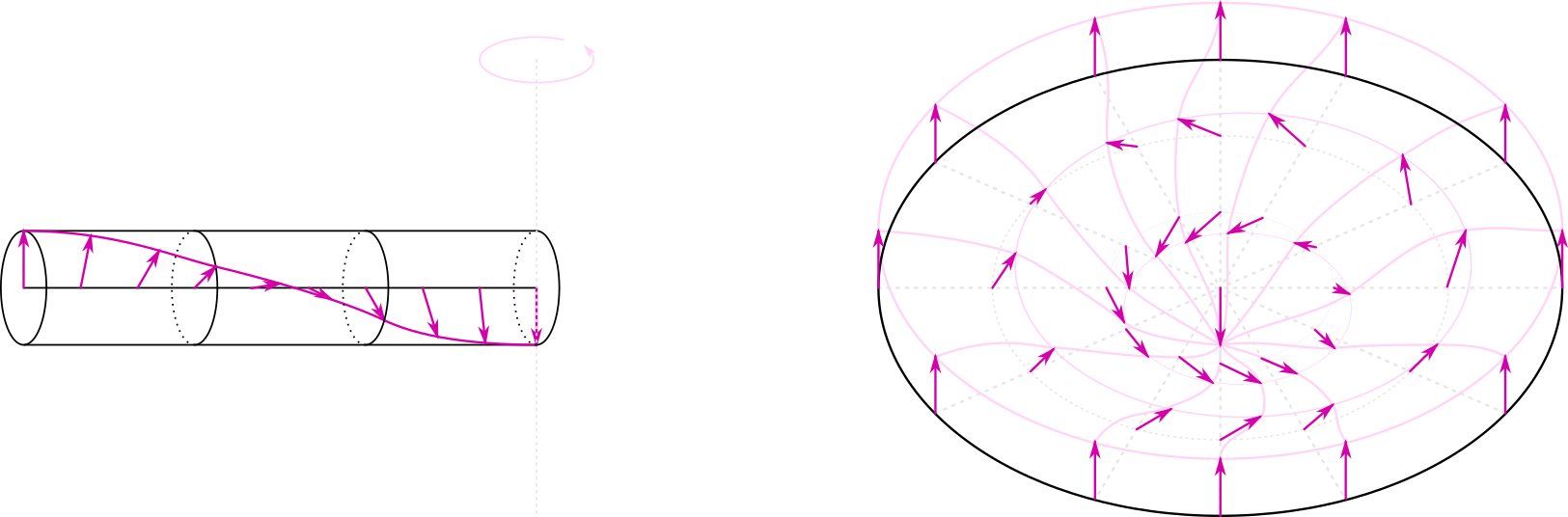}
        \caption{Local twisting of the upward-pointing vector field near a point. The picture from the right is obtained from the one on the left by rotating it around the right end.}
    \end{figure}

    \begin{CONSTR}\label{constr:map}
        To build the map $\hatP_s \to \spinc(Y, \vec w)$, we need to construct a ${\rm Spin}^{\rm c}$-structure rel $\vec w$ on $Y$ for any given path joining a point in $\mathbb T_\alpha$ to a point in $\mathbb T_\beta$ inside $\scr D^d \Sigma^\circ$. To begin with, let us note that there is a canonically existing vector field $V$ on $Y$ (up to a contractible space of choices), which is built by reconstructing a downward gradient-like vector field from the process of attaching handles to $\Sigma$ to obtain $Y$, cf. the Morse function discussed in {\sc Constr.~\ref{constr:morse}}. Assume that this vector field points inside of $U_\alpha$ and outside of $U_\beta$, and that it satisfies $V = -\tau^* V$; its only zeros are at the ``critical points'', i.e.~ one at the center $a_i$ and $b_i$ of each disk $D^\alpha_i$ and $D^\beta_i$, and finally one for the center $p_i$ or $q_i$ of each 3-ball attached (each corresponding to a basepoint, and the handlebody $U_\alpha$ or $U_\beta$ that the 3-cell is attached inside). We will be modifying this vector field (and in the process potentially replacing vectors with the more general degree-1 divisors on the sphere bundles of $TY$) according to the data of the specified element of $\hatP_s$, and then take the associated ${\rm Spin}^{\rm c}$-structure to it.

        We will be using {\sc Props.~\ref{prop:exts}, \ref{prop:rext}} in order to modify this real vector field and remove the points where it is ill-defined, i.e.~ its zeros. We begin with the centers $p_i$ and $q_i$ of the 3-balls, where the modification actually does not depend on the element in $\hatP_s$ at all (which is to be expected, since we want the real ${\rm Spin}^{\rm c}$-structure to be the same near the basepoints.) Let us pick the radial path connecting $w_i$ with the corresponding critical point $p_i$ in $U_\alpha$, and likewise the image under $\tau$ connecting $w_i$ with the corresponding critical point $q_i$ in $U_\beta$, and let $B$ be a $\tau$-equivariant regular neighborhood of the concatenation of these two paths. We modify the vector field $V$ inside this ball: first, on $B \cap \Sigma$, we use any choice in the degree $-1$ circle-component from {\sc Prop.~\ref{prop:rext}-ii} (e.g.~ the construction shown in {\sc Fig.~\ref{fig:loctw}}). Then, in order to extend the choice to the two chambers of $B \setminus \Sigma$, it suffices to check the degree condition, cf. {\sc Prop.~\ref{prop:exts}}. This can also be done by explicit calculation near a concrete model of the gradient-like vector field near an index-$3$ critical point (i.e.~ the vector field pointing everywhere outside). Finally, we note that the extension on one of the chambers uniquely determines the extension in the other chamber, if the map is to be $C_2$-equivariant.
        
        Now, we move on to the centers of the 2-cells $D^\alpha_i$ and $D^\beta_i$. Here, we will need the additional input of the path $\gamma : [-1, 1] \to \scr D^d\Sigma^\circ$, connecting $\mathbb T_\alpha$ to $\mathbb T_\beta$. Let us consider a $\tau$-equivariant tubular neighborhood of $\Sigma$ inside $Y$, parameterized as $[-1, 1] \times \Sigma$, and let us apply the deformation corresponding to degree $d = 1$ from {\sc Prop.~\ref{prop:rext}} on horizontal disks centered around the points $(t, \gamma(t)) \in [-1, 1] \times \Sigma$, in a $\tau$-equivariant manner. This means that if the point is on the fixed-point locus, we must use the real version of the proposition, i.e.~ part {\sc ii}. It is clear what this means if the points in $\gamma(t)$ are all disjoint, though some explanation is needed to handle the case that points collide. This is the crucial step where we use the fact that the space of ${\rm Spin}^{\rm c}$-structures is a torsor over a topological abelian group. The process of twisting can be equivalently described as adding a certain degree-0 divisor supported in the disk. This process can be carried out $g$ times in a row for any \emph{unordered} sequence of potentially repeating $g$ many points in $\Sigma$, so the construction makes sense even when points in the symmetric product collide. Because the points are constrained to lie in $\Sigma^\circ$, we are never affecting the vector field near the basepoints. Now, we may once again perform the same trick as for the centers of the 3-balls, i.e.~ we may take paths from the centers $a_i$ of the $D^\alpha_i$ to the points in $(-1, \gamma(-1))$, and change the divisor in a regular neighborhood of these paths, using {\sc Prop.~\ref{prop:exts}} and likewise for the centers $b_i$ of the $D^\beta_i$.
    \end{CONSTR}

    \begin{RMK}
        In the case that the path in $\hatP_s$ is constant, i.e.~ we have an actual geometric intersection points of $\mathbb T_\alpha$ and $\mathbb T_\beta$ inside $\scr S^g\Sigma^\circ$, the construction of Guth-Manolescu also modifies the gradient-like vector field in a neighborhood of gradient flow lines. Since the choices of extension are contractible, it follows that our construction recovers that of Guth-Manolescu \cite{GM25} on the level of generators of $\widehat{\it CFR}$ and real ${\rm Spin}^{\rm c}$-structures up to isomorphism. In particular, the splitting of the Floer complex according to connected components of the pathspace agrees with that built by Guth-Manolescu.
    \end{RMK}

    Now, we must prove the homotopy equivalence claim:

    \begin{proof}[Proof of {\sc Thm.~\ref{thm:comp}}]
        To this end, let us recall that both the non-equivariant and the real pathspaces are homotopy-fiber products of spaces that are abstractly homotopy-equivalent to tori, or disjoint unions of tori. The same is true for the spaces of ordinary and real ${\rm Spin}^{\rm c}$-structures, i.e.~
        \[ \begin{tikzcd}
            \spinc(Y, \vec w) \rar\dar & \spinc(U_\alpha, \vec w) \dar \\
            \spinc(U_\beta, \vec w) \rar & \spinc(TY|_\Sigma, \vec w)
        \end{tikzcd} \quad \begin{tikzcd}
            \rspinc(Y, \vec w) \rar\dar & \spinc(U_\alpha, \vec w) \dar \\
            \rspinc(TY|_\Sigma, \vec w) \rar & \spinc(TY|_\Sigma, \vec w)
        \end{tikzcd}\]
        are homotopy fiber-products. It is not difficult to see that the map $\hatP_s \to \spinc(Y, \vec w)$ built in {\sc Constr.~\ref{constr:map}} respects this fiber-product structure in the pathspace model of the homotopy-fiber product: indeed, we have split $Y$ into three pieces: the tubular neighborhood $[-1, 1] \times \Sigma$, and the two handlebodies that make up the complement; the construction of the ${\rm Spin}^c$-structure on the $U_\alpha$ and $U_\beta$ handlebodies depends only on the endpoints of the path in $\hatP_s$, and likewise the construction at $\{t\} \times \Sigma$ depends only on the value of the path at time $t$. Thus, the bigger map $\hatP_s \to \spinc(Y, \vec w)$ and its restriction to $C_2$-fixed points are built out of smaller maps
        \begin{enumerate}
            \item[\sc i.] $\mathbb T_\alpha \to \spinc(U_\alpha, \vec w)$, and similarly for $\beta$;
            \item[\sc ii.] $\scr D\Sigma^\circ \to \spinc(TY|_\Sigma, \vec w)$;
            \item[\sc iii.] $\rdiv\Sigma^\circ \to \rspinc(TY|_\Sigma, \vec w)$;
        \end{enumerate}
        and by the homotopy-invariance of homotopy-pullbacks it suffices to show that these maps are each individually homotopy-equivalences.
    
        First, we observe that each of the 6 spaces showing up in {\sc i-iii} has the homotopy type of a disjoint union $G \times K(\mathbb Z^r, 1)$ of tori for some abelian group $G$, with the same $G$ and $r$ both on the domain and codomain. Let us begin with the domains: the first one is clearly of this form; the second one follows by the Dold-Thom theorem, as we discussed in {\sc\S\ref{sec:sym}}, since it is a topological abelian group with $\pi_*$ given by $H_*(\Sigma^\circ; \mathbb Z)$, i.e.~ $r = 2g + k - 1, G = \mathbb Z$. For the third, let us recall from the discussion at the end of {\sc Def.~\ref{def:rdeg}} that
        \begin{equation}\label{eq:pi0fix}
            G = \pi_0 \rdiv\Sigma^\circ \cong \{ (d, d_1, \dots, d_k) \in \mathbb Z \oplus (\bb Z/2)^{k} : d \equiv d_1 + \cdots + d_k  ~\text{mod}~2\},
        \end{equation}
        and furthermore that the higher homotopy groups satisfy $\pi_{* \ge 1} \rdiv\Sigma^\circ \cong \pi_* \scr D\Sigma' \cong H_*(\Sigma'; \bb Z)$, so in particular
        \begin{equation}\label{eq:pi1fix}
            r = {\rm rk~} H_1(\Sigma'; \bb Z) = g
        \end{equation}
        due to the homotopy equivalence proved in {\sc Prop.~\ref{prop:treq}}, and the Dold-Thom theorem applied to $\Sigma'$.
        
        For the codomains of the maps, let us first note that they are all affine spaces over topological abelian groups ${\rm Maps}_*(U_\alpha / \vec w, K(\mathbb Z, 2))$, ${\rm Maps}_*(\Sigma / \vec w, K(\mathbb Z, 2))$ and ${\rm Maps}^{C_2}_*(\Sigma / \vec w, K(\mathbb Z, 2))$, respectively, since in each case the $SO(3)$-bundle can be trivialized, and hence so can the divisor bundle $\scr D^0$ with $K(\bb Z, 2)$-fibers. It is not hard to see that all three mapping spaces have vanishing $\pi_{* \ge 2}$, since the domains are connected and the component at the basepoint in $\Omega^2K(\mathbb Z, 2)$ is contractible. For the first two, we have $\pi_0$ equal to $H^2(U_\alpha, \vec w) \cong 0$, and $H^2(\Sigma, \vec w) \overset{\rm deg}\cong \mathbb Z$ respectively, which matches the respective invariants of the domains. For the third space, we must examine the homotopy type by breaking it up over cells. The fixed-points of $\Sigma / \vec w$ is a bouquet of circles $\bigvee_{\vec w_i \in \vec w} S^1$, and the space of choices over each point is a $K(\mathbb Z/2, 1)$, hence the space of global choices over the fixed locus has the homotopy type of a discrete group $\prod_{w_i \in \vec w} \mathbb Z/2$. If we give $\Sigma'$ a cell structure where the only 0-cells are the $w_i$, and we lift it to a $C_2$-equivariant cell structure on $\Sigma$, each free 1-cell gives an $S^1 \cong \Omega K(\mathbb Z, 2)$-parameter family of choices. These choices are independent, so the joint choice of extension to the 1-skeleton represents a torus of rank $g = {\rm rk~} H_1(\Sigma'; \bb Z)$. Finally, the 2-cell is always fillable up to contractible choice once the degree is specified, cf. {\sc Prop.~\ref{prop:rext}-i}. The degree of the whole ${\rm Spin}^{\rm c}$-structure gives us a homomorphism $\pi_0 {\rm Maps}^{C_2}_*(\Sigma/\vec w, K(\mathbb Z, 2)) \to \mathbb Z$. When all the $\mathbb Z/2$-degrees on the bouquet are zero, the degree of the whole ${\rm Spin}^{\rm c}$-structure is always twice the degree of the 2-cell; so we hit all the even numbers, in a manner very similar to the proof {\sc {Prop.~\ref{prop:rext}}}. We can then apply local twistings as in {\sc Prop.~\ref{prop:rext}-ii} on every connected component of $\Sigma^\tau$ to change the global degree by 1, and the $\mathbb Z/2$-degree of that specific component by 1. We conclude that the global degree always has the same parity as the sum of all the $\mathbb Z/2$-degrees at all the circles in the bouquet, and moreover that the class in $\pi_0$ is uniquely determined by this data, i.e.~ $\pi_0 {\rm Maps}^{C_2}_*(\Sigma/\vec w, K(\mathbb Z, 2))$ is also isomorphic to (\ref{eq:pi0fix}). As for $\pi_1$, we have already established that each component is a torus of rank the number of free 1-cells, i.e.~ the rank $g$ of $H_1(\Sigma')$, so we once again recover the expected (\ref{eq:pi1fix}).

        It remains to check that the maps of spaces {\sc i-iii} induce the (a priori non-canonical) isomorphisms on $\pi_0$ and $\pi_1$ that we have explained above. Beginning with {\sc ii}, we have $\pi_0 = \mathbb Z$ for both, given on the domain by the degree of the divisor, and on the codomain by the degree of the ${\rm Spin}^{\rm c}$-structure over the whole surface $\Sigma$. It is clear that twisting the ${\rm Spin}^{\rm c}$-structure as in {\sc Constr.~\ref{constr:map}} changes its global degree by one, hence proving the claim for $\pi_0$. As for $\pi_1$, we need to show that
        \[ H_1(\Sigma^\circ) \cong \pi_1 \scr D\Sigma^\circ \to \pi_1 \spinc(TY|_\Sigma,\vec w) \cong \pi_1{\rm Maps}_*(\Sigma/\vec w, K(\mathbb Z,2)) \cong H^1(\Sigma, \vec w) \]
        is an isomorphism. We claim that this is the Poincar\'e duality isomorphism, i.e.~ that if $\gamma$ is a closed loop in $\Sigma^\circ$, and $\delta$ is an embedded closed loop or properly embedded open curve in $\Sigma^\circ$, then the image of $[\gamma]$ paired with $[\delta]$ is the intersection number $\gamma \cdot \delta \in \bb Z$. To this end, recall that the construction of the map {\sc ii} twists the ${\rm Spin}^{\rm c}$-structure by degree 1 in a small disk near the point. Then, our claim boils down to a local computation, where we move the small disk in which the twisting is performed from one side to the other side of $\delta$, and we wish to see that the degree of the twisting is one. In other words, we may consider the ${\rm Spin}^{\rm c}$-structure on $\delta \times \gamma$ given by the restriction of the ${\rm Spin}^{\rm c}$-structure on $TY|_\delta$ using the twisting at the respective point in $\gamma$, and we want to see that this degree near the intersection points is 1. It is not hard to see that the local picture near the intersection of $\gamma$ and $\delta$ is exactly the same as the twisting by a degree-1 from {\sc Prop.~\ref{prop:rext}}; this finishes the proof that {\sc ii} is an equivalence.

        We claim that the equivalence claim about {\sc i} follows from that about {\sc ii}. Both sides of {\sc i} have trivial $\pi_0$, so it remains to show that $\pi_1$ induces an isomorphism. Indeed, there is a commutative square
        \[ \begin{tikzcd}
            \pi_1 \mathbb T_\alpha \rar\dar & \pi_1 \spinc(U_\alpha, \vec w) \dar \\
            \pi_1 \scr D\Sigma^\circ \rar & \pi_1 \spinc(TY|_\Sigma, \vec w).
        \end{tikzcd} \]
        and the vertical maps are both inclusions (a simple calculation on $H_1$ and $H^1$), while the bottom one has just been proven to be an isomorphism. Thus, the top map (i.e.~ map {\sc i}) is an isomorphism if and only if the images of the top groups into the bottom groups are equal as subgroups thereof, under the identification induced by the isomorphism {\sc ii}. This follows from the fact that $U_\alpha$ is obtained up to homotopy from $\Sigma^\circ$ by attaching 2-cells along all the $\alpha$-circles, so in particular under Poincar\'e duality the cohomology classes in $H^1(\Sigma, \vec w)$ that pair to zero with the $\alpha$ circles are precisely those that extend to $H^1(U_\alpha, \vec w)$.

        Finally, we must establish that {\sc iii} induces an isomorphism on $\pi_0$ and $\pi_1$. On $\pi_0$, the two sides are once again determined uniquely by all the degree maps (either into $\mathbb Z/2$ or $\mathbb Z$), and moreover it is clear that the twisting of the ${\rm Spin}^{\rm c}$-structures by divisors of the given degree change the corresponding degrees of the ${\rm Spin}^{\rm c}$-structures by the same amount. On $\pi_1$, there is once again a commutative square relating back to {\sc ii}:
        \[ \begin{tikzcd}
            \pi_1 \rdiv\Sigma^\circ \rar \dar & \pi_1 \rspinc(TY|_\Sigma, \vec w) \dar \\
            \pi_1 \scr D \Sigma^\circ \rar & \pi_1 \spinc(TY|_\Sigma, \vec w),
        \end{tikzcd} \]
        where the vertical maps are injections. The left vertical map is injective, thanks to our last claim in {\sc Prop.~\ref{prop:divhom}} that it is induced by the transfer map, which can be checked to be injective e.g.~ because its post-composition with the projection $\pi_*$ is equal to twice the identity, and both sides are torsion-free. For the right vertical map, recall that the toroidal components of $\rspinc(TY|_\Sigma, \vec w)$ come from a copy of $S^1 \cong \Omega K(\mathbb Z,2)$ each one of the free 1-cells on $\Sigma$ pulled back from the 1-cells of $\Sigma'$ (other than the boundary of $\Sigma').$ Thus, the induced map is just the pull-back on the level of cohomology, once both $\pi_1$'s are identified with cohomology groups $H^1(\Sigma', \vec w)$ and $H^1(\Sigma, \vec w)$. Then the images of the top groups into the bottom groups of the commutative square agree under Poincar\'e duality, hence concluding the proof.
    \end{proof}

    Next, we wish to show that the maps in {\sc Thm.~\ref{thm:comp}} are preserved under handleslides and stabilizations, at least in the homotopy category, so that the isomorphisms induced on $\pi_0$ and $\pi_1$ respect these operations. 

    \begin{PROP}
        Let $\alpha_i, \alpha_j$ be two alpha-circles in a Heegaard diagram, and $\alpha_i'$ be another embedded circle in the complement of the alpha curves such that $\alpha_i, \alpha_i', \alpha_j$ cobound a pair of pants in $\Sigma$ not containing any of the basepoints $\vec w$. The new set of curves obtained by replacing $\alpha_i$ with $\alpha_i'$ gives rise to a new torus $\mathbb T_\alpha'$. In this case, there is a canonical contractible space of homotopies witnessing the commutativity of the diagram
        \[ \begin{tikzcd}
            \mathbb T_\alpha \dar["\rm Dehn" swap, "\sim"]\rar & \spinc(U_\alpha, \vec w) \dar[equals] \\
            \mathbb T_\alpha' \rar & \spinc(U_\alpha', \vec w)
        \end{tikzcd} \]
        where the horizontal maps are the homotopy equivalences of type {\sc i} in the proof of {\sc Thm.~\ref{thm:comp}}, the left vertical map is the Dehn twist uniquely determined on generators by sending the $\alpha_\lambda$ to $\alpha_\lambda$ for $\lambda \neq i$, and $\alpha_i$ to $\alpha_{i}' \star \alpha_j$ by preserving points far away from the point of contact between $\alpha_i$ and $\alpha_j$ in the connect-sum operation, and the right vertical map is the identification of handlebodies determined by the $\alpha$-curves.
    \end{PROP}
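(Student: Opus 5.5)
The plan is to reduce everything to statements about spaces with vanishing higher homotopy, and then to compare induced maps on $\pi_1$.

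First, observe that $\mathbb T_\alpha$, $\mathbb T_\alpha'$ and the component of $\spinc(U_\alpha, \vec w)$ are all tori, i.e.~ $K(\pi,1)$'s with $\pi$ free abelian. This was established in the proof of {\sc Thm.~\ref{thm:comp}}: $\pi_0 \spinc(U_\alpha,\vec w) = H^2(U_\alpha,\vec w)=0$, and $\pi_{*\ge 2}$ vanishes.

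\textbf{Existence of the homotopy.} By {\sc Prop.~\ref{prop:fibertori}}, two maps from a torus into a torus are homotopic if and only if they induce the same map on $H_1$. So it suffices to check that the two composites $\mathbb T_\alpha \to \spinc(U_\alpha,\vec w)$ agree on $H_1$. Via the identification of map {\sc i} from {\sc Thm.~\ref{thm:comp}}, $\pi_1\spinc(U_\alpha,\vec w)$ is identified with the subgroup of $H^1(\Sigma,\vec w)$ consisting of classes vanishing on the $\alpha$-curves, which is Poincar\'e dual to $\<[\alpha_1],\ldots,[\alpha_d]\> \subset H_1(\Sigma^\circ)$. This identification is made by composing with the injection $\pi_1\spinc(U_\alpha,\vec w)\hookrightarrow \pi_1\spinc(TY|_\Sigma,\vec w)$ and with the isomorphism {\sc ii}. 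That injection does not depend on whether we use $\alpha$ or $\alpha'$, since the handlebody and $\Sigma$ are literally the same. Hence the commutative square in the proof of {\sc Thm.~\ref{thm:comp}} reduces the claim to checking that the composite $H_1(\mathbb T_\alpha)\to H_1(\mathbb T_\alpha')\to H_1(\Sigma^\circ)$ equals the inclusion $H_1(\mathbb T_\alpha)\to H_1(\Sigma^\circ)$.

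On generators, $[\alpha_\lambda]\mapsto[\alpha_\lambda]$ for $\lambda\ne i$. For $\lambda = i$, we have $[\alpha_i]\mapsto[\alpha_i'\star\alpha_j]=[\alpha_i']+[\alpha_j]$, and this equals $[\alpha_i]$ in $H_1(\Sigma^\circ)$ with appropriate orientations. The reason is that $\alpha_i,\alpha_i',\alpha_j$ cobound a pair of pants avoiding $\vec w$, so they are homologous in $\Sigma^\circ$ via that pair of pants. One must also verify that the map $\mathbb T_\alpha\to\scr D^d\Sigma^\circ$ induces the obvious inclusion on $H_1$, which was shown in the proof of {\sc Prop.~\ref{prop:spcalc}} (product of equivalences $\alpha_i\to\scr D^1\alpha_i$).

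\textbf{Contractibility of the space of homotopies.} The space of homotopies between two fixed homotopic maps $f_0,f_1:\mathbb T\to\mathbb T'$ is a torsor over $\pi_1$ of the component of ${\rm Map}(\mathbb T,\mathbb T')$ at $f_0$, based at $f_0$. Since $\mathbb T'$ is a topological abelian group (up to homotopy, as the component of an affine space over ${\rm Maps}_*(U_\alpha/\vec w,K(\mathbb Z,2))$), this mapping space component is $\mathbb T'\times{\rm Map}_*(\mathbb T,\mathbb T')$. Its $\pi_1$ is $\pi_1\mathbb T'$, which is nonzero in general. So naive contractibility fails, and the word ``canonical'' must be interpreted as in {\sc Prop.~\ref{prop:torihslide}}: the square is formed relative to the bottom edge, and the homotopy is \emph{specified} as the one induced by the handle-slide family of Morse functions, or equivalently by the radial homotopies in the disks $D^\alpha_i$ together with the path from $a_i$ to $a_i'$.

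I expect the main obstacle to be exactly this point. One must show that the homotopy produced by tracking the construction in {\sc Constr.~\ref{constr:map}} along the 1-parameter family of Morse functions agrees, up to contractible choice, with any other natural choice. I would attempt this as in the proof of {\sc Prop.~\ref{prop:torihslide}}: identify the space of such homotopies with a homotopy fiber of ${\rm Map}^\simeq(\mathbb T_\alpha,\mathbb T_\alpha')\to{\rm Map}(\mathbb T_\alpha,\spinc(U_\alpha,\vec w))$. Since map {\sc i} is a homotopy equivalence, this map is an equivalence of mapping spaces, so its homotopy fibers are contractible. The remaining check is that the Dehn-twist map lies in the correct component, which is precisely the $H_1$ computation above.
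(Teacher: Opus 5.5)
Your existence argument is correct, and it is more direct than the paper's. Since $\spinc(U_\alpha,\vec w)$ is connected and aspherical with free abelian $\pi_1$, {\sc Prop.~\ref{prop:fibertori}} reduces homotopy of the two composites to an $H_1$ check. You do that check through the injection into $\pi_1\spinc(TY|_\Sigma,\vec w)$, and the pair-of-pants relation $[\alpha_i]=[\alpha_i']+[\alpha_j]$ settles it. The paper instead builds the homotopy in $\scr S^2P\subset\scr D\Sigma^\circ$. It then lifts along $\spinc(U_\alpha,\vec w)\to\spinc(TY|_\Sigma,\vec w)$, whose fibers are homotopy-discrete, and checks at one point, using the handle-slide family of Morse functions, that the lifts lie in the same component.

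The gap is in the canonicity part, which is the real content of the statement. You correctly compute that, with both maps fixed, the space of homotopies is a torsor over $\pi_1\spinc(U_\alpha,\vec w)\cong H^1(U_\alpha,\vec w)$, so it is not contractible. Your proposed fix does not repair this. The homotopy fiber of ${\rm Map}^\simeq(\mathbb T_\alpha,\mathbb T_\alpha')\to{\rm Map}(\mathbb T_\alpha,\spinc(U_\alpha,\vec w))$ over the top map parametrizes pairs $(g,h)$ in which the left vertical map $g$ \emph{varies}. Its contractibility only says that the left edge is determined by the rest of the square up to contractible choice. Once you fix $g$ to be the Dehn twist, as the statement does, the fiber over $g$ is again exactly your non-contractible torsor. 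So ``Dehn lies in the correct component'' recovers existence and nothing more. Likewise, ``the homotopy induced by the handle-slide family'' is never actually constructed over all of $\mathbb T_\alpha$.

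The missing device is to pin the homotopy down on the relative $1$-skeleton. Take it along $\{x_0\}\times I$, where $x_0$ is a basepoint whose $\alpha_i$-coordinate is the point $u$ diametrically opposite the contact point of the handle-slide. There the Dehn twist is the identity and an obvious path exists: the paper takes it in $\scr S^2P$. In your direct setup, the natural candidate is the path of structures that the Morse family produces at $x_0$. After this choice, the extension over the remaining relative cells $e\times I$ is unique up to contractible choice. The obstructions on the $2$-cells lie in $\pi_1$ and vanish by your $H_1$ computation, and every later obstruction and choice group is some $\pi_{\ge 2}=0$.
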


    \begin{proof}
        We begin by noting that the map $\spinc(U_\alpha, \vec w) \to \spinc(TY|_\Sigma, \vec w)$ has homotopy-discrete fibers; indeed, the fiber has the homotopy type of ${\rm Maps}_*(U_\alpha / \partial U_\alpha, K(\mathbb Z, 2))$ by the torsor action of degree-zero divisors on degree-one divisors. The latter space has $\pi_i$ equal to $H^{2 - i}(U_\alpha, \partial U_\alpha) \overset{\rm PD}\cong H_{i+1}(U_\alpha)$, which proves the claim. Thus, it suffices to produce a homotopy witnessing the commutativity of
        \[ \begin{tikzcd}
            \mathbb T_\alpha \dar["\rm Dehn" swap, "\sim"]\rar & \scr D \Sigma^\circ \dar[equals] \\
            \mathbb T_\alpha' \rar & \scr D \Sigma^\circ
        \end{tikzcd} \]
        and check that the canonical extension of the associated ${\rm Spin}^{\rm c}$-structures on $TY|_\Sigma$ induced from the map $\scr D\Sigma^\circ \to \spinc(TY|_\Sigma, \vec w)$ lies in the same component of the homotopy-discrete set of choices of extension all the way to $\spinc(U_\alpha, \vec w)$, for both $\mathbb T_\alpha$ and $\mathbb T_\alpha'$. As far as constructing the square above, we may ignore the $\alpha$ circles other than the ones involved in the handle-slide, so it suffices to establish a contractible space of homotopies
        \[ \begin{tikzcd}
            \alpha_i \times \alpha_j \rar\dar["\rm Dehn" swap, "\sim"] & \scr S^2 P \dar[equals] \\
            \alpha_i' \times \alpha_j \rar & \scr S^2 P
        \end{tikzcd} \]
        where $P$ is the pair of pants bounding $\alpha_i, \alpha_i', \alpha_j$.
        Given that $\scr S^2 P$ is complex biholomorphic to $\mathbb C^2$ minus a union of two transverse $\mathbb C \times 0$ and $\mathbb C \times 0$, it is homotopy equivalent to a torus, and hence has $\pi_{* \ge 2}$ equal to zero. In particular, it suffices to construct the homotopy on the relative 1-skeleton, and check the obstructions for the 2-cells. We work with the standard cell structure on $S^1 \times S^1 \times I$, so the only relative 1-cell is the one connecting the origins in the two tori. We may pick the basepoint $u$ on $\alpha_i$ and $\alpha_i'$ to be the one diametrically opposite to the point of contact between the circles during handle-slide, where the homotopy can be chosen to be the obvious one, cf. {\sc Fig.~\ref{fig:aslide}}. The only relative 2-cells come from $S^1 \times * \times I$ and $* \times S^1 \times I$, which satisfy the desired fillability condition because the Dehn twist was rigged to make the diagram commutative on the level of $H_1$.

        \begin{figure}
            \centering\includegraphics[scale=1.5]{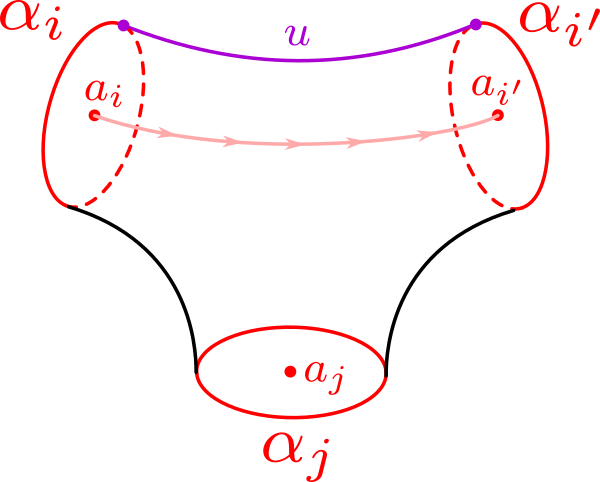}
            \caption{Depiction of a handle-slide, particularly the process of sliding the critical point $a_i$ along the pink path with arrows all the way to $a_i'$; the path labelled in purple on the very top represents the position of the basepoint $u$, which is ``the one far away from the point of contact of the handle-slide''; this path could be shrunk to a single common point $u$, but we keep it like this for aesthetic reasons, and to better recognize the pair of pants.}\label{fig:aslide}
        \end{figure}

        Finally, we must check that the extension of the associated ${\rm Spin}^{\rm c}$-structures on the whole of $U_\alpha = U_{\alpha'}$ lie in the same contractible component of the fiber of the map $\spinc(U_\alpha, \vec w) \to \spinc(TY|_\Sigma, \vec w)$. By connectedness of $\bb T_\alpha$, this can be checked over a single point in it, e.g.~ a tuple in $\mathbb T_\alpha$ where the $\alpha_i$-component is far from the point of contact of the handle-slide. Indeed, in this case we have a 1-parameter family of Morse functions given by moving the index-2 critical point, as explained in {\sc Constr.~\ref{constr:morse}}, and we may extend our modification of the pseudo-gradient along regular neighborhoods of the flow lines. This results in a path of ${\rm Spin}^c$-structures, witnessing the desired claim.
    \end{proof}

    We have an analogous proposition for stabilizations near the basepoints, which concludes the invariance of the stable pathspace with respect to the particular real Heegaard diagram:

    \begin{PROP}\label{prop:pathstab}
        Let $\scr H'$ be the stabilization of a (real) Heegaard diagram $\scr H$ by connect-summing with one of the three standard diagrams \cite{GM25}, cf. {\sc Fig.~\ref{fig:rstab}} for $S^3$ in the domain containing one of the basepoints $w_i$. The inclusion $\hatP_s(\scr H) \hookrightarrow \hatP_s(\scr H')$ given by adjoining the constant path(s) on the extra generator(s) of $\widehat{\it CFR}(S^3)$ induces a ($C_2$-equivariant) homotopy-equivalence, and moreover the diagram
        \[ \begin{tikzcd}
            \hatP_s(\scr H) \rar["\sim"] \dar["\sim"] & \spinc(Y, \vec w) \dar[equals] \\
            \hatP_s(\scr H') \rar["\sim"] & \spinc(Y, \vec w) 
        \end{tikzcd} \] 
        commutes.
    \end{PROP}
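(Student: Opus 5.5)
The plan is to reduce both claims to the building blocks already isolated in the proof of {\sc Thm.~\ref{thm:comp}}, namely the homotopy fiber-product descriptions of $\hatP_s$ and $\spinc(Y, \vec w)$ (and their real versions). Write $\scr H' = \scr H \# \scr H_0$, where $\scr H_0$ is one of the standard diagrams of {\sc Fig.~\ref{fig:rstab}}, and let the connected sum be performed in a small ($\tau$-invariant) disk near $w_i$. In the free case one extra pair $(\alpha_{d+1}, \beta_{d+1})$ appears, meeting in a single point $x$, together with its $\tau$-mirror. In the fixed-point cases one pair $(\alpha_{d+1}, \beta_{d+1} = \tau\alpha_{d+1})$ appears. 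In every case the new generators are intersection points $x$ of the added curves, and these lie in a region that deformation retracts to $w_i$. The inclusion $\hatP_s(\scr H) \hookrightarrow \hatP_s(\scr H')$ is $\gamma \mapsto \gamma + x$ (constant in the added coordinate). It is defined using the divisor group structure on $\scr D^{d+1}\Sigma'^{\circ}$, where $\Sigma'$ now denotes the stabilized surface. Since $x$ is $\tau$-fixed as a divisor when the stabilization is real, this map is $C_2$-equivariant.

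First I would prove the homotopy-equivalence claim. By {\sc Cor.~\ref{cor:pathisfib}} and {\sc Prop.~\ref{prop:fibertori}}, both pathspaces are disjoint unions of tori whose $\pi_0$ and $\pi_1$ are the cokernel and kernel of $\bb Z\<\beta\> \to H_1(U_\alpha)$. For the real case, the corresponding map is $H_1(\Sigma'^\circ) \to H_1(U_\alpha)$. Under stabilization, $H_1(\Sigma^\circ)$ acquires new summands spanned by the added $\alpha$ and $\beta$ curves (a meridian/longitude pair in the torus summand). The new $\beta$ maps isomorphically onto the new generator of $H_1(U_\alpha)$, because it meets the new $\alpha$ once. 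Hence kernel and cokernel are unchanged, and the inclusion maps them identically, since adding the fixed point $x$ commutes with the Dold-Thom identifications. The same bookkeeping applies to the transfer in the real case. Free stabilization adds a pair $\beta, \tau\beta$ in $H_1(\Sigma'^\circ)$ via the transfer, whereas fixed-point stabilization adds a single class. For $\pi_0$ of $\rhatP_s$ when $k>1$, I would not compute directly. Instead I would argue that $\gamma\mapsto\gamma+x$ preserves all the real degrees ${\rm rdeg}_j$ up to the fixed shift, so the torsor structures from {\sc Thm.~\ref{thm:rspinctors}} match. Then a map of disjoint unions of tori that is bijective on $\pi_0$ and an isomorphism on each $\pi_1$ is an equivalence.

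Second I would prove commutativity of the square. Since $\spinc(Y, \vec w)$ has components with vanishing $\pi_{\ge 2}$, it suffices to produce a homotopy between the two composites $\hatP_s(\scr H) \to \spinc(Y,\vec w)$. One composite uses the gradient-like field $V$ of $\scr H$. The other uses the field $V'$ of $\scr H'$, which has extra critical points $a_{d+1}, b_{d+1}$ born near $w_i$ (and their mirrors), followed by the twist at $x$ and the path-filling of {\sc Constr.~\ref{constr:map}} from $a_{d+1}$ and $b_{d+1}$ to $x$. The 1-parameter birth family of Morse functions in {\sc Def.~\ref{def:realmoves}} gives a homotopy from $V$ to $V'$ away from a small ball $B$ near $w_i$ containing the new critical points and $x$. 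On $B$ the two ${\rm Spin}^c$-structures agree on $\partial B$ and have degree zero. By {\sc Prop.~\ref{prop:exts}}, or by {\sc Prop.~\ref{prop:rext}-iii} in the real case with $B$ chosen $\tau$-invariant, the fillings form a contractible space, which yields the homotopy canonically.

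The main obstacle is the degree-zero check in the fixed-point stabilizations. There the twist at $x$ is the real degree-$1$ twist of {\sc Prop.~\ref{prop:rext}-ii}, and it must cancel precisely against the local contributions of $a_{d+1}$ and $b_{d+1}$, including the $\bb Z/2$-degree on the fixed arc through $w_i$. I would handle this by an explicit computation in the standard $S^3$ diagram, as was done for the $p_i, q_i$ modification in {\sc Constr.~\ref{constr:map}}. Concretely, the index-$1$ and index-$2$ points contribute opposite degrees, the connecting paths are $\tau$-mirror images of each other, and so the real $\bb Z/2$-degree on $C_i$ changes by the same amount on both sides. I would settle the sign separately for the positive and negative fixed-point stabilizations.
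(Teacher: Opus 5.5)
Your argument is correct, and the commutativity half matches the paper's proof. After the birth homotopy, the two constructions agree outside the (real) ball $B$ coming from the $S^3$ summand, and Props.~\ref{prop:exts} and \ref{prop:rext}-iii supply a contractible space of homotopies.

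Where you differ is the homotopy-equivalence claim. The paper computes nothing here: it deduces the claim from commutativity by two-out-of-three, since both horizontal maps are ($C_2$-equivariant) equivalences by Thm.~\ref{thm:comp}. You instead verify it directly through Cor.~\ref{cor:pathisfib} and Prop.~\ref{prop:fibertori}. You check that the new $\beta$-classes (resp.\ their transfers from the quotient surface) map isomorphically onto the new summand of $H_1(U_{\alpha'})$, so kernel and cokernel are unchanged. Your route is independent of Thm.~\ref{thm:comp} and shows the invariance of $\pi_1 = A$ directly at the level of the diagram, but it is longer. For $\pi_0 \rhatP_s$ with $k>1$, argue componentwise rather than appealing to the torsor of Thm.~\ref{thm:rspinctors}, whose identification with $\pi_0\rhatP_s$ itself passes through Thm.~\ref{thm:comp}. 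Concretely, $\gamma \mapsto \gamma + x$ permutes the real-degree components of $\rdiv\Sigma^\circ$ bijectively, shifting ${\rm rdeg}_i$ by one in the fixed-point case. On each component, Prop.~\ref{prop:fibertori} applies with the same cokernel.

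The ``main obstacle'' you flag, the degree-zero check in the fixed-point stabilizations, is not actually an obstacle. Both ${\rm Spin}^c$-structures are globally defined on $Y$, since each is an output of Constr.~\ref{constr:map} for $\scr H$ or $\scr H'$. Their common restriction to $\partial B$ therefore already admits a (real) filling, so its degree is automatically zero and the filling space of Props.~\ref{prop:exts} and \ref{prop:rext}-iii is non-empty. Contractibility of that space then gives the homotopy rel $\partial B$, with no explicit local cancellation between $a_{d+1}$, $b_{d+1}$ and the twist at $x$ required.
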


    \begin{proof}
        We note that the ${\rm Spin}^{\rm c}$-structures constructed by the horizontal maps are the same away from the 3-ball coming from the connect-summand $S^3$, and therefore they are connected by a contractible space of homotopies (both in the classical and real settings), due to {\sc Props.~\ref{prop:exts},~\ref{prop:rext}-iii}. This proves the desired commutativity, and the fact that the stabilization map on stable pathspaces is a homotopy equivalence.
        
    \end{proof}

\section{Index theory for \texorpdfstring{$\mathbb Z/2$}{Z2}-gradings and \texorpdfstring{$\mathbb Z$}{Z}-coefficients}\label{sec:gror}

	In this section, we explore the notions of $\mathbb Z/2$-gradings and of coherent orientations in the greater generality of Lagrangian Floer homology, and set up the relevant obstruction theory in terms of Stiefel-Whitney classes on connected components of the Lagrangian pathspace. This subject is very thoroughly studied, and several very standard approaches to it include \cite{FH93, Si98, Fuk00, FOOO10, Sei08, Rez21}; what these all have in common is that they endow the Lagrangians with extra structure lifts (``branes''), and potentially also the symplectic manifold itself (with ``background data''), with suitable compatibility with the branes. This is because one is typically interested in building a Fukaya category, which requires that objects be Lagrangians with some extra data depending only on themselves, or on the ambient symplectic manifold.

	Recent developments show that this insistence is at times too limiting, e.g.~ Abouzaid-Manolescu's \cite{AM25} using \emph{coupled Spin structures}, which require a kind of ``brane on the pair of the two Lagrangians together.'' Inspired by this, we give an even more general notion of coupling, defined as the vanishing of a certain characteristic class living on the \emph{pathspace} of the two Lagrangians (i.e.~ we allow the data to depend on a path connecting the two points in the two different Lagrangians.) The obstruction theory we develop is in some sense if-and-only-if, given {\sc Defs.~\ref{def:z2gr},~\ref{def:cohsys}} to follow. For the reader who is not willing to give up the hope of building Fukaya categories, we conjecture that this approach could be used to construct not an $\bb A_\infty$-category, but a category living over a more complicated dg-operad; we do not pursue this further here, though. Our index theory calculation techniques resemble \cite{Sei08} a lot, and we do not claim too much originality in this regard; the novelty comes in doing the computation over the pathspace, and furthermore in the algebraic machinery of groupoid cohomology.

	Let $M^{2n}$ be a symplectic manifold, and $L_0^n, L_1^n$ be two transversely intersecting embedded Lagrangians. We will assume that $n$ is large enough (in particular, at least $n \ge 3$ in certain places), which we may do in the case of (real) Heegaard Floer theory by stabilizing enough times; nevertheless, the final results will also apply to low genus a posteriori, see {\sc Rmk.~\ref{rmk:larged}}. Let $\scr P$ be a \emph{connected component} of the pathspace $\scr P_M(L_0, L_1)$ connecting $L_0$ and $L_1$ inside $M$, and denote by \keywd{${\it CL}_{\scr P}$ the Lagrangian Floer complex} over $\mathbb Z/2$ built by only considering intersection points whose associated constant path in $\scr P_M(L_0, L_1)$ lies in $\scr P$; we seek to endow ${\it CL}_{\scr P}$ with a $\mathbb Z/2$-grading, as well as a $\mathbb Z$-lift. We employ the notation \keywd{$L_0 \cap_\scr P L_1$} for the aforementioned subset of intersection points.

	There is a naturally occurring virtual bundle on $\scr P$, whose Stiefel-Whitney classes $w_1$ and $w_2$ will give the obstructions to the desired upgrades, which we now define.

	\begin{DEF}\label{def:lbd}
		Define the \keywd{Lagrangian boundary difference} element associated to two Lagrangians $L_0, L_1 \subset M$ in a symplectic manifold $M$ to be the map
		\[ \scr P \subset \scr P_M(L_0, L_1) \overset{TL_1 - TL_0} \longrightarrow BO \]
		obtained formally by applying the difference map $BO \times BO \overset\ominus\to BO$ to the classifying maps for both $TL_0$ and $TL_1$. All choices involved are parameterized by contractible spaces, including the aforementioned difference map, since $BO$ is a group-like $\mathbb E_\infty$-space.
	\end{DEF}

	\begin{RMK}
		As an element in $[\scr P, BO] \cong \widetilde{\it KO}(\scr P)$, this classifies the difference $[TL_1] - [TL_0]$. In the case of real Heegaard Floer homology, we have $[T\mathbb T_\alpha] = 0$, so on the level of K-theory the Lagrangian boundary difference is just $[T(\rsym^d \Sigma^\circ)] \in {\it KO}(\rhatP(\scr H, \tau))$.
		However, in order to identify the choices of orientations and gradings more canonically, one would have to work with the actual map $\scr P \to BO$, as opposed to its mere homotopy class. Since we only rigorously prove invariance up to an ambiguity of the choices, this does not concern us directly, though it will be useful in the somewhat speculative last section {\sc\S\ref{sec:abs}}.
	\end{RMK}

	The main results of this section can be summarized as follows (see also {\sc Rmk.\ref{rmk:twgr}} about twisted local coefficients to squeeze a little extra information in certain cases):

	\begin{THM}\label{thm:obs}
		The obstructions to abstract $\mathbb Z/2$-gradings {\sc(Def.~\ref{def:z2gr})} and $\bb Z$-coefficients {\sc(Def.~\ref{def:cohsys})} produced in {\sc Props.~\ref{prop:grobs},~\ref{prop:totobs}} are given by $w_1$ and $w_2$, respectively, of the Lagrangian boundary difference $TL_0 - TL_1$ on $\scr P$.
	\end{THM}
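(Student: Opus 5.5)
Our plan is to reduce both obstructions to index theory for the linearized Cauchy--Riemann operators over families of strips. We first assemble, over the component $\scr P$, the family of Fredholm problems associated to each path. Concretely, for each $\gamma \in \scr P$ we choose a path of Lagrangian subspaces in $\gamma^*TM$ from $T_{\gamma(0)}L_0$ to $T_{\gamma(1)}L_1$; this is unique up to contractible choice once we stabilize to $n$ large. Capping off with a fixed reference path then gives a Fredholm index bundle $\mathrm{Ind} \in {\it KO}(\scr P)$. The first step is the standard ``gluing/excision'' identity: for a Whitney disk $u$ from $x$ to $y$, viewed as a path in $\scr P$ between the constant paths at $x$ and $y$, the index and determinant line of $D_u$ are computed by gluing the capping operators at the two ends. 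This reduces $\mu(u) \bmod 2$ and $\det D_u$ to the mod-2 holonomy and orientation-line transport of $\mathrm{Ind}$ along the loop or path $u$. We then identify $\mathrm{Ind}$, up to a constant trivial summand, with $TL_0 - TL_1$ (or its negative, which has the same $w_1,w_2$). For this we deform the boundary conditions on a strip: the family of operators with boundary values on the loop of Lagrangians $TL_0 \to \gamma \to TL_1 \to \text{reference}$ has index class given by the Maslov-type difference, and the real part of a complex Lagrangian loop contributes exactly the difference of real bundles. This is essentially the argument of \cite{Sei08}, carried out parametrically over $\scr P$.

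Next we compare with Props.~\ref{prop:grobs} and \ref{prop:totobs}. Those produce obstructions as classes in groupoid cohomology of the fundamental groupoid of $\scr P$. For gradings, we need a function $L_0\cap_{\scr P}L_1 \to \bb Z/2$ with $\mathrm{gr}(x)-\mathrm{gr}(y)=\mu(u)$. The only obstruction is that $\mu \bmod 2$ evaluated on loops in $\scr P$ be zero. By the first step this is the evaluation of $w_1(\mathrm{Ind})$ on $\pi_1\scr P$, and since $H^1(\scr P;\bb Z/2)=\mathrm{Hom}(\pi_1\scr P,\bb Z/2)$ this is exactly $w_1(TL_0-TL_1)$. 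For coherent orientations, once $w_1=0$ the determinant lines form an orientable line bundle. The coherence data is a choice of trivialization compatible with gluing of disks and with homotopies of Whitney disks. The obstruction produced in Prop.~\ref{prop:totobs} is a $2$-cocycle measuring the failure of the orientation-transport to respect homotopies, that is, the sign by which the determinant line transports around a $2$-parameter family (a map of a closed surface into $\scr P$, or a $2$-cell of the groupoid). We will identify this sign with $\langle w_2(\mathrm{Ind}), [\Sigma]\rangle$, using the fact that for a virtual bundle with $w_1=0$, the failure of a spin/orientation lift of $\det$ over $2$-cells is $w_2$. A cleaner route is to pull back along maps $T^2\to\scr P$ and $S^2\to\scr P$ and compute on these test surfaces, since the paper's situation has $\scr P$ a torus by Prop.~\ref{prop:spcalc}.

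The main obstacle is this $w_2$ step, and specifically the sign conventions. The orientation of a determinant line of a family of real Cauchy--Riemann operators is governed by a spin structure on the boundary bundle, but the determinant line bundle itself only sees $w_1$. So we must explain why the coherence condition, which concerns gluing compatibility rather than mere orientability over $\scr P$, is governed by $w_2$. We plan to handle this by passing to the double: coherence over $2$-cells is equivalent to orientability of the determinant line over the free loop space of $\scr P$ restricted to those loops, i.e.\ $w_1$ of the transgressed bundle on $L\scr P$. Transgression then sends $w_2(\mathrm{Ind})$ to $w_1$ of the loop-space index, which is the standard mechanism by which spin structures orient Floer moduli spaces. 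Checking that the groupoid-cohomology class of Prop.~\ref{prop:totobs} is precisely this transgression, including the $w_1^2$ correction that could a priori appear, is where we expect the real work to lie.
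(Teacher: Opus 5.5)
There is a genuine gap, and it starts at your first step. The space of Lagrangian paths in $\gamma^*TM$ from $T_{\gamma(0)}L_0$ to $T_{\gamma(1)}L_1$ is homotopy equivalent to $\Omega\big(U(n)/O(n)\big)$. Stably this is $\mathbb Z\times BO$, not a point: its $\pi_0$ is the Maslov index and its $\pi_1=\pi_2(U/O)=\mathbb Z/2$. Transporting a cap around a loop $u$ in $\scr P$ moves it to the component shifted by the Maslov index of $u$. Stably, a continuous choice of caps over $\scr P$ amounts to a null-homotopy of the polarization map $\scr P\to U/O$ of {\sc\S\ref{sec:abs}}, and that forces $[TL_0-TL_1]=\eta\cdot\theta=0$. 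So your $\mathrm{Ind}$ exists only when both obstructions already vanish. In that case your gluing argument shows that every Whitney disk from $p$ to itself has index zero and a canonically oriented determinant line; it can only prove vanishing, never produce $w_1(\mathrm{Ind})(u)$ or a $w_2$. The grading formula you want is still true, but it comes from a different input. The index parity and determinant line of a Whitney disk depend only on its boundary bundle ({\sc Prop.~\ref{prop:boundred}}, via sphere bubbles). Gluing a disk bubble modelled on $\scr O(1)\oplus\scr O^{n-1}$ over $\mathbb P^1$ (real index $n+1$) shows that a reflection in either boundary monodromy changes the index by one. This gives $\delta=w_1(TL_0)+w_1(TL_1)$ on loops with no global index bundle.

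The $w_2$ step, which you explicitly defer, is the substance of the theorem, and your framing of it is off in two ways.

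First, $w_1=0$ has nothing to do with orientability of the determinant lines. Orientability of $\Lambda_{p,p}$ over $\Omega_0\scr P$ is the primary obstruction in $H^1(\Omega_0\scr P;\mathbb Z/2)\cong\mathrm{Hom}(\pi_2\scr P,\mathbb Z/2)$, which is itself the image of a degree-two class. Nor can you restrict to $w_1=0$. The claimed $\mathbb Z$-obstruction is the full $w_2(TL_0-TL_1)=w_2(TL_0)+w_2(TL_1)+w_1(TL_0)w_1(TL_1)+w_1(TL_0)^2$, and that is exactly what is needed when $\mathbb Z$-lifts exist without $\mathbb Z/2$-gradings (e.g.\ $F(x,x)\neq 0$, $Q=0$).

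Second, you are right that a determinant line on $\scr P$ only sees $w_1$. The missing idea is that orientation data is not a line bundle on $\scr P$. It is a lift of the structure group of the \emph{pair} $(TL_0,TL_1)$ along a central extension $1\to\{\pm1\}\to\fk O_\partial\to O_n\times O_n\to 1$. This extension is obtained by reducing to a one-object groupoid and to boundary clutching data over $\mathrm{Aut}(T_pL_0)\times\mathrm{Aut}(T_pL_1)$.

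The real work is computing its class in $H^2(BO_n\times BO_n;\mathbb Z/2)$ with explicit real CR operators on $\mathbb P^1$:
\begin{itemize}
\item each factor restricts to a ${\rm Pin}^\pm$ extension;
\item the two signs are opposite, because identifying the $L_0$- and $L_1$-side disk bubbles inverts the monodromy and reverses the gluing order, which costs a Koszul sign;
\item lifts from the two factors commute only up to a Koszul sign, which forces the cross term $w_{1,(0)}w_{1,(1)}$.
\end{itemize}
The low-degree sequence $0\to H^2(\mathrm B\Pi\scr P)\to H^2(\scr P)\to H^1(\Omega_0\scr P)^{\pi_1\scr P}$ then shows that this one pulled-back class governs both the primary and the secondary obstruction. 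Finally, $w_1(TL_0)^2=w_1(TL_1)^2$ on $\scr P$ (both complexify to $TM$) removes the remaining ambiguity. Your transgression-to-$L\scr P$ heuristic supplies none of this. It names no operator family over $L\scr P$, does not relate its $w_1$ to the groupoid $2$-cocycle, and transgresses $w_2$ of the ill-defined $\mathrm{Ind}$.
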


	\begin{THM}\label{thm:invar}
		The vanishing of the obstructions for $\mathbb Z/2$-gradings and $\mathbb Z$-coefficients for ${\it CF}_\scr P(L_0, L_1)$ is independent up to Hamiltonian isotopy or change of almost-complex structure. Moreover, if any of the two obstructions vanishes, there is a canonical bijective correspondence between the sets of choices of either $\bb Z/2$-gradings or $\bb Z$-coefficients, for the two chain theories before and after Hamiltonian isotopy or change of almost-complex structure.
	\end{THM}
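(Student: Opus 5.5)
The plan is to reduce both claims to the fact that the obstruction classes, and the torsors of choices, are built functorially out of the pathspace component $\scr P$ together with the Lagrangian boundary difference map $TL_1 - TL_0 : \scr P \to BO$. Both pieces of data are transported by Hamiltonian isotopy and are insensitive to the almost-complex structure.

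First I treat Hamiltonian isotopy. Let $\phi_t$ be a Hamiltonian isotopy with $L_1' = \phi_1(L_1)$. I consider the family of pathspaces $\scr P_M(L_0, \phi_t(L_1))$ over $t \in [0,1]$. Concatenating a path with the trace $s \mapsto \phi_s(\text{endpoint})$ gives a homotopy equivalence $\Phi : \scr P_M(L_0, L_1) \to \scr P_M(L_0, L_1')$. Under $\Phi$, the component $\scr P$ goes to a component $\scr P'$. Moreover, $\Phi^*(TL_1' - TL_0)$ is homotopic to $TL_1 - TL_0$, via the homotopy supplied by $d\phi_t$ along the trace. Since the obstructions of {\sc Thm.~\ref{thm:obs}} are $w_1$ and $w_2$ of this class, naturality of Stiefel-Whitney classes gives $w_i(\scr P') \mapsto w_i(\scr P)$ under $\Phi^*$. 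In particular, vanishing is preserved.

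For the correspondence of choices, I use the construction in {\sc Props.~\ref{prop:grobs},~\ref{prop:totobs}}. When the obstructions vanish, a $\bb Z/2$-grading, respectively a coherent system of $\bb Z$-coefficients, should amount to a trivialization of the determinant line (a $w_1$-datum), respectively a lift to $BSpin$-type data (a $w_2$-datum), of the difference bundle over $\scr P$. These are torsors over $H^0(\scr P;\bb Z/2)$ and $H^1(\scr P;\bb Z/2)$, and pulling back along $\Phi$ gives a torsor isomorphism. I then check that this identification agrees with the continuation-map identification on generators. For an intersection point $x \in L_0\cap_\scr P L_1$ and its continuation $x'$, the continuation strip is a path in the family of pathspaces. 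Gluing its index bundle shows that the grading and orientation transported by the continuation map coincide with those obtained from $\Phi^*$. The continuation map is therefore degree-preserving and defined over $\bb Z$.

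Changing the almost-complex structure $J$ is easier. The pathspace, the difference class and the obstructions do not depend on $J$ at all. The only thing to check is that the assignment of gradings and orientations to Whitney disks, which is defined by homotopy classes of paths in $\scr P$ and the index bundle of the linearized $\bar\partial$, is independent of $J$. This holds because the space of compatible $J$ is contractible, so the determinant lines over the family of operators are canonically trivialized along any path of $J$'s. The identity bijection on choices is then the canonical one.

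I expect the main obstacle to be the compatibility of the canonical bijection with \emph{gluing}. The identification of choices must intertwine the orientation of moduli spaces of continuation strips with the chosen data at the ends. This requires the standard linear gluing theorem for determinant lines of Cauchy-Riemann operators with moving Lagrangian boundary conditions, applied over loops in $\scr P$ rather than only at constant paths. I would attack it by deforming the continuation strip with its moving boundary condition into a strip with fixed boundary condition concatenated with the trace of $\phi_t$. Along this deformation the index bundle varies continuously, so the orientation transported this way agrees with the pullback by $\Phi$ of the $w_1$/$w_2$ trivializations.
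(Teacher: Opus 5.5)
Your argument for the first claim is fine, and shorter than the paper's. Granting Thm.~\ref{thm:obs}, the map $\Phi$ (concatenation with the trace of $\phi_s$) is a homotopy equivalence with $\Phi^*TL_1'\cong TL_1$ via $d\phi_1$. Naturality of $w_1,w_2$ then shows the obstructions vanish simultaneously, and $J$-independence is clear.

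The gap is in the second claim. The choice sets of Defs.~\ref{def:z2gr} and~\ref{def:cohsys} are not structures on the virtual bundle $TL_1-TL_0$ over $\scr P$. They are cochains on $\mathrm B\Pi\scr P$, whose vertices are the generators $L_0\cap_{\scr P}L_1$, cobounding cocycles built from $\delta_{p,q}$ and $\Lambda_{p,q}$. The generator sets before and after the isotopy are in general unrelated, not even of the same size. So $\Phi^*$ acts on orientations or $\fk O_\partial$-lifts of the difference bundle, but not on the choice sets.

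Your ``should amount to'' is therefore an unproved index-theoretic lemma relating $\delta_{p,q}$ and $\Lambda_{p,q}$ for $p\neq q$ to data at the corners. The paper only identifies obstruction \emph{classes}, and only after reducing to the one-object groupoid $\Pi_p\scr P$; moreover $\fk O_\partial$ is built from the asymptotics at that particular generator $p$. Comparing $\Pi_p\scr P$ with $\Pi_{q'}\scr P'$ then needs a continuation disk from $p$ to $q'$ and its determinant line. That is exactly the gluing compatibility you defer as the ``main obstacle''.

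Note also that the canonical bijection in the theorem is the one making the continuation map graded and coherently oriented. You also do not address the chain homotopies realizing $\Phi\star\bar\Phi\simeq\mathrm{id}$ at all.

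The missing idea is that no further index theory is needed. Put both generator sets into one category $\Pi^\Phi$, whose morphisms from old to new generators are $\pi_0\Omega^\Phi_{p,q'}$. Then work with \emph{joint} gradings and orientations, i.e.\ cochains on its nerve built from $\delta_{p,q'}$, $\Lambda_{p,q'}$ and their additivity under pre- and post-concatenation.

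The primary obstruction matches because concatenation gives homotopy equivalences $\Omega_{\tilde p,p}\scr P\simeq\Omega^\Phi_{p,q'}\simeq\Omega_{q',\tilde q'}\scr P'$ compatible with determinant lines. The key observation is that $X_{01}=\pi_0\Omega^\Phi$ is a torsor over both $\pi_1\scr P$ and $\pi_1\scr P'$. Hence $\Pi^\Phi\simeq\Pi\scr P\times(0\to1)\simeq\Pi\scr P'\times(0\to1)$ compatibly with the end inclusions, and restriction to either end is a cochain homotopy equivalence.

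It follows that the obstructions vanish simultaneously, and each choice on one side extends \emph{uniquely} to a joint choice. This makes the continuation map graded and sign-coherent by construction, and defines the canonical bijection. The chain homotopies are handled the same way, using the bidirectional groupoid $\Pi^\Phi_{\leftrightarrows}$ and $\Lambda^{\rm aug}_{p,q}\cong\Lambda_{p,q}$.
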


	\begin{COR}\label{cor:isotinv}
		Change of almost-complex structure and isotopy of the $\alpha$-circles induces a bijection between the sets of choices of absolute $\mathbb Z/2$-gradings and $\mathbb Z$-coefficients for $\widehat{\it HFR}$ for the two real Heegaard diagrams.
	\end{COR}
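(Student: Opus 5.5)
We will deduce {\sc Cor.~\ref{cor:isotinv}} from {\sc Thm.~\ref{thm:invar}}, applied in the real Heegaard Floer setting. Here $M = \scr S^d\Sigma^\circ$, $L_0 = \mathbb T_\alpha$ and $L_1 = \rsym^d\Sigma^\circ$, as in the Guth-Manolescu definition $\widehat{\it HFR} = {\it HL}(\mathbb T_\alpha, \rsym^d\Sigma^\circ)$. The connected component $\scr P$ of the real pathspace $\rhatP(\scr H)$ is the one corresponding, under {\sc Thm.~\ref{thm:comp}}, to the chosen real ${\rm Spin}^c$-structure $\fk s$.

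The first step is to show that the two operations in question are instances of the hypotheses of {\sc Thm.~\ref{thm:invar}}. A change of the ($\tau$-compatible) almost-complex structure is directly covered. For an isotopy of the $\alpha$-circles, the $\beta$-circles move correspondingly under $\tau$. This induces an isotopy of the Lagrangian torus $\mathbb T_\alpha$ inside $\scr S^d\Sigma^\circ$ through tori $\alpha_1^t \times \cdots \times \alpha_d^t$, while $\rsym^d\Sigma^\circ$ stays fixed. Since the $\alpha$-curves are embedded and avoid the basepoints, such an isotopy of product tori is Lagrangian for a suitable symplectic form. In the standard Heegaard Floer manner, it can be realized as a Hamiltonian isotopy, possibly after composing with a small admissibility-preserving perturbation, because the flux is governed by areas of domains. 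The admissibility hypothesis kills the flux obstruction on periodic domains. This reduction is where I expect the main difficulty: one must check that the flux of the induced isotopy of $\mathbb T_\alpha$ vanishes on the relevant classes, or else arrange the symplectic form so that it does. I would first try the standard Ozsv\'ath-Szab\'o argument of choosing area forms on $\Sigma$ for which all real periodic domains have zero signed area.

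The second step is to check that the identification of pathspace components is compatible with the isotopy. An isotopy of $\mathbb T_\alpha$ induces a homotopy equivalence of the pathspaces $\rhatP$. Combined with the invariance of $\pi_0$ under {\sc Thm.~\ref{thm:comp}}, this means the component $\scr P$ labelled by $\fk s$ before the isotopy is carried to the component labelled by $\fk s$ after it. The isotopy is supported away from $\vec w$, so the vector-field construction of {\sc Constr.~\ref{constr:map}} varies continuously and does not change the real ${\rm Spin}^c$-structure.

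With these two steps in place, the bijection of choices of $\mathbb Z/2$-gradings and $\mathbb Z$-lifts for ${\it CL}_\scr P$ follows from {\sc Thm.~\ref{thm:invar}}, component by component. Summing over components then gives the statement for $\widehat{\it HFR}$.
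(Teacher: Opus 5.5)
Your proposal follows the paper's own route: the corollary is just {\sc Thm.~\ref{thm:invar}} applied to the pair $(\mathbb T_\alpha, \rsym^d\Sigma^\circ)$ in $\scr S^d\Sigma^\circ$, with $\rsym^d\Sigma^\circ$ held fixed and only $\mathbb T_\alpha$ (or the almost-complex structure) varying, and the paper likewise takes the analytic input for granted. Note that the bijection of choices in {\sc\S\ref{ssec:cmaps}} uses only that the isotopy induces a homotopy equivalence of pathspaces, so your flux concern bears only on the existence of the continuation maps themselves; there the standard Ozsv\'ath--Szab\'o/Guth--Manolescu treatment uses admissibility for the energy bounds, and handles non-exactness by splitting the isotopy into exact Hamiltonian pieces and $\tau$-equivariant diffeomorphisms of $\Sigma$, the latter absorbed as a change of almost-complex structure.
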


	\begin{RMK}
		The obstructions in {\sc Thm.~\ref{thm:obs}} in particular vanish when $(L_0, L_1)$ are endowed with a coupled Spin structure in the sense of \cite{AM25}, and likewise in the case that $(L_0, L_1, M)$ admits a relative Spin structure in the sense of \cite{Fuk00}. Thus, our hypothesis is strictly weaker than these, but leads to the same conclusion.
	\end{RMK}

	\subsection{Abstract systems of \texorpdfstring{$\bb Z/2$}{Z/2}-gradings and coherent orientations} We begin with a few preliminaries:
	\begin{DEF}
		Let $p, q \in L_0 \cap_{\scr P} L_1$ be two non-degenerate critical points, together with a path $\gamma$ in $\scr P$ from $p$ to $q$, or equivalently a (topological) Whitney disk. Even though such a path $\gamma$ need not be a flow line for the gradient flow of the action functional on $\scr P$, there is still a well-defined linearization of the Floer equation, resulting in a Fredholm operator $D_\gamma$ between Hilbert spaces (the exact choices of Sobolev spaces, metric, almost complex structure will end up not affecting the definition). There is a well-defined locally constant \keywd{virtual dimension function $\delta_{p, q}$} from the space of Whitney disks $\Omega_{p, q} \scr P$ to $\mathbb Z$,
		\[ \delta_{p, q}(\gamma) := {\rm dim~Ker}(D_\gamma) - {\rm dim~Cok}(D_\gamma) \]
		as well as a well-defined \keywd{determinant line bundle} \keywd{$\Lambda_{p, q}$} over $\Omega_{p, q} \scr P$ defined point-wise by
		\[ \Lambda_{p, q}|_\gamma := \Lambda^{\rm top} {\rm Ker}(D_\gamma) \otimes \Lambda^{\rm top} {\rm Cok}(D_\gamma)^\vee.\]
		Standard linear gluing shows that $\delta_{p, q}$ and $\Lambda_{p, q}$ satisfy the following additivity properties:
		\begin{equation}\label{eq:deladd}
			\delta_{p, r}(\gamma \# \gamma') = \delta_{p, q}(\gamma) + \delta_{q, r}(\gamma')
		\end{equation}
		and
		\begin{equation}\label{eq:lamgl}
			\Lambda_{p, r}|_{\gamma \# \gamma'} \cong \Lambda_{p, q}|_\gamma \otimes \Lambda_{q, r}|_{\gamma'}
		\end{equation}
		for any two Whitney disks $\gamma$ and $\gamma'$ from $p$ to $q$ and from $q$ to $r$, respectively, and where $\gamma\# \gamma'$ is any choice of gluing, or more formally any Whitney disk from $p$ to $r$ which is the beginning endpoint of a 1-parameter family of Whitney disks converging in the Gromov sense to $(\gamma, \gamma')$ at the opposite endpoint. The isomorphism of the $\Lambda$'s should be thought of as \emph{data} rather than property, depending on the data of the 1-parameter family; it is also understood to be continuous in $\gamma, \gamma'$, and well-defined up to a contractible space of choices.
	\end{DEF}

	\begin{DEF}\label{def:z2gr}
		An \keywd{abstract $\mathbb Z/2$-grading} is an assignment ${\rm gr} : L_0 \cap_{\scr P} L_1 \to \mathbb Z/2$ such that $\delta_{p, q} = {\rm gr}(p) - {\rm gr}(q)$ mod 2, for all $p, q \in \mathbb Z/2$.
	\end{DEF}

	Such an abstract $\mathbb Z/2$-grading can be used to give a concrete $\mathbb Z/2$-grading of the chain group ${\it CL}_\scr P$ in the tautological way; this is because whenever the Floer chain complex is well-defined, the transversality condition ${\rm Cok}(D_\gamma) = 0$ is implicit, and so the virtual dimension $\delta_{p, q}(\gamma)$ is the actual dimension of the moduli space of parameterized holomorphic strips. In particular, since the Floer differential only counts strips with actual parameterized dimension 1, this means that it must flip the $\mathbb Z/2$-grading for it to be non-trivial.

	\begin{DEF}\label{def:cohsys}
		Define \keywd{$\fk o_{p, q}([\gamma])$} to be the set of choices of orientations of $\Lambda_{p, q}$ restricted to the component of $\gamma$ in the case that it is orientable, and the empty set otherwise, for any $[\gamma] \in \pi_0 \Omega_{p, q}\scr P$ (we often abuse notation and write $\gamma$ instead of $[\gamma]$). In view of (\ref{eq:lamgl}), there are well-defined maps
		\begin{equation}\label{eq:compor}
			\otimes : \fk o_{p, q}(\gamma) \times \fk o_{q, r}(\gamma') \to \fk o_{p, r}(\gamma \# \gamma'),
		\end{equation}
		given by the induced orientation on the tensor product of two oriented line bundles. Note that in our case $[\gamma\#\gamma'] \in \pi_0 \Omega_{p, r}$ depends only on $[\gamma], [\gamma']$, since the choice of interpolating 1-parameter family from $\gamma \# \gamma'$ to the broken $(\gamma, \gamma')$ in Gromov sense can be chosen to be small, and hence lie in a contractible space. Define an \keywd{abstract system of coherent orientations} to be any choice $\scr o_{p, q}(\gamma) \in \fk o_{p, q}(\gamma)$ for all $p, q, \gamma$, such that $(\scr o_{p, q}(\gamma), \scr o_{q, r}(\gamma'))$ maps to $\scr o_{p, q}(\gamma \# \gamma')$ under the map (\ref{eq:compor}) for all $p, q, r, \gamma, \gamma'$. Two such systems $\scr o, \scr o'$ are said to be \keywd{cohomologous} if there exists a function $\epsilon : L_0 \cap_{\scr P} L_1 \to \{\pm 1\}$ such that 
		\begin{equation}\label{eq:cohsys}
			\scr o'_{p, q}(\gamma) = \epsilon(p) \cdot \epsilon(q) \cdot \scr o_{p, q},
		\end{equation}
		for all $p, q, \gamma$. A cohomology class of abstract systems of coherent orientations can be called more succinctly an \keywd{abstract $\bb Z$-coefficient lift} for ${\it CF}_\scr P(L_0, L_1)$.
	\end{DEF}

	\begin{RMK}\label{rmk:signambig}
		The map (\ref{eq:compor}) is sensitive to the order of the tensor factors in (\ref{eq:lamgl}): indeed, if we switch the order, we incur an extra sign $(-1)^{\delta_{p,q}(\gamma) \cdot \delta_{q,r}(\gamma')}$. This subtlety will become relevant later, and is the cause of certain asymmetries. 
	\end{RMK}

	An abstract system $\scr o$ of coherent orientations allows us to define the Floer chain complex ${\it CL}_{\scr P}(L_0, L_1; \scr o)$ over $\bb Z$ by counting $\scr M_{p, q}$ with signs given by transporting the orientation $\scr o_{p, q}(\gamma)$ through the identification $T_\gamma\scr M_{p, q} \oplus \mathbb R \cong {\rm Ker}(D_\gamma)$, where $\mathbb R$ comes from the $\mathbb R$-translation action. The coherence ensures that the orientation on the 1-dimensional unparameterized moduli spaces respects the product orientation on the boundary, so that $d^2 = 0$. If $\scr o'$ and $\scr o$ are two cohomologous systems of coherent orientations, differing as in the equation (\ref{eq:cohsys}) above, then there is a natural isomorphism
	\[ \sigma_\epsilon : {\it CF}_\scr P(L_0, L_1; \scr o) \overset\sim\to {\it CF}_\scr P(L_0, L_1; \scr o') \]
	given by sending $[p] \mapsto \epsilon(p) [p]$. One has $\sigma_{\epsilon \cdot \epsilon'} = \sigma_{\epsilon'} \circ \sigma_{\epsilon}$ on the chain level, thus producing a transitive system of chain complexes over all the choices of systems of coherent orientation in a cohomology class.

	\begin{RMK}
		The reason for the name ``abstract'' in our definitions is that they are not necessarily in bijection with the choices of gradings and $\mathbb Z$-lifts of the actual chain complex. Indeed, some of the moduli spaces could end up being empty, in which case the choices of $\delta_{p, q}$ and $\scr o_{p, q}(\gamma)$ become irrelevant. However, the latter phenomenon is accidental, in the sense that if we apply several Heegaard moves the non-emptiness property of the moduli space need not be preserved (in fact even deforming the almost-complex structure could change this.) As such, the ``abstract'' versions should be thought of as being the ``natural choices.''
	\end{RMK}

	We are interested in finding if-and-only-if obstructions to the existence of abstract systems of $\mathbb Z/2$-gradings and coherent orientations, and also in understanding the set of such choices (up to cohomology in the latter case), when the obstructions vanish.
	
	\begin{DEF}
		Let $\Pi\scr P$ denote the fundamental groupoid of $\scr P$ with basepoints given by constant paths, i.e.~ elements of $L_0 \cap_\scr P L_1$, and let $\mathrm B\Pi \scr P$ denote its nerve as a simplicial set, i.e.~ with $n$-simplices given by composable arrows $p_0 \to p_1 \to \cdots \to p_n$ with the usual face and degeneracy maps.
	\end{DEF}

	\begin{RMK}
		Except for the degenerate case that $L_0 \cap_\scr P L_1 = \0$, the groupoid $\Pi \scr P$ is equivalent to the entire fundamental groupoid $\Pi_1 \scr P$. Since the exceptional case corresponds to the chain complex being zero on the nose, it is not very interesting, and we will henceforth assume that it does not occur.
	\end{RMK}

	We begin with $\mathbb Z/2$-gradings. The collection of all $\delta_{p, q}(\gamma)$ mod 2 give an $\mathbb F_2$-valued (simplicial) cochain
	\[ \delta \in C^1(\mathrm B\Pi\scr P; \mathbb F_2), \]
	and the additivity property (\ref{eq:deladd}) can be used to prove that this is a \emph{cocycle}. It is not hard to see that an abstract $\mathbb Z/2$-grading is precisely the data of a 0-cochain ${\rm gr} \in C^0(\mathrm B\Pi\scr P; \mathbb F_2)$ whose differential is delta: $d({\rm gr}) = \delta$. In other words, we have proven:

	\begin{PROP}\label{prop:grobs}
		There is a canonical $\mathbb F_2$-valued 1-cocycle in $B\Pi\scr P$ whose cohomology class vanishes if and only if an absolute $\mathbb Z/2$-grading exists, and such that the $\mathbb Z/2$-gradings are in bijection with 0-cochains that cobound it.
	\end{PROP}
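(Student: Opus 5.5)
The plan is to unwind the definitions and reduce the statement to the standard correspondence between 1-cochains on the nerve of a groupoid and grading functions on its objects. First, I will define the cochain $\delta \in C^1(\mathrm B\Pi\scr P; \mathbb F_2)$ on a 1-simplex, i.e.~ on an arrow $[\gamma] : p \to q$ of $\Pi\scr P$, by $\delta([\gamma]) := \delta_{p,q}(\gamma) \bmod 2$. This is well-defined because $\delta_{p,q}$ is locally constant on $\Omega_{p,q}\scr P$, so it depends only on the homotopy class of the Whitney disk rel endpoints, which is exactly a morphism in $\Pi\scr P$. Canonicity follows because the Fredholm index of $D_\gamma$ is independent of the auxiliary choices of Sobolev completions, metric and almost complex structure: these form a connected space of choices, and the index is constant along it.

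Next, I will check the cocycle condition. A 2-simplex is a composable pair $p \xrightarrow{[\gamma]} q \xrightarrow{[\gamma']} r$, and the simplicial coboundary gives
\[ (d\delta)(\gamma, \gamma') = \delta([\gamma']) - \delta([\gamma \# \gamma']) + \delta([\gamma]). \]
By (\ref{eq:deladd}) this vanishes, provided that $[\gamma \# \gamma']$ equals the groupoid composite $[\gamma'] \circ [\gamma]$. That identification is the one point needing care. A gluing $\gamma \# \gamma'$ is joined by a small 1-parameter family to the broken configuration, and the broken configuration is homotopic rel endpoints to the concatenated path in $\scr P$. Hence its class in $\pi_0\Omega_{p,r}\scr P$ is the concatenation class, and this argument is already implicit in Def.~\ref{def:cohsys}. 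I expect this identification to be the only non-formal step, and it is mild.

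Finally, I will compare with Def.~\ref{def:z2gr}. A function ${\rm gr} : L_0 \cap_\scr P L_1 \to \mathbb Z/2$ is precisely a 0-cochain on $\mathrm B\Pi\scr P$, since the 0-simplices are the intersection points. Its coboundary on an arrow $p \to q$ is ${\rm gr}(q) - {\rm gr}(p)$, which over $\mathbb F_2$ equals ${\rm gr}(p) - {\rm gr}(q)$. So the defining condition of an abstract $\mathbb Z/2$-grading is literally $d({\rm gr}) = \delta$, imposed on all morphisms at once. This gives the bijection between gradings and 0-cochains cobounding $\delta$. Such a cochain exists if and only if $[\delta] = 0$ in $H^1(\mathrm B\Pi\scr P; \mathbb F_2)$, which is the vanishing criterion.

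As a remark, under the standing assumption $L_0 \cap_\scr P L_1 \neq \varnothing$, $\mathrm B\Pi\scr P$ is equivalent to the 1-type of the connected space $\scr P$. Then $H^1(\mathrm B\Pi\scr P;\mathbb F_2) \cong H^1(\scr P;\mathbb F_2)$, and the set of gradings, when nonempty, is a torsor over $H^0 = \mathbb F_2$, namely global flipping.
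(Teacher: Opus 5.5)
Your proposal is correct and follows the paper's argument exactly. The paper defines the same cochain $\delta \in C^1(\mathrm B\Pi\scr P;\mathbb F_2)$ from the mod-$2$ indices $\delta_{p,q}(\gamma)$, uses the additivity (\ref{eq:deladd}) to show it is a cocycle, and observes that an abstract $\mathbb Z/2$-grading is literally a $0$-cochain ${\rm gr}$ with $d({\rm gr}) = \delta$. Your version only spells out points the paper leaves implicit, namely well-definedness on homotopy classes, the identification of $[\gamma\#\gamma']$ with the groupoid composite, and the sign convention over $\mathbb F_2$.
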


	Next, we seek to prove an analogous statement for orientations. The first road-block is that some of the $\Lambda_{p,q}$ may be non-orientable. It turns out that if at least one $\Lambda_{p,q}|_{[\gamma]}$ is orientable, where $[\gamma] \in \pi_0 \Omega_{p,q}$ for some points $p, q$, then the same is true for all of the other ones. Indeed, given any other $[\gamma'] \in \pi_0\Omega_{p,q}$, there exists some $\gamma'' \in \Omega_{p, p}$ such that $\gamma' = \gamma'' \# \gamma$, in which case the gluing isomorphism (\ref{eq:lamgl}) gives us an isomorphism of bundles
	\[ \Lambda_{p,p}|_{\gamma''} \otimes \Lambda_{p,q}|_{[\gamma]} \overset\sim\to \Lambda_{p,q}|_{[\gamma']} \]
	over the connected components $[\gamma]$ and $[\gamma']$, so in particular one is orientable if and only if the other one is. Similarly, one can pre/post-compose with paths from/to different points in $L_0 \cap_\scr P L_1$ in order to prove invariance of the orientability property with respect to changing $p$ or $q$. Let $\Omega_0 \scr P$ denote the connected component of the loopspace $\Omega\scr P$ at the constant loop, and let us call 
	\begin{equation}\label{eq:probs}
		w_1(\Lambda_{p,p}|_{[{\rm id}_p]}) \in H^1(\Omega_0\scr P; \mathbb F_2)
	\end{equation}
	the \keywd{primary obstruction to coherent orientations}. We will be able to give an explicit formula for it later on. As the name suggests, there is a secondary obstruction which we are about to define, when the primary one does vanish.
	
	Let us assume that the primary obstruction vanishes, and let us begin by picking arbitrary orientations $\scr o_{p, q}(\gamma) \in \fk o_{p, q}(\gamma)$, for all $p, q, [\gamma]$ without any coherence assumption. In this case, given any two (homotopy classes of) composable Whitney disks $p \overset\gamma\rightsquigarrow q \overset{\gamma'}\rightsquigarrow r$, we obtain a well-defined element $\xi(\gamma, \gamma') \in \{\pm 1\}$, defined by
	\begin{equation}\label{eq:cohfail}
		\scr o_{p, r}(\gamma \# \gamma') = \xi(\gamma, \gamma') \cdot \big[\scr o_{p, q}(\gamma) \otimes \scr o_{q, r}(\gamma')\big],
	\end{equation}
	measuring the failure of coherence. This can be thought of as a $\{\pm1\}$-valued 2-cochain
	\[ \xi \in C^2(\mathrm B\Pi \scr P; \{\pm 1\}). \]
	Associativity of the $- \otimes -$ operation translates to the equation
	\[ \xi(\gamma \# \gamma', \gamma'') \cdot \xi(\gamma, \gamma') = \xi(\gamma, \gamma' \# \gamma'') \cdot \xi(\gamma', \gamma''), \]
	which after rearrangement of terms translates precisely into the 2-cocycle condition. Changing the original choice of $\scr o_{p, q}(\gamma)$ to $\scr o_{p, q}'(\gamma) := \eta_{p, q}(\gamma) \cdot \scr o_{p, q}(\gamma)$ for some 1-cochain $\eta \in C^1(\mathrm B\Pi\scr P; \{\pm 1\})$ changes the associated 2-cocycle by the formula
	\[ \xi'(\gamma, \gamma') = \eta(\gamma) \eta(\gamma') \eta(\gamma \# \gamma') \cdot \xi(\gamma, \gamma'), \]
	i.e.~ by the coboundary $d\eta$. In particular, the original choice of orientation assignment $\scr o_{p, q}(\gamma)$ can be changed to become a system of coherent orientations if and only if $[\xi] \in H^2(\mathrm B\Pi\scr P; \{\pm 1\})$ is zero. Moreover, this shows that the set of all systems of coherent orientations $\scr o_{p, q}(\gamma)$ is naturally a torsor over $C^1(\mathrm B\Pi\scr P; \{\pm 1\})$, if it is non-empty. The action of such a 1-cochain $\eta$ does not change the cohomology class of the system of coherent orientations if and only if $\eta = d \epsilon$ for a 0-cochain $\epsilon$. Thus, we have proven:

	\begin{PROP}\label{prop:orobs}
		Assuming the primary obstruction (\ref{eq:probs}) vanishes, there is a 2-cocycle in $C^2(\mathrm B\Pi\scr P; \{\pm 1\})$, called the \keywd{secondary obstruction to coherent orientations}, well-defined up to a transitive system of 2-coboundaries, whose induced class in $H^2$ is zero if and only if abstract systems of coherent orientations exist. The set of orientation choices modulo the cohomology relation is naturally a torsor over $H^1(\mathrm B\Pi\scr P; \{\pm 1\})$.
	\end{PROP}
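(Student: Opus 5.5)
The plan is to use the cochain $\xi$ from (\ref{eq:cohfail}) directly as the secondary obstruction. The first step is to check that $\xi$ is defined. Since the primary obstruction (\ref{eq:probs}) vanishes, the transport argument just preceding shows that every $\Lambda_{p,q}|_{[\gamma]}$ is orientable, so each $\fk o_{p,q}(\gamma)$ is a nonempty two-element set. Make an arbitrary choice $\scr o_{p,q}(\gamma)$ for every morphism $[\gamma]$ of $\Pi\scr P$. Because $\Pi\scr P$ is a groupoid, morphisms are homotopy classes of Whitney disks, and the gluing class $[\gamma\#\gamma']$ depends only on $[\gamma],[\gamma']$, as observed in Def.~\ref{def:cohsys}. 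Hence $\xi(\gamma,\gamma')\in\{\pm1\}$ is a well-defined function on 2-simplices of $\mathrm B\Pi\scr P$. For degenerate simplices, identity morphisms, take the orientation on $\Lambda_{p,p}|_{{\rm id}_p}$ induced by the canonical trivialisation coming from the surjective translation-invariant operator. Then $\xi$ is trivial on degenerate simplices, and we obtain a normalized cochain.

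The second step is the cocycle identity. I would compare the two ways of orienting $\Lambda_{p,s}|_{\gamma\#\gamma'\#\gamma''}$ from the three factors. This requires that the gluing isomorphisms (\ref{eq:lamgl}) be associative: the two triple-gluing isomorphisms $(\Lambda\otimes\Lambda)\otimes\Lambda\to\Lambda$ obtained by gluing in either order must agree up to the contractible choice of 2-parameter family of breakings. I expect this to be the \textbf{main obstacle}, or at least the only non-formal point. I would argue it by considering a 2-parameter family of doubly-broken Whitney disks. The corner of this family gives both compositions, and continuity of linear gluing over the square identifies them. The factor order is kept fixed throughout, so the Koszul sign of Rmk.~\ref{rmk:signambig} never enters. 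Expanding $\scr o_{p,s}$ both ways and cancelling yields $\xi(\gamma\#\gamma',\gamma'')\xi(\gamma,\gamma')=\xi(\gamma,\gamma'\#\gamma'')\xi(\gamma',\gamma'')$. This is $d\xi=1$ multiplicatively.

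The third step is the dependence on choices. Changing the choices by $\eta\in C^1(\mathrm B\Pi\scr P;\{\pm1\})$ multiplies $\xi$ by $d\eta$, so the class $[\xi]\in H^2$ is canonical. The coboundaries relating any two representatives form a transitive system, because the comparison for a pair of choices is exactly the $\eta$ relating them, and these $\eta$ compose multiplicatively. A coherent system is precisely a choice with $\xi\equiv1$. Such a choice exists iff some $\eta$ satisfies $d\eta=\xi$, that is, iff $[\xi]=0$. Coherent systems then form a torsor over the 1-cocycles $Z^1$, since $\eta$ preserves coherence iff $d\eta=1$; the previous text said $C^1$ loosely, and I will correct this to $Z^1$.

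The final step identifies the cohomology relation (\ref{eq:cohsys}) with the action of coboundaries $d\epsilon$ for $\epsilon\in C^0$. Here $(d\epsilon)_{p,q}=\epsilon(p)\epsilon(q)$, which uses that signs are $\pm1$. Therefore the set of abstract $\mathbb Z$-coefficient lifts is a torsor over $Z^1/B^1=H^1(\mathrm B\Pi\scr P;\{\pm1\})$.
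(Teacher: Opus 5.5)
Your proposal is correct and follows the paper's own argument. Both choose arbitrary orientations, define $\xi$ by (\ref{eq:cohfail}), get the cocycle identity from associativity of gluing, note that re-choosing by $\eta$ changes $\xi$ by $d\eta$, and quotient coherent systems by the coboundaries $d\epsilon$ of (\ref{eq:cohsys}). Your additions are sound and only make explicit what the paper leaves implicit: the normalization on degenerate simplices, the two-parameter gluing argument for associativity, and the correction that coherent systems form a torsor over the $1$-cocycles $Z^1$ rather than all of $C^1$.
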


	\subsection{Real Cauchy-Riemann operators, and calculation of obstructions} In order to elucidate these obstructions, let us recall some more facts about real Cauchy-Riemann (CR) operators. First of all, the necessary data needed to define a (real) linear Cauchy-Riemann operator are:
	\begin{list}{$\cdot$}{}
		\item A compact Riemann surface $(\Sigma, \partial \Sigma, j)$ with boundary, together with a finite set $S \subset \Sigma$ of punctures (which could occur either on the interior or on the boundary);
		\item A complex Hermitian vector bundle $(E, J, h)$ over $\Sigma \setminus S$, together with a sub-bundle $F \subset E|_{\partial \Sigma \setminus S}$ such that $E = F \oplus JF$ is an orthogonal decomposition with respect to $g := {\rm Re}(h)$;
		\item A $\bar\partial$-type connection, i.e.~ a $\mathbb C$-linear map $\bar\partial_E : C^\infty(E) \to C^\infty(\Omega^{0,1}\Sigma \otimes E)$ satisfying $\bar \partial_E (f \cdot s) = (\bar\partial f) \cdot s + f \cdot \bar \partial _E$;
		\item A volume form $d\nu$ that is asymptotically cylindrical near every puncture, which induces well-defined equivalence classes of Sobolev spaces $W^{1, 2}$ and $L^2$ on $E$.
	\end{list}
	This gives us a linear operator
	\[ D : W^{1,2}(\Sigma \setminus S; E, F) \to L^2(\Sigma \setminus S; \Omega^{0, 1}\Sigma \otimes E) \]
	defined as the Sobolev closure of $\bar \partial_E$, where the $F$ in the domain signifies that we only consider sections of $E$ whose restriction to the boundary lies in $F$. In order to get a Fredholm operator on $\Sigma \setminus S$ (a necessary precondition for having a well-defined index and determinant line), one requires the following extra asymptotic condition when approaching the punctures:
	\begin{list}{$\cdot$}{}
		\item near every point $p \in S$, either a left cylindrical end, of the form $(-\infty, R) \times [0, \pi]$ or $(-\infty, R) \times S^1$, or a right cylindrical end, of the form $(R, +\infty) \times [0, \pi]$ or $(R, +\infty) \times S^1$, parameterizing $\Sigma\setminus p$ near $p$; furthermore, an identification of $E$ over all fibers of a ray $(R, +\infty) \times t$ or $(-\infty, R) \times t$ for $t \in [0, \pi]$ or $t \in S^1$, and likewise for $F$ on $\{0, 1\}$ in the boundary case, so that:
		\item The volume form $d\nu$ is asymptotically cylindrical with respect to the aforementioned particular cylindrical end, and the operator $\partial_E$, when written in local coordinates $(s, t)$, asymptotically converges to $\partial_s + J \partial_t + A$, for $A$ a symmetric 0th order operator, so that $J \partial_t + A$ has zero kernel.
	\end{list}
	We know that the index as a number is preserved under homotoping the operator through Fredholm operators, and likewise the determinant lines are canonically isomorphic up to contractible choice induced by the homotopy, which is functorial with respect to composing homotopies. The choice of $\bar \partial_E$ connection, as well as the Hermitian structure and volume form $d\nu$ away from the punctures, are parameterized by a contractible space. As such, we may speak of the index or determinant line associated to the slightly smaller datum $(\Sigma, S, E, F, \ldots)$, where the extra asymptotical data near punctures (i.e.~ the cylindrical end, the form $d\nu$, and the ray trivializations) required for Fredholmness are written in ellipses. By abuse the notation, we refer to the datum $(\Sigma, S, E, F, \ldots)$ as ``the real CR operator'', though secretly it is an entire contractible parameter-family of such operators.
	
	The index and determinant line are also respected by any isomorphism 
	\[ (\Sigma, S, E, F, \ldots) \overset\sim\longrightarrow (\Sigma', S', E', F', \ldots'), \]
	commuting with all the data. The upshot of this discussion is that the index and determinant lines are abstract homotopical constructions associated to purely algebraic-topological input data.

	We may also change a real CR operator with a controlled effect on the index and determinant lines by gluing in either a formal disk or sphere bubble, which we now explain in slightly more detail. Indeed, let $\bar\partial_\Sigma = (\Sigma, S, E_\Sigma, F_\Sigma, \ldots)$ be a real CR operator, and $z \in \Sigma \setminus S$ be an interior point. Further, suppose $\bar\partial_{S^2} = (S^2, \0, E_{S^2}, \ldots)$ is a real CR operator on a sphere, together with an interior point $w \in S^2$, and equipped with identifications
	\[ T_z \Sigma \cong T_w^* S^2, \qquad E_\Sigma|_z \cong E_{S^2}|_w. \]
	Then, up to some choice of neck length, there is a well-defined connect-sum $\Sigma \# S^2$ at the basepoints $z, w$ which is still a Riemann surface, on which one obtains an induced operator
	\[ \bar\partial_{\Sigma \# S^2} = (\Sigma \# S^2, S, E_\Sigma \# E_{S^2}, F_\Sigma, \ldots). \]
	The index and determinant line change \cite[(11.12) of \S11c]{Sei08} as follows:
	\begin{equation}\label{eq:inds}
		{\rm Ind}(\bar\partial_{\Sigma \# S^2}) = {\rm Ind}(\bar\partial_{\Sigma}) - {\rm dim}_\mathbb R(E) + {\rm Ind}(\bar\partial_{S^2})
	\end{equation}
	\begin{equation}\label{eq:dets}
		{\rm Det}(\bar \partial_{\Sigma \# S^2}) \cong {\rm Det}(\bar\partial_{\Sigma}) \otimes (\Lambda^{\rm top}_\mathbb R E_{S^2}|_w)^{\vee} \otimes {\rm Det}(\bar\partial_{S^2}).
	\end{equation}
	(the last two tensor factors end up being trivial, but we write it like this to illustrate the similarity with gluing on a disk bubble, which does not enjoy the same cancelling property.) When gluing a bubble on top of another bubble, we may first glue the bubbles together and then glue the result to $\Sigma$, or the other way around. The two induced identifications between determinant lines end up being homotopic, so the process may be considered ``associative''. 
	
	Similarly, if $z \in \Sigma \setminus S$ is a boundary point, and $\bar \partial_{D^2} = (D^2, \0, E_{D^2}, F_{D^2}, \ldots)$ is a real CR operator on a disk, with a boundary point $w$ and an identifications
	\[ T_z (\partial \Sigma) \cong T^*_w (\partial D^2), \qquad F_{\Sigma}|_z \cong F_{D^2}|_w, \]
	there is an induced operator
	\[ \bar\partial_{\Sigma \# D^2} = (\Sigma \# D^2, S, E_\Sigma \# E_{D^2}, F_\Sigma \# F_{D^2}, \ldots), \]
	satisfying \cite[(11.11) of \S11c]{Sei08} 
	\begin{equation}\label{eq:indd}
		{\rm Ind}(\bar\partial_{\Sigma \# D^2}) = {\rm Ind}(\bar\partial_{\Sigma}) - {\rm dim}_\mathbb R(F) + {\rm Ind}(\bar\partial_{D^2})
	\end{equation}
	\begin{equation}\label{eq:detd}
		{\rm Det}(\bar \partial_{\Sigma \# D^2}) \cong {\rm Det}(\bar\partial_{\Sigma}) \otimes (\Lambda^{\rm top} F_{D^2}|_w)^{\vee} \otimes {\rm Det}(\bar\partial_{D^2}).
	\end{equation}
	The analogous associativity claim holds, with the caveat that whenever tensor factors are swapped in the commutative diagram, one must introduce the appropriate \emph{Koszul sign} (more concrete examples of this will show up in the proofs below). These operations can be thought of as keeping $\Sigma$ the same, except changing the bundles $E$ (and $F$) in a small neighborhood near $z \in \Sigma \setminus S$.

	Using these, one can deduce the following very standard (cf. \cite[Prop. 8.1.4]{FOOO10}) corollary:

	\begin{PROP}\label{prop:boundred}
		The index mod 2, as well as the determinant line, of a real CR operator on $\Sigma = D^2$ with punctures $S = \{\pm i\}$ (i.e.~ a Whitney disk), depend only $F$. The identification of determinant lines for Whitney disks with the same $F$ is compatible with the gluing isomorphism (\ref{eq:lamgl}).
	\end{PROP}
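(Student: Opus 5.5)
The plan is to reduce a Whitney-disk operator to a standard form by deforming $E$ and trivialising it over the disk, so that the only remaining homotopical input is the boundary Lagrangian loop $F$. First, the puncture data: at each of $\pm i$ the asymptotic operator $J\partial_t + A$ is fixed by the intersection point ($p$ or $q$) together with a contractible choice of $A$. The bundle data $(E,J,h)$ over $D^2 \setminus \{\pm i\}$ extends over the closed disk, and since $D^2$ is contractible, $E$ admits a unitary trivialisation $E \cong D^2 \times \bb C^n$. The space of such trivialisations is ${\rm Maps}(D^2, U_n)$, which is contractible relative to its value at one point. After fixing the trivialisation, the data reduces to the boundary condition $F$, viewed as a path in the Lagrangian Grassmannian $U_n/O_n$ over each of the two boundary arcs of $\partial D^2 \setminus \{\pm i\}$. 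The asymptotic conditions pin this path down near the punctures. Since the space of connections, metrics and volume forms is contractible, the index and the determinant line are functions of $F$ (in this trivialisation) alone. This gives the first claim, apart from checking that the answer does not depend on the trivialisation.

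Second, I handle the trivialisation ambiguity. Two unitary trivialisations differ by a map $g : D^2 \to U_n$, which is homotopic to a constant. Transporting along the homotopy gives a path of Fredholm operators, and hence a canonical isomorphism of determinant lines; because the space of homotopies is contractible, this isomorphism is well-defined. The statement is only about mod-2 index and the determinant line, so a more intrinsic route is also available. Changing $E$ while keeping $F$ fixed can be achieved by gluing sphere bubbles at interior points (\ref{eq:inds}), (\ref{eq:dets}). A sphere bubble contributes ${\rm Ind}(\bar\partial_{S^2}) - \dim_{\bb R} E = 2c_1$, which is even, and a determinant factor $\Lambda^{\rm top}_{\bb R}(E_{S^2}|_w)^\vee \otimes {\rm Det}(\bar\partial_{S^2})$. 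Both factors are canonically oriented by the complex structure, since complex-linear CR operators have complex kernel and cokernel. Hence the mod-2 index and the determinant line are unchanged, canonically. Together with the first step, this shows that only $F$, up to homotopy relative to the endpoints, matters.

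Third, I check compatibility with gluing. The gluing isomorphism (\ref{eq:lamgl}) is built by linear gluing of two strips along a common puncture. Suppose the two identifications (from Whitney disk $u$ to $u'$ with the same $F$, and similarly for the second factor) are realised by paths of operators, i.e. by homotopies of $E$ or by complex sphere-bubble insertions supported away from the puncture. Then linear gluing commutes with these deformations, because gluing is continuous in families. The square of determinant-line isomorphisms therefore commutes up to a contractible choice. Sphere bubbles have complex, hence even-dimensional, determinant contributions, so no Koszul signs arise when they are reordered past the other tensor factors. This parity bookkeeping is the point needing care.

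The main obstacle I anticipate is the third step. I must verify that the isomorphism of determinant lines in the first step, coming from trivialisation or bubble-insertion, is independent of choices coherently in families, so that it intertwines with (\ref{eq:lamgl}) rather than only pointwise. I would handle this by following \cite[Prop.~8.1.4]{FOOO10} and \cite[\S11]{Sei08}. The key is to phrase everything over the contractible space of choices and to use that gluing is a continuous map of such parameter spaces.
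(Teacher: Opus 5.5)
Your sphere-bubble argument in the second step is exactly the paper's proof of the first claim. Two extensions of $F\otimes\mathbb C$ agree away from an interior point $z$, so $E'\cong E\# E_{S^2}$, and by (\ref{eq:inds}), (\ref{eq:dets}) the index changes by $2c_1(E_{S^2})$ and the determinant line by a canonically oriented complex factor. Your third step reproduces the first of the paper's two commuting squares: bubble insertion commutes with strip gluing, and the even degree of the bubble factor removes Koszul signs. The trivialisation layer of your first step is superfluous. The loop in $U_n/O_n$ you obtain depends on $E$, since its Maslov index shifts by $2c_1$ when $E$ is changed rel.\ $F$. So all the content is in the bubble comparison.

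The gap is the coherence you yourself flag as the main obstacle, and the remedy you propose does not work as stated. The extensions of $F\otimes\mathbb C$ over the disk do not form a contractible space of choices. Their components are indexed by a relative Chern number in $\mathbb Z$, and different components are related only by bubble insertion, not by paths. So ``phrasing everything over a contractible space of choices'' cannot supply two things you still need.
\begin{enumerate}
\item Transitivity: the bubble identifications $\det(E^{(0)})\to\det(E^{(1)})\to\det(E^{(2)})$ must compose to the direct one $\det(E^{(0)})\to\det(E^{(2)})$. Otherwise ``the'' determinant line of $F$ is not well defined, and the bundle $\mathfrak O_\partial$ later cannot be patched.
\item On the glued disk $\gamma\#\gamma'$, inserting one bubble in each half gives some isomorphism; that is what your commuting-with-gluing argument produces. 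It must agree with the canonical identification, which uses a single bubble carrying the combined clutching bundle.
\end{enumerate}
The missing input for both is associativity of bubble gluing. Gluing two bubbles onto a third sphere and then onto the disk, versus gluing them directly onto the disk, are joined by a 2-parameter family of operators with independent gluing parameters, so the induced determinant isomorphisms agree. This fills the second square of the hexagon in Fig.~\ref{fig:glsph}. It also gives $E^{(0)}\#E^{0\to1}_{S^2}\#E^{1\to2}_{S^2}\cong E^{(0)}\#E^{0\to2}_{S^2}$ compatibly on determinant lines.
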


	\begin{proof}
		Assume $(D^2, \pm i, E, F, \ldots)$ and $(D^2, \pm i, E', F', \ldots)$ are two real CR operators with an identification of $E$ and $E'$ and the asymptotic ends near the punctures, as well as an identification of $F$ and $F'$ compatible with the isomorphisms $F \otimes \mathbb C \cong E$ and $F' \otimes \mathbb C \cong E'$. In particular, $E'$ is also identified with $E$ away from an interior point $z$, thanks to being complexifications of $F$ and $F'$. Hence, we may write $E' \cong E \# E_{S^2}$ for some complex bundle $E_{S^2}$ well-defined up to contractible space of choices on $S^2$ by the clutching isomorphism induced by trivializing $E$ and $E'$ outside a small neighborhood of $z$. The gluing formulas (\ref{eq:inds}, \ref{eq:dets}) give us the desired equality of index mod 2 and determinant, because the operator on $S^2$ is complex, hence having canonically trivial determinant and real dimension mod 2. In addition, for the determinant line we must show that the identifications are transitive with respect to a commutative triangle
		\[ \begin{tikzcd}
			(E^{(0)}, F^{(0)}, \ldots) \rar\ar[rr,bend left = 10] & (E^{(1)}, F^{(1)}, \ldots) \rar & (E^{(2)}, F^{(2)}, \ldots).
		\end{tikzcd} \]
		This is true thanks to the associativity of gluing two bubbles on top of each other:
		\[ E^{(2)} \cong E^{(1)} \# E_{S^2}^{1\to2} \cong E^{(0)} \# E_{S^2}^{0\to1} \# E_{S^2}^{1\to2} \cong E^{(0)} \# E_{S^2}^{0\to2}, \]
		where $E_{S^2}^{i\to j}$ represents the bundle on the bubble used to go from $E^{(i)}$ to $E^{(j)}$.

		Finally, to see compatibility of the identifications with gluing isomorphisms, we note that the outer hexagon in {\sc Fig.~\ref{fig:glsph}} can be filled in with two commutative squares, each being induced by a 2-parameter family of operators coming from doing the relevant degenerations in parallel.
		\begin{figure}
			\centering
			\includegraphics[scale=0.5]{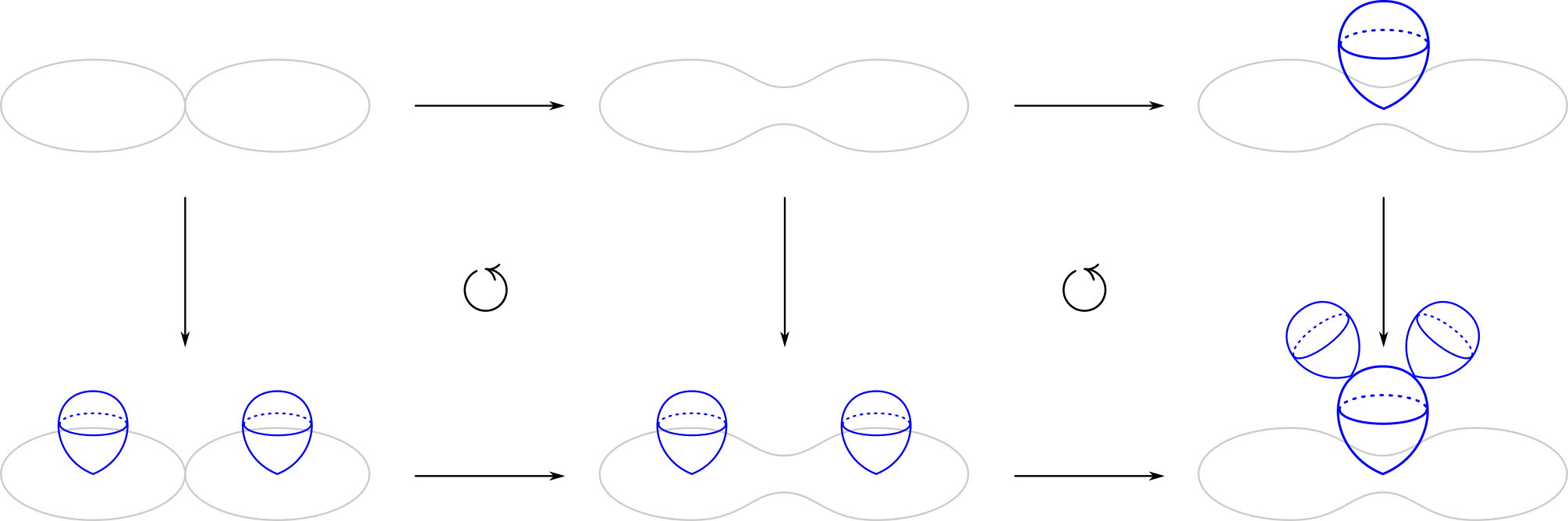}
			\caption{Depiction of gluing bubbles on composable Whitney disks, showing the compatibility of the induced maps on determinant lines. The first square commutes because the gluing of the sphere bubbles can be done independently of the gluing of the Whitney disks, hence inducing a 2-parameter family of operators depending on the gluing parameters used for both constructions. Likewise, the second square commutes because the gluing of the big bubble can be done independently of the gluing of the two smaller bubbles.}\label{fig:glsph}
		\end{figure}
	\end{proof}

	We will use this to express the obstructions in {\sc Props.~\ref{prop:grobs},~\ref{prop:orobs}} in terms of characteristic classes of the Lagrangian boundary difference introduced in {\sc Def.~\ref{def:lbd}}. First, let us reduce to the case of a groupoid with one object:

	\begin{PROP}
		The inclusion $\mathrm B\Pi_p \scr P \hookrightarrow \mathrm B\Pi \scr P$ of the full sub-groupoid with one object $p$ induces a chain-homotopy equivalence on $C^*$. If $p, q$ are two objects and $[\gamma]$ is a Whitney disk from $p$ to $q$, there is a well-defined conjugation map $c_{\gamma} : \Pi_p \overset\sim \to \Pi_q$ commuting up to canonical chain homotopy $h_\gamma$ with the inclusion in $\Pi \scr P$ on the level of $C^*$. The chain homotopies are coherent with respect to concatenation of Whitney disks:
		\[ h_{\gamma \# \gamma'} = h_{\gamma'} + h_{\gamma} \circ c_{\gamma'}. \]
	\end{PROP}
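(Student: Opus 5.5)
The plan is to treat this as a purely categorical statement about groupoid nerves. I will use the standard fact that an equivalence of categories induces a homotopy equivalence of nerves, and make the homotopies explicit so that the coherence formula can be checked by hand. Let $i : \Pi_p \hookrightarrow \Pi\scr P$ be the inclusion of the full subgroupoid on $p$.

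\emph{Step 1: a retraction.} Since $\scr P$ is connected, $\Pi\scr P$ is connected. For every object $x$ I fix a Whitney disk class $\theta_x : p \rightsquigarrow x$, taking $\theta_p = {\rm id}_p$. This gives a retraction functor
\[ r : \Pi\scr P \to \Pi_p, \qquad r(\gamma : x \rightsquigarrow y) := \theta_x \# \gamma \# \theta_y^{-1}, \]
with $r \circ i = {\rm id}$. The $\theta$'s also assemble into a natural isomorphism $\theta : i \circ r \Rightarrow {\rm id}$.

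\emph{Step 2: the chain homotopy.} A natural transformation $F \Rightarrow G$ between functors of groupoids determines a functor $\Pi\scr P \times [1] \to \Pi\scr P$, and hence a simplicial homotopy $\mathrm B\Pi\scr P \times \Delta^1 \to \mathrm B\Pi\scr P$. By the usual prism operator, this yields an explicit cochain homotopy $H$ with $dH + Hd = r^*i^* - {\rm id}$ on $C^*(\mathrm B\Pi\scr P)$. On $p$-simplices, $H$ is given by the alternating sum of insertions of the transformation component at position $j$. Together with $i^*r^* = {\rm id}$, this proves that $i^*$ is a chain-homotopy equivalence.

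\emph{Step 3: conjugation.} For a Whitney disk $\gamma : p \rightsquigarrow q$, I define $c_\gamma(\lambda) := \gamma^{-1} \# \lambda \# \gamma$, which is an isomorphism $\Pi_p \to \Pi_q$. The composites $i_q \circ c_\gamma$ and $i_p$ are related by the natural transformation whose components are $\gamma$ at the single object. Applying the same prism construction to this transformation defines $h_\gamma$, with $d h_\gamma + h_\gamma d = c_\gamma^* i_q^* - i_p^*$. Here $c_\gamma^*$ denotes the induced map on cochains, and I will fix the variance so that it matches the stated formula.

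\emph{Step 4: coherence.} This is the step I expect to be the main obstacle, because of bookkeeping of orders and sign conventions (over $\bb F_2$ or $\{\pm1\}$ signs are harmless, but the orders of composition are not). Natural transformations compose: the transformation for $\gamma\#\gamma'$ is the vertical composite of the one for $\gamma$, whiskered by $c_{\gamma'}$, with the one for $\gamma'$. The prism homotopy is additive under vertical composition of natural transformations up to a canonical higher homotopy. To get equality on the nose, I will check the formula directly on simplices. The prism operator evaluated on $(\lambda_1,\dots,\lambda_n)$ inserts the component $\gamma\#\gamma'$. Splitting that inserted arrow as $\gamma$ followed by $\gamma'$ is exactly what the two terms $h_{\gamma'}$ and $h_\gamma \circ c_{\gamma'}$ record. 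The cocycle condition on the cochain is what makes the degenerate cross terms cancel; alternatively, I can normalize cochains so that these terms vanish. If on-the-nose equality fails for unnormalized cochains, I will restrict to normalized cochains, which suffices for all later uses, since only the induced maps on $H^1$ and $H^2$ matter.
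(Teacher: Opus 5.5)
\textbf{Steps 1--3 match the paper. Step 4 has a gap.} Your overall route is the paper's, which is only sketched there. The inclusion $\Pi_p\scr P\hookrightarrow\Pi\scr P$ is an equivalence. Natural transformations induce homotopies on nerves. $h_\gamma$ is the homotopy coming from the transformation $i_p\Rightarrow i_q\circ c_\gamma$ with component $\gamma$. Your explicit retraction $r$, the isomorphism $\theta$, and the prism operator are all correct and fill in what the paper calls ``very standard.''

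The problem is Step 4. The prism homotopy of a vertical composite of transformations is \emph{not} the sum of the individual prism homotopies. $P_{\gamma\#\gamma'}(\sigma)$ consists of simplices containing the single arrow $\gamma\#\gamma'$. $P_\gamma(\sigma)$ and $P_{\gamma'}(c_\gamma\sigma)$ consist of different simplices, containing $\gamma$ and $\gamma'$ respectively. Splitting the inserted arrow produces an $(n+2)$-simplex, not those two terms. Degree $1$ already shows this. For $\phi\in C^1(\mathrm B\Pi\scr P)$ we have $h_{\gamma\#\gamma'}\phi=\phi(\gamma\#\gamma')$, whereas the two terms give $\phi(\gamma)+\phi(\gamma')$. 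These differ by $(d\phi)(\gamma,\gamma')$. Normalization only imposes $\phi(\mathrm{id}_x)=0$, so it does not remove this discrepancy, and your fallback fails. There are no ``degenerate cross terms'' to cancel.

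\textbf{Repair.} The triangle $\gamma,\gamma',\gamma\#\gamma'$ defines a functor $\Pi_p\scr P\times[2]\to\Pi\scr P$. On the edges $[0,1]$, $[1,2]$, $[0,2]$ it restricts to:
\begin{itemize}
\item the transformation for $\gamma$;
\item the transformation for $\gamma'$ whiskered by $c_\gamma$;
\item the transformation for $\gamma\#\gamma'$.
\end{itemize}
Pairing with $[0,1,2]$ via the Eilenberg--Zilber map gives an operator $K$ of degree $-2$ satisfying $h_{\gamma\#\gamma'}-h_\gamma-c_\gamma^*h_{\gamma'}=dK+Kd$ (signs suppressed; the coefficients are $\bb Z/2$ anyway). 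So the coherence holds canonically up to homotopy. This is the honest content of the paper's appeal to ``associativity of path composition.''

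Moreover, on a cocycle $\phi$ of degree $m\le 2$ the error term is $dK\phi$ with $K\phi\in C^{m-2}(\mathrm B\Pi_p\scr P)$. It vanishes because $C^{-1}=0$, and the coboundary $C^0\to C^1$ of a one-object groupoid is zero. Hence the identity holds strictly on the obstruction cocycles $\delta$ and $\xi$, which is all that is used later. For $m\ge 3$ it generally fails on the nose.

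\textbf{Variance.} With your $c_\gamma:\Pi_p\to\Pi_q$, the composite transformation is $(\eta_{\gamma'}c_\gamma)\circ\eta_\gamma$. So it is $h_{\gamma'}$ that gets precomposed with a conjugation, giving $c_\gamma^*h_{\gamma'}$. Whiskering $\eta_\gamma$ by $c_{\gamma'}$ does not typecheck. Read the displayed formula as $h_{\gamma\#\gamma'}=h_\gamma+c_\gamma^*h_{\gamma'}$ up to the paper's ordering conventions, rather than forcing your variance to match it literally.
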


	\begin{proof}
		This is very standard: the inclusion $\Pi_p \scr P \hookrightarrow \Pi \scr P$ induces an equivalence of categories; and since natural transformations of functors induce canonical homotopies on the nerves, the first claim follows. The chain homotopy $h_{\gamma}$ is induced by the natural transformation relating $\Pi_p \hookrightarrow \Pi$ and the composite $\Pi_p \overset{c_\gamma}\to \Pi_q \hookrightarrow \Pi$ given by $\gamma$ itself, and the associativity of the $h$'s follows by the associativity of path composition.
	\end{proof}

	Thanks to this proposition, it suffices to prove any formula for the obstruction on its restriction to $C^*(\mathrm B\Pi_p \scr P; {\pm 1})$. The advantage here is that the identity in the groupoid has trivial index mod 2 and determinant line:

	\begin{PROP}\label{prop:idtriv}
		We have $\delta_{p,p}({\rm id}_p) \equiv 0 ~{\rm mod}~2$ and $\Lambda_{p,p}|_{{\rm id}_p} \cong \mathbb R$ canonically.
	\end{PROP}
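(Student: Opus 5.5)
The plan is to take the constant Whitney disk at $p$, i.e.~the constant path ${\rm id}_p \in \Omega_{p,p}\scr P$, and compute its linearized operator directly. The corresponding real CR operator lives on the strip $\bb R \times [0,1]$ (equivalently $D^2$ with punctures $\pm i$). The bundle is the constant bundle $E = T_pM$, and the boundary conditions are the constant Lagrangian subspaces $F_0 = T_pL_0$ on $\bb R \times \{0\}$ and $F_1 = T_pL_1$ on $\bb R \times \{1\}$. We may take the operator to be the translation-invariant one, $D = \partial_s + J\partial_t$, with no zeroth-order term. The asymptotic operator at both ends is then the same self-adjoint operator $J\partial_t$ with boundary conditions $(T_pL_0, T_pL_1)$. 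Its kernel is $T_pL_0 \cap T_pL_1 = 0$ by transversality, so this is an admissible Fredholm datum in the contractible family used to define $\Lambda_{p,p}|_{{\rm id}_p}$.

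Next I would show that this translation-invariant operator is an isomorphism. I would expand a $W^{1,2}$ section in eigenfunctions of the asymptotic operator $A = J\partial_t$. Since $A$ is self-adjoint with spectrum bounded away from $0$, the equation $\partial_s \xi + A\xi = 0$ decouples into $\partial_s c_\lambda + \lambda c_\lambda = 0$. Each solution $c_\lambda(s) = c_\lambda(0) e^{-\lambda s}$ blows up at one of the ends unless it vanishes, so ${\rm Ker}\, D = 0$. The same argument applied to the formal adjoint $-\partial_s + A$ gives ${\rm Cok}\, D = 0$; equivalently, on the Fourier side $D$ acts as multiplication by $i\sigma + \lambda$, which is invertible with bounded inverse. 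Hence $\delta_{p,p}({\rm id}_p) = 0$, which in particular gives the claim modulo $2$. Also $\Lambda_{p,p}|_{{\rm id}_p} = \Lambda^{\rm top}(0) \otimes \Lambda^{\rm top}(0)^\vee = \bb R$ canonically, via the standard convention that the determinant line of an invertible operator is trivialized by $1 \in \bb R$.

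The main point requiring care is that this trivialization should be \emph{canonical}, i.e.~independent of the choices in the contractible family (the metric, $J$, the volume form, and the asymptotic identifications). I would argue that the space of translation-invariant choices, those with $J$ compatible and the asymptotic datum nondegenerate, is itself contractible, and that every operator in it is invertible. Over this subspace the determinant bundle therefore carries the tautological nowhere-vanishing section $1$, and this section is preserved by the canonical parallel transport along homotopies through invertible operators. Restricting the contractible space of all choices to this subfamily then identifies $\Lambda_{p,p}|_{{\rm id}_p}$ with $\bb R$ up to contractible choice.

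Finally, I would check compatibility with gluing: ${\rm id}_p \# {\rm id}_p = {\rm id}_p$. The gluing isomorphism (\ref{eq:lamgl}) between invertible operators, where both kernels and cokernels are zero, sends $1 \otimes 1 \mapsto 1$, so this trivialization is consistent with the unit of the groupoid. I expect this canonicity and compatibility step, rather than the vanishing of the index, to be the only place where care is needed.
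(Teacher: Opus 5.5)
Your argument is correct, but it is not the route the paper takes. The paper's proof is purely formal: since ${\rm id}_p \# {\rm id}_p = {\rm id}_p$, additivity (\ref{eq:deladd}) gives $\delta_{p,p}({\rm id}_p) + \delta_{p,p}({\rm id}_p) = \delta_{p,p}({\rm id}_p)$, and the gluing isomorphism (\ref{eq:lamgl}) gives $\Lambda_{p,p}|_{{\rm id}_p} \otimes \Lambda_{p,p}|_{{\rm id}_p} \cong \Lambda_{p,p}|_{{\rm id}_p}$. This forces $\delta = 0$ and produces a canonical trivialization, because an isomorphism $\ell \otimes \ell \cong \ell$ amounts to a nonzero element of $\ell^\vee$.

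You instead compute directly. The translation-invariant operator $\partial_s + J\partial_t$ with boundary conditions $(T_pL_0, T_pL_1)$ has an invertible self-adjoint asymptotic operator, so it is itself an isomorphism and both kernel and cokernel vanish.

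Your route gives more information: the operator is genuinely invertible, not merely of vanishing index, and you get an explicit trivialization. The cost is analytic input, namely self-adjointness of $J\partial_t$ (which uses the Lagrangian boundary conditions) and invertibility of $\partial_s + A$. You also need separate canonicity and gluing-compatibility checks. The paper gets these for free, since its trivialization is by construction the unit for (\ref{eq:lamgl}), and that unit property is what is used afterwards in {\sc Props.~\ref{prop:dellamform},~\ref{prop:factsOn}}.

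Your final gluing check is exactly what shows the two trivializations coincide. Fix an isomorphism $\phi : \ell \otimes \ell \to \ell$; a generator $e$ with $\phi(e \otimes e) = e$ is unique, since $e' = ce$ satisfying the same equation forces $c^2 = c$, hence $c = 1$.
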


	\begin{proof}
		By the gluing formulas (\ref{eq:deladd}, \ref{eq:lamgl}) applied to ${\rm id}_p$ and itself, we obtain
		\[ \delta_{p,p}({\rm id}_p) + \delta_{p,p}({\rm id}_p) = \delta_{p,p}({\rm id}_p)\]
		\[ \Lambda_{p,p}|_{{\rm id}_p} \otimes \Lambda_{p,p}|_{{\rm id}_p} \cong \Lambda_{p,p}|_{{\rm id}_p} \]
		from which we deduce the relevant triviality results.
	\end{proof}

	Now, by our boundary reduction result {\sc Prop.~\ref{prop:boundred}}, we obtain a well-defined locally constant \keywd{abstract index function $\delta_\partial$ mod 2}, as well as a well defined \keywd{abstract orientation bundle $\fk O_\partial$} on the ``parameter space of $F$'s on a Whitney disk with asymptotic data the same as ${\rm id}_p$ on both ends.'' More rigorously, let us observe that line bundles $F$ on $\mathbb R$ extending given values $F_\pm$ at $\pm \infty$ are uniquely determined up to contractible choice by the clutching function $\phi : F_- \overset\sim\to F_+$, if $F^0_-, F^0_+, F^1_-, F^1_+$ denote the four asymptotical values of the strip ${\rm id}_{p}$, with the subscript indicating the direction $\pm \infty$, and the superscript denoting the side of the Whitney disk. Then, we may think of the space
	\[ B_\partial := {\rm Iso}(F_-^0, F_+^0) \times {\rm Iso}(F_-^1, F_+^1) \]
	as parameterizing such choices of $F$ (in practice, $F^0_\pm$ and $F^1_\pm$ are the tangent spaces of the two Lagrangians.) Over each point in this space $B_\partial$, there is a well-defined $F$ on the boundary of the Whitney disk given by clutching, with the same asymptotic data as ${\rm id}_p$. Define $\delta_\partial$ to be the index mod 2 of the CR operator associated to any particular choice of filling $E$ of $F \otimes \mathbb C$ on the Whitney disk. Similarly, there is a 2-points set of orientations of $\Lambda$, well-defined up to canonical isomorphism, due to the choice of $E$ to fill it in. This can be used to define the abstract orientation double cover bundles $\fk O_\partial$ on $B_\partial$ via patching as follows: for every point $p \in B_\partial$, any small contractible neighborhood $U$, and every choice $E$ of filling on the interior, there is an honest determinant line bundle on $U$ associated to an actual CR operator up to contractible choice; there are obvious clutching functions between them, which satisfy the cocycle condition precisely because of the transitivity of the isomorphisms in {\sc Prop.~\ref{prop:boundred}}.

	Since we are working in the particular case of $F^0_- = F^0_+ = T_pL_0$ and $F^1_- = F^1_+ = T_pL_1$, thanks to the assumption that the asymptotic conditions are the same as for ${\rm id}_{p}$, we have
	\[ B_\partial = {\rm Aut}(T_pL_0) \times {\rm Aut}(T_pL_1). \]
	The following proposition describes $\delta_\partial$ and $\fk O_\partial$ on this space:

	\begin{PROP}\label{prop:dellamform}
		We have
		\begin{list}{$\cdot$}{}
			\item $\delta_{\partial}(\Phi_0, \Phi_1) = {\rm det}(\Phi_0) \cdot {\rm det}(\Phi_1) \in \{\pm 1\}$, under the obvious identification of $\{\pm 1\}$ with $\mathbb Z/2$.
			\item $\fk O_{\partial}$ can be canonically identified as the unit sphere bundle of with the external tensor product of the unique line bundles on ${\rm Aut}(T_pL_0)$ and ${\rm Aut}(T_pL_1)$ that are non-trivial on both the positive and negative components of these spaces.
		\end{list}
	\end{PROP}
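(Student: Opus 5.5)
Since $B_\partial = {\rm Aut}(T_pL_0) \times {\rm Aut}(T_pL_1)$ has four connected components for each factor pair (two per factor), and both $\delta_\partial$ and $\fk O_\partial$ are determined by how the boundary data $F$ is clutched, I will split the problem factor by factor. The plan is to first show that both invariants are \emph{multiplicative/external} with respect to the two boundary arcs: varying $\Phi_0$ while fixing $\Phi_1$ (and vice versa) can be realized by gluing a disk bubble at a boundary point on the corresponding side of the Whitney disk. The formulas (\ref{eq:indd}) and (\ref{eq:detd}) then give
\[ {\rm Det}(\Phi_0 \Phi_0', \Phi_1) \cong {\rm Det}(\Phi_0, \Phi_1) \otimes (\Lambda^{\rm top} F)^\vee \otimes {\rm Det}(\bar\partial_{D^2}, \Phi_0'), \]
and symmetrically on the other arc. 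This reduces everything to computing the index mod 2 and the orientation bundle of a CR operator on a disk with no punctures, whose boundary Lagrangian loop is the clutching loop determined by an element of ${\rm Aut}(\bb R^n)$, i.e.~a loop in the Lagrangian Grassmannian obtained by rotating from $F$ to $F$ through $\Phi$. For this I would use the standard model: after homotoping $\Phi$ within its component of ${\rm GL}_n(\bb R)$ to ${\rm diag}(\pm1, 1, \ldots, 1)$, the bundle splits into $n-1$ trivial rank-one problems plus one rank-one problem whose boundary loop has Maslov index $0$ or $1$ according to ${\rm det}\,\Phi$.

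For the index statement, the rank-one disk with Maslov index $\mu$ has index $1 + \mu$, so the parity is governed by the Maslov index mod 2. That Maslov index mod 2 equals the sign of $\det\Phi$, read as an element of $\bb Z/2$. Multiplicativity on the two arcs, together with $\delta_{p,p}({\rm id}_p) \equiv 0$ from {\sc Prop.~\ref{prop:idtriv}}, then gives $\delta_\partial(\Phi_0,\Phi_1) = \det\Phi_0 \cdot \det\Phi_1$. Here the dimension correction $-{\rm dim} F$ in (\ref{eq:indd}) cancels against the trivial summands' index $n$.

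For the orientation bundle, the bubble gluing identifies $\fk O_\partial$ with an external tensor product $\fk O^{(0)} \boxtimes \fk O^{(1)}$ of double covers on ${\rm Aut}(T_pL_0)$ and ${\rm Aut}(T_pL_1)$. It therefore suffices to compute $w_1$ of the determinant line of the family of disk operators parameterized by ${\rm GL}_n(\bb R)$, on each of its two components. Each component has $\pi_1 = \bb Z/2$ for $n \ge 3$, which is the reason for the standing large-$n$ assumption. The key claim is that this $w_1$ is nonzero on both components. This is the classical fact, as in \cite{FOOO10} and \cite{Sei08}, that the determinant line of a disk whose boundary loop is the generator of $\pi_1 {\rm SO}_n$ is non-orientable: orientability of the family of boundary conditions corresponds to a spin structure along the loop, and the nontrivial loop in ${\rm SO}_n$ changes it. The claim on the negative component follows by translating by a fixed reflection using the bubble gluing. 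Finally, the unit sphere bundle of the tensor product line bundle is exactly the double cover $\fk O^{(0)} \boxtimes \fk O^{(1)}$.

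The main obstacle is making the identification \emph{canonical} rather than merely matching isomorphism classes of double covers. The issue is that the gluing isomorphisms involve the factors $(\Lambda^{\rm top} F)^\vee$ and Koszul reorderings. My approach would be to check that these factors are constant, fixed by $T_pL_i$, along the family, so they contribute only a global trivialization. I would then use the transitivity and compatibility in {\sc Prop.~\ref{prop:boundred}} to see that the patched cover agrees with the tensor product.
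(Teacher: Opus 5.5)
Your proposal is correct and follows essentially the same route as the paper. Both arguments relate every point of $B_\partial$ to ${\rm id}_p$ by gluing disk bubbles on either boundary arc, compute the index jump with the model $\scr O(1) \oplus \scr O^{\oplus(n-1)}$ (your rank-one splitting is the same computation: the twisted bubble has index $n+1$ against the correction $-n$), and reduce the orientation claim to finding one orientation-reversing loop in each component. The only difference is that you cite the classical spin-structure/gauge-transformation fact from \cite{FOOO10,Sei08}, whereas the paper builds the loop explicitly as the mapping torus of $\scr O(1)^{\oplus 2}\oplus\scr O^{\oplus(n-2)}$ under the automorphism that swaps the two $\scr O(1)$ summands and negates one trivial summand, which acts on $H^0$ with determinant $-1$.
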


	\begin{proof}
		We already know the fact that ${\rm id}_p$ (corresponding to $\Phi_0 = \Phi_1 = {\rm id}$) has canonically trivial $\delta$ and $\Lambda$, thanks to {\sc Prop.~\ref{prop:idtriv}}. We relate all other points in $B_\partial$ to this particular one by means of gluing disk bubbles, in a similar way to how we used sphere bubbles in the proof of {\sc Prop.~\ref{prop:boundred}}.

		For $\delta_\partial$, due to its local constancy, it suffices to determine its value at a single point in any connected component. We have already verified the desired formula at ${\rm id}_p$, corresponding to the connected component $(+, +)$ given by the determinant of $\Phi_0, \Phi_1$. Now, if either $\Phi_0$ or $\Phi_1$ has negative determinant, we can bubble off a disk bubble with non-trivial monodromy on the boundary in order to flip the sign of its determinant. The gluing formulas (\ref{eq:indd}, \ref{eq:detd}) tell us that the induced changes in index mod 2 are given by
		\[ {\rm Ind}(\bar \partial_{D^2}) - {\rm dim}_\mathbb R F, \] 
		which can be computed explicitly for any particular choice of real CR operator with non-trivial winding number. We can choose the $\bar \partial$ connection associated to the holomorphic vector bundle $\scr O(1) \oplus \scr O^{n-1}$ on $\mathbb {CP}^1$, and restrict this to the upper-half sphere to get a real CR operator with $F$ given by the real locus $\scr O_\mathbb R(1) \oplus \scr O_\mathbb R^{n-1}$. The kernel and cokernel correspond to the algebraic $H^0$ and $H^1$ of the given \emph{real} vector bundle over \emph{real} $\mathbb{P}^1$, and it can readily be checked by the Riemann-Roch formula that $h^0 - h^1 = n + 1$ in this particular case. More concretely, $H^0(\scr O(1))$ counts real homogeneous polynomials of degree 1 (which is a 2-dimensional space), and $H^0(\scr O)$ counts real homogeneous polynomials of degree 0 (which is a 1-dimensional space), while $H^1$ vanishes for both line bundles.

		We now turn to the orientation bundle. Since $H^1$ with $\mathbb Z/2$-coefficients of ${\rm SO}_n$ is isomorphic to $\mathbb Z/2$, by the same trick of gluing a disk bubble to either side, it suffices to find a loop of real CR operators on $D^2$ with fixed fiber at a distinguished point $w \in D^2$, whose monodromy flips the orientation on the determinant line, cf. (\ref{eq:detd}). To this end, consider the constant family of real CR operators associated to $\scr O(1)^{\oplus 2} \oplus \scr O^{\oplus (n-2)}$ on $\mathbb{CP}^1$, and glue the ends via the isomorphism $\psi$ switching the two $\scr O(1)$ summands, and flipping the sign of exactly one of the other $\scr O$ summands. Once again, real $H^1$ of the real bundle $\scr O_\mathbb R (1)^{\oplus 2} \oplus \scr O_\mathbb R^{\oplus n-2}$ is trivial, whereas real $H^0$ is given by $(\mathbb R^2)^{\oplus 2} \oplus (\mathbb R)^{\oplus(n-2)}$. Consequently, the map on determinant lines induced by the involution $\psi$ is the map that swaps the two copies of $\mathbb R^2$, and flips the sign of exactly one of the $\mathbb R$-entries. Now, when we glue this $S^1$-parameter family of disk bubbles to a Whitney disk, we need to identify the fiber of $F$ at the given point $z \in D^2$ where we are gluing with the fiber of $F_{D^2}$ at $w \in \partial D^2$. This can be done in an $S^1$-parameter family because the involution $\psi$ has positive determinant. Consequently, we obtain a loop in every connected component of $B_\partial$ whose monodromy on $\fk O_\partial$ is non-trivial, as claimed.
	\end{proof}

	Now, the total space of the orientation bundle $\fk O_\partial$, which is a $\{\pm 1\}$-principal bundle over the Lie group ${\rm Aut}(T_pL_0) \times {\rm Aut}(T_p L_1) \cong O_n \times O_n$, is itself a Lie group under gluing of orientations, and is in fact a central extension thereof:
	\begin{equation}\label{eq:fkO}
		1 \to \{\pm 1\} \to \fk O_\partial \to O_n \times O_n \to 1.
	\end{equation}
	Identifying this extension will help us make explicit the obstructions to coherent orientations, since a lift of structure group of $(TL_0, TL_1)$ from $O_n \times O_n$ to $\fk O_\partial$ is tantamount to such a choice of orientations. We begin by gathering a few facts about the extension, which will suffice for us to determine the obstructions:

	\begin{PROP}\label{prop:factsOn}
		The following are true:
		\begin{enumerate}
			\item[\sc i.] The restrictions of the extension (\ref{eq:fkO}) to the subgroups $O_n \times 1$ and $1 \times O_n$ are either ${\rm Pin}_n^+$ or ${\rm Pin}_n^-$.
			\item[\sc ii.] The signs of the Pin extensions are opposite for the said subgroups.
			\item[\sc iii.] If $o$ belongs to the preimage of $O_n \times 1$ inside $\fk O_\partial$, and $o'$ belongs to the preimage of $1 \times O_n$ inside $\fk O_\partial$, then the commutator $[o, o']$ equals $(-1)^{\frac{1 + {\rm det}(o)}2 \cdot {\frac{1 + {\rm det}(o')}2}}$, where det signifies the determinant of the matrix in $O_n$ underlying $o, o'$.
		\end{enumerate}
	\end{PROP}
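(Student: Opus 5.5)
The plan is to treat (\ref{eq:fkO}) as a central extension of $O_n \times O_n$ by $\{\pm 1\}$ and to identify its pieces using the disk-bubble gluing formulas (\ref{eq:indd}, \ref{eq:detd}), along the lines of the proof of {\sc Prop.~\ref{prop:dellamform}}.

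\emph{Part {\sc i}.} Central extensions of $O_n$ by $\bb Z/2$ are classified by $H^2(BO_n; \bb F_2) = \bb F_2\<w_2, w_1^2\>$. The second half of the proof of {\sc Prop.~\ref{prop:dellamform}} produces a loop in ${\rm SO}_n \times 1$, and likewise in $1 \times {\rm SO}_n$, whose monodromy on $\fk O_\partial$ is non-trivial. Hence each restricted extension is non-trivial over ${\rm SO}_n$, so its class has a non-zero $w_2$-component and is $w_2 + \epsilon w_1^2$ for some $\epsilon \in \bb F_2$. This means the restriction is ${\rm Pin}_n^+$ or ${\rm Pin}_n^-$; for $n \ge 3$ these are the only two extensions restricting to ${\rm Spin}_n$ over ${\rm SO}_n$. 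The sign $\epsilon$ is detected by the square of a lift $\tilde r$ of a reflection $r$: one has $\tilde r^2 = +1$ for ${\rm Pin}^+$ and $\tilde r^2 = -1$ for ${\rm Pin}^-$.

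\emph{Part {\sc ii}.} I would realize $\tilde r$ as an orientation of the determinant line of the disk bubble with $F = \scr O_\bb R(1) \oplus \scr O_\bb R^{n-1}$. By the computation in {\sc Prop.~\ref{prop:dellamform}}, this bubble changes the index by $1$, so the orientation has odd degree. The square $\tilde r^2$ is obtained by gluing two such bubbles at nearby boundary points, which gives an even-index bubble with trivial monodromy. Comparing it with the canonical orientation of ${\rm id}_p$ from {\sc Prop.~\ref{prop:idtriv}} gives a sign, which I would compute explicitly in the model $\scr O(1)^{\oplus 2} \oplus \scr O^{\oplus(n-2)}$. The key point is the following. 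The $L_0$-boundary and the $L_1$-boundary of the Whitney disk are traversed in opposite directions relative to the orientation of $\partial D^2$, so the two bubbles are glued in opposite orders on the two sides. By {\sc Rmk.~\ref{rmk:signambig}}, reversing the order of two odd-degree factors costs $(-1)^{1 \cdot 1} = -1$. Hence the values of $\tilde r^2$ on the two sides differ by a sign, whichever sign each side individually has. This proves {\sc ii} without deciding which side is ${\rm Pin}^+$.

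\emph{Part {\sc iii}.} Lifts of the commuting subgroups $O_n \times 1$ and $1 \times O_n$ have a commutator that is central and continuous. It is therefore a bimultiplicative pairing that depends only on the components $\pi_0 O_n \times \pi_0 O_n$, and it equals $1$ whenever one argument lies over the identity component. So it suffices to compute it for two reflection lifts $\tilde r_0$ and $\tilde r_1$, one bubble on each side. Swapping them amounts to transporting one boundary bubble past the other, which interchanges two tensor factors in (\ref{eq:detd}). This produces the Koszul sign $(-1)^{\delta(o)\delta(o')}$, where $\delta$ is the index parity of the bubble. By {\sc Prop.~\ref{prop:dellamform}}, this parity is encoded by the determinant under the paper's identification of $\{\pm 1\}$ with $\bb Z/2$. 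Converting $\delta$ into $\det$ with that same convention yields the exponent in the statement.

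The main obstacle is the careful bookkeeping of boundary orientations and gluing order in parts {\sc ii} and {\sc iii}. That is, one has to check that the relevant $2$-parameter families of gluings really produce exactly these Koszul signs and no extra hidden sign from the identification $T_z(\partial\Sigma) \cong T_w^*(\partial D^2)$. I would first settle this in the explicit $\bb{CP}^1$ model, where the kernels are spaces of real polynomials and can be written down directly.
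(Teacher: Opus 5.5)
You follow the paper's approach in all three parts. Part {\sc i} rules out the classes $0$ and $w_1^2$ using the non-trivial monodromy over ${\rm SO}_n$ from {\sc Prop.~\ref{prop:dellamform}}. Part {\sc ii} compares the squares of the two reflection lifts via the Koszul sign forced by the opposite boundary orientations of the $L_0$- and $L_1$-sides; the paper phrases this as the identification $\bar\partial^0_{D^2,x} \cong \bar\partial^1_{D^2,x^{-1}}$, which reverses the order of products. For {\sc iii}, your mechanism is again the paper's: swapping a bubble on one side past a bubble on the other produces a Koszul sign. Your preliminary reduction to a bimultiplicative pairing on $\pi_0 O_n \times \pi_0 O_n$ is a useful addition that the paper leaves implicit.

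The problem is your last sentence. The Koszul sign is $(-1)^{\delta(o)\delta(o')}$, where $\delta$ is the parity of the bubble's index change. By {\sc Prop.~\ref{prop:dellamform}}, under the identification $+1 \leftrightarrow 0$ (forced by $\delta_{p,p}({\rm id}_p) = 0$ in {\sc Prop.~\ref{prop:idtriv}}), this parity is $\delta = \tfrac{1-\det}{2}$. So your argument proves
\[ [o,o'] = (-1)^{\frac{1-\det(o)}{2}\cdot\frac{1-\det(o')}{2}}, \]
which says the commutator is $-1$ exactly when both elements lie over reflections. This is not the printed exponent $\frac{1+\det(o)}{2}\cdot\frac{1+\det(o')}{2}$. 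The printed formula would give $[o,o'] = -1$ for $o = o' = 1$, and it contradicts your own correct observation that the pairing is trivial whenever one argument lies over the identity component. The printed formula is therefore a sign typo in the statement. The $1-\det$ version is what the paper's Koszul-sign proof actually establishes, and it is what is used later: the non-zero coefficient of $w_{1,(0)}w_{1,(1)}$ in the proof of {\sc Thm.~\ref{thm:obs}} requires it. You should flag this discrepancy explicitly rather than assert that converting $\delta$ into $\det$ produces the stated exponent.
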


	\begin{RMK}
		This proposition uniquely identifies the extension (\ref{eq:fkO}) except for the ambiguity of $\pm$ in the Pin extensions. However, as we have noted in {\sc Rmk.~\ref{rmk:signambig}}, there is a sign ambiguity coming from the convention of how to write composition of paths, which would change the flip the sign of both extensions, so this is less important (and will not change the formula for the obstruction, as we will see.)
	\end{RMK}

	\begin{proof}
		Extensions of $O_n$ are classified by $H^2(BO_n; \mathbb Z/2)$, which is freely generated by $w_2$ and $w_1^2$. The extensions corresponding to $0$ and $w_1^2$ are those that have four connected components, which are ruled out by {\sc Prop.~\ref{prop:dellamform}}, since $\fk O_\partial$ is non-trivial. The remaining two cohomology classes $w_2$ and $w_2 + w_1^2$ classify the ${\rm Pin}^\pm_n$ extensions, cf. \cite{GM25}, which finishes our proof of claim {\sc i.}

		For claim {\sc ii}, we note that the restriction of the ${\rm Pin}^\pm_n$ extension to the 2-element subgroup generated by $\iota := {\rm diag}(-1, 1, \ldots, 1) \subset O_n$ is $\mathbb Z/2 \oplus \mathbb Z/2$ for ${\rm Pin}^+$ and $\mathbb Z/4$ for ${\rm Pin}^-$, cf. \cite{GM25}. In particular, what distinguishes the two extensions is that the element $\iota$ lifts to elements of order 2 in ${\rm Pin}^+$, and to elements of order 4 in ${\rm Pin}^-$.
		
		Let us from here on trivialize $T_pL_0$ and $T_pL_1$, so that we can write $O_n$ instead of ${\rm Aut}(T_pL_0)$ and ${\rm Aut}(T_pL_1)$. Let $o_0 \in \fk O_\partial$ be any choice of orientation at ${(\iota, 1)}$, and likewise $o_1 \in \fk O_\partial$ for $(1, \iota)$. We will show that
		\begin{equation}\label{eq:orsignchange}
			o_0^2 = -o_1^2 \text{ in the group } \fk O_\partial,
		\end{equation}
		which easily implies claim {\sc ii}, in light of our remark about the orders of $o_0$ and $o_1$. To prove this, we use a trick similar to that in {\sc Fig.~\ref{fig:glsph}}, but with disk bubbles instead of sphere bubbles. Indeed, given any element $(x, 1)$ in $O_n \times 1$, we may compare its determinant line to that of $1 \times 1$ (which we have shown to be canonically trivial in {\sc Prop.~\ref{prop:idtriv}}), by bubbling off a disk bubble, which gives
		\begin{equation}\label{eq:dbubboff}
			\Lambda|_{(x, 1)} \cong ({\Lambda}^{\rm top}T_pL_0)^\vee \otimes {\rm Det}(\bar\partial_{D^2, x}^0)
		\end{equation}
		by applying (\ref{eq:detd}), where $\bar\partial_{D^2, x}^0$ is an operator on the disk, determined by $x$ and $T_pL_0$ (which we have trivialized, but we would like to keep it notationally like this for a while, for the sake of clarity). The left-hand side of this identification is \emph{multiplicative} in $x$, and in fact so is the right-hand side by gluing two disk operators to the trivial disk operator:
		\[ \begin{aligned}
			{\rm Det}(\bar\partial_{D^2, x' \cdot x}^0) &= {\rm Det}(\bar\partial_{D^2, 1}^0) \otimes ({\Lambda}^{\rm top}T_pL_0)^\vee \otimes {\rm Det}(\bar\partial_{D^2, x'}^0) \otimes ({\Lambda}^{\rm top}T_pL_0)^\vee \otimes {\rm Det}(\bar\partial_{D^2, x}^0) \\ &= {\rm Det}(\bar\partial_{D^2, x'}^0) \otimes ({\Lambda}^{\rm top}T_pL_0)^\vee \otimes {\rm Det}(\bar\partial_{D^2, x}^0)
		\end{aligned} \]
		where we have used the fact that $\bar\partial_{D^2, 1}^0$ is canonically identified with ${\Lambda}^{\rm top}(T_p L_0)$. The identification (\ref{eq:dbubboff}) respects the aforementioned multiplicative structures on both sides, by an argument very similar to that shown in {\sc Fig.~\ref{fig:glsph}}. In a very similar fashion, there is also a multiplicative identification on the other Lagrangian:
		\[ \Lambda|_{(1, x)} \cong (\Lambda^{\rm top}T_p L_0)^\vee \otimes {\rm Det}(\bar\partial^1_{D^2, x}), \]
		where $\bar \partial^1_{D^2, x}$ is a real CR operator on the disk, completely determined by $x$ and and $T_p L_1$.

		Now, recall that we have trivialized $T_p L_0$ and $T_p L_1$, so that in particular they are identified with each other. By abuse of notation, $x \in {\rm Aut}(T_p L_0)$ corresponds to another element that we also denote $x \in {\rm Aut}(T_p L_1)$. In this language, there is a canonical identification of $\bar\partial_{D^2, x}^0$ and $\bar\partial_{D^2, x^{-1}}^1$ (observe the inverse in $x^{-1}$), due to the fact that monodromy around the boundary is measured in different directions on $L_0$ and $L_1$ sides. In particular, there is an identification
		\[ (\Lambda^{\rm top}T_pL_0)^\vee \otimes {\rm Det}(\bar\partial^0_{D^2, x}) \cong (\Lambda^{\rm top}T_pL_1)^\vee \otimes {\rm Det}(\bar\partial^1_{D^2, x^{-1}}). \ \]
		This respects the multiplicative structures on both signs, bot only up to a Koszul sign, due to the need to swap the tensor factors (since $(xy)^{-1} = y^{-1}x^{-1}$, not $x^{-1}y^{-1}$). The sign is given by
		\[ (-1)^{\big({\rm Ind}(\bar\partial^1_{D^2, x^{-1}}) - {\rm dim}_{\mathbb R}(T_pL_1)\big) ~\cdot~ \big({\rm Ind}(\bar\partial^1_{D^2, x'{}^{-1}}) - {\rm dim}_{\mathbb R}(T_pL_1)\big)}. \]
		In the case that $x = x' = \iota$, we have already seen a concrete model for the operator on the disk, namely the real CR operator coming from the real line bundle $\scr O(1) \oplus \scr O^{n-1}$, so the quantity in either of the two big parentheses in the exponent is $1$, resulting in a non-trivial sign change, as promised in (\ref{eq:orsignchange}). This concludes the proof of claim {\sc ii}.

		Claim {\sc iii} also boils down to a Koszul sign: indeed, if $x, x'$ denote the underlying elements of the orthogonal group for $o, o'$, then both compositions $(x, 1) \cdot (1, x')$ and $(1, x') \cdot (x, 1)$ lead to the same element $(x, x')$, resulting in gluing isomorphisms
		\[ {\Lambda}_\partial|_{(x, 1)} \otimes \Lambda_\partial|_{(1, x')} \cong \Lambda_\partial|_{(x, x')} \cong {\Lambda}_\partial|_{(1, x')} \otimes \Lambda_\partial|_{(x, 1)}. \]
		We want to argue that the composite is induced by the expected Koszul sign. Indeed, we may use the identification (\ref{eq:dbubboff}) and its analogue for the $L_1$-side, to reduce the left side to
		\[ \big((\Lambda^{\rm top}T_pL_0)^\vee \otimes {\rm Det}(\bar\partial^0_{D^2, x})\big) \otimes \big((\Lambda^{\rm top}T_pL_1)^\vee \otimes {\rm Det}(\bar\partial^1_{D^2, x'})\big)  \]
		and the right side to the same expression with the two big parentheticals swapped. The resulting isomorphism is indeed the Koszul sign because gluing a disk bubble on either side of the same Whitney disk (in this case $(x, x')$) can be done in either order, and gluing disk bubbles in different orders results in a Koszul sign difference. This concludes the proof.
	\end{proof}

Before proving the main result of this section, we introduce some necessary algebraic topology that relates the various relevant cohomology groups.

	\begin{PROP}
		There is an exact sequence
		\begin{equation}\label{eq:lowterm}
			0 \to H^2(B\Pi\scr P; \mathbb Z/2) \to H^2(\scr P; \mathbb Z/2) \to H^1(\Omega_0 \scr P; \mathbb Z/2)^{\pi_1 \scr P} \to H^3(B\Pi \scr P; \mathbb Z/2),
		\end{equation}
		where the superscript $\pi_1 \scr P$ is used to indicate the fixed-points of the natural action of $\pi_1 \scr P \cong \pi_0 \Omega \scr P$ on the cohomology group via the conjugation of $\Omega \scr P$ on $\Omega_0 \scr P$.
	\end{PROP}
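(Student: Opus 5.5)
This is the five-term exact sequence of the Serre spectral sequence for the map $\scr P \to B\Pi\scr P$. Here $B\Pi\scr P$ is the nerve of the fundamental groupoid. Its geometric realization is a disjoint union of $K(\pi_1,1)$'s, and since $\scr P$ is connected, a single $K(\pi_1\scr P, 1)$.

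First I construct a map $\scr P \to |B\Pi\scr P|$ inducing an isomorphism on $\pi_0$ and $\pi_1$. Concretely, I take the $1$-truncation: attach cells to kill $\pi_{\ge 2}$, and identify the result with $|B\Pi\scr P|$ via the standard equivalence between $1$-types and groupoids. Here I use the earlier remark that $\Pi\scr P$ is equivalent to the full fundamental groupoid, assuming $L_0\cap_{\scr P} L_1\ne\varnothing$. I also use that $C^*$ of the nerve computes the cohomology of the realization.

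Next, the homotopy fiber of this map is the universal cover $\widetilde{\scr P}$. Its loopspace is $\Omega_0\scr P$, since $\Omega\widetilde{\scr P} \simeq \Omega_0\scr P$. The fiber is simply connected, so $H^0(\widetilde{\scr P})=\mathbb Z/2$ and $H^1(\widetilde{\scr P};\mathbb Z/2)=0$. The group $H^2(\widetilde{\scr P};\mathbb Z/2)$ is identified with $\mathrm{Hom}(\pi_2\scr P,\mathbb Z/2)$ by Hurewicz and universal coefficients, and hence with $\mathrm{Hom}(\pi_1\Omega_0\scr P,\mathbb Z/2) = H^1(\Omega_0\scr P;\mathbb Z/2)$. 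This isomorphism is natural, so it intertwines the $\pi_1\scr P$-action by deck transformations on $H^2(\widetilde{\scr P})$ with the conjugation action on $\Omega_0\scr P$. Checking that these two actions agree is a routine unwinding of the $\pi_1$-action on $\pi_2$.

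Then I run the Serre spectral sequence $E_2^{pq}=H^p(B\pi_1; \underline{H}^q(\widetilde{\scr P};\mathbb Z/2)) \Rightarrow H^{p+q}(\scr P;\mathbb Z/2)$. The row $q=1$ vanishes. Therefore $E_2^{2,0}=E_\infty^{2,0}$, since the only differentials hitting it come from the $q=1$ row. The $p+q=2$ line of $E_\infty$ consists of $E_\infty^{2,0}$ and $E_\infty^{0,2}$. The term $E_2^{0,2}$ is the invariants $H^2(\widetilde{\scr P})^{\pi_1}$. The only differential leaving it that can be nonzero is the transgression $d_3\colon E_3^{0,2}\to E_3^{3,0}$, because $d_2$ lands in the zero row $q=1$. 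Moreover $E_3^{3,0}=E_2^{3,0}=H^3(B\Pi\scr P)$, since $d_2$ into it comes from $E_2^{1,1}=0$. This yields the exact sequence $0\to E_2^{2,0}\to H^2(\scr P)\to E_2^{0,2}\xrightarrow{d_3} E_2^{3,0}$, which is the claimed sequence (\ref{eq:lowterm}).

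The only step needing care is the identification of $E_2^{0,2}$ with $H^1(\Omega_0\scr P;\mathbb Z/2)^{\pi_1\scr P}$, including the matching of the two actions; everything else is formal. Alternatively, the same argument can be phrased via the path-loop fibration directly.
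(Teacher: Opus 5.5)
Your proposal is correct and takes essentially the same route as the paper. Both use the Serre spectral sequence for the first Postnikov truncation $\scr P \to B\Pi\scr P$, whose fiber has $\pi_0=\pi_1=0$. The vanishing $q=1$ row then gives $E_2^{2,0}=E_\infty^{2,0}$ and $E_\infty^{0,2}=\ker(d_3^{0,2})$, and the Hurewicz/universal-coefficients identification $H^2(F;\mathbb Z/2)\cong \mathrm{Hom}(\pi_2\scr P,\mathbb Z/2)\cong H^1(\Omega_0\scr P;\mathbb Z/2)$ finishes the argument. Your remark that the deck-transformation action on the fiber's cohomology matches the conjugation action on $\Omega_0\scr P$ is a point the paper leaves implicit.
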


	\begin{proof}
		Note that $B\Pi \scr P$ is the 1st Postnikov truncation of $\scr P$, and consequently there is a fiber sequence
		\[ F \to \scr P \to B\Pi\scr P \]
		where $\pi_0 F = \pi_1 F = 0$ and $\pi_2 F = H_2 F = \pi_2 \scr P = \pi_1 \Omega_0 \scr P$. Thanks to the vanishing of $H^1 F$, the $q = 1$ row in the Serre spectral sequence (with local coefficients due to the potential non-simple-connectivity of $B\Pi\scr P$)
		\[  E_r^{p, q} = H^p(B\Pi \scr P; \underline H^q(F; \bb Z/2)) \cong H^p_{\it gr}(\pi_1 \scr P; H^q(F; \bb Z/2)) \]
		associated to this fibration vanishes, which leaves us with a low term exact sequence
		\[ 0 \to E_\infty^{2,0} \to H^2(\scr P; \mathbb Z/2) \to E_\infty^{0,2} \to 0. \]
		the subgroup $E_\infty^{2,0}$ is $E_2^{2,0} = H^2(B\Pi \scr P; \mathbb Z/2)$, since there can be no non-trivial differential into this spot. When it comes to $E_\infty^{0,2}$, this is equal to 
		\[ {\rm Ker}\big(d_3^{0,2} : H^2(F; \bb Z/2)^{\pi_1\scr P} \to H^3(B\Pi\scr P; \bb Z/2)\big),\]
		which proves the claim due to the isomorphism 
		\[H^2(F; \mathbb Z/2) = {\rm Hom}(\pi_2 \scr P, \mathbb Z/2) = {\rm Hom}(\pi_1 \Omega \scr P; \mathbb Z/2) = H^1(\Omega_0 \scr P; \mathbb Z/2). \qedhere \]
	\end{proof}

	\begin{PROP}\label{prop:totobs}
		Let $\xi \in H^2(\scr P; \mathbb Z/2)$ be the obstruction to lifting the structure group of the pair $(TL_0, TL_1)$ on $\scr P$ from $O_n \times O_n$ to $\fk O_\partial$. The image of $\xi$ under the second map in (\ref{eq:lowterm}) recovers the primary obstruction to coherent orientations. When this vanishes, the unique lift along the first map (\ref{eq:lowterm}) is equal to the secondary obstruction to coherent orientations. In particular, both obstructions to coherent orientations vanish if and only if $\xi$ vanishes, which motivates us to name it the \keywd{total obstruction to coherent orientations}.
	\end{PROP}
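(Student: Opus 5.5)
The plan is to make the total obstruction $\xi$ explicit as a structure-group lifting problem, and then to compare it separately with the two pieces of the low-term sequence (\ref{eq:lowterm}).

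\emph{Step one: realize the orientation data as a lift.} By {\sc Prop.~\ref{prop:boundred}}, the determinant line of a Whitney disk depends only on the boundary data $F$. For a loop $\gamma \in \Omega\scr P$ based at the constant path ${\rm id}_p$, this data is the pair of monodromies of $(TL_0, TL_1)$ around the two boundary arcs, i.e.~ a point of $B_\partial \cong O_n \times O_n$, once $T_pL_0$ and $T_pL_1$ are trivialized. The induced map $\Omega\scr P \to O_n \times O_n$ is the holonomy of the $O_n\times O_n$-bundle $(TL_0, TL_1)$ on $\scr P$, up to homotopy. The line $\Lambda_{p,p}$ is then the pullback of $\fk O_\partial$ along this map. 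Moreover, the gluing isomorphism (\ref{eq:lamgl}) corresponds to the group law of the extension (\ref{eq:fkO}); this compatibility is the content of the last clause of {\sc Prop.~\ref{prop:boundred}}.

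\emph{Step two: the primary obstruction.} The image of $\xi$ in $H^1(\Omega_0\scr P;\mathbb Z/2) = {\rm Hom}(\pi_2\scr P,\mathbb Z/2)$ is computed by transgression. One restricts the bundle to a sphere $S^2 \to \scr P$ and asks whether it lifts to $\fk O_\partial$. Equivalently, one takes the adjoint loop $S^1 \to \Omega_0\scr P \to O_n\times O_n$ and checks whether its lift to $\fk O_\partial$ closes up. Since $\Lambda_{p,p}|_{[{\rm id}_p]}$ is the pullback of $\fk O_\partial$, this lift closes up exactly when $w_1(\Lambda_{p,p})$ vanishes on that loop. This identifies the image of $\xi$ with (\ref{eq:probs}).

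\emph{Step three: the secondary obstruction.} Assume the primary obstruction vanishes. By the preceding proposition, it suffices to work over the one-object groupoid $\Pi_p\scr P$, which equals $\pi_1\scr P = \pi_0\Omega\scr P$. A choice of orientation $\scr o_{p,p}(\gamma)$ for each $[\gamma]$ is then the same as a set-theoretic section $s : \pi_1\scr P \to \pi_0(\text{pullback of }\fk O_\partial)$. The cocycle $\xi(\gamma,\gamma')$ of (\ref{eq:cohfail}) is exactly the group-extension cocycle of the pulled-back extension
\[ 1\to\{\pm1\}\to \tilde G \to \pi_1\scr P\to 1 \]
measured against $s$, using that gluing corresponds to multiplication. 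The classical fact is that this extension class in $H^2(B\pi_1\scr P;\mathbb Z/2)$, pulled back to $\scr P$, is the obstruction to lifting the bundle $(TL_0,TL_1)$ once the $\pi_2$-part vanishes. To see it, build the lift on the $2$-skeleton of a CW model of $\scr P$ whose $1$-skeleton is a wedge of circles at $p$: lifts over loops correspond to choices of $s$, and fillability over the $2$-cells encoding relations in $\pi_1$ is measured by $d s$. This identifies the unique preimage of $\xi$ under the injective first map of (\ref{eq:lowterm}) with $[\xi]$ from {\sc Prop.~\ref{prop:orobs}}.

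The final claim then follows by exactness of (\ref{eq:lowterm}): $\xi = 0$ if and only if its image vanishes and its lift vanishes. The main obstacle is the compatibility asserted in step one. One must check that concatenation of Whitney disks, including the Koszul signs of {\sc Rmk.~\ref{rmk:signambig}}, matches multiplication in $\fk O_\partial$ rather than its opposite group. To handle this, I would fix the convention so that $\gamma\#\gamma'$ corresponds to the product of holonomies in that order. I would then invoke the multiplicativity of (\ref{eq:dbubboff}) established in the proof of {\sc Prop.~\ref{prop:factsOn}}. Finally, I would note that switching to the opposite extension does not change the class in $H^2$.
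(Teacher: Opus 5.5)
Your proposal is correct and follows essentially the same route as the paper. For the primary obstruction, the paper likewise compares the lifting problem for the adjoint loop $S^1 \to \Omega_0\scr P \to O_n\times O_n$ with the structure-group lifting problem on the sphere $\Sigma c$ obtained by clutching. For the secondary obstruction, it observes that once the primary obstruction vanishes, the lifting obstruction on a $2$-simplex depends only on the homotopy classes of its edges, which is exactly your group-extension cocycle description. The ordering issue you flag is settled by definition, since the paper gives $\fk O_\partial$ its group law via gluing of orientations.
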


	\begin{proof}
		Let us recall that the primary obstruction, which is an element in $H^1(\Omega_0 \scr P; \mathbb Z/2)$, is equal to $w_1$ of the determinant line bundle of the real CR operators coming from Whitney disks from the basepoint $p$ to itself that are homotopic to the identity. By definition of $\fk O_\partial$, a choice of orientation of $\Lambda_{p,p}|_{[\rm id_p]}$ is equivalent to a lift of the map $\Omega_0\scr P \to B_\partial \cong O_n \times O_n$ along the 2-to-1 cover $\fk O_\partial \to O_n \times O_n$. Thus, the primary obstruction is the obstruction to this lifting problem. We now reformulate this lifting problem: if $c$ is a loop in $\Omega_0 \scr P$, then it traces out a 2-sphere called $\Sigma c$ inside $\scr P$, on which the $O_n \times O_n$-bundle $(TL_0, TL_1)|_{\Sigma c}$ can be reconstructed via the clutching function given by $S^1 \to O_n \times O_n$; a lift of this equatorial map to $\fk O_\partial$ would be equivalent to a lift of structure group of $(TL_0, TL_1)|_{\Sigma c}$ from $O_n \times O_n$ to $\fk O_\partial$. In summary, we have established that the following two lifting problems have the same obstruction:
		\begin{enumerate}
			\item[\sc i.] lifting the map $S^1 \overset c\to \Omega_0 \scr P \to O_n \times O_n$ to $\fk O_\partial$, whose obstruction is given by pairing the primary obstruction to coherent orientations with $c$;
			\item[\sc ii.] lifting the structure group of the bundle $(TL_0, TL_1)|_{\Sigma c}$ from $O_n \times O_n$ to $\fk O_\partial$, whose obstruction is given by pairing $\xi$ with $\Sigma c$.
		\end{enumerate}
		Since this is true for all $[c] \in \pi_1 \Omega_0 \scr P \cong \pi_2 \scr P$, and we have the equality $H^1(\Omega_0 \scr P; \bb Z/2) \cong {\rm Hom}(\pi_2 \scr P, \bb Z/2)$, this concludes the proof of the first claim.
		
		For the second claim, let us assume that the primary obstruction to coherent orientations is zero. In this case, any orientation of $\Lambda_{p,p}|_\gamma$ can be transported to an orientation of $\Lambda_{p,p}|_{\gamma'}$ for any two homotopic Whitney disks $\gamma, \gamma'$, independently of the chosen homotopy. In particular, the obstruction $\xi \in H^2(\scr P; \mathbb Z/2)$, when evaluated to a 2-simplex, only depends on the homotopy classes of its edges, i.e.~ it is zero if the $\fk O_\partial$-lift over the compositions $0 \to 1 \to 2$ and $0 \to 2$ agree, and nonzero otherwise. This is precisely the definition of the secondary obstruction to coherent orientations, which finishes the proof of the second claim.
	\end{proof}

	\begin{proof}[Proof of {Thm.~\ref{thm:obs}}]
		For gradings, note that $\delta_{p,p}(\gamma)$ is given by the sum of the monodromies of $TL_0$ and $TL_1$ under the loops traced by the two sides of the Whitney disk $\gamma$ inside $L_0$ and $L_1$. This is precisely the evaluation of $w_1(TL_0) + w_1(TL_1) = w_1(TL_0 - TL_1)$ on the loop $\gamma$, hence representing the desired cohomology class.

		Now, let us move on to coherent orientations, where thanks to {\sc Prop.~\ref{prop:totobs}}, it remains to check what the obstructions to lifting are for the map $B\fk O_\partial \to BO_n \times BO_n$. Indeed, there is a fiber sequence
		\[ B\{\pm 1\} \to B\fk O_\partial \to BO_n \times BO_n \to B^2\{\pm 1\}, \]
		where the last map is the one classifying the extension (\ref{eq:fkO}). The second cohomology group of $BO_n \times BO_n$ with $\mathbb Z/2$-coefficients is freely generated by:
		\[ w_{1, (0)}^2,~ w_{1, (1)}^2,~ w_{1,(0)} w_{1, (1)},~ w_{2, (0)},~ w_{2, (1)}, \]
		where the number in parentheses denotes whose factor the Stiefel-Whitney class is taken with respect to. First, let us see that the coefficient of $w_{1, (0)} w_{1, (1)}$ must be nonzero. Indeed, the subgroup of $H^2(BO_n \times BO_n; \mathbb F_2)$ consisting of linear combinations with no $w_{1, (0)} w_{1, (1)}$ is precisely the joint image of the inclusion maps of the two factors of $O_n \times O_n$, and in particular the extensions $E$ obtained in this way have the property that if $g, g' \in E$ are elements such that $g$ lands in $O_n \times 1$ and $g'$ lands in $1 \times O_n$, then $gg' = g'g$. But this is false for $E = \fk O_\partial$, as we have shown in part {\sc iii} of {\sc Prop.~\ref{prop:factsOn}}.

		Next, we see that the $w_2$'s must both have nonzero coefficients. Indeed, the restriction to each factor $BO_n \times 1$ and $1 \times BO_n$ must be $w_2$ or $w_2 + w_1^2$, thanks to {\sc Prop.~\ref{prop:factsOn}-ii}. Finally, we see that precisely one of $w_{1, (0)}^2$ and $w_{1, (1)}^2$ must be nonzero, thanks to {\sc Prop.~\ref{prop:factsOn}-iii}. The ambiguity concerning which one survives becomes irrelevant when we pull-back the class to the pathspace: indeed, on $\scr P$ we have that $TL_0 \otimes \mathbb C$ and $TL_1 \otimes \mathbb C$ are both identified with $TM$, so in particular $w_2(TL_0^{\oplus 2}) = w_2(TL_1^{\oplus 2})$, i.e.~ $w_1^2(TL_0) = w_1^2(TL_1)$. From here on, a straightforward calculation using Cartan's formula shows that the cohomology class on the pathspace is given by $w_2(TL_1 - TL_0) = w_2(TL_0 - TL_1)$, as desired:
		\[ \begin{aligned}
			w_2(TL_1 - TL_0) &= w_2(TL_1) + w_1(TL_1) w_1(-TL_0) + w_2(-TL_0) \\ 
			&= w_2(TL_1) + w_1(TL_1) w_1(TL_0) + w_2(TL_0) + w_1^2(TL_0),
		\end{aligned}  \]
		where the general formulas $w_1(-E) = w_1(E)$ and $w_2(-E) = w_2(E) + w_1^2(E)$ for any virtual bundle $E$ follow by solving for the total Stiefel-Whitney class $w(-E)$ in the Cartan formula $1 = w(E \oplus (-E)) = w(E) \cdot w(-E)$.
	\end{proof}

	\begin{RMK}\label{rmk:larged}
		In the context of real Heegaard Floer theory, our computation of the Stiefel-Whitney classes in {\sc Thm.~\ref{thm:wi}} shows that the condition for vanishing of $w_1$ and $w_2$ is independent of $d$, for $d$ sufficiently large. Note also that in this section we have been assuming that $n \ge 3$, which was needed in some of the specific computations involving real vector bundles on $\bb P^1$, which is another independent reason to require $d$ to be large. This is not an issue in our case, since we can always apply enough stabilizations to achieve the desired bound on $d$. In the process of stabilizing, it is possible that some Whitney disks may become homotopic that weren't before.
	\end{RMK}

	\begin{RMK}\label{rmk:twgr}
		Even in the case that only the primary obstruction to orientability vanishes, i.e.~ the image of $w_2$ in $H^1(\Omega_0 \scr P; \mathbb Z/2)$ is zero, it is still possible to define a \emph{local coefficient} chain complex over a \keywd{twisted group algebra $\mathbb Z[\pi_1 \scr P, \xi]$}, twisted by the group 2-cocycle $\xi$ that is the secondary obstruction to coherent orientations. This follows an idea of Rezchikov \cite{Rez21}, who twists $\pi_1 L_i$ instead of $\pi_1 \scr P$. This twisted group algebra is constructed, as an abelian group, in the same way as the usual group algebra, but is endowed with a twisted multiplication:
		\[ [g] \cdot_\xi [h] := \xi(g,h) \cdot [gh]. \]
		The 2-cocycle condition implies associativity, with unit $\xi(1, 1) \cdot [1]$. Adjusting $\xi$ by a 2-coboundary $d\eta$ induces an isomorphism $\mathbb Z[G, \xi] \overset\sim\to \mathbb Z[G, \xi + d\eta]$ given by sending $g \mapsto \eta(g) \cdot g$. Thus, the isomorphism class of the group algebra only depends on the group cohomology class. An augmentation map $\epsilon : \mathbb Z[G, \xi] \to \mathbb Z$ sending each $g$ to $\pm 1$ exists if and only if the twisted group algebra is standard.

		The chain complex over this twisted group algebra is defined in the following manner: choose a privileged point $z \in L_0 \cap_\scr P L_1$, and let
		\[ \underline{\it CL}_\scr P := \bigoplus_{p \in L_0 \cap_\scr P L_1} \mathbb Z[\pi_0 \Omega_{z,p} \scr P] \]
		with differential given by
		\[ \pi_0 \Omega_{z,p} \scr P \ni [\gamma_{z,p}] \longmapsto \sum_{\substack{\gamma_{p, q} \in \scr M_{p, q}\\ \mu(\gamma_{p,q}) = 1}} \xi([\gamma_{z,p}], [\gamma_{p,q}]) \cdot [\gamma_{z,p} \# \gamma_{p,q}],  \]
		with respect to the \emph{non-coherent} orientations on the $\scr M_{p, q}$, whose failure of coherence is measured by $\xi$, cf (\ref{eq:cohfail}). The 2-cocycle condition ensures that $d^2 = 0$. There is an obvious action of $\mathbb Z[\pi_1 \scr P, \xi] = \mathbb Z[\pi_0 \Omega_{z,z}]$ by pre-composition, and $d$ respects this action.
	\end{RMK}

	\subsection{Continuation maps}\label{ssec:cmaps} The definition of Lagrangian Floer homology depends on some choices, and in order to prove independence of said choices, one must construct chain maps between Floer chains using non-translation invariant CR equations, and then provide chain homotopies witnessing the fact that they are mutual inverses, via 1-parameter families of CR equations. In this section, we extend the obstruction theory that we discussed above, in order to account for these generalizations of the CR equations studied previously. That being said, the formal apparatus we are about to construct applies equally well to any kind of map between Floer homologies, not necessarily one inducing a quasi-isomorphism, and will probably be essential in developing functoriality of the theory, though we do not pursue this in the present paper.

	\emph{Continuation maps} can be induced either by a Hamiltonian isotopy of one of the two Lagrangians, or by a 1-parameter family of almost-complex structures interpolating between the two endpoints. The formalism we are about to develop is so similar for the two, that we only discuss the Hamiltonian isotopy explicitly, and leave it to the reader to see that the same story applies for almost-complex structures (in fact this latter one is simpler, since the pathspace does not change). Assume, without loss of generality, that the first Lagrangian $L_1$ is being isotoped. That it, assume that $L_1' := \Phi_1 L_1$ is the time-1 image of a time-dependent Hamiltonian diffeomorphism $\Phi_t$ with $\Phi_0 = {\rm id}$. Since isotopy in particular implies homotopy, it follows that there is an induced homotopy-equivalence relating the two pathspaces, so that connected components thereof are in canonical bijection. Let $\scr P$ and $\scr P'$ denote two components of the pathspaces, that correspond to each other under this bijection. The way to construct a map
	\[ {\it CL}_\scr P(L_0, L_1) \to {\it CL}_{\scr P'}(L_0, L_1'), \]
	is by considering pseudo-holomorphic strips where the 0th order term of the equation is perturbed by the Hamiltonian $H_t$ generating $\Phi_t$. The linearization of these PDEs is only slightly more general than the real CR operators considered previously, in that the 0th order term can now have a complex-conjugate linear term. Nevertheless, this does not change the homotopy type of the space of operators, so the difference is inconsequential.

	We are now led to make definitions of virtual dimensions, determinant lines, and coherent gradings/orientations for the continuation maps that closely mimic the classical versions. To begin with, if $p \in L_0 \cap_\scr P L_1$ and $q' \in L_0 \cap_{\scr P'} L_1'$ are two generators of the two complexes we wish to compare, we may define a space \keywd{$\Omega_{p,q'}^\Phi$} of paths connecting $p$ to $q$ through the intermediary pathspaces, i.e.~ more formally defined to be a time-dependent family of paths $\gamma(t) \in \scr P_M(L_0, \Phi^t(L_1))$ as $t$ varies from 0 to 1, such that $\gamma(0) = p, \gamma(1) = q'$ (the regularity of $\gamma$ is irrelevant as far as the homotopy type is concerned.) There are pre- and post-concatenation maps
	\[ \Omega_{\tilde p, p}\scr P \times \Omega^\Phi_{p, q} \to \Omega^\Phi_{\tilde p, q}, \qquad \Omega^\Phi_{p, q} \times \Omega_{q, \tilde q} \scr P' \to \Omega^\Phi_{p, \tilde q}. \]
	As in the classical case, we still have \keywd{virtual dimension functions $\delta_{p, q'}$} and \keywd{determinant line bundles $\Lambda_{p, q'}$} over $\Omega^\Phi_{p, q'}$, which satisfy very similar relations to (\ref{eq:deladd}, \ref{eq:lamgl}) for pre- and post-concatenation. We still use the notation $\mathfrak o_{p, q'}([\gamma])$ to denote the (possibly empty) set of orientations of $\Lambda_{p, q'}$ on the connected component of $\gamma$ inside $\Omega^\Phi_{p, q'}$.
	\begin{DEF}
		Based on this, we may define:
		\begin{enumerate}
			\item[\sc i.] An \keywd{abstract joint $\mathbb Z/2$-grading} is an assignment ${\rm gr} : (L_0 \cap_\scr P L_1) \sqcup (L_0 \cap_{\scr P'} L_1') \to \mathbb Z/2$ such that
			\[ {\rm gr}(\tilde p) - {\rm gr}(p) \equiv \delta_{\tilde p, p}, \qquad {\rm gr}(p) - {\rm gr}(q') \equiv \delta_{p, q'}, \qquad {\rm gr}(q') - {\rm gr}(\tilde q') \equiv \delta_{q', \tilde q'} \]
			mod 2 for all $\tilde p, p, q', \tilde q'$.
			\item[\sc ii.] An \keywd{abstract joint system of coherent orientations} is a choice of elements 
			\[ \scr o_{\tilde p, p}([\gamma_{\tilde p, p}]) \in \fk o_{\tilde p, p}([\gamma_{\tilde p, p}]), \quad \scr o_{p, q'}([\gamma_{p, q'}]) \in \mathfrak o_{p, q'}([\gamma_{p, q'}]), \quad \scr o_{q', \tilde q'}([\gamma_{q', \tilde q'}]) \in \mathfrak o_{q', \tilde q'}([\gamma_{q', \tilde q'}]) \]
			such that they are multiplicative with respect to the tensor product of orientations induced by all forms of concatenation.
		\end{enumerate}
	\end{DEF}

	These joint versions in particular contain the data of classical $\mathbb Z/2$-gradings/coherent orientations for the two particular Floer chains, but in addition also include the necessary properties/data to ensure that the continuation map also respects said structures. As in the classical case, there is a primary obstruction to orientability for the continuation map, which lives in $H^1(\Omega^\Phi_{p, q'}; \pm 1)$, for all $p$ and $q'$.

	\begin{PROP}
		The following are equivalent:
		\begin{enumerate}
			\item The primary obstruction to coherent orientations for ${\it CL}_\scr P(L_0, L_1)$ vanishes;
			\item The primary obstruction to coherent orientations for ${\it CL}_{\scr P'}(L_0, L_1')$ vanishes;
			\item The primary obstruction to coherent orientations for the continuation map relating the two vanishes, for all $p, q'$.
			\item The same obstruction vanishes for any particular choice of $p, q'$.
		\end{enumerate}
	\end{PROP}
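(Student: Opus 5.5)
The plan is to reduce all four conditions to a single statement about the real CR operators attached to Whitney-disk loops. We then transport determinant lines along the concatenation maps $\Omega_{\tilde p,p}\scr P \times \Omega^\Phi_{p,q'} \to \Omega^\Phi_{\tilde p,q'}$ and $\Omega^\Phi_{p,q'} \times \Omega_{q',\tilde q'}\scr P' \to \Omega^\Phi_{p,\tilde q'}$. The key tool is the gluing isomorphism for these concatenations, the analogue of (\ref{eq:lamgl}), namely
\[ \Lambda_{\tilde p, q'}|_{\gamma\#\eta} \cong \Lambda_{\tilde p,p}|_\gamma \otimes \Lambda_{p,q'}|_\eta. \]
Fix a path $\eta \in \Omega^\Phi_{p,q'}$. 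Pre-concatenation with $\eta$ gives a map $\Omega_0\scr P \to \Omega^\Phi_{p,q'}$, landing in the component of $\eta$, and the gluing isomorphism identifies $w_1(\Lambda_{p,p}|_{[{\rm id}_p]})$ with the pullback of $w_1(\Lambda_{p,q'})$, twisted by a constant line. Since the loopspace $\Omega_0\scr P$ acts on paths, it suffices to show that this map is a homotopy equivalence onto the component of $\eta$, at least on $H_1$.

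First, (3)$\Leftrightarrow$(4) follows exactly as in the argument preceding (\ref{eq:probs}). Any component of $\Omega^\Phi_{p,q'}$ is reached from any component of $\Omega^\Phi_{\tilde p,\tilde q'}$ by pre- and post-concatenating with fixed Whitney disks in $\scr P$ and $\scr P'$. Gluing then tensors $\Lambda$ by a fixed line over a contractible choice, so $w_1$ is transported isomorphically and vanishing for one pair $(p,q')$ gives vanishing for all.

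Next, for (1)$\Leftrightarrow$(4), use that $\Omega^\Phi_{p,q'}$ is, up to homotopy, the space of paths in the total space of the family $\bigcup_t \{t\}\times\scr P_M(L_0,\Phi^t L_1)$, which deformation retracts onto $\scr P$ via $\Phi_t^{-1}$. Hence $\Omega^\Phi_{p,q'} \simeq \Omega_{p,\Phi^{-1}(q')}\scr P$, a torsor over $\Omega_p\scr P$ via pre-concatenation, and each of its components is homotopy equivalent to $\Omega_0\scr P$. Under this equivalence, the pullback of $\Lambda_{p,q'}$ along $\gamma\mapsto\gamma\#\eta$ is $\Lambda_{p,p}\otimes(\text{fixed line})$. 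So $w_1(\Lambda_{p,q'})$ on the component of $\eta$ corresponds to the primary obstruction (\ref{eq:probs}), and the two vanish together. The same argument with post-concatenation by $\Omega_{q',q'}\scr P'$ gives (2)$\Leftrightarrow$(4).

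The main obstacle is justifying that the gluing isomorphism varies continuously in families over $\Omega_0\scr P$, so that it really is an isomorphism of line bundles and not merely fiberwise. This also requires the perturbed operators, with their extra conjugate-linear zeroth-order terms, to have determinant lines behaving like those of real CR operators. Following the remark on continuation maps, I would deform the conjugate-linear term to zero through Fredholm operators, which does not change the determinant line bundle up to isomorphism. I would then invoke Prop.~\ref{prop:boundred} to see that $\Lambda$ depends only on the boundary data $F$, so the family gluing reduces to the parametrized disk-bubble gluing already used in Prop.~\ref{prop:factsOn}.
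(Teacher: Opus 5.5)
Your proposal is correct and follows essentially the paper's argument. Concatenation with a fixed Whitney disk gives homotopy equivalences $\Omega_{\tilde p,p}\scr P\simeq\Omega^\Phi_{p,q'}\simeq\Omega_{q',\tilde q'}\scr P'$, with concatenation by the reverse path as homotopy inverse; these are compatible with determinant lines through the gluing isomorphism, so the $w_1$'s vanish together. Two small corrections: your identification of $\Omega^\Phi_{p,q'}$ with a path space in $\scr P$ should use the trivialization $\gamma\mapsto\bigl(s\mapsto\Phi_{st}^{-1}(\gamma(s))\bigr)$ rather than applying $\Phi_t^{-1}$ to the whole path (and $q'$ is sent to a point of $\scr P$, not to $\Phi^{-1}(q')$), and the continuity of gluing in families that you flag as the main obstacle is already part of the paper's convention for (\ref{eq:lamgl}).
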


	\begin{proof}
		This boils down to the existence of homotopy equivalences
		\[ \Omega_{\tilde p, p} \scr P \cong \Omega^\Phi_{p, q'} \cong \Omega_{q', \tilde q'} \scr P' \]
		compatible with the determinant line bundles. Indeed, these homotopy equivalences are induced by pre- or post-composition with any particular element of $\Omega_{\tilde p, p} \scr P$ or $\Omega_{q', \tilde q'}\scr P'$, and the inverse homotopy equivalence is given by composing with the reverse path.
	\end{proof}

	This now leads us to consider the obstruction to joint $\mathbb Z/2$-gradings and the secondary obstruction to joint coherent orientations. These also live in the simplicial cohomology groups of the nerve of a particular category, now no longer a groupoid:

	\begin{DEF}
		The \keywd{homotopy category $\Pi^\Phi$} associated to the continuation map is given by:
		\begin{list}{$\cdot$}{}
			\item Object set consisting of the disjoint union $(L_0 \cap_\scr P L_1) \sqcup (L_0 \cap_{\scr P'} L_1)$;
			\item Morphisms given by $\pi_0 \Omega_{\tilde p, p} \scr P, \pi_0 \Omega^\Phi_{p, q'}, \pi_0 \Omega_{q', \tilde q'} \scr P'$ for all $\tilde p, p, q', \tilde q'$ (and understood to be empty from $q'$ to $p$).
			\item Composition given by concatenation.
		\end{list}
	\end{DEF}

	\begin{PROP}
		The obstructions to abstract joint $\mathbb Z/2$-gradings and joint systems of coherent orientations (assuming the primary obstruction already vanishes) lie in simplicial $H^1$ and $H^2$ of the nerve $\scr N\Pi^\Phi$ with $\mathbb Z/2$-coefficients. When the obstructions vanish, the sets of choices of joint $\mathbb Z/2$-gradings and systems of joint coherent orientations modulo cohomology are torsors over $H^0$ and $H^1$, respectively.
	\end{PROP}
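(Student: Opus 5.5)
We mimic the proofs of {\sc Props.~\ref{prop:grobs},~\ref{prop:orobs}}, with the groupoid $\Pi\scr P$ replaced by the category $\Pi^\Phi$. Throughout, the primary obstruction is assumed to vanish, which by the previous proposition is the same condition for $\scr P$, for $\scr P'$, and for every $\Omega^\Phi_{p,q'}$. Hence every set $\fk o_{p,q'}([\gamma])$, and likewise $\fk o_{\tilde p,p}$, $\fk o_{q',\tilde q'}$, is a two-element set.

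\emph{Gradings.} The virtual dimensions $\delta_{\tilde p,p}$, $\delta_{p,q'}$, $\delta_{q',\tilde q'}$ are indexed by the three types of morphisms of $\Pi^\Phi$. Taken mod 2, they assemble into a single 1-cochain $\delta \in C^1(\scr N\Pi^\Phi;\bb Z/2)$. The cocycle condition on a 2-simplex $x_0 \to x_1 \to x_2$ is the linear-gluing additivity (\ref{eq:deladd}). Its analogues for pre- and post-concatenation with continuation paths hold by the same linear gluing argument; I take these as given, as the text preceding the statement asserts. A joint grading is then exactly a 0-cochain ${\rm gr}$ with $d\,{\rm gr} = \delta$. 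So the class $[\delta]\in H^1$ is the obstruction, and two solutions differ by a 0-cocycle, giving a torsor over $H^0(\scr N\Pi^\Phi;\bb Z/2)$.

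\emph{Orientations.} I would choose orientations $\scr o$ arbitrarily on every morphism component, i.e.~on each element of $\pi_0\Omega_{\tilde p,p}\scr P$, $\pi_0\Omega^\Phi_{p,q'}$ and $\pi_0\Omega_{q',\tilde q'}\scr P'$. Then define $\xi(\gamma,\gamma')\in\{\pm1\}$ on each composable pair by the analogue of (\ref{eq:cohfail}), which gives $\xi\in C^2(\scr N\Pi^\Phi;\{\pm1\})$. The 2-cocycle condition on a 3-simplex comes from associativity of the gluing isomorphisms for triple concatenations of mixed type (e.g.~$\tilde p\to p\to q'\to\tilde q'$). Changing $\scr o$ by a 1-cochain $\eta$ changes $\xi$ by $d\eta$. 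Hence coherent systems exist iff $[\xi]=0$, and the coherent systems form a torsor over the 1-cocycles. Quotienting by the cohomology relation (\ref{eq:cohsys}) on the object set identifies exactly the coboundaries $d\epsilon$. The set of choices modulo cohomology is therefore an $H^1(\scr N\Pi^\Phi;\{\pm1\})$-torsor.

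\emph{Main obstacle.} Everything above is formal except the associativity of the gluing isomorphisms on determinant lines across the different kinds of concatenation. This needs a 2-parameter family of linearized operators, including those with the Hamiltonian 0th-order term, degenerating to the two possible broken configurations. It must also respect the Koszul sign conventions of {\sc Rmk.~\ref{rmk:signambig}}. I would argue as in {\sc Fig.~\ref{fig:glsph}}: the two gluing parameters can be varied independently, and the space of such families is contractible. One further point needs care. Because $\Pi^\Phi$ is not a groupoid, the reduction to a one-object subgroupoid used earlier is unavailable, so the cohomology must be computed directly on the full nerve. This affects only the identification of the groups, not the obstruction argument.
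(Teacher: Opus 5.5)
Your proposal is correct and matches the paper's intended argument: the paper omits the proof as ``so similar to the classical case,'' meaning a transcription of Props.~\ref{prop:grobs} and~\ref{prop:orobs} to the nerve of $\Pi^\Phi$. That is exactly what you carry out, with the grading 1-cocycle coming from additivity, the orientation 2-cocycle from associativity of gluing, and torsors over $H^0$ and $H^1$ after quotienting by $d\epsilon$. Your statement that coherent systems form a torsor over the 1-cocycles, rather than over all of $C^1$ as the paper's wording in the groupoid case suggests, is the precise form of that step.
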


	The proof is so similar to the classical case that we omit it.

	\begin{RMK}
		Regarding the structure of $\Pi^\Phi$, we notice that it is no longer a groupoid, but rather it is equivalent to a 2-object category -- let us call the objects $0$ and $1$, respectively -- with no morphisms going from 1 to 0, and such that all endomorphisms are invertible. Such a category is uniquely determined by groups $G_0 = {\rm Aut}(0), G_1 = {\rm Aut}(1)$, as well as a set $X_{01} := {\rm Hom}(0, 1)$ endowed with a balanced $(G_0, G_1)$-bimodule action.
		
		The simplicial chains on the nerve of such a category are given by the two-sided bar construction $B_\bullet (\mathbb Z[G_0], \mathbb Z[X_{01}], \mathbb Z[G_1])$, and constitute a concrete model for the derived tensor product
		\[ \mathbb Z \otimes^{\bf L}_{\mathbb Z[G_0]} \mathbb Z[X_{01}] \otimes^{\bf L}_{\mathbb Z[G_1]} \mathbb Z \]
		whose (co)homology computes group (co)homology of the module $\mathbb Z[X_{01}]$ over $G_0 \times G_1$. This formula may be of use in the broader study of functorial maps in Heegaard Floer type theories, though in this paper we content ourselves to continuation maps, for which this derived algebra language will not be necessary.
	
		Indeed, $\Pi_\Phi$ has one additional important property, namely that $X_{01}$ is a torsor over both $G_0$ and $G_1$. Indeed, any two paths can be subtracted by gluing at either end, and deforming to get an element of $G_0$ or $G_1$. This key observation allows us to conclude the following:
	\end{RMK}

	\begin{PROP}
		The inclusions of $\Pi\scr P$ and $\Pi \scr P'$ into $\Pi^\Phi$ induce chain-homotopy equivalences on the level of simplicial (co)chains. In particular, the obstructions for abstract $\mathbb Z/2$-gradings and abstract systems of coherent orientations for ${\it CF}_\scr P(L_0, L_1)$ vanish if and only if they vanish for ${\it CF}_{\scr P'}(L_0, L_1')$, if and only if they vanish for the joint versions associated to the continuation maps. Moreover, any choice of abstract $\mathbb Z/2$-gradings or system of coherent orientation for any of the two chain theories automatically induces a canonical choice for the joint version associated to the continuation map, and hence also for the other chain theory.
	\end{PROP}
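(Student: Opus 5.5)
The plan is to show that the category $\Pi^\Phi$ deformation-retracts, at the level of nerves, onto each of its two full subcategories $\Pi\scr P$ and $\Pi\scr P'$. The key input is the torsor property noted in the preceding remark. Once this homotopy equivalence is in hand, the remaining assertions follow formally from the obstruction-theoretic descriptions in {\sc Props.~\ref{prop:grobs},~\ref{prop:orobs}} and their joint analogue.

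\emph{Step 1: reduce to one-object models.} Fix $p \in L_0 \cap_\scr P L_1$ and $q' \in L_0 \cap_{\scr P'} L_1'$. As in the one-object reduction proposition for $\Pi\scr P$, the full subcategory on $\{p, q'\}$ is equivalent to $\Pi^\Phi$. This subcategory is the two-object category determined by
\[ G_0 = \pi_0\Omega_{p,p}\scr P, \qquad G_1 = \pi_0\Omega_{q',q'}\scr P', \qquad X_{01} = \pi_0\Omega^\Phi_{p,q'}. \]
Equivalences of categories induce homotopy equivalences of nerves, so it suffices to treat this small model.

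\emph{Step 2: construct the retraction.} Choose one element $\phi \in X_{01}$. By the torsor property, $g \mapsto g\cdot\phi$ is a bijection $G_0 \to X_{01}$, and likewise $h \mapsto \phi\cdot h$ is a bijection $G_1 \to X_{01}$. Together these give a group isomorphism $c_\phi : G_0 \to G_1$ characterized by $g\phi = \phi\, c_\phi(g)$.

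Define a functor $R : \Pi^\Phi \to \Pi\scr P'$ on the two-object model as follows. Send both objects to $q'$. On morphisms, send $g \in G_0$ to $c_\phi(g)$, send $h \in G_1$ to $h$, and send $x = \phi h \in X_{01}$ to $h$. Functoriality reduces to the balanced bimodule identities $(g x) h' = g(x h')$, together with the defining relation of $c_\phi$.

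There is a natural transformation from the identity functor to $\iota\circ R$ whose components are $\phi$ at object $0$ and $\mathrm{id}$ at object $1$; naturality is exactly the relation $g\phi = \phi\,c_\phi(g)$ together with the definition of $R$ on $X_{01}$. Natural transformations induce homotopies on nerves. Moreover $R\circ\iota = \mathrm{id}$ on $\Pi\scr P'$. Hence $\iota : \scr N\Pi\scr P' \hookrightarrow \scr N\Pi^\Phi$ is a homotopy equivalence, and in particular a chain-homotopy equivalence on simplicial cochains.

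The case of $\Pi\scr P$ is symmetric: it uses the transformation $\iota' \circ R' \Rightarrow \mathrm{id}$ with component $\phi$ at object $0$, and the inverse isomorphism $c_\phi^{-1}$.

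\emph{Step 3: transport the obstructions.} The obstruction cocycles for joint gradings and for joint orientations restrict, along these inclusions, to the classical cocycles for each complex: the defining equations restrict on the nose. Since restriction is an isomorphism on $H^1$ and $H^2$, the following are equivalent: the joint obstruction vanishes; the obstruction for ${\it CF}_\scr P(L_0, L_1)$ vanishes; the obstruction for ${\it CF}_{\scr P'}(L_0, L_1')$ vanishes.

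For the same reason, restriction gives a bijection between the torsors of joint choices (over $H^0$, respectively $H^1$, of $\scr N\Pi^\Phi$) and the torsors of choices for each single complex. Inverting this bijection produces the canonical extension of any given choice, and restricting to the other subcategory then transfers the choice to the other chain theory.

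The main obstacle is at the cochain level of Step 3. A cochain-level bijection of choices is not immediate, because the torsors are over cochains rather than cohomology. For gradings I would handle this by hand: a joint grading is determined by its value on a single object together with the cocycle, so it is determined by its restriction. For orientations, the choices modulo the cohomology relation correspond to $H^1$, so the bijection holds at that level. I would also check that it does not depend on the choice of $\phi$: different choices of $\phi$ give naturally isomorphic retractions, so they induce the same map on cohomology.
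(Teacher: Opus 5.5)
Your proposal is correct and follows essentially the paper's route. The paper also fixes a privileged element of $X_{01}$ and uses the torsor property to get the bijection $G_0 \cong X_{01}$ and the group isomorphism $G_0 \cong G_1$. It then concludes that $\Pi^\Phi$ is equivalent to $\Pi\scr P \times (0 \to 1)$, whose nerve retracts onto either end; this is a repackaging of your retraction-plus-natural-transformation argument, and your Step 3 spells out the passage to the torsors of choices more carefully than the paper does.

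One small slip: in the $\Pi\scr P$ case, the transformation $\iota'\circ R' \Rightarrow \mathrm{id}$ must have component $\mathrm{id}$ at object $0$ and $\phi$ at object $1$ (a morphism $\iota'R'(1) = 0 \to 1$), not $\phi$ at object $0$. Also, your concern about dependence on $\phi$ is moot: the extension is obtained by inverting restriction, which involves no choice.
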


	\begin{proof}
		The fact the $X_{01}$ is a torsor over both $G_0$ and $G_1$ implies that the category $\Pi^\Phi$ is equivalent to $\Pi \scr P \times (0 \to 1)$ and $\Pi \scr P' \times (0 \to 1)$, in a way compatible with the inclusion of $\Pi \scr P \times 0$ and $\Pi \scr P' \times 1$. Indeed, if we choose a privileged element $x \in X_{01}$, we get a bijection $G_0 \overset\sim\to X_{01}$ given by $g \mapsto gx$, and also a group isomorphism $\phi : G_0 \overset\sim\to G_1$ by sending $g \in G_0$ to the unique element $\phi(g) \in G_1$ satisfying $gx = x \phi(g)$. This gives a morphism $(G_0, G_0, G_0) \to (G_0, X_{01}, G_1)$ of groups and bimodules, which can be used to provide the desired equivalence; the $G_1$ case is very similar.
	\end{proof}

	The upshot of what we have just proved is that once the job has been done for one of the chain theories, everything desirable also follows for free for the other chain theory, as well as the continuation map between them. The last step is to establish a similar statement also for the chain homotopies relating the mutual composites of the forwards and backwards continuation maps to the identity maps, hence proving the chain-homotopy invariance of the construction with respect to Hamiltonian isotopies. As far as absolute $\mathbb Z/2$-gradings, there is no extra data to be provided, or conditions to be checked, since the condition that a chain map be a quasi-isomorphism is independent of the gradings. However, the same is not true with respect to $\mathbb Z$-gradings, since it is possible for a map to be a $\mathbb Z/2$-quasi-isomorphism, but not a $\mathbb Z$-quasi-isomorphism.

	The equations one uses to produce the chain-homotopies now come in a 1-parameter family, interpolating between the concatenation $\Phi \star \bar\Phi$, where the bar denotes the reverse, i.e.~ $\bar \Phi(t) = \Phi(1 - t)$, and the identity isotopy (which induces the identity chain map). This extra parameter is incorporated also in the moduli space, i.e.~ we are looking at the moduli space of solutions to \emph{any} of the intermediate equations all at once, and smoothness of the moduli space can only be guaranteed jointly. As a result, to compute the tangent space to the moduli space, one must add the extra parameter to the domain of the linearized real CR operator. Nevertheless, it still holds true that the virtual dimension and the determinant line of this ``augmented'' Fredholm operator are homotopy-invariant, so we can generalize all the concepts from before, in particular the determinant line $\Lambda_{p,q}^{\rm aug}$ for these augmented linear PDEs.

	There is a canonical identification $\Lambda_{p,q}^{\rm aug} \cong \Lambda_{p, q}$, coming from the fact that the two linear problems are related by adding a copy of $\mathbb R$ to the domain of the non-parametric equation, to obtain the parametric one. Thus, once we have chosen systems of joint coherent orientations for both directions, compatible with the coherent orientations for the two chain theories we are comparing, the remaining coherence condition for the chain-homotopies takes the form
	\[ \scr o_{p, q'}([\gamma_{p, q'}]) \otimes \scr o_{q', r}([\gamma_{q', r}]) = \scr o_{p, r}([\gamma_{p, q'} \# \gamma_{q', r}]) \]
	and likewise for the concatenation going the other way around. The existence of such coherent choices can also be interpreted as the vanishing of a certain obstruction in $H^2$ of the nerve of a category.

	Indeed, we may define the bi-directional groupoid $\Pi_{\leftrightarrows}^\Phi$ in the same way as the one-directional homotopy category $\Pi^\Phi$ as before, but now allowing morphisms both from $\scr P$ to $\scr P'$ and from $\scr P$ to $\scr P'$. This is now a connected groupoid, and hence the inclusions of the groupoids $\Pi \scr P$ and $\Pi \scr P'$ are both homotopy-equivalences of Kan complexes, thus inducing isomorphisms on simplicial (co)chains of their nerves. Consequently, once again the vanishing of the obstruction for the chain-homotopy is equivalent to the vanishing of the obstruction for either of the two chain theories, and moreover any choice of witness that the obstruction is zero extends uniquely to the chain-homotopies. This provides the proof for {\sc Thm.~\ref{thm:invar}}.

\section{Proof of the main result}\label{sec:prf}

	Now, we assemble all the previous facts that we have established, to give a proof of {\sc Thm.~\ref{thm:main}}. In {\sc\S\ref{sec:gror}}, we have identified the necessary and sufficient conditions for (abstract) $\mathbb Z/2$-gradings and $\mathbb Z$-coefficients {\sc (Thms.~\ref{thm:obs},~\ref{thm:invar})}.
	In particular, this establishes that given any real Heegaard diagram $(\scr H, \tau)$, these obstructions lie in $H^1$ and $H^2$ of the particular connected component of the pathspace $\rhatP(\scr H, \tau)$ that we are interested in. The connected components of the pathspace, at least when considered in the stable range {\sc(Prop.~\ref{prop:connsd})}, correspond canonically to relative real ${\rm Spin}^c$-structures on $(Y, \tau)$, cf. {\sc Thm.~\ref{thm:comp}}, and the Lagrangian Floer chain complex at any connected component computes the real Heegaard Floer chain complex at the corresponding real ${\rm Spin}^c$-structure.

	The topology of the connected components of the stable pathspace has been studied in {\sc Prop.~\ref{prop:spcalc}}, where we have concluded that they are homotopy-tori with $\pi_1 = A \overset{\rm def}= H^1(Y' \setminus N', \partial N', \underline{\bb Z}^v)$. Consequently, elements of $H^1(\scr P; \mathbb Z/2)$ are identified with $\mathbb Z/2$-valued functionals on $A$, and similarly elements of $H^2(\scr P; \mathbb Z/2)$ are identified with alternating bilinear forms on $A$ with values in $\mathbb Z/2$. The obstructions are given by $w_1$ and $w_2$ of the formal difference of the two Lagrangian tangent bundles, cf. {\sc Thm.~\ref{thm:obs}}; in our case, the torus is parallelizable, so the latter are the same as $w_1$ and $w_2$ of the tangent bundle to $\rsym^d\Sigma^\circ$, which we have computed in {\sc Thm.~\ref{thm:wi}} to be
	\[ w_1(x) = F(x, x), \qquad w_2(x, y) = F(x, y) - F(x,x) \cdot F(y, y) \overset{\rm def}= Q(x, y). \]
	This is precisely the claim of {\sc Thm.~\ref{thm:main}}, for a particular given real Heegaard diagram $(\scr H, \tau)$. It therefore remains to establish invariance of the $\mathbb Z/2$-gradings or $\mathbb Z$-lift with respect to real Heegaard moves. Invariance up to isotopy of the $\alpha, \beta$ curves has already been proven in {\sc Cor.~\ref{cor:isotinv}}, on the basis of {\sc Thm.~\ref{thm:invar}}. It remains to explain invariance up to real handleslides and real stabilizations.

	Handleslides can be treated either in the more classical Ozsv\'ath-Szab\'o \cite{OS-I} way via triangle-counting maps, or in the more modern \cite{Per08} way due to Perutz, of building a Hamiltonian isotopy between the Lagrangian tori before and after the handle-slide. We choose the latter, to conform with \cite{GM25}, though for the sake of future work on functoriality it would be good to also develop the classical technique. The result of \cite{Per08} states the a handle-slide induces a Hamiltonian isotopy of the Lagrangian torus, so the same result {\sc Thm.~\ref{thm:invar}} also gives us handle-slide invariance.

	It remains to establish invariance up to stabilization, which in the real case comes in the form of connect-summing with one of three distinct real diagrams for $S^3$, cf. {\sc Def.~\ref{def:realmoves}}. Since we are only doing the $\widehat{\it HFR}$ version, there is a one-to-one correspondence between generators and holomorphic disks from one diagram to the stabilized one, thanks to domains not being able to pass through the basepoints. That is, the chain complexes are honestly isomorphic over $\mathbb Z/2$, and it only remains to show that the underlying bijective correspondence can also be made to respect the choices of $\mathbb Z/2$-gradings or coherent orientations. The effect of stabilization on the linearized real CR operator is equivalent to direct-summing the unstabilized operator $D$ with another real CR operator $D^\perp$ on the orthogonal complement $(E^\perp, F^\perp)$ of the inclusion $(E, F) \subset (E', F')$ of the bundles before stabilization inside the ones after stabilization:
	\[ D' \cong D \oplus D^\perp. \]
	This orthogonal complement $(E^\perp, F^\perp)$ is trivial, and hence has zero index and trivial determinant line bundle. In other words, if we use $\delta_{p, q}, \Lambda_{p, q}$ to denote the index and determinant line before stabilization, and $\delta_{p,q}'$, $\Lambda_{p, q}'$ after stabilization, we have $\delta_{p, q}' = \delta_{p, q} + 0$ and $\Lambda_{p, q}' \cong \Lambda_{p, q} \otimes \underline{\mathbb R}$. This means that $\mathbb Z/2$-gradings and coherent orientations can be extended canonically via this isomorphism, hence concluding the proof of invariance up to real Heegaard moves.

	The resulting invariant is well-defined up to an ambiguity of the choice of $\mathbb Z/2$-grading or $\mathbb Z$-coefficients, and it is perfectly possible that a loop of Heegaard moves might not preserve this choice. This is the reason why we only state {\sc Thm.~\ref{thm:main}} as invariance of a \emph{relatively graded} object. In the last section, we offer some speculation as to why we expect this grading to be canonical, though we do not offer any rigorous proof of this additional conjecture.

\section{Algebraic properties of \texorpdfstring{$A$}{A}, \texorpdfstring{$F$}{F}, and \texorpdfstring{$Q$}{Q}}\label{sec:alg}

    In this section, we study some important algebraic properties of the group 
        \[A = H^1(Y' \setminus N', \partial N'; \underline{\bb Z}^v) \] 
    that appears in {\sc Thm.~\ref{thm:main}}, as well as the $\bb Z/2$-valued bilinear forms $F$ and $Q$ on this space.

    \begin{PROP}\label{prop:bigA}
        The following are equivalent descriptions of the group $A$ up to canonical isomorphism, where $\vec \mu$, $\vec \mu'$ denote the disjoint unions of meridional circles for the link $C$ in $Y$ and $Y'$, respectively:
        \begin{enumerate}
            \item $H^1(Y' \setminus N', \partial N'; \underline{\bb Z}^v) = H^1(Y' \setminus N', \vec \mu'; \underline{\bb Z}^v)$;
            \item $H^1(Y \setminus N, \partial N; \bb Z)^{-\tau^*} = H^1(Y \setminus N, \vec \mu; \bb Z)^{-\tau^*}$;
            \item $H^1(Y, C; \bb Z)^{-\tau^*} = H^1(Y, \vec w; \bb Z)^{-\tau^*}$; and in addition if $C$ is a knot, also $H^1(Y)^{-\tau^*}$.
        \end{enumerate}
        Furthermore, there are inclusions of finite index equal to a power of two
        \begin{enumerate}
            \item[\sc iv.] $A = H^1(Y' \setminus N', \vec \mu'; \underline{\bb Z}^v) \subset H^1(Y' \setminus N'; \underline{\bb Z}^v)$;
            \item[\sc v.] $A = H^1(Y \setminus N, \vec \mu; {\bb Z})^{-\tau^*} \subset H^1(Y \setminus N; \bb Z)^{-\tau^*}$;
            \item[\sc vi.] $A = H^1(Y, C; \bb Z)^{-\tau^*} = H^1(Y, \vec w; \bb Z)^{-\tau^*} \subset H^1(Y; \bb Z)^{-\tau^*}$; 
        \end{enumerate}
        which therefore preserve rank and become isomorphisms rationally. Rationalization can be obtained by replacing $\mathbb Z$ with $\mathbb Q$ in all coefficients that show up in the various cohomology groups.
    \end{PROP}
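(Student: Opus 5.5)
The argument has three parts: the equalities inside each of {\sc i--iii}, the identifications between the three descriptions, and the finite-index inclusions {\sc iv--vi}.

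For the equalities inside each item, the plan is to use long exact sequences of triples with the intermediate pair given by the meridians.

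\begin{list}{$\cdot$}{}
	\item For {\sc i}, take the triple $(Y'\setminus N', \partial N', \vec\mu')$. The groups $H^*(\partial N', \vec\mu'; \underline{\bb Z}^v)$ reduce to the cohomology of each torus relative to a meridian. The local system $v$ is nontrivial along the meridian (it counts the mod-2 winding number around $\mu$) and trivial along a suitable longitude. Relative to the meridian, $H^0$ vanishes. $H^1$ is the twisted cohomology of the torus rel a circle, i.e.\ of the longitudinal direction cut open, which I expect to be $0$ because of the sign monodromy along $\mu$. One has to check this carefully: the cellular cochain complex of $T^2$ rel $\mu$ is the complex of the 1-cell $\lambda$ and the 2-cell, with boundary $1 - (-1) = 2$ or $0$ depending on the twist. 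If instead it is $\bb Z$ or $\bb Z/2$ in degree 1, I would compare the two sides directly in the LES.
	\item For {\sc ii} and the first equality in {\sc iii}, the $(-\tau^*)$-invariants are handled the same way upstairs. Using the $\tau$-invariant collar $N$ and excision, $H^*(Y, C) \cong H^*(Y, N) \cong H^*(Y\setminus N, \partial N)$. Replacing $C$ by $\vec w$ uses the LES of the triple $(Y, C, \vec w)$: $H^0(C, \vec w) = 0$, and $H^1(C, \vec w) = \bigoplus \bb Z$ on which $\tau$ acts trivially (since $C$ is fixed), so its $(-\tau^*)$-part vanishes. Taking $(-\tau^*)$-invariants is left exact, which is enough for the injections; after tensoring with $\bb Z[1/2]$ it is exact.
	\item When $C$ is a knot, $H^1(Y, w) = H^1(Y)$ directly.
\end{list}

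To identify {\sc i} with {\sc ii}, use the transfer. The cover $Y\setminus N \to Y'\setminus N'$ is the $C_2$-cover determined by $v$, so $H^*(Y\setminus N; \bb Z) \cong H^*(Y'\setminus N'; \pi_*\bb Z)$. Here $\pi_*\bb Z = \bb Z[C_2]$ as a local system, and its $(-1)$-eigen-part is $\underline{\bb Z}^v$. There is an exact sequence $0\to \underline{\bb Z}^v \to \pi_*\bb Z \to \bb Z \to 0$, where the first map sends $1\mapsto (1,-1)$ and the second is the sum map. Its LES identifies $H^1(\,\cdot\,;\underline{\bb Z}^v)$ with the kernel of the map to $H^1(\,\cdot\,;\pi_*\bb Z)$ modulo the image of $H^0(\,\cdot\,;\bb Z) \to H^1(\,\cdot\,;\underline{\bb Z}^v)$. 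In the relative setting rel $\partial N'$, the relevant $H^0$ vanishes because $\partial N' \neq \0$ meets every component. This gives an injection into $H^1(Y\setminus N,\partial N)$ landing in the $-\tau^*$ invariants. For surjectivity onto the invariants, I would show that an anti-invariant relative cocycle upstairs descends as a cochain of the twisted system on an equivariant cell structure. On a free $C_2$-CW pair, anti-invariant cochains are exactly twisted cochains downstairs, so the twisted cochain complex \emph{equals} the anti-invariant subcomplex. Taking cohomology of invariants versus invariants of cohomology differs by group cohomology $H^1(C_2; B^1)$-type terms. I expect this to be the main obstacle: showing no 2-torsion discrepancy arises in degree 1 relative to $\partial N$. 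I would first try using that $H^0$ of the relative complex vanishes, so cocycles in degree $1$ coincide with cohomology classes modulo coboundaries of relative $0$-cochains, and the anti-invariant $0$-cochains surject onto the $(-\tau^*)$-invariant coboundaries after averaging.

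For {\sc iv--vi}, the inclusions come from the LES of the pair $(X, \vec\mu)$. The cokernel sits inside $H^1(\vec\mu; \cdot)$, which is $\bb Z/2$ per twisted meridian downstairs and $\bb Z$ with trivial $\tau$ upstairs. Injectivity follows because $H^0(\vec\mu;\underline{\bb Z}^v)=0$ and, upstairs, from anti-invariance. Hence the index divides a power of $2$. In {\sc vi} the cokernel is a subgroup of $H^1(C, \vec w)^{-\tau^*}$ modulo 2-torsion considerations; since $C$ is fixed, any anti-invariant class $c$ satisfies $2c = c - \tau^*c$ lies in the image, again giving 2-power index. Rational statements follow by tensoring with $\bb Q$, which makes invariants exact, and freeness of $A$ follows from it being a subgroup of the torsion-free group of anti-invariant classes modulo torsion in $H^1$. $H^1$ with $\bb Z$ coefficients of a pair whose $H^0$ term vanishes is torsion-free.
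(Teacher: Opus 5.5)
\textbf{Gap: the identification of {\sc i} with {\sc ii}.} This is the step you flag as the main obstacle, and you leave it open. Write $(X,A)=(Y'\setminus N',\partial N')$ and $(\tilde X,\tilde A)=(Y\setminus N,\partial N)$. Your transfer sequence correctly gives an injection $H^1(X,A;\underline{\mathbb Z}^v)\hookrightarrow H^1(\tilde X,\tilde A;\mathbb Z)^{-\tau^*}$. You do not prove surjectivity, and the plan you sketch would not give it: ``averaging'' a $\tau$-invariant $0$-cochain means dividing by $2$, which is exactly where a 2-torsion discrepancy would enter.

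The paper handles this step with the Leray--Serre (Cartan--Leray) spectral sequence of the free cover over $\mathbb{RP}^\infty$, namely $E_2^{p,q}=H^p(C_2;H^q(\tilde X,\tilde A;\mathbb Z))$ with the action twisted to $-\tau^*$. The row $q=0$ vanishes because $H^0(\tilde X,\tilde A)=0$, so $H^1(X,A;\underline{\mathbb Z}^v)=E_2^{0,1}=H^1(\tilde X,\tilde A)^{-\tau^*}$.

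Inside your own framework the missing step is short:
\begin{enumerate}
\item The sum map $\pi_*\mathbb Z\to\mathbb Z$ induces the transfer $\mathrm{tr}$, and $\mathrm{tr}\circ\pi^*=2$, $\pi^*\circ\mathrm{tr}=1+\tau^*$.
\item Since $H_0(X,A)=0$, the group $H^1(X,A;\mathbb Z)=\mathrm{Hom}(H_1(X,A),\mathbb Z)$ is torsion-free. Because $\mathrm{tr}\circ\pi^*=2$, $\pi^*$ is injective on it.
\item For an anti-invariant $y$, $\pi^*\mathrm{tr}(y)=y+\tau^*y=0$, so $\mathrm{tr}(y)=0$. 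Hence $y$ lies in the image of $H^1(X,A;\underline{\mathbb Z}^v)$.
\end{enumerate}
Your cochain route can also be completed, but the needed input is freeness, not averaging. Since $\delta$ is injective on $C^0(\tilde X,\tilde A)$, a relation $\tau z=-z+\delta c$ forces $\tau c=c$. Because $C^0$ is a free $\mathbb Z[C_2]$-module, $c=b+\tau b$, and then $z-\delta b$ is an anti-invariant representative.

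\textbf{Smaller points.}
\begin{enumerate}
\item The equality inside {\sc ii} is not handled ``the same way'' as {\sc i}. Upstairs, with untwisted coefficients, $H^1(\partial N,\vec\mu;\mathbb Z)\cong\mathbb Z^k\neq0$. What you need is that $\tau$ acts on each boundary torus by rotation by $\pi$ in the meridional direction. This rotation is isotopic to the identity through maps preserving $\vec\mu$, so $\tau^*$ is trivial on this torsion-free group and its anti-invariant part vanishes. Left exactness of $(-)^{-\tau^*}$, exactly as you use it for {\sc iii}, then gives the equality. Alternatively, as the paper implicitly does, apply the {\sc i}$\leftrightarrow${\sc ii} isomorphism to both pairs $(Y'\setminus N',\partial N')$ and $(Y'\setminus N',\vec\mu')$.
\item In {\sc v}, the cokernel of the map on anti-invariants does not embed in $H^1(\vec\mu;\mathbb Z)^{-\tau^*}$, which is $0$. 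The correct argument is the one you give for {\sc vi}: an anti-invariant $x$ restricts to zero on $\vec\mu$ and lifts non-equivariantly to some $y$. Then $y-\tau^*y$ is an anti-invariant lift of $2x$, so the cokernel is killed by $2$.
\item Your cellular computation in {\sc i} resolves in your favor, so you can drop the hedge. The monodromy along $\mu$ is $-1$, so the coboundary is $\pm2$ and $H^1(T^2,\mu;\underline{\mathbb Z}^v)=0$. This matches the paper's K\"unneth argument.
\end{enumerate}
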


    \begin{proof}
        To establish {\sc i}, let us look at the associated long-exact sequence associated to the triple $(Y' \setminus N', \partial N', \vec \mu')$. It suffices to show that $H^0(\partial N', \vec \mu'; \underline{\bb Z}^v) = H^1(\partial N', \vec \mu'; \underline{\bb Z}^v) = 0$. This follows from the K\"unneth formula applied to the smash product of pairs with local systems:
        \[ (C, \vec w; \mathbb Z) \otimes (\vec \mu', \0; \underline{\bb Z}^v) \cong (\partial N', \vec \mu'; \underline{\bb Z}^v). \]
        Indeed, the cohomology of either factor is concentrated in degree 1, and moreover the cohomology of the former is torsion-free, so the naive K\"unneth formula (without Tor) applies, implying the desired conclusion.

        Part {\sc ii} follows from the more general observation that, for any pair $(X, A)$ with $X$ connected and $A \neq\0$, and every local system $\alpha : \pi_1 X \to \mathbb Z/2$, there is an isomorphism
        \[ H^1(\tilde X^\alpha, \tilde A^\alpha; \mathbb Z)^{-\tau^*} \cong H^1(X, A; \underline{\bb Z}^\alpha), \]
        where $\tilde X^\alpha, \tilde A^\alpha$ are the associated covers to $\alpha$. Indeed, by inspecting the Leray-Serre spectral sequence for the fiber sequence $(\tilde X^\alpha, \tilde A^\alpha; \mathbb Z) \to (X, A; \underline{\bb Z}^v) \to \mathbb{RP}^\infty$, we find that there is a spectral sequence
        \[ E_2^{p,q} = H^p(\mathbb{RP}^\infty; \underline{H}^q(\tilde X^\alpha, \tilde A^\alpha; \mathbb Z)) \]
        where $\underline H$ signifies the local system over $\mathbb{RP}^\infty$; when viewed as a $\mathbb Z/2$-representation, this is given by the involution $-\tau^*$, where the minus sign is due to the fact that the trivialization of the pull-back of $\underline{\bb Z}^\alpha$ to $\tilde X^\alpha$ \emph{anti-}commutes with the deck transformation $\tau$. The row $q = 0$ is entirely zero, thanks to the connectivity ans non-emptiness assumptions, and therefore the only value on the $p + q = 1$ diagonal is given by $E^{0,1}_2 = E^{0,1}_\infty$, which equals the fixed-points of $-\tau^*$ on $H^1(\tilde X^\alpha, \tilde A^\alpha; \underline{\bb Z}^\alpha)$.

        The isomorphism in {\sc iii} claiming $A \cong H^1(Y, C; \bb Z)^{-\tau^*}$ follows by filling in the solid tori through excision. To see the equality $H^1(Y, C; \bb Z)^{-\tau^*} = H^1(Y, \vec w; \bb Z)^{-\tau^*}$, we can explore the long-exact sequence
        \[ 0 = H^0(C, \vec w) \overset\delta\to H^1(Y, C) \to H^1(Y, \vec w) \to H^1(C, \vec w), \]
        together with the observation that $\tau^*$ acts trivially on the free abelian group $H^1(C, \vec w)$.
        
        Claim {\sc iv} follows from the long-exact sequence in cohomology associated to the pair $(Y' \setminus N', \vec \mu')$, and the observation that $H^0(\vec \mu'; \underline{\bb Z}^v) = 0$ and $H^1(\vec \mu'; \underline{\bb Z}^v) = (\mathbb Z/2)^{k}$; indeed, the co-chain complex for the cohomology of $S^1$ valued in the sign representation is given by $\mathbb Z \overset{\cdot 2}\to \mathbb Z$.

        For proving {\sc v}, let us investigate the long-exact sequence
        \[ H^0(\vec \mu; \bb Z) \overset\delta\to H^1(Y \setminus N, \vec \mu; \bb Z) \to H^1(Y \setminus N; \bb Z) \to H^1(\vec \mu; \bb Z). \]
        To see injectivity of the middle map on $(-\tau^*)$-fixed points, assume that an element $x$ of the second term satisfies $x = -\tau^* x$, and maps to zero in the third term. By exactness, there exists $y$ in the first term such that $\delta y = x$. But all elements in the first term are $\tau$-invariant, and since $\delta$ commutes with the $\tau^*$-action, it follows that the element $x$ in question is simultaneously $\tau^*$-invariant and $(-\tau)^*$-invariant. Due to the torsion-freeness of $H^1$, this forces $x$ to be zero, as desired. For the claim that the index of this inclusion is a power of two, let us assume that $x$ now is a $(-\tau^*)$-invariant element in the third term, which maps to zero in the fourth. By exactness, it is the image of an element $x$ in the second term, though this $x$ need not necessarily be $(-\tau)^*$-invariant. Nevertheless, the difference $x - \tau^*x$ is indeed $(-\tau)^*$-invariant, and its image is given by $y - \tau^* y = 2y$. This proves the claim.

        Lastly, claim {\sc vi} follows by the exact same formal argument applied to the long-exact sequence
        \[ H^0(C; \mathbb Z) \overset\delta\to H^1(Y, C; \mathbb Z) \to H^1(Y; \mathbb Z) \to H^1(C; \mathbb Z). \]
        The statement that rationalization of the groups in question amounts to replacing $\mathbb Z$ with $\mathbb Q$ in the coefficient systems comes from the fact that rationalization is an exact functor that commutes with taking invariants of finite group actions.
    \end{proof}

    \begin{COR}\label{cor:AisrPi}
        Given any real Heegaard diagram $(\scr H, \tau)$, the group $A$ of the associated real 3-manifold $(Y(\scr H), \tau)$ is naturally isomorphic to the group ${\it r}\hspace{-0.1em}\hat\varPi(\scr H, \tau)$ of real periodic domains.
    \end{COR}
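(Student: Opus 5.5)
The plan is to identify the real periodic domains with $\pi_1$ of a component of the stable real pathspace, and then invoke {\sc Prop.~\ref{prop:spcalc}}, which already identifies that $\pi_1$ with $A$. Concretely, recall that the proof of {\sc Prop.~\ref{prop:spcalc}} exhibited $\pi_1\rhatP_s$ as the kernel of the composite (\ref{eq:composite2})
\[ H_1(\Sigma'^\circ) \hookrightarrow H_1(\Sigma^\circ) \twoheadrightarrow H_1(U_\alpha), \]
and identified this kernel with $H_2(Y'\setminus N';\underline{\bb Z}^v)\cong A$ via the long exact sequence of the pair $(Y'\setminus N', U_\alpha)$ together with excision and the Thom isomorphism (\ref{eq:excth}). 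So it suffices to identify ${\it r}\hspace{-0.1em}\hat\varPi(\scr H,\tau)$ naturally with this same kernel.

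\textbf{Step 1: ordinary periodic domains.} I would first recall the standard Ozsv\'ath--Szab\'o description. A domain $D\in\hat\Delta$ is periodic if and only if its boundary, as a 1-chain, is a sum $\partial D=\sum a_i\alpha_i+\sum b_j\beta_j$ of full circles. In this case the relation $\sum a_i[\alpha_i]+\sum b_j[\beta_j]=0$ holds in $H_1(\Sigma^\circ)$. Since $\Sigma\setminus\vec\alpha$ and $\Sigma\setminus\vec\beta$ have exactly one basepoint per component, $D$ is determined by $\partial D$. This gives an isomorphism
\[ \hat\varPi \cong \ker\Big(\bb Z\langle\alpha\rangle\oplus\bb Z\langle\beta\rangle\to H_1(\Sigma^\circ)\Big), \]
and projecting to the $\beta$-coefficients identifies it further with $\ker\big(\bb Z\langle\beta\rangle\to H_1(U_\alpha)\big)$. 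Geometrically, this is the $H_2(Y^\circ)$ description of the first half of {\sc Prop.~\ref{prop:spcalc}}.

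\textbf{Step 2: the real version.} A $\tau$-invariant periodic domain has boundary $\sum a_i\alpha_i+\sum a_i'\tau(\alpha_i)$, and $\tau$-invariance forces a sign relation between $a'$ and $a$, because $\tau$ reverses the orientation of $\Sigma$. Taking the quotient by $\tau$, such a domain becomes a 2-chain on $\Sigma'$ with twisted coefficients, relative to $C$ minus the basepoints. Alternatively, one can take the class in $H_1(\Sigma'^\circ)$ obtained by projecting the $\beta$-part, or the $\alpha$-part, and this lands exactly in the kernel of (\ref{eq:composite2}), since the transfer of that class is $\sum b_j[\beta_j]$ up to the $\alpha$-relation. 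Injectivity again follows from the basepoint condition. For surjectivity, given $c$ in the kernel, I would lift the null-homology of its transfer in $U_\alpha$ to a periodic domain $D$ and then symmetrize; the resulting factor of $2$ must be handled.

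\textbf{Main obstacle.} The main difficulty is this factor of $2$ in surjectivity and the handling of the fixed circles $C_i$. A $\tau$-invariant domain may have odd multiplicity along regions crossing $C$, and the naive map $D\mapsto D+\tau D$ only hits an index-$2^r$ subgroup. I would resolve this by working directly on the quotient: regard a real domain as a twisted 2-chain on $\Sigma'$ relative to $\partial\Sigma'$. The twisting is forced along $C$, which is precisely why the coefficients are $\underline{\bb Z}^v$ and the relative group is $H^1(\cdot,\partial N')$ rather than the absolute one, as in {\sc Prop.~\ref{prop:bigA}-iv}. I would then check that the boundary of such a twisted chain is exactly the transfer picture used in (\ref{eq:excth}). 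Naturality would then follow because all the identifications pass through the intrinsic group $H_2(Y'\setminus N';\underline{\bb Z}^v)$, with a real periodic domain mapping to the twisted 2-cycle obtained by capping its boundary with the $\alpha$- and $\beta$-disks in the quotient.
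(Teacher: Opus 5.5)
Your route through $\pi_1\rhatP_s$ can work, but the step that carries the content is wrong as written, and the factor of $2$ you anticipate is not where you place it. For a real periodic domain, $\tau$-invariance forces $\partial D=\sum a_i(\alpha_i-\tau_*\alpha_i)$, so $x:=\sum a_i[\alpha_i]\in H_1(\Sigma^\circ)$ satisfies $\tau_*x=x$.

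The class you propose is $\pi_*x\in H_1(\Sigma'^\circ)$. Its transfer is ${\rm tr}(\pi_*x)=x+\tau_*x=2x$, not $x$. So $D\mapsto\pi_*x$ does land in the kernel of (\ref{eq:composite2}) and is injective, but it is twice the correct map, and its image has index $2^{{\rm rk}\,A}$.

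Surjectivity, by contrast, needs no symmetrization. Take any $\tau_*$-invariant $x=\sum a_i[\alpha_i]$. The cycle $\sum a_i(\alpha_i-\tau_*\alpha_i)$ is then null-homologous in $\Sigma^\circ$, so it bounds a periodic domain $D$. The mirror $-\tau_*D$ has the same boundary and vanishes near $\vec w$, hence equals $D$. Therefore ${\it r}\hspace{-0.1em}\hat\varPi\cong{\rm span}\langle[\alpha_i]\rangle\cap H_1(\Sigma^\circ)^{\tau_*}$, while the kernel of (\ref{eq:composite2}) is ${\rm span}\langle[\alpha_i]\rangle\cap{\rm tr}\,H_1(\Sigma'^\circ)$.

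What is missing is the lemma $H_1(\Sigma^\circ)^{\tau_*}={\rm tr}\,H_1(\Sigma'^\circ)$; once you have it, $D\mapsto{\rm tr}^{-1}(x)$ is the desired isomorphism. The lemma is true here, but it uses that every $C_i$ contains a basepoint:
\begin{enumerate}
\item Then $C\setminus\vec w$ is a union of arcs. So $H_1(\Sigma\setminus C)\hookrightarrow H_1(\Sigma^\circ)$ has free cokernel, detected by intersection numbers with these arcs, and $\tau_*=-1$ on it. Hence all invariants come from $H_1(\Sigma\setminus C)$.
\item $\Sigma\setminus C$ retracts equivariantly onto a free $C_2$-graph. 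There the norm identifies chains on the quotient with invariant chains, and since there are no $2$-cells, the transfer is onto the invariants in $H_1$.
\end{enumerate}
If some $C_i$ carried no basepoint, $[C_i]$ would be invariant but only $2[C_i]$ would be a transfer. Your twisted-chain paragraph postpones exactly this verification (``I would then check\ldots''), so the bijection is never established.

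For comparison, the paper avoids all of this. The classical isomorphism $\hat\varPi\cong H^1(Y,\vec w;\bb Z)$ intertwines mirroring of domains with $-\tau^*$. Taking fixed points gives ${\it r}\hspace{-0.1em}\hat\varPi\cong H^1(Y,\vec w;\bb Z)^{-\tau^*}$, which is $A$ by Prop.~\ref{prop:bigA}-iii. Fixed points of an equivariant isomorphism need no transfer argument, and the power-of-two subtleties are already absorbed into Prop.~\ref{prop:bigA}.
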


    \begin{proof}
        It is standard that in classical Heegaard Floer theory, the group $H^1(Y, \vec w; \bb Z)$ is isomorphic to the group $\hat\varPi(\scr H)$ of periodic domains. The action of $-\tau^*$ on the former translates to the action that takes a domain and mirrors it with respect to the $\tau$-action; the minus sign comes from the orientation-reversing nature of $\tau$ on elementary domains.
    \end{proof}

    \begin{PROP}\label{prop:Apr1}
        The group $A = H^1(Y' \setminus N', \partial N'; \underline{\bb Z}^v)$ is a free abelian group, whose quotient $A/2A$ embeds into $H^1(Y' \setminus N', \partial N'; \bb F_2) \overset{\rm exc}= H^1(Y', C; \bb F_2)$. In fact, this embedding factors through the subgroup of elements $x$ satisfying the relation
        \[ x \smile x = x \smile v \]
        in $H^2(Y' \setminus N', \partial N'; \bb F_2)$.
    \end{PROP}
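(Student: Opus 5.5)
Write $X := Y'\setminus N'$ and $\partial := \partial N'$. The plan is to argue with the short exact sequence of local systems $0 \to \underline{\bb Z}^v \xrightarrow{\cdot 2} \underline{\bb Z}^v \to \bb F_2 \to 0$ on $X$. Reduction mod 2 of the twisted system is the untwisted $\bb F_2$, since $-1 \equiv 1$.

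For freeness, we take the associated long exact Bockstein sequence in relative cohomology of $(X,\partial)$. The term $H^0(X,\partial;\bb F_2)$ vanishes because $X$ is connected and $\partial \neq \0$. So $H^1(X,\partial;\underline{\bb Z}^v) \xrightarrow{\cdot 2} H^1(X,\partial;\underline{\bb Z}^v)$ is injective, i.e.\ $A$ has no 2-torsion. Odd torsion is excluded by the finite-index inclusion $A \subset H^1(Y;\bb Z)^{-\tau^*}$ of {\sc Prop.~\ref{prop:bigA}-vi}, which lands in the torsion-free group $H^1(Y;\bb Z)$. Finite generation is clear, so $A$ is free. The same Bockstein sequence gives the embedding $A/2A \hookrightarrow H^1(X,\partial;\bb F_2)$ directly, as the cokernel of $\cdot 2$ injects into the next term. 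Excision, by filling in $N'$ and using that $N'$ retracts to $C$, identifies this target with $H^1(Y',C;\bb F_2)$.

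For the relation $x^2 = xv$, the idea is that for untwisted integral classes $\bar x^2 = {\rm Sq}^1 \bar x = \beta(\bar x)$ vanishes. In the twisted setting, ${\rm Sq}^1 \bar x$ should equal the twisted Bockstein of $\bar x$ plus the correction $v \smile \bar x$. Concretely, I would prove the general formula: for the Bockstein $\beta^v$ associated to $0\to \bb F_2 \to \underline{\bb Z/4}^v \to \bb F_2 \to 0$, one has $\beta^v(y) = {\rm Sq}^1 y + v \smile y$. This can be checked at the cochain level. Lift $y$ to an integral cochain $\tilde y$ and compute its twisted coboundary $\delta^v \tilde y = \delta \tilde y + 2(v \cup \tilde y)$, where the twisted coboundary picks up a factor $-1 = 1-2$ on edges where $v=1$. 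Dividing by 2 gives $\beta y + v\cup y$, and the untwisted term is $\beta y = {\rm Sq}^1 y$. Alternatively, one can reduce to the universal case on $\bb{RP}^\infty \times K(\bb Z/2,1)$, or on the pullback to the double cover, and compare. If $\bar x$ lifts to a twisted integral class, its twisted $\bb Z/4$ Bockstein vanishes, hence ${\rm Sq}^1\bar x = v\bar x$, i.e.\ $\bar x^2 = \bar x v$ in $H^2(X,\partial;\bb F_2)$.

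The main obstacle is this cochain identity with the correct sign and cup-product ordering, including making it valid relative to $\partial$. The ordering is harmless mod 2 in degree 1, since $v\bar x = \bar x v$ by graded commutativity mod 2. The relative version holds because all cochains vanish on $\partial$ and $v$ is defined on all of $X$. I would first verify the identity on $\bb{RP}^\infty$ with $y=v$, where it gives $\beta^v(v) = v^2+v^2=0$. This is consistent with $v$ lifting to the twisted class generating $H^1(\bb{RP}^\infty;\underline{\bb Z}^-)=\bb Z/2$, and confirms the sign convention.
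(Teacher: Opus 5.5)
Your proof is correct, and its overall structure matches the paper's. The Bockstein sequence of $0\to\underline{\mathbb Z}^v\xrightarrow{2}\underline{\mathbb Z}^v\to\mathbb F_2\to 0$ gives both the embedding $A/2A\hookrightarrow H^1(Y'\setminus N',\partial N';\mathbb F_2)$ and, through the formula $\beta^v(y)={\rm Sq}^1y+v\smile y$ (the paper's Lemma~\ref{lem:bock}), the relation $x^2=xv$ on the image. You depart from the paper in two sub-steps.

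\emph{Freeness.} The paper runs the same long exact sequence at every prime $p$, using $H^0(Y'\setminus N',\partial N';\underline{\mathbb F}_p^v)=0$. That vanishing holds for any local system on a connected space relative to a nonempty subspace, so you could have done the same at odd primes instead of importing Prop.~\ref{prop:bigA}-vi. Your citation is still legitimate: that proposition is proved beforehand, via the Cartan--Leray spectral sequence and torsion-freeness of untwisted relative $H^1$, without using the present statement. Once you have the injection $A\hookrightarrow H^1(Y;\mathbb Z)$, though, it rules out all torsion at once, so your separate 2-torsion argument becomes redundant.

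\emph{Twisted Bockstein formula.} The paper argues universally on $K(\mathbb F_2,k)\times\mathbb{RP}^\infty$ and fixes the coefficient of $v\smile x$ with the test case $S^k\times S^1$. Your cochain identity $\delta^v\tilde y=\delta\tilde y-2\,v\cup\tilde y$ is a genuinely different and shorter proof. With your own convention $(-1)^v=1-2v$ the sign is $-$, not $+$ as you wrote, but this disappears after halving and reducing mod 2. Your argument avoids the universal-example bookkeeping and handles the relative case directly, whereas the paper's universal argument leaves the relative version implicit.
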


    \begin{COR}\label{cor:YpCF2}
        If $H^1(Y', C; \bb F_2) = 0$, or equivalently $H^1(Y'; \mathbb F_2) = 0$ and $C$ is a knot, then it follows that $A = 0$.
    \end{COR}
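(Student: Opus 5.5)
The plan is to combine the embedding of Prop.~\ref{prop:Apr1} with the freeness of $A$. Freeness of $A$ gives that $A = 0$ if and only if $A/2A = 0$. By Prop.~\ref{prop:Apr1}, $A/2A$ embeds into $H^1(Y' \setminus N', \partial N'; \bb F_2) \cong H^1(Y', C; \bb F_2)$. So if the latter group vanishes, then $A/2A = 0$, and since $A$ is free we conclude $A = 0$. This is the main implication, and it is essentially formal once Prop.~\ref{prop:Apr1} is available.

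It remains to justify the parenthetical equivalence
\[ H^1(Y', C; \bb F_2) = 0 \iff H^1(Y'; \bb F_2) = 0 \text{ and } C \text{ is a knot}. \]
I would use the long exact sequence of the pair $(Y', C)$ with $\bb F_2$ coefficients:
\[ 0 \to H^0(Y') \to H^0(C) \to H^1(Y', C) \to H^1(Y') \to H^1(C). \]
Since $Y'$ is connected, $H^0(Y') = \bb F_2$ and $H^0(C) = \bb F_2^k$. Hence the image of $H^0(C)$ in $H^1(Y', C)$ is $\bb F_2^{k-1}$, and this vanishes exactly when $k = 1$.

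For the converse direction, suppose $k = 1$ and $H^1(Y'; \bb F_2) = 0$. Then $H^1(Y', C) = 0$ follows immediately from exactness.

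For the forward direction, suppose $H^1(Y', C) = 0$. First, $k = 1$ is forced by the count above. Next, I need the restriction map $H^1(Y') \to H^1(C)$ to be injective, which is the step requiring real input. Here I would use that $C$ is the branch locus of the double cover. The class $v \in H^1(Y' \setminus C; \bb F_2)$ evaluates nontrivially on each meridian. I would then argue as follows: any nonzero $x \in H^1(Y'; \bb F_2)$ restricting to zero on $C$ lifts to $H^1(Y', C)$, which is zero. So injectivity is exactly what exactness demands, and the forward direction is automatic as well.

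Thus both directions reduce to exactness together with the count $\dim H^0(C) = k$. The only genuine input is Prop.~\ref{prop:Apr1}, which is assumed, and I do not expect any step to be an obstacle.
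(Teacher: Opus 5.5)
Your deduction of $A=0$ from $H^1(Y',C;\mathbb F_2)=0$ is exactly the paper's argument: $A$ is free and $A/2A$ embeds into $H^1(Y',C;\mathbb F_2)$. Your proof of the $\Leftarrow$ half of the parenthetical equivalence is also fine.

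The gap is in the $\Rightarrow$ half. From $H^1(Y',C;\mathbb F_2)=0$ and exactness you correctly get $k=1$. You also correctly get that the restriction $H^1(Y';\mathbb F_2)\to H^1(C;\mathbb F_2)\cong\mathbb F_2$ is injective. But injectivity into $\mathbb F_2$ does not make $H^1(Y';\mathbb F_2)$ vanish. Your sentence about a nonzero $x$ restricting to zero on $C$ only re-derives injectivity; it says nothing about classes that restrict nontrivially. In fact the equivalence fails for an arbitrary knot. Take $Y'=S^1\times S^2$ and $C=S^1\times\{\mathrm{pt}\}$. Then both $H^0(Y')\to H^0(C)$ and $H^1(Y')\to H^1(C)$ are isomorphisms, so $H^1(Y',C;\mathbb F_2)=0$ while $H^1(Y';\mathbb F_2)\neq 0$. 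So the forward direction is not automatic, and the claim that ``both directions reduce to exactness together with the count'' is wrong.

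The missing ingredient is the one you name but never use. Because $C$ is the branch locus of a double cover, it is null-homologous mod $2$ in $Y'$. In the Heegaard picture this is because $C=\partial\Sigma'$. Intrinsically, $v$ is Lefschetz dual to a surface in $Y'\setminus N'$ whose boundary $\ell\subset\partial N'$ satisfies $\ell\cdot\mu=v(\mu)=1$. So $\ell$ is a longitude plus a multiple of $\mu$ mod $2$. Since $\ell$ bounds and $\mu$ bounds a meridian disk, this gives $[C]=0\in H_1(Y';\mathbb F_2)$. Hence the restriction $x\mapsto\langle x,[C]\rangle$ is the zero map. A map that is both zero and injective has zero domain, so $H^1(Y';\mathbb F_2)=0$. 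This is exactly how the paper closes the forward direction.
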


    \begin{COR}\label{cor:trivtauact}
        The group $A$ vanishes if and only if $\tau^*$ acts trivially on $H^1(Y; \bb Q)$.
    \end{COR}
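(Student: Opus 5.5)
The statement is Cor.~\ref{cor:trivtauact}. The plan is to reduce it to the rank statement in Prop.~\ref{prop:bigA} together with the freeness in Prop.~\ref{prop:Apr1}. Since $A$ is free abelian, $A = 0$ if and only if $A \otimes \bb Q = 0$. By Prop.~\ref{prop:bigA}-vi and the closing remark of that proposition, the inclusion $A \subset H^1(Y;\bb Z)^{-\tau^*}$ has finite index, and rationalizing gives $A \otimes \bb Q \cong H^1(Y;\bb Q)^{-\tau^*}$, the $(-1)$-eigenspace of $\tau^*$. The rationalization step uses the fact that taking fixed points of a finite group commutes with the exact functor $-\otimes\bb Q$. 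So the corollary reduces to the claim that $H^1(Y;\bb Q)^{-\tau^*} = 0$ if and only if $\tau^*$ acts trivially on $H^1(Y;\bb Q)$.

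For that linear-algebra step I would use that $\tau^*$ is an involution on a $\bb Q$-vector space. Over a field of characteristic $\neq 2$, such an involution is diagonalizable via the projectors $\tfrac12(1\pm\tau^*)$, so
\[ H^1(Y;\bb Q) = H^1(Y;\bb Q)^{\tau^*} \oplus H^1(Y;\bb Q)^{-\tau^*}. \]
Hence the $(-1)$-eigenspace vanishes exactly when $\tau^* = \mathrm{id}$. Both directions of the equivalence follow at once. This also gives the rank statement quoted in Thm.~\ref{thm:vanish}.

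There is essentially no obstacle. The only point to be careful about is that Prop.~\ref{prop:bigA}-vi passes through $H^1(Y,C;\bb Z)^{-\tau^*}$, and its rationalization must be correctly identified with $H^1(Y;\bb Q)^{-\tau^*}$. That identification is the content of the finite-index (power-of-two) claim there, so it suffices to invoke it.
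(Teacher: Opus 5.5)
Your proposal is correct and follows the paper's proof: freeness of $A$ (Prop.~\ref{prop:Apr1}) combined with the rational identification $A\otimes\mathbb Q \cong H^1(Y;\mathbb Q)^{-\tau^*}$ coming from the finite-index inclusion of Prop.~\ref{prop:bigA}-vi. Your eigenspace decomposition via $\tfrac12(1\pm\tau^*)$ spells out the linear-algebra step the paper leaves implicit.
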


    \begin{proof}[Proof of {\sc Cor.~\ref{cor:YpCF2}}.]
        If $H^1(Y, C; \bb F_2) = 0$, then in particular, $A/2A$ embeds inside the zero group, and hence $A$ itself must be zero (since $A$ is free abelian). It remains to establish the equivalence between this condition and the statement that $H^1(Y'; \mathbb F_2) = 0$ and $C$ is a knot. To this end, let us investigate the long-exact sequence 
        \[ H^0(Y'; \bb F_2) \to H^0(C; \bb F_2) \to H^1(Y', C; \bb F_2) \to H^1(Y'; \bb F_2) \to H^1(C; \bb F_2). \]
        For the $\Rightarrow$ implication, note that $H^0(Y'; \bb F_2) \to H^0(C; \bb F_2)$ must be a surjection, and therefore $C$ must be connected; in addition, it follows that $H^1(Y'; \bb F_2) \to H^1(C; \bb F_2)$ must be an injection, but since $C$ is nullhomologous in $Y'$ we also learn that this map must be zero, thus forcing $H^1(Y'; \bb F_2) = 0$. The converse $\Leftarrow$ implication follows once again because the first map in the sequence is an isomorphism, and the last map is zero.
    \end{proof}

    \begin{proof}[Proof of {\sc Cor.~\ref{cor:trivtauact}}]
        This follows from the fact that $A$ is free abelian, and the alternative description of $A \otimes \bb Q$ given in {\sc Prop.~\ref{prop:bigA}-vi}.
    \end{proof}

    Before proving {\sc Prop.~\ref{prop:Apr1}}, we must better understand the Bockstein homomorphism associated to $\underline{\bb Z}^v$ and $\underline{\bb Z/4}^v$, which will be the main tool in comparing the mod-2 reduction map in the Proposition.

    \begin{LEM}\label{lem:bock}
        The connecting homomorphism in both $H^*(-; \bb Z/2)$ and $H^*(-,-;\bb Z/2)$, associated to the short-exact sequence
        \[ 0 \to \bb Z/2 \to \underline{\bb Z/4}^v \to \bb Z/2 \to 0, \]
        of local coefficient systems, takes the form
        \begin{equation}\label{eq:twbock}
             x ~\longmapsto~ {\rm Sq}^1 (x) + v \smile x.
        \end{equation}
        In particular, the connecting homomorphism associated to the short-exact sequence
        \[ 0 \to \underline{\bb Z}^v \overset{\cdot 2}\to \underline{\bb Z}^v \to \bb Z/2 \to 0, \]
        after post-composition with mod-2 reduction also takes the same form (\ref{eq:twbock}).
    \end{LEM}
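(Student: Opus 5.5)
The plan is to prove the first claim, about the $\bb Z/4$-twisted Bockstein, by reducing it to a universal computation on a classifying space, and then to deduce the second claim from the first by a map of coefficient sequences.

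Everything is natural in the space (or pair) equipped with the class $v$. Any local system $\underline{\bb Z/4}^v$ is pulled back along a classifying map $X \to \bb{RP}^\infty = B\{\pm1\}$, so $(X, v)$ maps to $\bb{RP}^\infty \times X$ with its diagonal class. It therefore suffices to identify the connecting homomorphism $\tilde\beta$ as a stable cohomology operation $H^n(-;\bb F_2) \to H^{n+1}(-;\bb F_2)$ over $B\{\pm 1\}$. More concretely, I would argue as follows.

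\textbf{Step 1 (reduction to a universal formula).} The operation $x \mapsto \tilde\beta(x)$ is additive, natural, and commutes with suspension, including the relative version via the long exact sequence of a pair. The difference $D(x) := \tilde\beta(x) - {\rm Sq}^1 x$ is then also additive and natural. It vanishes whenever $v = 0$, because in that case the sequence is the untwisted $0 \to \bb Z/2 \to \bb Z/4 \to \bb Z/2 \to 0$, whose Bockstein is ${\rm Sq}^1$.

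\textbf{Step 2 (identify $D$ via a cochain-level formula).} At cochain level, take an $\bb F_2$-cocycle $x$, lift it to a $\bb Z/4$-cochain $\hat x$, and compute the twisted coboundary. The twisted coboundary differs from the untwisted one on each simplex $\sigma = [e_0, \ldots, e_{n+1}]$ only in the face $d_0$. That face is transported along the edge $e_0e_1$, where the transport multiplies by $-1 = 1 + 2$ exactly when $v(e_0e_1) = 1$. This contributes $2\,v(e_0e_1)\,\hat x(d_0\sigma)$, and dividing by $2$ gives the extra term $(v \smile x)(\sigma)$ in the Alexander–Whitney front/back convention. Hence $\tilde\beta(x) = {\rm Sq}^1 x + v\smile x$ directly, with no appeal to Step 1. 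This cochain argument is the one I would actually write. Step 1 serves as a sanity check, since $\bb{RP}^\infty$ with $x = v$ gives $\tilde\beta(v) = v^2 + v^2 = 0$, consistent with $v$ lifting to the twisted $\bb Z/4$ class. The relative case is identical, using relative cochains, because the formula is local on simplices.

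\textbf{Step 3 (integral version).} There is a map of short exact sequences of local systems from $0 \to \underline{\bb Z}^v \overset{\cdot 2}\to \underline{\bb Z}^v \to \bb Z/2 \to 0$ to $0 \to \bb Z/2 \to \underline{\bb Z/4}^v \to \bb Z/2 \to 0$, given by reduction mod 2 on the left term, reduction mod 4 in the middle, and the identity on the right. It is compatible with the twisting because $-1 \mapsto -1$. Naturality of connecting homomorphisms then shows that reducing the integral twisted Bockstein mod 2 equals $\tilde\beta$, which gives the formula (\ref{eq:twbock}).

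The main obstacle is the sign and convention bookkeeping in Step 2. One has to check that the twisted coboundary deviates exactly on the $d_0$ face, and that this deviation produces $v \smile x$ rather than $x \smile v$. Over $\bb F_2$ in cohomology the two are equal up to a coboundary, by graded commutativity mod 2, so the ambiguity is harmless at the level of cohomology classes.
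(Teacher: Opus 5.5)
Your proposal is correct, but your main argument (Step 2) takes a different route from the paper. The paper never works with cochains. It passes to the universal space $K(\bb F_2,k)\times\bb{RP}^\infty$, where $H^{k+1}$ is spanned by ${\rm Sq}^1x$, $v\smile x$ and $v^{k+1}$. It then fixes the three coefficients one at a time:
\begin{enumerate}
\item additivity in $x$ kills the $v^{k+1}$ coefficient;
\item the untwisted case $v=0$ forces the ${\rm Sq}^1x$ coefficient to be $1$;
\item a test computation on $S^k\times S^1$ decides the $v\smile x$ coefficient: there ${\rm Sq}^1x=0$, but $x$ does not lift to $\underline{\bb Z/4}^v$-coefficients, because $H^0(S^1;\underline{\bb Z/4}^v)\to H^0(S^1;\bb Z/2)$ is zero.
\end{enumerate}

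Your twisted-cochain computation replaces all of this with one local calculation. The twist $(-1)^{v(\sigma_{01})}$ on the $d_0$ face contributes $2\,v(\sigma_{01})\,\hat x(d_0\sigma)$, which is exactly the front/back cup product $v\smile x$. This buys you three things:
\begin{itemize}
\item It identifies the Bockstein at the cochain level.
\item It works in every degree at once, including $k=0$, where it gives $\tilde\beta(1)=v$. The paper later uses this case to define $\tilde v$ in \textsc{Prop.~\ref{prop:2torcompl}}, and its ``three-dimensional $H^{k+1}$'' count needs small adjustments there.
\item It makes the relative case automatic. The paper's universal argument is written for absolute classes and would need a relative universal example to cover pairs like $(Y'\setminus N',\partial N')$, on which $v$ is nontrivial.
\end{itemize}

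What the paper's route buys is independence from cochain conventions (which face carries the twist, front versus back cup product). As you note, that ambiguity disappears mod $2$. Your Step 3 is exactly the paper's closing naturality argument.

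Your $\bb{RP}^\infty$ sanity check is actually stronger than you say. There ${\rm Sq}^1v=v^2\neq 0$, while $v$ lifts to $H^1(\bb{RP}^\infty;\underline{\bb Z}^v)\cong\bb Z/2$, so $\tilde\beta(v)=0$. This already rules out $D=0$ in degree $1$, and could replace the paper's test space in that degree.
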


    \begin{proof}[Proof of Lemma.]
        It suffices to compute this in the universal case, but we must be precise about what this means. The space $K(\bb Z/2, k)$ classifies elements of $H^k(-;\bb F_2)$, but in our case we also need to keep track of the extra element $v$, and so our universal space will be $K(\bb F_2, k) \times \bb{RP}^\infty$, where the first factor keeps track of $x$, and the second one keeps track of $v$. The local coefficients systems $\underline{\bb Z}^v$ and $\underline{\bb Z/4}^v$ are implicitly understood to be pulled back from this universal $v$ (hence the elements $x$ and $v$ will now have no relation to the original 3-manifold, but rather live on the universal space $K(\bb F_2, k) \times \bb{RP}^\infty$).

        In this case, $H^{k+1}(K(\bb F_2, k) \times \bb{RP}^\infty)$ is given by the 3-dimensional $\bb F_2$-vector space generated by ${\rm Sq^1}(x)$, $v \smile x$, and $v^{k+1}$. We know that the Bockstein must be linear in $x$, so the coefficient of $v^{k+1}$ must be zero. Furthermore, in the classical case that $v = 0$, we must recover the usual Bockstein, which forces the coefficient of ${\rm Sq}^1(x)$ to be one. So it remains to distinguish between ${\rm Sq^1}(x)$ and ${\rm Sq}^1 + v \smile x$. Let us suppose towards a contradiction that it is the former, and analyze the situation in the case of $S^k \times S^1$, with $x$ being the pull-back of the fundamental class of $S^k$, and $v$ being the pull-back of the fundamental class of $S^1$. In this case, ${\rm Sq}^1(x)$ is zero, which under our assumption would imply that $x$ lifts to a class in $H^k(S^k \times S^1; \underline{\bb Z/4}^v)$. We show by direct computation that this is not true. By the K\"unneth formula, the cohomology of $S^k$ can be factored out, resulting in
        \[ H^*(S^k \times S^1; \bb Z \otimes \underline{M}^v) \cong \bb Z\<1, x\> \otimes H^*(S^1; \underline{M}^v), \]
        for any abelian group $M$. Based on the minimal cell structure on $S^1$, we find that $C^*(S^1; \underline{M}^v)$ is computed by the 2-step co-cochain complex $M \overset {\cdot 2}\to M$. We can see that the generator in degree zero for $M = \bb Z/4$ maps to zero into the analogous cohomology group for $M = \bb Z/2$, so in particular $x \otimes 1$ does not lie in the image of reduction mod 2 from $\underline{\bb Z/4}^v$, a contradiction.

        The claim about $\underline{\bb Z}^v$ follows in the same way as in the classical case, since it boils down merely to the naturality of the connecting homomorphism coming from a map of short-exact sequences.
    \end{proof}

    \begin{proof}[Proof of Proposition.]
        For the freeness claim, we note that it is equivalent to the claim that multiplication by any prime number $p$ is injective. We may prove this by analyzing the long-exact sequence associated to $\bb Z \overset{\cdot p}\to \bb Z \to \bb F_p$, and then showing that $H^0$ with $\underline{\bb F}_p^v$-coefficients is zero. Indeed, recall that $H^0$ with local coefficients is nothing but global sections of the sheaf representing the local system; at odd primes, the non-triviality of the sign monodromy makes it impossible to have a global section, while at $p = 2$ the monodromy is trivial, and so the cohomology group vanishes for connectedness reasons.
        
        For the next claim, we will be using this long-exact sequence at $p = 2$ in more depth:
        \[ \begin{gathered}
            H^1(Y' \setminus N', \partial N'; \underline{\bb Z}^v) \overset{2\cdot}\to H^1(Y' \setminus N', \partial N'; \underline{\bb Z}^v) \\ \to H^1(Y' \setminus N', \partial N'; \bb F_2) 
            \to H^2(Y' \setminus N', \partial N'; \underline{\bb Z}^v).
        \end{gathered} \]
        The cokernel of the first map is $A/2A$, so we arrive at
        \[ 0 \to A/2A \to H^1(Y' \setminus N', \partial N'; \bb F_2) \to H^2(Y' \setminus N', \partial N'; \underline{\bb Z}^v). \]
        This proves that $A/2A$ embeds in $H^1(Y' \setminus N', \partial N'; \bb F_2)$. For the more refined claim that this embedding factors through the elements $x$ satisfying $x \smile x = v \smile x$, note that the image of this $A/2A$ subgroup under the composite 
        \[ H^1(Y' \setminus N', \partial N'; \bb F_2) \overset\delta\to H^2(Y' \setminus N', \partial N'; \underline{\bb Z}^v) \overset{\text{mod }2}\longrightarrow H^2(Y' \setminus N', \partial N'; \bb F_2) \]
        must be zero. But this composite is precisely the Bockstein discussed in the Lemma, i.e.~ ${\rm Sq}^1(x) + v \smile x = x \smile x + v \smile x$.
    \end{proof}

    \begin{COR}\label{cor:F2F2summ}
        The form $F(x, y)$ may alternatively be calculated via the formula
        \begin{equation}\label{eq:altQ}
            \int_{Y' \setminus N'} x^2 \smile y = \int_{Y' \setminus N'} x \smile y^2 \quad\text{mod}~ 2.
        \end{equation}
        Consequently, $F$ descends to the image of $A$ under the map
        \begin{equation}\label{eq:pmap}
            A \overunderset{\text{\sc Prop}}{\text{\sc\ref{prop:Apr1}}}\longrightarrow H^1(Y', C; \bb F_2) \to H^1(Y'; \bb F_2) \to H^1(Y'; \bb F_2) / {\rm Ker({\rm Sq^1})}.
        \end{equation}
        In particular, $F \equiv 0$ if ${\rm Sq}^1 \equiv 0$, and $Q \equiv 0$ if ${\rm codim~ker} ({\rm Sq}^1 : H^1Y' \to H^2Y') \le 1$. Put differently, $H_1(Y'; \bb Z)$ must have at least a $\bb Z/2$ summand for $F$ to have a chance at being nonzero, and at least a $\bb Z/2 \oplus \bb Z/2$ summand for $Q$ to have a chance at being nonzero.
    \end{COR}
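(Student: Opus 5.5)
Starting from the definition $F(x,y)=\int x\smile y\smile v$ with $x,y\in A$, I would first apply {\sc Prop.~\ref{prop:Apr1}}. It says the mod 2 reduction of any $x\in A$ satisfies $x\smile x = x\smile v$ in $H^2(Y'\setminus N',\partial N';\bb F_2)$. Cup products are graded commutative mod 2, so
\[ F(x,y)=\int (x\smile v)\smile y=\int x^2\smile y. \]
Using the same relation for $y$ gives $F(x,y)=\int x\smile (y\smile v)=\int x\smile y^2$. So both expressions in (\ref{eq:altQ}) equal $F$ on $A$ directly, and the Steenrod square argument of {\sc Rmk.~\ref{rmk:form}} is not needed here. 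Still, I would note the consistency check: $x\smile y$ lies in $H^2(Y'\setminus N',\partial N';\bb F_2)\cong H^2(Y',C;\bb F_2)$ by excision. On a compact orientable 3-manifold relative to its boundary, ${\rm Sq}^1$ into top degree is cup with $w_1=0$, hence zero. The Cartan formula then gives $x^2y+xy^2={\rm Sq}^1(xy)=0$.

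\textbf{Descent.} Next, since $x^2={\rm Sq}^1 x$ for degree-one classes, I would write $F(x,y)=\int {\rm Sq}^1(x)\smile y$. This depends on $x$ only through ${\rm Sq}^1 x$. By the symmetric formula it also depends on $y$ only through ${\rm Sq}^1 y$. The main technical point is to push this from relative to absolute classes along (\ref{eq:pmap}). My plan is to use naturality of ${\rm Sq}^1$ under the map $j:H^1(Y',C)\to H^1(Y')$. If ${\rm Sq}^1(j x)=0$ in $H^2(Y')$, then ${\rm Sq}^1 x$ lies in the image of $H^1(C;\bb F_2)\to H^2(Y',C;\bb F_2)$ from the long exact sequence. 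A class of the form $\delta c$ cupped with a relative class $y$ gives $\delta(c\smile y|_C)$. Since $y|_C=0$ for relative classes, this vanishes (equivalently, $\int\delta(c)\smile y=\int_C c\smile y|_C=0$). Hence $F(x,y)$ depends only on the class of $jx$ modulo $\ker{\rm Sq}^1$. By symmetry the same holds in $y$, so $F$ descends as claimed.

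\textbf{Consequences.} If ${\rm Sq}^1\equiv 0$ on $H^1(Y';\bb F_2)$, the target of (\ref{eq:pmap}) is zero, so $F\equiv0$. If the target has dimension at most one, then $F$ is pulled back from a symmetric form on a space of dimension at most one. Its alternating modification $Q(x,y)=F(x,y)-F(x,x)F(y,y)$ then vanishes: on a line spanned by $e$, $Q(e,e)=F(e,e)-F(e,e)^2=0$, and bilinearity handles the rest. Finally, to translate this into the statement about $H_1$, I would observe that ${\rm Sq}^1:H^1(Y';\bb F_2)\to H^2(Y';\bb F_2)$ is the mod 2 Bockstein. Its rank equals the number of $\bb Z/2$ summands (order exactly 2) in the torsion of $H_1(Y';\bb Z)$, which follows from the universal coefficient theorem and the Bockstein spectral sequence, since $\bb Z/2^k$ summands with $k\ge2$ contribute no first-page differential. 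This yields the last two assertions of the corollary.
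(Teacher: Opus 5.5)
Your argument is correct and is essentially the paper's. The relation $x\smile x = x\smile v$ from Prop.~\ref{prop:Apr1} gives both formulas, the descent comes from $x^2={\rm Sq}^1 x$, and the statement about $H_1(Y';\mathbb Z)$ follows because only $\mathbb Z/2$ cyclic summands carry a nonzero ${\rm Sq}^1$. The one small difference is the factorization through $H^1(Y';\mathbb F_2)$: the paper gets it in one step, since naturality of cup products and $H^3(Y',C;\mathbb F_2)\cong H^3(Y';\mathbb F_2)$ give $\int_{(Y',C)} x^2\smile y=\int_{Y'} (jx)^2\smile jy$. Your detour through $\delta\colon H^1(C;\mathbb F_2)\to H^2(Y',C;\mathbb F_2)$ is valid but not needed.
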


  \begin{proof}
        The first claim follows because $x \smile x = x \smile v$ mod 2, and likewise for $y$. Since evaluating the triple cup product on the top class in $H^1(Y', C; \bb F_2)$ versus $H^1(Y'; \bb F_2)$ is the same, it follows that $F$ factors through $H^1(Y'; \bb F_2)$. Since ${\rm Sq}^1$ is the same as squaring for a degree-1 class, $F$ must further descend all the way to the last term of (\ref{eq:pmap}).
        
        The claims in the last sentence of our statement follow by analyzing the contribution of cyclic summands to the Bockstein. Indeed, recall that any chain complex with finitely generated homology is quasi-isomorphic to a direct sum of either $\bb Z$ concentrated in a single degree, or shifts of complexes of the form $\bb Z \overset{p^r}\to \bb Z$. The former has trivial Bockstein, and so does the latter at odd primes. At $p = 2$, direct calculation shows that when $r \ge 2$, the $\bb Z/4$-Bockstein is zero, so the only possible contribution comes from the $r = 1$ summands, where the Bockstein is non-trivial.
    \end{proof}

    \begin{RMK}
        Somewhat remarkably, the formula (\ref{eq:altQ}) does not depend on $v$, although the domain of definition of $F$ and $Q$ of course does.
    \end{RMK}

    \begin{PROP}\label{prop:Apr2}
        If $Y' = Y_0 \# Y_1$ with separating sphere $S$, i.e.~ $Y = Y^\circ_0 \cup_S Y^\circ_1$, the link $C$ is contained entirely in $Y^\circ_1$, and the class $v \in H^1(Y' \setminus N'; \mathbb F_2)$ is supported in $Y^\circ_1$, i.e.~ $v$ is the image of $v_1 \in H^1(Y'_1 \setminus N'; \mathbb F_2)$ under extension by zero, then one has
        \[ A(Y', v) \cong H^1(Y_0; \mathbb Z) \oplus A(Y_1, v_1) \]
        and moreover
        \[ F_{Y', v} = 0 \oplus F_{Y_1, v_1}, \]
        as a block-sum decomposition of symmetric bilinear forms.
    \end{PROP}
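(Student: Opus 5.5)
The plan is a Mayer–Vietoris argument along the separating sphere $S$, followed by a support argument for the triple cup product. Write $X := Y' \setminus N'$. Then $X = Y_0^\circ \cup_S (Y_1^\circ \setminus N')$, where $Y_0^\circ$ is $Y_0$ minus a ball and $Y_1^\circ \setminus N'$ is $Y_1 \setminus N'$ minus a ball.

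\emph{Step 1: the local system.} Since $v$ is extended by zero from $Y_1^\circ \setminus N'$, its restriction to $Y_0^\circ$ and to $S$ is trivial. Hence $\underline{\bb Z}^v$ restricts to the constant system $\bb Z$ there. Moreover $\partial N' \subset Y_1^\circ$.

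\emph{Step 2: Mayer--Vietoris for the pair $(X, \partial N')$.} I would write the sequence in cohomology with $\underline{\bb Z}^v$ coefficients for the cover by $(Y_0^\circ, \0)$ and $(Y_1^\circ \setminus N', \partial N')$, glued along $(S, \0)$. Since $H^1(S) = 0$, the relevant portion is
\[ H^0(Y_0^\circ) \oplus H^0(Y_1^\circ \setminus N', \partial N'; \underline{\bb Z}^{v_1}) \to H^0(S) \to H^1(X, \partial N'; \underline{\bb Z}^v) \to H^1(Y_0^\circ) \oplus H^1(Y_1^\circ \setminus N', \partial N'; \underline{\bb Z}^{v_1}) \to 0. \]
The map $H^0(Y_0^\circ; \bb Z) \to H^0(S; \bb Z)$ is already an isomorphism, so the connecting map into $H^1$ vanishes. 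The resulting short exact sequence splits because the right-hand groups are free. Puncturing does not change $H^1$ of a $3$-manifold, since removing a ball only affects $H^2, H^3$. This gives $H^1(Y_0^\circ) = H^1(Y_0; \bb Z)$ and $H^1(Y_1^\circ \setminus N', \partial N'; \underline{\bb Z}^{v_1}) = A(Y_1, v_1)$, hence the decomposition of $A$. I would also make the splitting explicit: each summand is obtained by extending classes by zero across $S$ (via excision, or by choosing representatives vanishing near $S$). This explicit description is what Step 3 needs.

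\emph{Step 3: the form.} This is the main point. For $x$ coming from $H^1(Y_0)$ and any $y$, the product $x \smile y \smile v$ vanishes. Indeed, $v$ can be represented by a cochain supported in $Y_1^\circ$ while $x$ is supported in $Y_0^\circ$, so a relative cup-product argument with cochains vanishing near $S$ kills it. This gives both $F(x, y) = 0$ for the off-diagonal block and $F(x, x) = 0$. For $x, y$ both coming from $A(Y_1, v_1)$, the integral localizes to $Y_1^\circ \setminus N'$. Capping a ball back in does not change the evaluation on the fundamental class, so we recover $F_{Y_1, v_1}$. I expect the main obstacle to be the bookkeeping of supports and relative cup products with the twisted coefficients. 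If that becomes messy, I would instead use the $v$-free formula of {\sc Cor.~\ref{cor:F2F2summ}}, $F(x, y) = \int x^2 \smile y$, applied to the mod-2 reductions in $H^1(Y'; \bb F_2) = H^1(Y_0; \bb F_2) \oplus H^1(Y_1; \bb F_2)$. Cup products of classes from different summands of a connected sum vanish in degrees below the top, so this reduces to the fact that mixed triple products in $H^*(Y_0 \# Y_1)$ vanish.
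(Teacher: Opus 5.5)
Your proposal is correct and follows essentially the paper's route. The paper also splits $A$ by an exact sequence along $S$, but it uses the long exact sequence of the triple $(Y'\setminus N', S\cup\gamma\cup\partial N', \partial N')$, with an auxiliary arc $\gamma$ from $S$ to $\partial N'$, which exhibits the summands directly as extension-by-zero classes. Your relative Mayer--Vietoris instead gives a restriction isomorphism outright, so the splitting remark is unnecessary, and you correctly identify its inverse as extension by zero. The form is then handled by the same observation that $v$ vanishes on $Y_0^\circ$ and restricts to $v_1$ on $Y_1^\circ$.

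If you use the fallback $F(x,y)=\int x^2\smile y$, note that the $H^1(Y_0;\bb Z)$ diagonal block is not a mixed product. It vanishes instead because mod-$2$ reductions of integral classes satisfy $\bar x^2={\rm Sq}^1\bar x=0$.
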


    \begin{proof}
        The proof of the first statement concerning $A$ is proved in a manner quite analogous to the usual connect sum formula for ordinary cohomology, suitably adapted to the relative case. Indeed, let $\gamma$ be any embedded simple curve connecting $S$ to $\partial N'$ inside $Y_1^\circ$, and let us consider the long-exact sequence associated to the triple $(Y' \setminus N', S \cup \gamma \cup \partial N', \partial N')$:
        \[ 0 = \tilde H^0(S; \bb Z) \to H^1(Y_0; \bb Z) \oplus H^1(Y_1' \setminus N', \partial N'; \underline{\bb Z}^v) \to H^1(Y' \setminus N', \partial N'; \underline{\bb Z}^{v_1}) \overset0\to \tilde H^1(S; \bb Z) \]
        where we have used the following facts:
        \begin{list}{$\cdot$}{}
            \item that $(S \cup \gamma \cup \partial N', \partial N')$ is excisively equivalent to $(S \cup \gamma, \gamma) \overset{\rm h.e.}\simeq (S, *)$;
            \item that $(Y' \setminus N', S \cup \gamma \cup \partial N', \partial N')$ is excisively equivalent to the disjoint union of $(Y_0^\circ, S)$ and $(Y_1^\circ \setminus N', S \cup \gamma \cup \partial N')$, which can be further filled in with a ball $B$ to get rid of $S$, and then shrink back the appendage $B \cup \gamma$ in the second case;
            \item that $\underline{\bb Z}^v$ becomes trivial as a local system when restricted to $Y_0^\circ$;
            \item that the inclusion of $S$ into $Y' \setminus N'$ is null-homologous, thanks to bounding $Y_0^\circ$.
        \end{list}
        This concludes the first claim, concerning the additivity of $A$. As for the additivity of the bilinear form $F$, this follows easily from the fact that $v$ vanishes on the $Y_0^\circ$ part of the connect sum, and is equal to $v_1$ on the other part $Y_1^\circ$.
    \end{proof}

    \begin{COR}\label{cor:loclink}
        If $C$ is a local link in $Y'$, or more generally contained in an $\mathbb F_2$-homology ball, then $F$ vanishes identically.
    \end{COR}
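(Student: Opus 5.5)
The plan is to reduce to {\sc Prop.~\ref{prop:Apr2}} together with the $v$-independent formula for $F$ from {\sc Cor.~\ref{cor:F2F2summ}}. Let $B \subset Y'$ be an $\mathbb F_2$-homology ball containing $C$. We first treat the case in which $v$ is supported inside $B$, as in the second bullet of {\sc Thm.~\ref{thm:vanish}}. Then we show the general case follows, because $F$ does not depend on $v$.

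The key computation uses the formula $F(x,y) = \int x^2 \smile y$, where $x,y$ are viewed in $H^1(Y', C; \mathbb F_2)$ through the embedding of {\sc Prop.~\ref{prop:Apr1}}. By {\sc Cor.~\ref{cor:F2F2summ}}, $F$ factors through the image of $A$ in $H^1(Y'; \mathbb F_2)$. So it suffices to show two things: that $\int_{Y'} a^2 \smile b = 0$ for all $a,b$ in the image of $H^1(Y', C;\mathbb F_2) \to H^1(Y';\mathbb F_2)$, or at least on the image of $A$, and that the relative-to-absolute passage is harmless.

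\emph{Step 1.} Use excision to identify $H^*(Y', C;\mathbb F_2)$ with $H^*(Y' \setminus N', \partial N')$. Because $C \subset B$, the Mayer–Vietoris sequence for $Y' = B \cup \overline{Y' \setminus B}$ along the $\mathbb F_2$-homology sphere $\partial B$ gives
\[ H^1(Y';\mathbb F_2) \cong H^1(Y'\setminus B;\mathbb F_2) \oplus H^1(B;\mathbb F_2) = H^1(Y' \setminus B;\mathbb F_2). \]
Hence classes restrict trivially to $B$ in mod-2 cohomology.

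\emph{Step 2.} For $a \in A$ we have $a^2 = a \smile v$ mod 2 by {\sc Prop.~\ref{prop:Apr1}}. If $v$ is supported in $B$, then $a\smile v$ is a product of a class pulled back from $Y'\setminus B$ (after adjusting $a$ by a coboundary away from $B$) with a class supported in $B$. We can therefore homotope the cochain representatives so that the supports are disjoint, which forces $a \smile v = 0$. Consequently $F(a,b) = \int a\smile v\smile b = 0$.

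This is exactly the argument of {\sc Prop.~\ref{prop:Apr2}}. In the local-link case we literally have $Y' = Y' \# S^3$ with $C \subset S^3$, and $F_{S^3,v_1}$ vanishes since $H^1(S^3, C;\mathbb F_2)$ squares trivially. Here $F_{S^3}$ is zero because $H^1(S^3;\mathbb F_2)=0$, so the factorization (\ref{eq:pmap}) kills it. The homology-ball version runs the same block decomposition, with $S^3$ replaced by the $\mathbb F_2$-homology sphere obtained by capping $B$.

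\emph{Step 3.} To remove the hypothesis on $v$, use the $v$-free expression $\int x^2\smile y$ in $H^1(Y';\mathbb F_2)$. The subtle part, which I expect to be the main obstacle, is that $v$ itself may not be supported in $B$. In that case the image of $A$ in $H^1(Y';\mathbb F_2)$ need not lie in $\ker{\rm Sq}^1$, so the argument of Step 2 does not directly apply. I would try to show that whenever $C\subset B$, the class $v$ restricted to $Y'\setminus B$ is in fact trivial. The reason is that $v$ evaluates to $1$ on meridians of $C$ and to the linking parity on loops, and loops outside $B$ have linking number with $C$ computable inside $B$. If that works, $v$ is automatically supported in $B$ and Step 2 concludes. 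If it fails, I would fall back to stating the corollary under the support hypothesis, matching {\sc Thm.~\ref{thm:vanish}}.
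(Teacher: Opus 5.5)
Under the extra hypothesis that $v$ is supported in $B$, your argument is the paper's. The boundary of an $\mathbb F_2$-homology ball is a genuine $2$-sphere: the image of $H_1(\partial B;\mathbb F_2)$ in $H_1(B;\mathbb F_2)=0$ must have half the dimension of $H_1(\partial B;\mathbb F_2)$. So one can write $Y'=Y_0\# Y_1$ with $Y_1=B\cup D^3$ an $\mathbb F_2$-homology sphere. Then {\sc Prop.~\ref{prop:Apr2}} gives $F=0\oplus F_{Y_1,v_1}$, and $F_{Y_1,v_1}$ vanishes by {\sc Cor.~\ref{cor:F2F2summ}} because $H^1(Y_1;\mathbb F_2)=0$.

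Rely on that splitting rather than on the disjoint-supports sentence in Step 2. When $C$ has several components, the image of $A$ can contain coboundaries $\delta f$ of locally constant functions on $C$. These are nonzero on $(B,C)$ and cannot be pushed off $B$; for example, for the $2$-component unlink in $S^3$, $A\cong\mathbb Z$ is generated by such a class. In {\sc Prop.~\ref{prop:Apr2}} these classes land in the summand $A(Y_1,v_1)$, where $F$ vanishes.

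Step 3, however, cannot be carried out, and the corollary without the support hypothesis is false. Mayer--Vietoris along $\partial B\cong S^2$ gives $H^1(Y'\setminus C;\mathbb F_2)\cong H^1(B\setminus C;\mathbb F_2)\oplus H^1(Y'\setminus B;\mathbb F_2)$, and the second summand is $\cong H^1(Y';\mathbb F_2)$. The only constraint on $v$ (value $1$ on each meridian) concerns the first summand, so $v$ can be changed by any $u\in H^1(Y';\mathbb F_2)$. Your linking heuristic only shows that the dual of a spanning surface inside $B$ vanishes outside $B$; $v$ is not determined by linking with $C$.

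The paper's own \S\ref{ssec:rp3} gives a concrete counterexample. Take $Y'=\mathbb{RP}^3$, $C$ a local unknot, and $v$ dual to the M\"obius band, i.e.\ $v=v_D+u$ with $u\neq 0$. Then $A\cong\mathbb Z$ surjects onto $H^1(\mathbb{RP}^3,C;\mathbb F_2)\cong\mathbb F_2$, and $F(x,x)=\int x^3=1$.

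So the hypothesis that $v$ is supported in the homology ball is necessary. It appears in the second bullet of {\sc Thm.~\ref{thm:vanish}}, and the paper's proof uses it implicitly through {\sc Prop.~\ref{prop:Apr2}}, even though the corollary's statement omits it. Your fallback is the correct statement, and you should present it as the result rather than as a contingency.
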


    \begin{proof}
        This follows from the vanishing of $F$ for an $\mathbb F_2$-homology sphere, by {\sc Cor.~\ref{cor:F2F2summ}}.
    \end{proof}

    Here is a variation of {\sc Cor.~\ref{cor:F2F2summ}}, this time concerning the complement of the knot. It does not seem to obviously imply or be implied by the previous version of the corollary, due to the map $H_1(Y' \setminus N') \to H_1(Y')$ potentially having arbitrarily complicated kernel.

    \begin{PROP}\label{prop:2torcompl}
        In order for the form $F$ to be nonzero, the group
        \[ H^2(Y' \setminus N', \partial N'; \bb Z) \cong H_1(Y' \setminus N'; \bb Z) \]
        must have at least one $\bb Z/2$ summand. Likewise, for the form $Q$ to be nonzero, it must have at least a $\bb Z/2 \oplus \mathbb Z/2$ summand.
    \end{PROP}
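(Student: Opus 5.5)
The plan is to mimic the proof of Cor.~\ref{cor:F2F2summ}, working in the knot complement $X := Y' \setminus N'$ rather than passing to $Y'$. By Cor.~\ref{cor:F2F2summ}, for $x, y$ in the image of $A$ we have $F(x,y) = \int_X x^2 \smile y$, where $x^2 = {\rm Sq}^1(x)$ now lives in $H^2(X, \partial N'; \bb F_2)$. So $F$ factors through the relative Bockstein-type map
\[ {\rm Sq}^1 : H^1(X, \partial N'; \bb F_2) \to H^2(X, \partial N'; \bb F_2). \]
More precisely, $F(x, y)$ depends on $x$ only through ${\rm Sq}^1 x$, and by the symmetric formula on $y$ only through ${\rm Sq}^1 y$. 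Hence $F$ descends to $A \to H^1(X, \partial N'; \bb F_2)/{\rm Ker}({\rm Sq}^1)$, and the rank of this quotient bounds what $F$ and $Q$ can detect.

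\textbf{Step 1: identify ${\rm Sq}^1$ on $H^1(X,\partial N';\bb F_2)$ with the mod-2 reduction of the integral Bockstein.} I will use ${\rm Sq}^1 = \rho \circ \beta$ with $\beta : H^1(X,\partial N';\bb F_2) \to H^2(X, \partial N'; \bb Z)$. The image of $\beta$ is the $2$-torsion of $H^2(X,\partial N';\bb Z)$. Composing with $\rho$ kills the image of the $2\cdot$ map, i.e. the elements divisible by $2$. So the rank of ${\rm Sq}^1$ equals the number of cyclic summands of exact order $2$ in $H^2(X,\partial N';\bb Z)$, exactly as in the cyclic-summand analysis at the end of the proof of Cor.~\ref{cor:F2F2summ}. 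Summands $\bb Z/2^r$ with $r \ge 2$ have vanishing mod-2 reduced Bockstein; odd torsion and free summands contribute nothing.

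\textbf{Step 2: apply Lefschetz duality.} The isomorphism $H^2(X, \partial N'; \bb Z) \cong H_1(X; \bb Z)$ holds since $X$ is a compact oriented $3$-manifold with boundary $\partial N'$. This is the identification stated in the Proposition.

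\textbf{Step 3: conclude.} If $H_1(X;\bb Z)$ has no $\bb Z/2$ summand, then ${\rm Sq}^1 \equiv 0$ on $H^1(X,\partial N';\bb F_2)$, so $x^2 = 0$ for every $x$ and $F \equiv 0$. If there is at most one $\bb Z/2$ summand, then ${\rm Sq}^1$ has rank $\le 1$, so $F$ factors through a quotient of dimension at most $1$. The image of $A/2A$ is then spanned by a single class $e$ modulo ${\rm Ker}({\rm Sq}^1)$, so $F(x,y) = \lambda(x)\lambda(y) F(e,e)$ for some linear functional $\lambda$. In particular $F(x,y) = F(x,x)F(y,y)$, which gives $Q \equiv 0$.

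The main obstacle I expect is Step 1, specifically making sure the descent really goes through ${\rm Ker}({\rm Sq}^1)$ on relative classes. One point is that Cor.~\ref{cor:F2F2summ} used vanishing of ${\rm Sq}^1 : H^2 \to H^3$ of the closed manifold to obtain symmetry; in the relative setting I would instead use ${\rm Sq}^1$ on $H^2(X, \partial N'; \bb F_2) \to H^3(X, \partial N'; \bb F_2) \cong \bb F_2$. This vanishes because it is dual to the first Wu class $w_1$ of $X$, which is zero since $X$ is orientable. A second point, a minor sanity check, is that the integral over $X$ pairs a relative degree-2 class with an absolute or relative degree-1 class correctly.
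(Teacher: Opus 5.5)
Your argument is correct, but it follows a different route from the paper's proof.

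\textbf{The paper's route.} It works with the twisted integral lift $\tilde v \in H^1(Y'\setminus N';\underline{\bb Z}^v)$ of $v$, namely the twisted Bockstein of $1$. If $F(x,y)=1$, then $x\smile y\smile\tilde v\neq 0$ in $H^3(Y'\setminus N',\partial N';\underline{\bb Z}^v)\cong\bb Z/2$. Hence $y\smile\tilde v\in H^2(Y'\setminus N',\partial N';\bb Z)$ is an explicit $2$-torsion, non-$2$-divisible class, which splits off a $\bb Z/2$ summand. For $Q$, the paper invokes Prop.~\ref{prop:checkQ} to find $x,y$ with $F(x,x)=0$ and $F(x,y)=1$. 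It then checks that $x\smile\tilde v$ and $y\smile\tilde v$ are independent modulo $2$.

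\textbf{Your route.} You instead transplant the mechanism of Cor.~\ref{cor:F2F2summ} to the relative cohomology of the complement. There $F$ factors through ${\rm Sq}^1=\rho\beta$ on $H^1(Y'\setminus N',\partial N';\bb F_2)$. The rank of this map is exactly the number of $\bb Z/2$ summands of $H^2(Y'\setminus N',\partial N';\bb Z)$.

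\textbf{What each buys.} Your approach is more uniform: it shows that Cor.~\ref{cor:F2F2summ} and this proposition are the same argument, run on $H^1(Y';\bb F_2)$ versus $H^1(Y',C;\bb F_2)$. It also bounds the rank of $F$ by the number of $\bb Z/2$ summands. It dispatches the $Q$ case directly, since a symmetric form pulled back from a space of dimension at most $1$ satisfies $F(x,y)=F(x,x)F(y,y)$, so Prop.~\ref{prop:checkQ} is not needed. The paper's route, in exchange, produces concrete witnesses $y\smile\tilde v$ for the $\bb Z/2$ summands, tied to the twisted lift of $v$.

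\textbf{Your flagged obstacle.} This is not a real difficulty. Both $\int x^2\smile y$ and $\int x\smile y^2$ equal $\int x\smile y\smile v$ directly by Prop.~\ref{prop:Apr1}. So the two-sided descent through ${\rm Ker}({\rm Sq}^1)$ needs no Wu-class input. Your Wu-class justification is nonetheless valid. It can also be seen by noting that ${\rm Sq}^1=\rho\beta$ factors through the torsion-free group $H^3(Y'\setminus N',\partial N';\bb Z)\cong\bb Z$.
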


    \begin{proof}
        We note that $v \in H^1(Y' \setminus N'; \bb Z/2)$ admits a lift $\tilde v \in H^1(Y' \setminus N'; \underline{\bb Z}^v)$, defined as the twisted Bockstein of {\sc Lem.~\ref{lem:bock}} applied to $1 \in H^0(Y' \setminus N'; \bb Z/2)$. We have that $x \smile y \smile \tilde v$ in $H^3(Y' \setminus N'; \underline{\bb Z}^v) \cong \bb Z/2$ lifts $x \smile y \smile v$ in $H^3(Y' \setminus N'; \bb F_2) \cong \bb Z/2$, and so in particular if $F$ does not vanish, then there exist $x, y \in A$ such that $x \smile y \smile \tilde v$ is nonzero. But note that $\tilde v$ is a 2-torsion element already, which means that $y \smile \tilde v \in H^2(Y' \setminus N', \partial N'; \bb Z)$ is a nonzero 2-torsion element, hence implying that the group in question has a $\mathbb Z/2^r$ summand. To see that $r$ must be 1, note that this 2-torsion element cannot be 2-divisible, since this would force $x \smile y \smile v \in \bb F_2$ to be 2-divisible, i.e.~ zero. This forces $H_1(Y' \setminus N'; \bb Z)$ to have a $\mathbb Z/2$-summand: indeed, in general if $G$ is an abelian group with a 2-torsion and non-2-divisible element $x$, then the inclusion $\mathbb Z/2 \cong \<x\> \subset G$ can be split by choosing a functional on the $\mathbb F_2$-vector space $G/2G$ sending the nonzero class $[x]$ to 1, and pre-composing with the projection $G \twoheadrightarrow G/2G$. This concludes our claim about $F$.

        For $Q$ not to vanish, it follows from {\sc Prop.~\ref{prop:checkQ}} below that there must exist elements $x, y \in A$ such that $F(x, x) = 0$, but $F(x, y) = 1$. In the same language we used in the previous paragraph, this means that
        \[ x \smile x \smile \tilde v = 0, \quad x \smile y \smile \tilde v = 1. \]
        From the latter we deduce that $x \smile \tilde v$ and $y \smile \tilde v$ are both 2-torsion and non-2-divisible elements. However, in conjunction with the former, this implies that the difference $x \smile \tilde v - y \smile \tilde v$ is non-2-divisible. As a result, the inclusion of the subgroup $\mathbb Z/2 \oplus \mathbb Z/2 \cong \<x, y\> \subset H_1(Y' \setminus N'; \bb Z)$ can be split by a similar trick to the one before, since the classes $[x], [y]$ are linearly independent in the vector space $G / 2G$.
    \end{proof}

    There is an interesting equivalent formulation for the diagonal terms $F(x, x)$ of the bilinear form, which show up as the obstruction to $\mathbb Z/2$-gradings:

    \begin{PROP}\label{prop:obsc1} (cf. \cite[Prop.~3.4]{Nak13-II})
        For any real ${\rm Spin}^c$-structure $\mathfrak s$ on $(Y,\tau)$, we have the identity $v^2 \equiv c_1^R(\mathfrak s)$ mod 2, where $c_1^R(\mathfrak s) \in H^1(Y' \setminus N'; \underline{\bb Z}^v)$ was defined in {\sc Def.~\ref{def:rc1}}. Thus, the condition for $\mathbb Z/2$-gradings can be restated in the following two ways:
        \begin{enumerate}
            \item $x \smile c_1^R(\mathfrak s) \in H^3(Y' \setminus N', \partial N'; \bb Z) \cong \bb Z$ is even, $\forall x \in A(Y, \tau) = H^1(Y' \setminus N', \partial N'; \underline{\bb Z}^v)$;
            \item $x \smile c_1(\mathfrak s) \in H^3(Y, C; \bb Z) \cong \bb Z$ is divisible by 4, $\forall x \in A(Y, \tau) = H^1(Y, C; \bb Z)^{-\tau^*}$.
        \end{enumerate}
    \end{PROP}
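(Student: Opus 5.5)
We first prove the identity $c_1^R(\fk s) \equiv v^2$ mod 2 by a universal computation, and then deduce the two reformulations using Proposition~\ref{prop:Apr1} and the main theorem's criterion $F(x,x)=0$.

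\emph{The identity.} By Definition~\ref{def:rc1}, $c_1^R(\fk s\otimes\scr L)=c_1^R(\fk s)+2c_1^R(\scr L)$, so the reduction mod 2 is independent of $\fk s$. It therefore suffices to compute it for one convenient real ${\rm Spin}^c$-structure on $Y\setminus N$. We take the one induced by a real nowhere-vanishing vector field $V$ with $\tau^*V=-V$, as in Definition~\ref{def:rvect}. Its orthogonal complement $V^\perp$ is an Atiyah-real line bundle. The determinant line of the spinor bundle is identified with $V^\perp$, so $c_1^R(\fk s)=c_1^R(V^\perp)$. Proposition~\ref{prop:rc1}-ii then gives $c_1^R(V^\perp)\equiv w_2(V^\perp/I)$ mod 2, where $V^\perp/I$ is a rank-2 bundle on $Y'\setminus N'$.

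To identify this class, write $T(Y'\setminus N')\cong (V/\tau)\oplus (V^\perp/I)$. Here $V/\tau$ is the real line bundle whose sections are anti-invariant fields, i.e.\ the line bundle $\lambda_v$ with $w_1=v$. Orientability of $Y'$ forces $w_1(V^\perp/I)=v$. Since $w(T(Y'\setminus N'))=1$ (orientable 3-manifolds are parallelizable), the Whitney formula $w(\lambda_v)\,w(V^\perp/I)=1$ gives
\[ w_2(V^\perp/I)=v\smile w_1(V^\perp/I)+w_2(\lambda_v)=v^2 .\]
This is the expected step in the spirit of \cite{Nak13-II}. The main obstacle is the careful identification of the determinant line of $\fk s$ with $V^\perp$ as Atiyah-real bundles, compatibly with the real Chern class convention of Proposition~\ref{prop:rc1}, including signs. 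I would verify it on the local model used in Construction~\ref{constr:map}.

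\emph{Reformulation (1).} For $x\in A$, Proposition~\ref{prop:Apr1} gives $x\smile x\equiv x\smile v$ mod 2. Hence
\[ F(x,x)=\int x\smile x\smile v=\int x\smile v\smile v=\int x\smile v^2 .\]
Substituting $v^2\equiv c_1^R(\fk s)$, this equals the mod-2 reduction of the integer $\int x\smile c_1^R(\fk s)$. The product pairs twisted coefficients with twisted coefficients, so it lands in untwisted $H^3(Y'\setminus N',\partial N';\bb Z)\cong\bb Z$. Thus $F(x,x)=0$ for all $x$ if and only if this integer is always even.

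\emph{Reformulation (2).} Pull back along the branched cover, using the identification $A\cong H^1(Y,C;\bb Z)^{-\tau^*}$ from Proposition~\ref{prop:bigA}-iii. By Proposition~\ref{prop:rc1}-ii, $c_1^R$ pulls back to $c_1$ of the associated line bundle, which is $c_1(\fk s)$. Pulling back a top class on $Y'\setminus N'$ to $Y\setminus N$ multiplies its evaluation by the degree 2. Hence $\int_Y x\smile c_1(\fk s)=2\int_{Y'}x\smile c_1^R(\fk s)$. By this relation, divisibility of the left side by 4 is equivalent to evenness in (1).
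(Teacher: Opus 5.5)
Your proposal is correct and follows essentially the same route as the paper: represent $\fk s$ by a real vector field, split $T(Y'\setminus N')$ into the line bundle with $w_1=v$ and $V^\perp/I$, and use triviality of $TY'$ with the Whitney formula to get $w_2(V^\perp/I)=v^2$. You then obtain (1) from $x^2\smile v = x\smile v^2$ (Proposition~\ref{prop:Apr1}) and (2) from the degree-$2$ pullback along $\pi$, exactly as the paper does.

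The step you flag as the main obstacle, identifying the determinant line of $\fk s$ with $V^\perp$, is simply asserted in the paper, and it is in fact harmless mod 2. Write $S=\ell_+\oplus\ell_-$ for the $\pm i$-eigenlines of Clifford multiplication by $V$; these are preserved by $I$. Clifford multiplication then gives $\ell_-\cong V^\perp\otimes\ell_+$ as Atiyah-real bundles, up to conjugation. Hence $\det S\cong \ell_+^{\otimes 2}\otimes V^\perp$, so $c_1^R(\fk s)\equiv c_1^R(V^\perp)$ mod 2 regardless of sign conventions.
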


    \begin{proof}
        Let us represent the real ${\rm Spin}^c$ structure $\mathfrak s$ by a real vector field $X$ on $Y$. This induces a $\tau$-equivariant splitting of $TY|_{Y \setminus N}$ into $\<X\> \oplus L$, where $L$ is the $SO_2 \cong U_1$-bundle obtained as the orthogonal complement of $\< X \>$; this is an Atiyah-real line bundle. On the quotient $Y' \setminus N'$, the splitting descends to $TY' |_{Y' \setminus N'}$ into an $O_1$-bundle $\underline{\mathbb R}^v$ with monodromy $v$, and an $O_2$-bundle $L' := L / \tau$. The first Chern class of the real ${\rm Spin}^c$-structure is $c_1^R(L) \in H^2(Y' \setminus N'; \underline{\mathbb Z}^v)$, which restricts to $w_2(L')$ mod 2, as shown in {\sc Prop.~\ref{prop:rc1}}. Since $TY'$ is trivial, we have
        \[ 0 = w_2(\underline{\mathbb R}^v \oplus L') = w_2(\underline{\mathbb R}^v) + w_1(\underline{\mathbb R}^v) \cdot w_1(L') + w_2(L'). \]
        The first term is zero for degree reasons, and the second term is $v \cdot v$, since both bundles become canonically oriented when pulling back along the covering map, with the deck involution swapping the two orientations. This concludes the proof that $v^2 = c_1^R$.
        
        The reinterpretation {\sc i} of the obstruction to $\mathbb Z/2$-gradings in {\sc Main~Thm.~\ref{thm:main}} follows from the relation $x^2 \smile v = x \smile v^2$, by {\sc Prop.~\ref{prop:Apr1}-ii}. For reformulation {\sc ii}, we use the fact that pull-back along the projection map $\pi : Y \setminus N \twoheadrightarrow Y' \setminus N'$ induces an isomorphism
        \[ H^1(Y' \setminus N', \partial N'; \underline{\bb Z}^v) \overset\sim\longrightarrow H^1(Y \setminus N, \partial N; \bb Z)^{-\tau^*} \cong H^1(Y, C; \bb Z)^{-\tau^*}, \]
        as proved in {\sc Prop.~\ref{prop:bigA}}. Since ${\rm deg}(\pi) = 2$, we deduce that
        \[ 2 \cdot \int_{Y'} x \smile c_1^R(\fk s) = \int_Y \pi^*(x \smile c_1^R(\fk s)) = \int_Y \pi^*x \smile c_1(\fk s) \] 
        by {\sc Prop.~\ref{prop:rc1}-ii}, as desired.
    \end{proof}

    \begin{RMK}\label{rmk:conjzgr}
        This in particular tells us that $v^2$ has a lift to $\underline{\mathbb Z}^{v}$-coefficients, which is canonical up to the choice of real ${\rm Spin}^c$-structure $\mathfrak s$. As such, this choice of $\mathfrak s$ also determines a canonical lift of the functional $x \smile v^2$ to a $\mathbb Z$-valued functional. It is therefore natural to conjecture that $\widehat{\it HFR}$ is relatively graded modulo the image of this functional. Private communication from Jiakai Li suggests that this conjecture holds true in the monopole setting; cf. also \cite[\S2.5]{GM25}.
    \end{RMK}

    We end with a proposition that elucidates the precise relationship between the vanishing of $Q$ and that of $F$. Of course, vanishing of $F$ implies vanishing of $Q$, but one would like to know if there is a smaller subset $S$ of pairs $(x, y)$, such that $F|_S \equiv 0$ if and only if $Q \equiv 0$. We answer this in the affirmative:
    \begin{PROP}\label{prop:checkQ}
        Let $F(-,-)$ be a symmetric bilinear form on a finite-dimensional $\bb F_2$-vector space $V$. If the following implication holds
        \[ F(x, x) = 0 \quad \implies \quad F(x, y) = 0,\]
        then in fact the alternating bilinear form
        \[ Q(x, y) := F(x, y) - F(x,x) \cdot F(y,y) \]
        is identically zero.
    \end{PROP}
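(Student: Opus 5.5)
The plan is to exploit the fact that over $\bb F_2$ the diagonal of a symmetric bilinear form is \emph{linear}. Set $\lambda(x) := F(x,x)$. Expanding $F(x+y,x+y) = F(x,x) + F(y,y) + 2F(x,y)$ and reducing mod $2$ gives $\lambda(x+y) = \lambda(x) + \lambda(y)$. So $\lambda : V \to \bb F_2$ is a linear functional, and its kernel $K := {\rm Ker}(\lambda)$ is a linear subspace of $V$.

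Next I will reinterpret the hypothesis. It says exactly that every $k \in K$ lies in the radical of $F$, that is, $F(k, y) = 0$ for all $y \in V$. I then split into two cases according to whether $\lambda$ vanishes.

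If $\lambda \equiv 0$, then $K = V$. Every vector is in the radical, so $F \equiv 0$, and hence $Q \equiv 0$ trivially.

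If $\lambda \not\equiv 0$, choose $e \in V$ with $\lambda(e) = F(e,e) = 1$. Then $V = K \oplus \langle e \rangle$, and each $x \in V$ decomposes uniquely as $x = k + \lambda(x)\, e$ with $k \in K$. For $x = k + a e$ and $y = k' + b e$, bilinearity together with $k, k'$ lying in the radical gives
\[ F(x,y) = ab\, F(e,e) = ab = \lambda(x)\lambda(y) = F(x,x)\, F(y,y). \]
This is precisely the statement $Q(x,y) = F(x,y) - F(x,x)F(y,y) = 0$ for all $x, y$.

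There is no real obstacle here. The only point needing care is the observation that $\lambda$ is additive, which is what turns the pointwise hypothesis into a statement about the subspace $K$ and allows the splitting $V = K \oplus \langle e \rangle$. Finite-dimensionality is not actually needed, since the splitting only uses a single vector $e$ with $\lambda(e) = 1$.
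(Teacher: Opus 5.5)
Your proof is correct and follows the same route as the paper: both use that $x \mapsto F(x,x)$ is additive over $\bb F_2$, and then apply the hypothesis to a vector in its kernel. The paper does this pointwise, applying the hypothesis to $x+y$ when $F(x,x)=F(y,y)=1$ to get $F(x,y) = 1 + F(x,x+y) = 1$; you apply it to $x - \lambda(x)e$ via the splitting $V = K \oplus \langle e \rangle$, which gives the equivalent statement $F(x,y) = \lambda(x)\lambda(y)$.
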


    \begin{proof}
        The hypothesis already implies that $Q(x, y) = 0$ when $F(x, x)$ or $F(y,y)$ vanishes. So let us assume that we are in the remaining case $F(x, x) = F(y, y) = 1$, and let us show that $F(x, y) = 1$. Indeed,
        \[ F(x, y) = F(x, x) + F(x, x + y) = 1 + F(x, x + y),\]
        so it suffices to show that $F(x, x + y) = 0$. But this follows from the hypothesis, together with the observation that $F(q, q)$ is linear in $q$ in characteristic 2, and hence
        \[ F(x + y, x + y) = F(x, x) + F(y, y) = 1 + 1 = 0. \qedhere \]
    \end{proof}

\section{Examples}\label{sec:ex}

	In this section, we provide various examples illustrating either the vanishing or non-vanishing of the obstructions in {\sc Thm.~\ref{thm:main}}, as well as concrete calculations and diagram-level interpretations in certain cases.

	\subsection{Different choices leading to different invariants}\label{ssec:s1s2} There is a very straightforward case in which both obstructions vanish, and in which different choices of $\bb Z/2$-gradings and $\bb Z$-coefficients lead to non-isomorphic invariants. Consider the real Heegaard diagram in {\sc Fig.~\ref{fig:ds1s2}}, where $Y = (S^1 \times S^2)^{\# 2}$ and $Y' = S^1 \times S^2$ with $C \subset Y'$ the unknot and $v$ the Poincar\'e dual of the disk bounding $C$. This falls under the conditions of {\sc Cor.~\ref{cor:loclink}} of a local unknot, and therefore $A = H^1(S^1 \times S^2; \bb Z) = \bb Z$, and the obstructions vanish.

	\begin{figure}
		\centering\includegraphics[scale=1.5]{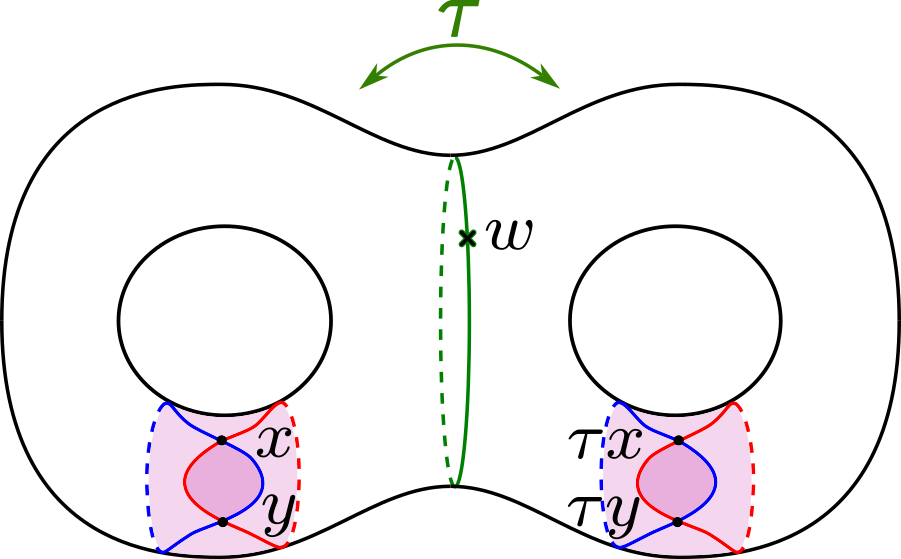}
		\caption{A weakly admissible real Heegaard diagram for $Y' = S^1 \times S^2$, $C$ the unknot, and $v$ the Poincar\'e dual of the bounding disk. The real involution $\tau$ is given by reflection along the vertical plane}\label{fig:ds1s2}
	\end{figure}

	We arrange that the $\alpha$-curves (red) and $\beta$-curves (blue) intersect as shown in {\sc Fig.~\ref{fig:ds1s2}}, so that the diagram becomes admissible. Indeed, in this case the group $A$ of real periodic domains is rank-1, freely generated by the domain highlighted in pink, where the forward-facing elementary domains have multiplicity $+1$, and the backward-facing ones have multiplicity $-1$. The presence of the basepoint makes it so that every holomorphic strip must use the aforementioned four elementary domains exclusively, and the $\tau$-invariance means that the coefficients must agree on the left and on the right. From here, it is straightforward to see that there are no domains with wholly non-negative coefficients from $(y, \tau y)$ to $(x, \tau x)$, and that the only such domains from $(x, \tau x)$ to $(y, \tau y)$ are given by forward-facing, and respectively backward-facing, elementary domains. By the Riemann Mapping Theorem, each such domain comes equipped with precisely one holomorphic representative, up to translation.

	We conclude that the chain complex in the unique real ${\rm Spin}^c$-structure containing $(x, \tau x)$ and $(y, \tau y)$ is in fact relatively graded over $\mathbb Z$ (and not just $\mathbb Z/2$), and takes the form
	\[ \mathbb Z \overset{\pm 1\pm 1}\longrightarrow \mathbb Z \]
	where the signs are determined by the orientation on the 0-dimensional moduli space. We know that changing the choice of coherent orientation by a 1-cocycle in $\varphi \in H^1(\scr P; \pm 1) = A^\vee \cong \mathbb Z/2$ changes the sign assignment of every holomorphic strip by $\pm 1$ according solely to its homotopy class in the space of Whitney disks. The group $A$ is precisely the group of real periodic domains, so that changing the orientations by the unique non-trivial cocycle in $A^\vee$ will result in multiplying the two domains from $x$ to $y$ by different elements of $\{\pm 1\}$. That is, if the signs were originally agreeing, the subsequent signs differ, and vice versa. This means that the two chain complexes corresponding to the two choices of coherent orientations are, up to isomorphism, given by
	\begin{equation}\label{eq:twochoices}
		\mathbb Z \overset{\cdot 2}\longrightarrow \mathbb Z, \quad \text{and} \quad \mathbb Z \overset{0}\longrightarrow \mathbb Z.
	\end{equation}
	They visibly have distinct chain homotopy types, which means in particular that the sign choice really affects the invariant. Also, it follows that in this particular case the conjectural set $\scr O$ of choices of $\mathbb Z$-lifts (cf. {\sc Rmk.~\ref{rmk:fkO}}) is an actual invariant of $(Y, \tau)$. Furthermore, since the choice of $\mathbb Z/2$-grading is independent of the choice of $\mathbb Z$-lift, and since in one of the two cases of $\mathbb Z$-coefficient choices, the groups $\widehat{\it HFR}_0$ and $\widehat{\it HFR}_1$ are distinct, it follows therefore that the set of choices of $\mathbb Z/2$-gradings is also a well-defined invariant of $(Y, \tau)$ (though we already knew that from \cite{Sr26}). We may decree that the privileged choice of $\mathbb Z/2$-grading is the one for which $\widehat{\it HFR}_0 = \mathbb Z/2$ for the first choice in (\ref{eq:twochoices}).

	We further note that this computation also handles the case of the other possible class $v \in H^1(Y' \setminus C; \bb F_2)$. Indeed, if we think of the unknot $C$ as being embedded in $\{1\} \times S^2 \subset S^1 \times S^2 = Y'$, it bounds two different disks, and in fact there is an isotopy (not respecting $C$) that takes one disk to the other. By invariance of real Heegaard-Floer homology, this shows that both cases give rise to isomorphic $\widehat{\it HFR}$.

	\subsection{Non-vanishing obstruction for \texorpdfstring{$\mathbb Z/2$-gradings}{Z/2-gradings}}\label{ssec:rp3} We know that this obstruction is given by the diagonal terms
	\[ F(x, x) = \int x^2 \smile v = \int x^3 \in \mathbb Z/2.  \]
	Naturally, the simplest case to study would be $Y' = \mathbb{RP}^3$, due to the necessity of having a non-trivial third cup-power mod 2. Let us take $C$ be the local unknot as before; then, there are precisely two choices of element $v \in H^1(Y \setminus C; \mathbb F_2)$, namely the Poincar\'e dual of the disk $D$ bounding it, and the Poincar\'e dual of the M\"obius band $M$ bounding it (such that the union $M \cup D$ of the two forms the standardly embedded $\mathbb{RP}^2 \subset \mathbb{RP}^3$.) The former leads, by virtue of {\sc Cor.~\ref{cor:loclink}}, to $A = H^1(\mathbb{RP}^3; \bb Z) = 0$, so it is not a good candidate. This leaves us with the option that $[\Sigma'] = [M]$.

	\begin{figure}
		\includegraphics[scale=1.5]{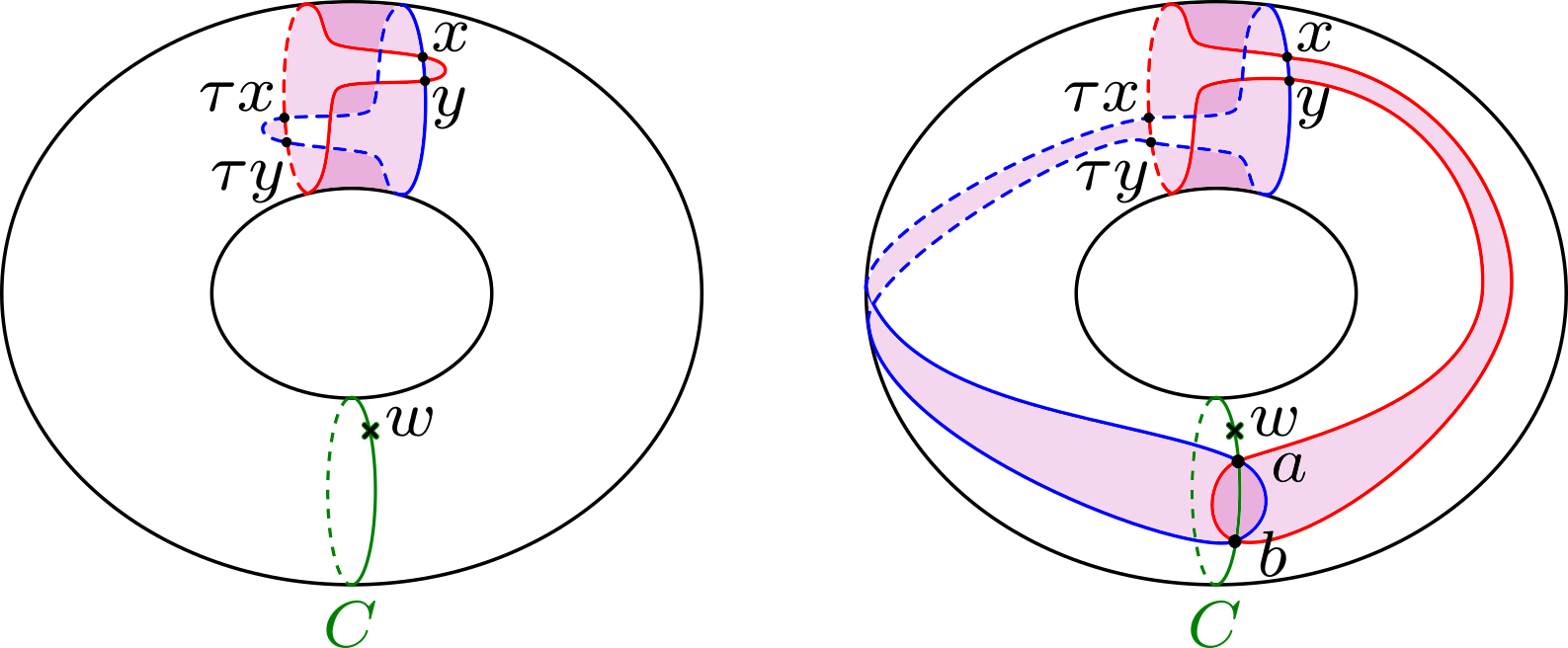}
		\caption{A weakly admissible real Heegaard diagram on the left, for $Y' = \mathbb{RP}^3$, $C$ the unknot, and $v$ the Poincar\'e dual class of the M\"obius band. The involution looks like reflection near the bottom green circle $C$, but looks like 3-dimensional reflection near a point on the opposite side. The finger moves are continued on the right, to have some actual generators for $\widehat{\it CFR}$.}\label{fig:rp3}
	\end{figure}

	Indeed, this M\"obius band already has the property that its complement is a handlebody, since the complement of $\mathbb{RP}^2$ in $\mathbb{RP}^3$ is a ball, and the result of filling in $D \subset \mathbb{RP}^2$ is the same as attaching a 1-handle. In other words, we obtain a real Heegaard diagram with $\Sigma' = M$, and $\Sigma$ a torus with $\alpha = \beta = C$, since the knot $C$ itself bounds a disk in the complement of $\Sigma'$. The left side of {\sc Fig.~\ref{fig:rp3}} depicts one example of a weakly admissible real Heegaard diagram for this branched double cover. A rigorous way to define the involution on the torus is to think of the 3-dimensional reflection across the center of a symmetric standard cylinder in $\mathbb R^3$, descended to the quotient identifying the opposite ends of the cylinder.

	The finger moves depicted in {\sc Fig.~\ref{fig:rp3}} are once again essential for ensuring weak admissibility, and indeed the group $A$ of periodic domains is cyclic generated by the domain highlighted in pink, with $-1$ multiplicities on the big elementary domains, and $+1$ on the small ones. There are 4 intersection points, but no intersection point that is $\tau$-invariant. As a result, the chain complex is zero on the nose, resulting in $\widehat{\it HFR} \equiv 0$. One can artificially continue the finger moves, cf. right picture of {\sc Fig.~\ref{fig:rp3}} in order that the red and blue circles intersect the green $C$ below in two new points $a$ and $b$. In that case, our main result would predict that the aforementioned periodic domain $D$ would have real Maslov index $F(x, x) = 1$ mod 2, since the map $A \overunderset{\text{\sc Prop.}}{\text{\sc\ref{prop:Apr1}}}\longrightarrow H^1(Y', C; \bb F_2) \cong \bb F_2$ is surjective, and the generator $x$ of the latter satisfies $\int x^3 = 1$; this is indeed verified by Guth-Manolescu's formula \cite[{\sc Cor.~4.3}]{GM25}:
	\[ \tfrac 12 (n_{a}(D) + n_b(D) + e(D)) = \tfrac 12 (1 + 1 + 0) = 1. \]
	That is to say, if we add $k \cdot D$ to the obvious bigon from $a$ to $b$, we get a domain of real Maslov index $k + 1$, so in particular there is no $\mathbb Z/2$-grading due to the possibility of choosing different $k$, as expected. Of course, these other domains from $a$ to $b$ do not admit any actual holomorphic representatives, but our definition of abstract $\mathbb Z/2$-gradings forces us to take them into account, cf. {\sc Def.~\ref{def:z2gr}}.

	\begin{RMK}
		At present, we have not been able to find an example where $F(x, x) \neq 0$ for some $x$, but $\widehat{\it HFR} \neq 0$. One might think of trying more complicated 3-manifolds having elements $x \in H^1(Y'; \bb Z/2)$ satisfying $x^3 \neq 0$, but as we now briefly show, they would likely be somewhat complicated. Such an $x$ would be represented by an embedded closed surface $F$, which must necessarily be non-orientable, and in fact with odd $b_1$. Indeed, the triple intersection number can be computed locally near the surface, and ends up being given by $\int_{Y'} x^3 = \int_{F} x^2 = \int_F w_1^2$, and the latter is a characteristic number, hence additive under connect sum, which easily shows that it must equal $b_1$ mod 2. The case $b_1 = 1$ corresponds to $F = \mathbb{RP}^2$, in which case the boundary of a tubular neighborhood thereof must necessarily be $S^2$, resulting in $Y' = Y'' \# \mathbb{RP}^3$. Since a K\"unneth formula for connect sums likely holds for $\widehat{\it HFR}$, we do not expect such cases to be interesting either. Thus, one must try $b_1 \ge 3$, so that there is an embedded incompressible $(\mathbb{RP}^2)^{\#(2n+1)}$ inside $Y'$. Furthermore, it is unlikely that $C$ equals the unknot and $v = (x, \mu)$ would work, since in this case it seems that one should be able to produce a real Heegaard diagram with odd genus but with no fixed intersection points, hence having no $\widehat{\it CFR}$ generators.
	\end{RMK}

	\subsection{An example where \texorpdfstring{$v$}{v} lifts to \texorpdfstring{$\mathbb Z/4$}{Z/4}\label{ssec:z4} but not to \texorpdfstring{$\mathbb Z$}{Z}}
	\begin{figure}
		\centering\includegraphics[scale=1.5]{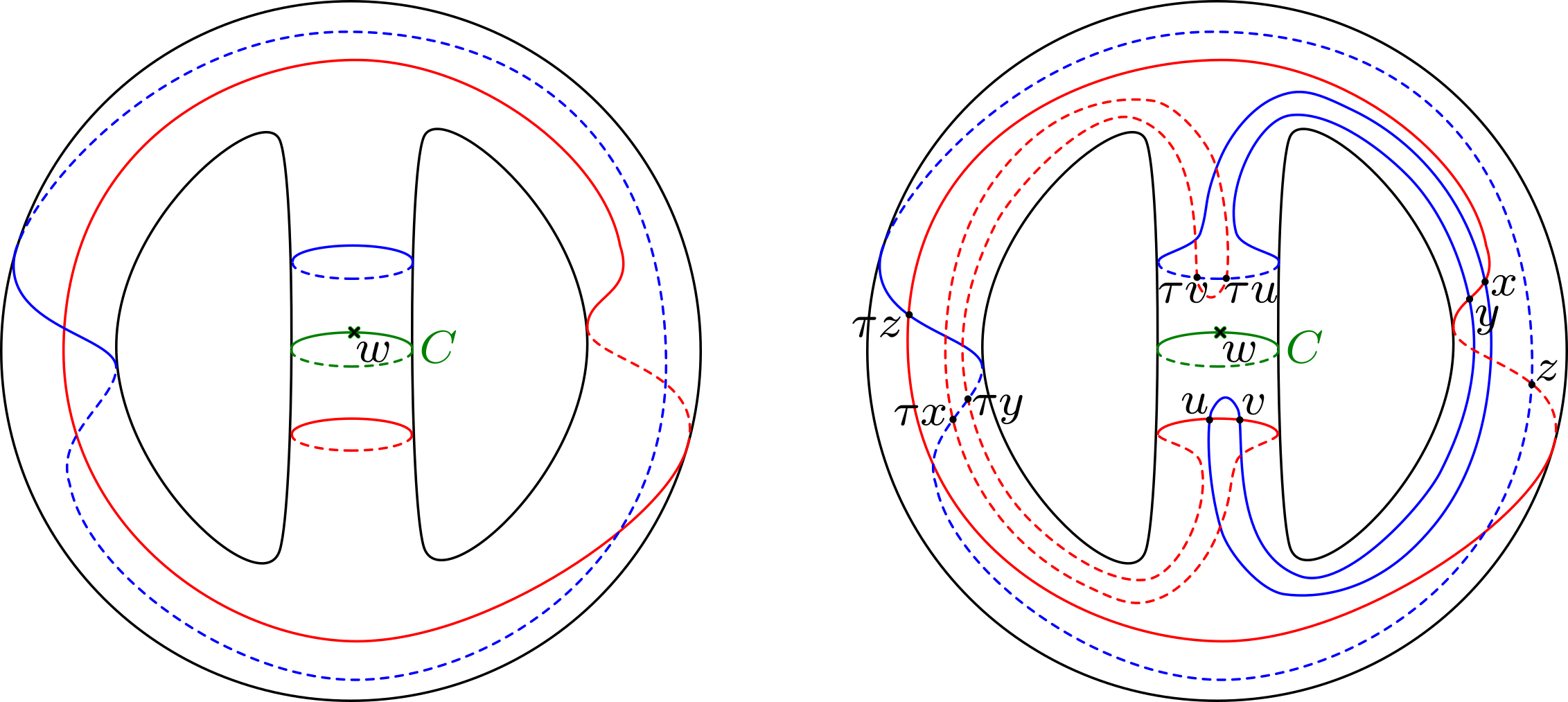}
		\caption{Real Heegaard diagrams for a $\bb Z/4$-lens space, $C$ the unknot, and $v$ nontrivial; the diagram on the right is obtained form the one on the left by doing a finger move introducing new intersection points $x, y, u, v$ and their images under $\tau$, which is necessary in order to achieve weak admissibility. The $\tau$-action away from the central tube is given by 3-dimensional reflection around the center of mass, but on the central tube it is warped in a Dehn twist fashion so that the central green circle $C$ is fixed by the action.}\label{fig:l41}
	\end{figure}
	We know from \cite{Sr26} that \emph{absolute} $\mathbb Z/2$-gradings exist provided that one lifts $v \in H^1(Y' \setminus C; \bb F_2)$ to a primitive element $V \in H(Y' \setminus C; \bb Z)$. This agrees with our weaker result. However, it is possible that $v$ lifts to $\mathbb Z/4$ and not to $\mathbb Z$, and our main theorem still implies that $\mathbb Z/2$-gradings do exist, though they need not be absolute. We now show a concrete example in which this occurs, and moreover in which $\widehat{HFR}$ is nonzero.

	The most natural place to look is the $\bb Z/4$-lens space $Y'$, since its generator $x \in H^1(Y; \mathbb F_2) \cong \bb F_2$ satisfies ${\rm Sq}^1 x = 0$, i.e.~ $x$ lifts to $\mathbb Z/4$, though it does not lift to $\mathbb Z$. Let us now try to express this class $x$ as the Poincar\'e dual of an embedded closed surface $F$, which would necessarily have to be non-orientable, since otherwise it would lift to $\mathbb Z$. As mentioned in the previous subsection, $x^3 \equiv b_1(F) ~\text{mod}~ 2$, so we would need $b_1$ to be even for ${\rm Sq}^1 x = x^2 = 0$.
	
	The smallest such surface to look for is a Klein bottle $K$; recall that
	\[ \pi_1 K = \<a, b | a b a^{-1} = b^{-1} \>, \]
	where we may think of $b$ as the homotopy class of the meridional fiber of the Klein bottle as fibered over $S^1$, and $a$ as the longitude. Thickening $K$ to an oriented 3-manifold with boundary $\mathbb T^2 \overset{2:1}\longrightarrow K$, we wish to close it off to obtain a $\bb Z/4$-lens space. Let us perform a Dehn filling by killing $a^2b$ inside
	\[ \pi_1 \bb T^2 = \<a^2, b | a^2 b a^{-2} = b\>, \]
	which produces a 3-manifold $Y'$ with
	\[ \pi_1 Y' = \<a, b | a b a^{-1} = b^{-1}, a^2 = b^{-1}\> = \mathbb Z/4. \]
	This means that we have an embedded Klein bottle $K$ in a $\bb Z/4$-lens space with complement given by a solid torus. If we remove a small disk $D$ from $K$, we obtain a surface $\Sigma'$ whose complement is a genus-2 handlebody, which can be thought of as attaching a 1-handle to $\mathbb T^2$ symmetrically with respect to the deck transformation on $\mathbb T^2$.

	{\sc Fig.~\ref{fig:l41}} shows the resulting real Heegaard diagram on the left (where the central tube corresponds to the aforementioned 1-handle), and the necessary modification on the right in order to achieve admissibility, via a symmetric finger move. The intersection points $z, u, v$ and their images under $\tau$ cannot contribute to a $\tau$-invariant intersection point, since that would force one $\alpha$ or $\beta$ circle to have two intersection points on it. The only remaining intersections are the mixed ones, i.e.~ $x, y$ and their images $\tau x, \tau y$, resulting in precisely two generators $(x, \tau x)$ and $(y, \tau y)$ for $\widehat{\it CFR}$.

	We claim that there are only two possible positive domains connecting $(x, \tau x)$ to $(y, \tau y)$: the most obvious one given by a free disjoint union of two bigons, obtained by following $x, y$ in the direction of the finger move until the end, and likewise for the image under $\tau$. The less obvious one is a genus 1 domain defined by the property that all elementary domains are given multiplicity 1, except the one containing $w$, and the tiny bigons connecting $u$ to $v$ and $\tau u$ to $\tau v$, which are assigned multiplicity zero.
	
	The difference between these two domains from $(x, \tau x)$ to $(y, \tau y)$ consists of the generator of the group of periodic domains (depicted in the leftmost picture of {\sc Fig.~\ref{fig:doms}}), and has multiplicities given by 1 everywhere except for: the elementary domain containing $w$, with multiplicity zero; the elementary quadrilateral with corners $x, y, v, u$ and its image under $\tau$, with multiplicity zero; and the small bigons connecting $u$ to $v$ and $\tau u$ to $\tau v$, with multiplicity $-1$. This periodic domain is $\tau$-invariant, and it also freely generates the group of real periodic domains, isomorphic to $A \cong \mathbb Z$. From here it is not difficult to see that any other domain connecting $(x, \tau x)$ to $(y, \tau y)$ must have some negative multiplicity, and hence cannot contribute to the differential.
	
	Both aforementioned domains are free, so by \cite{GM25}'s results, their real indices are equal to half their classical index. Direct computation shows that the classical index is $2$, and therefore the real index is 1, in both cases. From here, we conclude that the chain complex is in fact relatively graded over $\mathbb Z$, concentrated in two adjacent degrees. The free disjoint union of two bigons mentioned before contributes precisely one holomorphic strip, so it remains to investigate the remaining one. To this end, we use the following trick:

	\begin{PROP}\label{prop:trick}
		Given a real domain $D_{p,q} \in \pi^R_2(p, q)$ of classical index $2$ and real index $1$, its associated count $\# \scr M^R (D_{p,q})$ of real holomorphic representatives is equal mod 2 to the half-count of broken trajectories
		\begin{equation}\label{eq:broken}
			\frac 12 \sum_{D_{p,r} \star D_{r, q} = D_{p, q}} \# \scr M(D_{p, r}) \cdot \# \scr M(D_{r,q}),
		\end{equation}
		where $r$ ranges through not necessarily real generators, and $D_{p, r}$ and $D_{r, q}$ range through not necessarily real domains, of classical index 1.
	\end{PROP}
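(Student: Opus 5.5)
The plan is to view the real moduli space as the fixed locus of the involution on the classical moduli space, and to count the ends of the 1-dimensional classical moduli space. Concretely, let $\scr M(D_{p,q})$ denote the moduli space of all (not necessarily real) holomorphic strips in $\scr S^d\Sigma^\circ$ in the class $D_{p,q}$, taken modulo translation. Since the classical index is $2$, this is a 1-dimensional manifold for generic ($\tau$-compatible) almost complex structure. The involution $u \mapsto \tau \circ u \circ (\text{reflection of the strip})$ acts on $\scr M(D_{p,q})$ because $D_{p,q}$ is real. Its fixed locus is $\scr M^R(D_{p,q})$, which has dimension $1 - 1 = 0$ after quotienting by translation, by the real index assumption. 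I will take from \cite{GM25} that one can choose $J$ to achieve transversality simultaneously for the classical and real problems; this is how they define $\widehat{\it CFR}$, and the paper may assume it.

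The key steps, in order, are as follows.

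First, compactify $\scr M(D_{p,q})$ via Gromov compactness. This gives a compact 1-manifold whose boundary consists exactly of the broken pairs $(D_{p,r}, D_{r,q})$ of classical index $1$ each, where $r$ ranges over all generators. Sphere and disk bubbles are excluded in the hat setting by the usual basepoint and $\pi_2$ arguments of \cite{OS-I}, which carry over since the real structure is irrelevant to bubbling. So the number of ends equals (\ref{eq:broken}) without the factor $\frac12$.

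Second, observe that $\tau$ acts freely on these ends. It sends a broken pair $(u_1,u_2)$ through $r$ to the pair $(\tau u_2,\tau u_1)$ through $\tau r$. A fixed end would need $r$ real and $u_1=\tau u_2$, and then $D_{p,r}$ would be a real class of classical index $1$, hence of real index $\tfrac12$, which is impossible. So the ends come in free orbits, and the half-count in (\ref{eq:broken}) is an integer.

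Third, analyze the components of the compact 1-manifold. The involution preserves each component up to permutation. Consider a $\tau$-invariant component. If it is a circle, a smooth involution of a circle has either $0$ or $2$ fixed points, depending on whether it is a rotation or a reflection. If it is an arc, a reflection of the arc has exactly one interior fixed point and swaps the two ends. Now count mod $2$: circles contribute no ends and an even number of fixed points. Free pairs of components contribute no fixed points, and their ends come in two identical orbit sets, so they contribute an even number of orbits. Each invariant arc contributes exactly one fixed point and exactly one orbit of ends. Hence $\#\scr M^R(D_{p,q}) \equiv \#\{\text{invariant arcs}\} \equiv \tfrac12\#\partial \equiv$ (\ref{eq:broken}) mod $2$. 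A non-invariant arc still contributes an even number of orbits, because its $\tau$-partner is disjoint from it.

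The main obstacle is the transversality input. Specifically, one needs the fixed points of the involution on the 1-dimensional space to be exactly the transversely cut out real solutions, and the involution to be smooth near them. In particular, I need that at a real solution the linearized operator splits into $\tau$-even and $\tau$-odd parts, with the even part giving the real index. I would try to cite \cite{GM25}'s equivariant transversality for this, and argue that genericity within $\tau$-compatible $J$ suffices for the classical index-$1$ and index-$2$ non-real classes, because those curves are not $\tau$-invariant and so can be perturbed freely away from the fixed locus.
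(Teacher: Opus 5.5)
Your strategy is the same as the paper's. Both arguments realize $\scr M^R(D_{p,q})$ as the transversely cut out fixed locus of the involution on the compact $1$-manifold $\overline{\scr M}(D_{p,q})$, and show that its cardinality is congruent mod $2$ to the number of interval components, which is exactly (\ref{eq:broken}). The paper gets this from the mod-$2$ Lefschetz formula (traces of $\tau_*$ on $H_0$ and $H_1$ with $\bb F_2$-coefficients). You instead classify involutions of arcs and circles by hand. The two are equivalent, and yours makes explicit that no end of $\overline{\scr M}(D_{p,q})$ is fixed, which the paper uses tacitly. Both rest on the same equivariant transversality input, which you rightly flag.

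The step that fails as written is your justification of that freeness.

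First, a smaller slip: the involution $u \mapsto \tau\circ u(s,1-t)$ preserves the $s$-direction, since otherwise it could not preserve the class $D_{p,q}$. So it sends a broken pair $(u_1,u_2)$ through $r$ to $(\tau u_1,\tau u_2)$ through $\tau r$, not to $(\tau u_2,\tau u_1)$. A fixed end therefore means $\tau r=r$ with $u_1$ and $u_2$ both real.

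Second, and more seriously, a real class of classical index $1$ does not have real index $\tfrac12$. The real index is half the classical index only for special domains, e.g.\ free ones. In fact a classically rigid real strip always has real index exactly $1$: the translation vector $\partial_s u$ spans $\ker D_u$, is $\tau$-invariant, and so lies in the real part of the split operator. The $\tau$-invariant bigon from $a$ to $b$ in \S\ref{ssec:rp3} is an example.

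Either of two arguments repairs this:
\begin{itemize}
\item Additivity of the real index. A fixed end would give $\mu^R(D_{p,r})+\mu^R(D_{r,q})=\mu^R(D_{p,q})=1$. But each summand is at least $1$, because a nonconstant real strip in a transversely cut out real moduli space has real index at least $1$ (translation acts freely).
\item A topological argument. If $\tau$ fixed an end, it would preserve that arc and fix both endpoints, so it would be the identity on the whole arc. This contradicts the isolatedness of the real solutions.
\end{itemize}
With either repair, the rest of your count goes through unchanged.
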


	\begin{proof}
		Since the classical index of $D_{p,q}$ is one more than its real index, it follows by transversality that every real holomorphic solution has a $\tau$-symmetric 1-parameter extension of non-real holomorphic solutions, i.e.~ $\scr M^R(D_{p, q})$ is the transversely cut out fixed-point locus of the involution $\tau$ on the compact 1-manifold $\overline{\scr M}(D_{p,q})$. By applying the Lefschetz fixed point formula mod 2, it follows that the count $\# \scr M^R(D_{p,q})$ is equal to the sum of traces of $\tau_*$ on $H_0$ and $H_1$ with $\mathbb Z/2$-coefficients. The induced maps are always permutations of the generators, and the trace is equal to the number of fixed generators, which mod 2 is the same as the full number of generators. The circle components of $\overline{\scr M}(D_{p,q})$ contribute two generators each, so they do not contribute anything mod 2. Consequently, the count $\#\scr M^R(D_{p,q})$ is equal mod 2 to the number of interval components of $\overline{\scr M}(D_{p,q})$, which is precisely (\ref{eq:broken}).
	\end{proof}

	Therefore, in order to understand the differential in $\widehat{\it CFR}$ from $(x, \tau x)$ to $(y, \tau y)$, we must understand the classical $\widehat{\it CF}$ of the given Heegaard diagram. Let us begin by noting that this diagram represents $Y = \mathbb{RP}^3 \# (S^1 \times S^2)$, so we expect $\widehat{\it CF}$ to split up into two non-trivial ${\rm Spin}^c$-structures, each being chain homotopy-equivalent to $\mathbb F_2 \overset{0}\to \mathbb F_2$.

	There are 12 generators, cf. {\sc Fig.~\ref{fig:gens}}, which break up into two chain complexes with 4 and 8 generators, respectively, depending on which ${\rm Spin}^c$-structure they belong to. We now proceed with the explanation of the differentials; thanks to the $\tau$-symmetry, reflected in {\sc Fig.~\ref{fig:gens}} by flipping across the central vertical axis, it suffices to treat only one representative form each orbit. Regarding the smaller chain complex on the left, we have:
	\begin{list}{$\cdot$}{}
		\item $(u, \tau z) \to (v, \tau z)$, and its reflection, shown vertically: there is an obvious tiny bigon which contributes 1 mod 2. Adding any non-zero multiple of the generating periodic domain leads to negative multiplicities somewhere, hence there are no more contributions.
		\item $(u, \tau z) \to (\tau v, z)$, and its reflection, shown diagonally: they must be the same value $a \in \bb F_2$. If $a = 0$, then the complex is acyclic, which contradicts the expectation that it recover $\bb F_2 \overset 0 \to \bb F_2$ up to chain-homotopy equivalence. Thus, we must have $a = 1$, which indeed matches the expectation, since the matrix $\left[\begin{smallmatrix} 1 & 1 \\ 1 & 1 \end{smallmatrix}\right]$ has rank 1.
	\end{list}

	\begin{figure}
		\centering \begin{tikzcd}[column sep = 0]
			(u, \tau z) \dar[dash, "\bf 1" description]\ar[rd, dash, "1" description] & (\tau u, z) \dar[dash, "\bf 1" description]\ar[ld, dash, "1" description] \\
			(v, \tau z) & (\tau v, z)
		\end{tikzcd} \quad
		\begin{tikzcd}[column sep = 0, row sep = large]
			&&& (x, \tau x) \ar[dash, llld, "\bf 1" description]\ar[dash, ld, "\bf 1" description] \ar[dash, rrrd, "\bf 1" description]\ar[dash, rd, "\bf 1" description] &&& \\
			(u, z) \ar[dash, rd, "\bf 1" description]\ar[rrrd, dash, "1" description, pos = 0.6]\ar[rrrrrd, dash, dotted, "\phantom 0" description, pos = 0.6] && (x, \tau y) \ar[dash, rd, "\bf 1" description]\ar[dash, ld, dotted, "\bf 0" description, pos = 0.75]\ar[dash, rrrd, "\bf 1" description] && (y, \tau x)\ar[dash, ld, "\bf 1" description]\ar[dash, rd, dotted, "\bf 0" description, pos = 0.75]\ar[dash, llld, "\bf 1" description] && (\tau u, \tau z)\ar[dash, ld, "\bf 1" description]\ar[llld, dash, "1" description, pos = 0.6]\ar[llllld, dash, dotted, "0" description, pos = 0.63] \\
			& (v, z) && (y, \tau y) && (\tau v, \tau z) &
		\end{tikzcd}
		\caption{Depiction of the two nontrivial chain complexes constituting $\widehat{\it CF}$ over $\bb F_2$, broken over ${\rm Spin}^c$-structures. Height signifies Maslov degree, in decreasing order from top to bottom. The majority of the differentials are marked with boldface, and those suggest that the asserted fact is ``obvious'' from the diagram, while the other labels must be filled in by some other principle.}\label{fig:gens}
	\end{figure}

	Regarding the bigger complex, shown on the right, we have:
	\begin{list}{$\cdot$}{}
		\item The top four arrows emerging out of $(x, \tau x)$: each one admits either an obvious rectangle (i.e.~ the arrow landing in $(u, z)$ or $(\tau u, \tau z)$, cf. central picture of {\sc Fig.~\ref{fig:doms}}), or an obvious bigon (i.e.~ landing in $(x, \tau y)$ or $(y, \tau x)$), contributing 1 mod 2. Adding any non-zero multiple of the generating periodic domain leads to negative multiplicities somewhere, hence there are no more contributions.
		\item $(u, z) \to (v, z)$ and its reflection: as before, there is an obvious bigon contributing 1 mod 2, and any non-trivial shift by a periodic domain leads to negative multiplicities.
		\item $(x, \tau y) \to (y, \tau y)$ and its reflection: exactly the same justification.
		\item $(x, \tau y) \to (\tau v, \tau z)$ and its reflection: the same justification, except the 1 mod 2 comes from a rectangle instead of a bigon (cf. the rightmost picture in {\sc Fig.~\ref{fig:doms}}, depicting the reflection of said rectangle, from $(y, \tau x)$ to $(v, z)$, which differs from the previous rectangle in the middle picture by a smaller rectangular strip).
		\item $(x, \tau y) \to (v, z)$ and its reflection: any representative of this class has a negative multiplicity somewhere, hence it cannot contribute.
		\item $(u, z) \to (\tau v, \tau z)$, and its reflection: the same explanation, or alternatively this follows by $d^2 = 0$ and the previous information.
		\item $(u, z) \mapsto (y, \tau y)$, and its reflection: this is the only remaining one, and the only positive domain is a genus-1 domain, which is not that easy to calculate directly. Also, $d^2 = 0$ does not help. Instead, we rely on the fact that we expect the homology of this chain complex to be given by two $\mathbb F_2$'s in adjacent degrees: the top generator already dies in homology, so it must be that the two bottom layers contribute to this non-trivial homology. Equivalently, the matrix representing the differential going from the middle to the bottom row must have rank 2. From the information we have gathered so far, it must look like
		\[ \left[\begin{matrix}
			1 & 0 & 1 & 0 \\
			a & 1 & 1 & a \\
			0 & 1 & 0 & 1
		\end{matrix}\right], \]
		where the common value $a$ is the one that remains to be elucidated. From the rank condition, it follows that $a$ must be 1, hence completing the analysis of $\widehat{\it CF}$.
	\end{list}

	\begin{figure}
		\centering\includegraphics[scale=1]{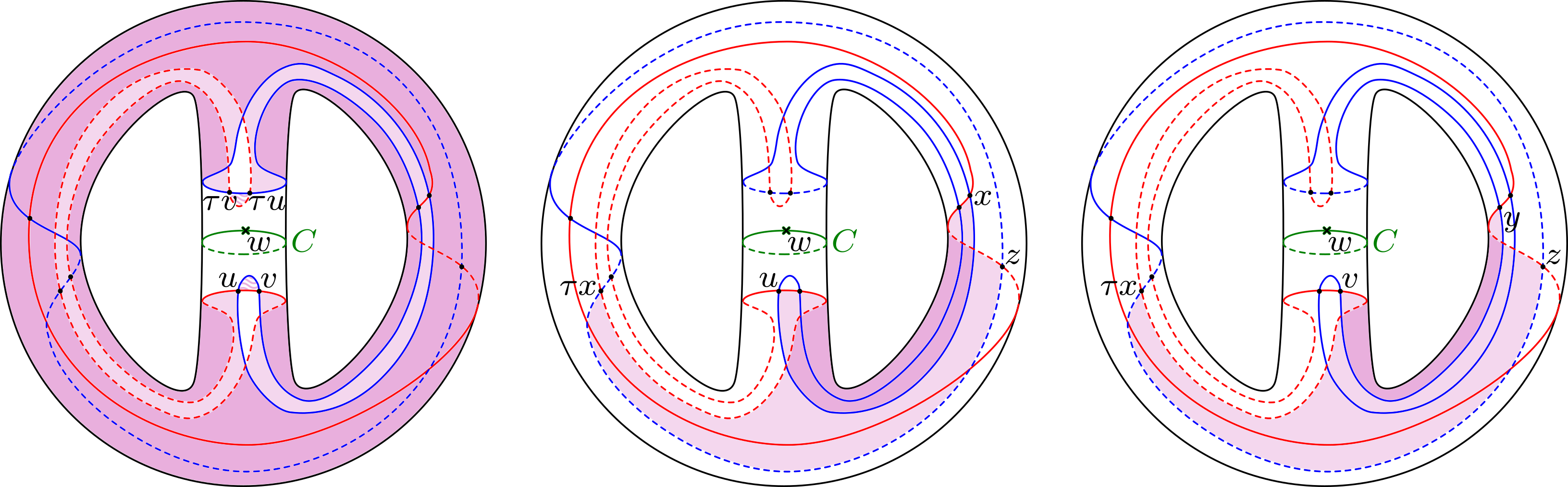}
		\caption{In order from left to right: a depiction of the generator of the group of periodic domains, with multiplicity equal to 1 except for the two small bigons from $u$ to $v$ and $\tau u$ to $\tau v$ with multiplicity equal to $-1$; a rectangle from $(x, \tau x)$ to $(u, z)$; a rectangle from $(x, \tau y)$ to $(v, z)$.}\label{fig:doms}
	\end{figure}

	From here, the formula (\ref{eq:broken}) implies that the differential in $\widehat{\it CFR}$ from $(x, \tau x)$ to $(y, \tau y)$ is given by $\tfrac 12 \cdot 4 = 2 \equiv 0$ mod 2. We recognize the contribution of the free disjoint union of two bigons coming from the middle two terms of the middle row of {\sc Fig.~\ref{fig:gens}}, and therefore that the other homotopy class also contributes 1 mod 2, due to the outer terms of the middle row.

	If we are doing the computation over $\mathbb Z$, it follows that the real Floer homology $\widehat{\it HFR}$ in the bottom degree is either $\mathbb Z/d$ or $\mathbb Z/(d+2)$, depending on which choice of coherent orientations we make, where $d$ is an even number. No matter what $d$ is, the two results are different, which shows that the two-element set of choices of $\mathbb Z$-lift is a well-defined invariant. A posteriori, it follows that the set of $\mathbb Z/2$-gradings is a well-defined invariant, since at least one of the choice will lead to a situation where $\widehat{\it HFR}_0 \not\cong \widehat{\it HFR}_1$. We have not been able to pinpoint exactly what $d$ is, which is a limitation inherent in {\sc Prop.~\ref{prop:trick}}'s formula only being true mod 2. One would have to either compute the genus-1 domain's contribution by hand, or refine the formula (\ref{eq:broken}) to incorporate signs, the latter presupposing some sort of comparison between orientations in $\widehat{\it CF}$ and $\widehat{\it CFR}$.

	\subsection{Non-vanishing obstruction for \texorpdfstring{$\bb Z$}{Z}-coefficients}\label{ssec:q8}

	Finally, we provide an example in which the obstruction $Q$ for $\mathbb Z$-coefficients does not vanish, which we describe first in terms of a real Heegaard diagram, and a posteriori also in more concrete terms.

	\begin{EXP}\label{exp:heegquat}
		Consider the real Heegaard diagram of genus 2 shown in the top-left corner of {\sc Fig.~\ref{fig:nonex}}, where the $\alpha$-curves are the ones being pictured, and the involution is given by flipping along the vertical axis of symmetry.
	\end{EXP}
	\begin{figure}
		\centering\includegraphics{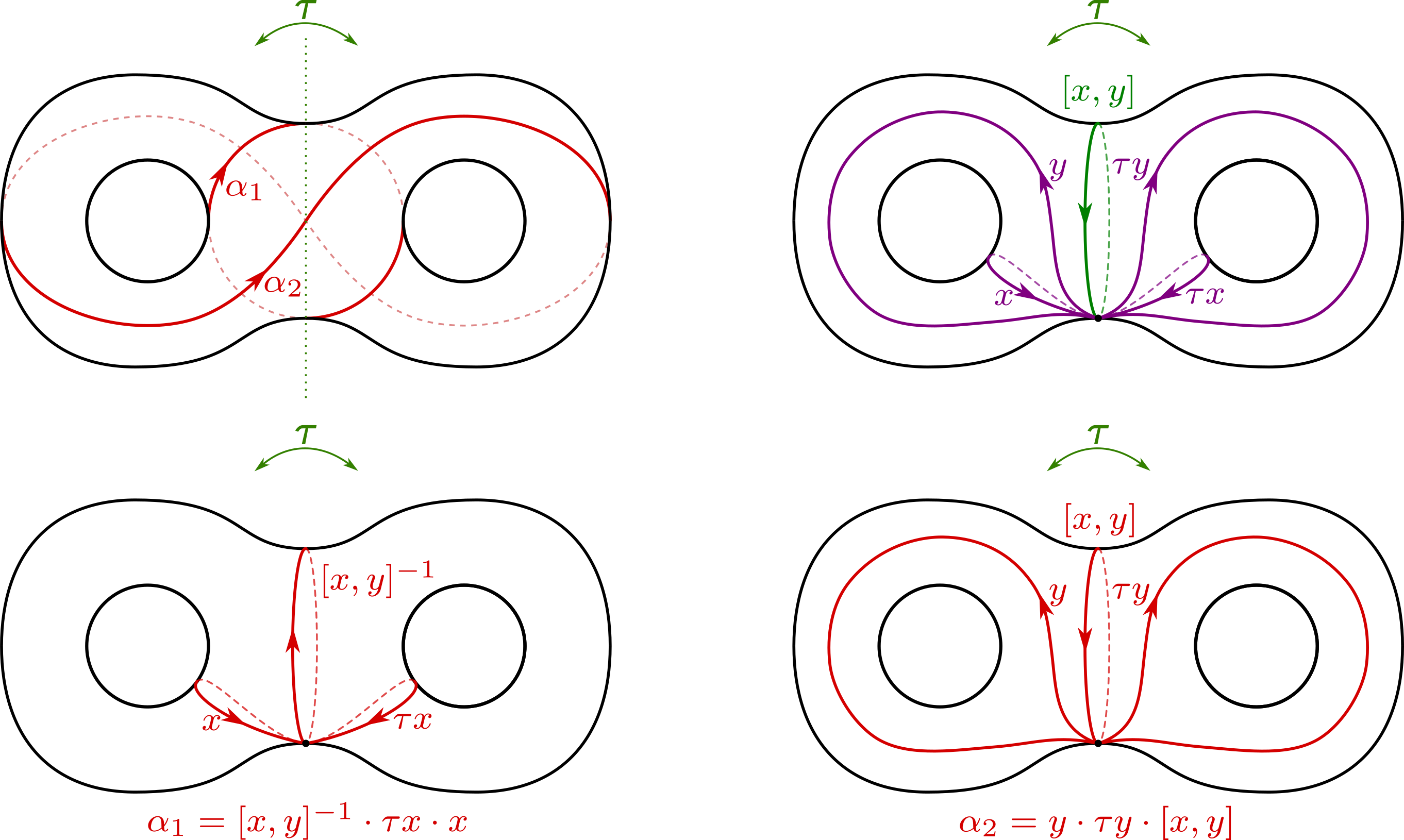}
		\caption{In the top-left corner, there is a depiction of the $\alpha$-curves for a real Heegaard diagram for which the obstruction $Q$ to $\bb Z$-coefficients does not vanish; in the top-right corner, we show the orientation conventions for the generators of $\pi_1 \Sigma$, including a word decomposition $[x, y] := xyx^{-1}y^{-1}$ for the green middle circle; on the bottom we show decompositions of the two $\alpha$-curves into words in terms of the generators.}\label{fig:nonex}
	\end{figure}

	If we let $x$ and $y$ denote standard generators for $H_1(\Sigma')$ given by the meridian and longitude, cf. top-right corner of {\sc Fig.~\ref{fig:nonex}}, then the $\alpha$-curves have been chosen specifically so that they represent classes $(1 + \tau)x$, and $(1 + \tau)y$ in homology. Let us recall from the proof of {\sc Prop.~\ref{prop:spcalc}} that $A$ can be represented as the kernel of the map (\ref{eq:composite2}), i.e.~ as the intersection
	\[ H_1(\bb T_\alpha) \cap H_1(\Sigma') \text{ inside } H_1(\Sigma). \]
	In our case, the $\alpha$-curves so happen to generate precisely $H_1(\bb T_\alpha) = H_1(\Sigma')$ inside $H_1(\Sigma)$, and hence
	\begin{equation}\label{eq:AforQ8}
		A \cong \bb Z\<(1+\tau)x, (1+\tau)y\>.
	\end{equation}
	The form $F$ may be further computed as the mod-2 intersection number of the given elements of $H_1(\Sigma')$. In this case, it is clear that $(1+\tau)x$ and $(1+\tau)y$ have intersection number 1, and self-intersection number 0, hence proving that $F = Q$ is standard symplectic on two generators, and in particular non-trivial. Thus, the obstruction to $\mathbb Z/2$-gradings vanishes, but the one for $\mathbb Z$-coefficients does not. We can identify at least $Y'$, as follows:

	\begin{PROP}
		The 3-manifold $Y'$ associated to {\sc Ex.~\ref{exp:heegquat}} is diffeomorphic to $S^3/Q_8$.
	\end{PROP}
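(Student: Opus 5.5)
The plan is to identify $Y'$ through its fundamental group, using the Heegaard splitting of $Y'$ coming from the real Heegaard diagram. By {\sc Constr.~\ref{constr:morse}}, $Y'$ decomposes as a handlebody $U'$ (the image of $U_\alpha$, homotopy equivalent to it) glued to a regular neighborhood of $\Sigma'$. In {\sc Ex.~\ref{exp:heegquat}}, $\Sigma$ has genus 2 and one fixed circle $C$, so $\Sigma'$ is a once-holed torus with $\pi_1 \Sigma' = \langle x, y\rangle$ free. The thickening $N\Sigma'$ is an $I$-bundle (twisted along $v$) over $\Sigma'$, and gluing it to $U_\alpha$ along $\Sigma$ amounts, up to homotopy, to identifying $U_\alpha$ with its $\tau$-image.

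\textbf{Presentation of $\pi_1 Y'$.} After refilling the solid torus $N'$, a van Kampen computation on this decomposition gives generators $x, y$ from $\pi_1(N\Sigma' \cup N') \simeq \pi_1 \Sigma' \cup (\text{disk on } C)$. Filling $C$ kills the boundary word $[x,y]$, so the $I$-bundle piece together with $N'$ contributes a closed torus-like group. The handlebody $U_\alpha$ then imposes the two relations given by the $\alpha$-curves. Each $\alpha_i \subset \Sigma$ maps to a word in $\pi_1\Sigma$, and these are read off from the bottom of {\sc Fig.~\ref{fig:nonex}}. Pushing them to $\Sigma'$ through the quotient $\Sigma \to \Sigma'$, which sends both $x$ and $\tau x$ to $x$ up to an orientation convention, gives two words $r_1, r_2$ in $x, y$. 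The homological constraint that $[\alpha_1] = (1+\tau)x$ and $[\alpha_2] = (1+\tau)y$ already suggests words like $x\,y\,x\,y^{-1}$ and $y\,x\,y\,x^{-1}$, which is exactly the quaternion presentation $\langle x, y \mid xyx = y,\ yxy = x\rangle \cong Q_8$. The careful step here is to track the orientation reversal under $\tau$, which produces the inverses. This bookkeeping has to match the figure.

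\textbf{From $\pi_1$ to diffeomorphism.} With $\pi_1 Y' \cong Q_8$ established, $Y'$ is a closed orientable 3-manifold with finite fundamental group. By elliptization (Perelman) it is a spherical space form $S^3/Q_8$. For $Q_8$ the spherical space form is unique up to diffeomorphism, since the free actions of $Q_8$ on $S^3$ are all conjugate up to orientation. If we want to avoid geometrization, the alternative is to recognize the genus-2 Heegaard diagram of $Y'$ as the standard one for the prism manifold $S^3/Q_8$, namely the Seifert fibration over $S^2(2,2,2)$. A consistency check is that $H_1 = \mathbb Z/2\oplus\mathbb Z/2$, which matches the remark after {\sc Thm.~\ref{thm:vanish}}.

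\textbf{Main obstacle.} The real difficulty is the van Kampen step. It requires describing precisely how $U_\alpha$ sits in $Y'$, which by {\sc Fig.~\ref{fig:Ypdec}} is not injectively embedded. It also requires a correct translation of the $\alpha$-words in $\pi_1\Sigma$ into relators in $\pi_1\Sigma'$, including basepoint changes and the sign that $\tau$ introduces. My first attempt would be to compute $\pi_1$ of the quotient of the genus-2 handlebody picture directly. Concretely, I would write $Y'$ as $U_\alpha$ with its boundary $\Sigma$ folded by $\tau$ onto $\Sigma'$ and $C$ filled, and then apply van Kampen to the CW model of {\sc Constr.~\ref{constr:morse}} modulo $\tau$: one 0-cell, the 1-cells $x, y$, and 2-cells given by the images of $D^\alpha_1, D^\alpha_2$ and the disk filling $C$.
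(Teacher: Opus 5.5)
\textbf{Gap 1: a false relation in the van Kampen step.} You impose $[x,y]=1$: ``filling $C$ kills the boundary word'', ``the disk filling $C$'' as a $2$-cell, and a ``closed torus-like group''. This relation does not hold in $Y'$. Since $Y=U_\alpha\cup_\Sigma U_\beta$ with $\tau(U_\alpha)=U_\beta$, one has exactly $Y'=U_\alpha\cup_q\Sigma'$, where $q\colon\Sigma\to\Sigma'$ is the fold, and nothing else is attached. The knot $C=\partial\Sigma'$ is the core of $N'$. Refilling $N'$ kills the meridian $\mu'$, which meets $\Sigma'$ once. It does not kill $C$, which is isotopic in $N'$ to the longitude $\Sigma'\cap\partial N'$. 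Hence $\pi_1Y'=F(x,y)/\langle\langle q_*\alpha_1,q_*\alpha_2\rangle\rangle$, where $\pi_1\Sigma'=F(x,y)$ is free and there is no relation on $[x,y]$.

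In fact $[x,y]$ maps to the central element $-1\in Q_8$. Adding $[x,y]=1$ to your own relators $xyxy^{-1}$ and $yxyx^{-1}$ makes the group abelian with $x^2=y^2=1$. So the CW model in your last paragraph computes $H_1(Y')\cong\mathbb Z/2\oplus\mathbb Z/2$, not $\pi_1Y'$. Your stated presentation $\langle x,y\mid xyx=y,\ yxy=x\rangle$ is correct only because it silently drops that relation, which contradicts the model you describe.

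\textbf{Gap 2: the relators are guessed, not derived.} The classes $(1+\tau)x$ and $(1+\tau)y$ determine $q_*\alpha_1$ and $q_*\alpha_2$ only modulo the commutator subgroup of $F(x,y)$. The resulting group really does depend on the representative: for example, $x^2$ and $y^2$ would give the infinite dihedral group $\mathbb Z/2*\mathbb Z/2$. The paper reads the words off the figure as $\alpha_1=[x,y]^{-1}\cdot\tau x\cdot x$ and $\alpha_2=y\cdot\tau y\cdot[x,y]$ in $\pi_1\Sigma$. Since $q\circ\tau=q$, the map $q_*$ sends $\tau x\mapsto x$ and $\tau y\mapsto y$ with no inversion. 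The inverses in your words come from the commutator factors, because the $\alpha$-curves wind around the fixed circle. They do not come from $\tau$ reversing orientation.

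This gives $\pi_1Y'=\langle x,y\mid x^2=y^{-2}=[x,y]\rangle$, whose relators are cyclic permutations of yours. The paper then checks that this group is $Q_8$ using the surjection $x\mapsto i$, $y\mapsto j$ and the normal form $x^iy^j$ with $0\le i\le 3$, $0\le j\le 1$. Your final step, elliptization plus uniqueness of the $Q_8$ space form, matches the paper's appeal to standard 3-manifold theory.
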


	\begin{proof}
		Standard 3-manifold theory implies that any closed connected 3-manifold with $\pi_1 \cong Q_8$ must be the standard spherical manifold $S^3/Q_8$ up to diffeomorphism, so it suffices to compute the fundamental group. If $x$ and $y$ represent the standard meridional and longitudinal curves shown in the top-right corner of {\sc Fig.~\ref{fig:nonex}}, then the red curves end up being represented by the words $[x,y]^{-1} \cdot \tau x \cdot x$ and $y \cdot \tau y \cdot [x,y]$, cf. bottom of {\sc Fig.~\ref{fig:nonex}}, where the convention is that $[x, y] := xyx^{-1}y^{-1}$. Consequently, $\pi_1(Y')$ ends up being isomorphic to the presentation
		\[ \<x, y | x^2 = y^{-2} = [x,y]\>, \]
		because the map $\Sigma \to \Sigma' \to Y'$ identifies $x$ with $\tau x$ and $y$ with $\tau y$.
		It is not hard to see that this is a presentation of the quaternion group: e.g.~ we can send $x, y$ to $i, j$, respectively, and get a surjective map $\pi_1 Y' \twoheadrightarrow Q_8$. To see that it is an isomorphism, it remains to show that the domain has at most 8 elements, exhaustively enumerated by the list
		\[ \{x^i y^j : 0 \le i \le 3, 0 \le j \le 1\}. \]
		To this end, first note that the relation $x^2 = y^{-2}$ implies that in any word we may ensure that at most one $y$ appears at a time (and no negative powers of $y$), i.e.~ every word is of the form
		\[ x^{n_1}y x^{n_2} y \cdots y x^{n_k}, \]
		for $n_1, \ldots, n_k \in \bb Z$. Next, the relation $x^2 = xyx^{-1}y^{-1}$ becomes equivalent to $xy = yx^{-1}$. From here, this allows us to inductively get rid of the powers of $x$ in between the $y$'s, which after further using $y^2 = x^{-2}$, reduces the word to the form $x^i y^j$, with $j \in \{0,1\}$. Lastly, it remains to show that $x^4 = 0$, which immediately follows form
		\[ x^2 y = y^{-2} y = y y^{-2} = y x^2 = x^{-2} y. \]
		This concludes the proof.
	\end{proof}

	\begin{PROP}
		The cohomology of $S^3 / Q_8$ with $\bb F_2$-coefficients is determined by the following generators and relations:
		\begin{list}{$\cdot$}{}
			\item $H^1 =\{0, a, b, c\}$ is the Klein group,
			\item $H^2 =\{0, a^2, b^2, c^2\}$ is likewise a Klein group, with $ab = c^2, bc = a^2, ca = b^2$,
			\item and $abc = a^3 = b^3 = c^3 = 0$, $a^2b = ab^2 = b^2c = bc^2 = c^2a = ca^2 = \iota$ in $H^3 = \bb F_2\<\iota\>$.
		\end{list}
	\end{PROP}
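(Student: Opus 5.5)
The computation needs three ingredients: the additive structure of $H^*(S^3/Q_8;\bb F_2)$, the symmetry of the cup product, and the top-degree pairings.

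\emph{Additive structure.} We have $H_1(S^3/Q_8;\bb Z) = Q_8^{ab} \cong \bb Z/2\oplus\bb Z/2$ and $H_2 = 0$, since $H^1(\,\cdot\,;\bb Z)=0$ and Poincar\'e duality applies. Universal coefficients then give $H^1 \cong H^2 \cong (\bb F_2)^2$ and $H^3=\bb F_2\<\iota\>$. So $H^1 = \{0,a,b,c\}$ with $c=a+b$, where $a,b$ are the homomorphisms $\pi_1\to\bb Z/2$ killing $j$ and $i$ respectively.

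\emph{Squares via Bocksteins.} For a class $x\in H^1$ we have $x^2={\rm Sq}^1x$, the mod-2 Bockstein. Since $H_1$ is a sum of two $\bb Z/2$ summands (order exactly 2), the last paragraph of the proof of {\sc Cor.~\ref{cor:F2F2summ}} shows that ${\rm Sq}^1:H^1\to H^2$ is injective, hence bijective. Therefore $a^2,b^2,c^2$ are the three distinct nonzero elements of $H^2$, and they satisfy $a^2+b^2=c^2$ by additivity of squaring mod 2.

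\emph{The mixed products $ab,bc,ca$.} Expanding $c^2=(a+b)^2=a^2+b^2$ gives no information about $ab$, so one needs an independent input. I would use Poincar\'e duality: the cup pairing $H^1\times H^2\to\bb F_2$ is perfect. Write $F(x,y)=\int x^2y$; the identity $\int x^2y=\int xy^2$ used in {\sc Rmk.~\ref{rmk:form}} holds because ${\rm Sq}^1:H^2\to H^3$ vanishes. Perfectness of the pairing, combined with bijectivity of ${\rm Sq}^1$, means the symmetric form $F$ on $H^1$ is nondegenerate.

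The values $\int x^3$ form a linear functional, namely $\int x\,{\rm Sq}^1 x$ composed with the linear map $x\mapsto x^2$. To pin down its diagonal, I would show $a^3=0$ by computing in the quaternion group. Restrict to the cyclic subgroup $\<i\>\cong\bb Z/4$ with cover $S^3/\bb Z_4 = L(4,1)\to S^3/Q_8$. On this lens space the class $b$ pulls back to the generator $u\in H^1(L(4,1);\bb F_2)$, and the class $a$ pulls back to $0$ because $a(i)=0$. On $L(4,1)$ we have $u^2=0$, since $u$ lifts to $\bb Z/4$. This gives $\int_{S^3/Q_8} x^3$ only up to the transfer, which has even degree 2, so this route is awkward.

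I would therefore use symmetry instead. The automorphism of $Q_8$ of order 3 cyclically permuting $i\to j\to k$ is realized by an isometry of $S^3/Q_8$, namely a unit quaternion conjugation, and it permutes $a,b,c$ cyclically. Hence $\int a^3=\int b^3=\int c^3$. Additivity of the cube map on the diagonal (from $\int(x+y)^3=\int x^3+\int y^3+F(x,y)+F(y,x)$, where the last two terms cancel) gives $\int c^3=\int a^3+\int b^3=0$, so all the cubes vanish.

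Nondegeneracy of $F$ then forces $F(a,b)=1$, so $a^2b=ab^2=\iota$. By the cyclic symmetry the same holds for the other pairs. Then $\int ab\cdot x$ agrees with $\int c^2x$ for every $x\in H^1$ (checking $x=a,b$ directly), so Poincar\'e duality gives $ab=c^2$. Cyclically, $bc=a^2$ and $ca=b^2$. Finally $abc=c^2\cdot c=c^3=0$.

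\emph{Main obstacle.} The most delicate step is justifying that the order-3 symmetry acts by a diffeomorphism of $S^3/Q_8$. This holds because conjugation by $(1+i+j+k)/2$ normalizes $Q_8$ and permutes $i,j,k$. A second point to check is the vanishing of ${\rm Sq}^1$ on $H^2$, which follows from orientability.
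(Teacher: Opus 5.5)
\paragraph{Verdict.} Your proof is correct, but it takes a genuinely different route from the paper's: you establish the degree-$3$ relations first and deduce the degree-$2$ ones from them, whereas the paper goes the other way.

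\paragraph{The paper's route.} The paper settles degree $2$ first. It uses that $S^3/Q_8\to BQ_8$ is an isomorphism on $H^{\le 2}$. It then runs the Serre spectral sequence of the central extension $BC_2\to BQ_8\to BK_4$, and uses $S_3$-invariance together with the known dimension of $H^2$ to force the transgression $d_2(x)=ab+bc+ca$. This gives $ab=c^2$. Only afterwards does it obtain the triple products, from $x^2y=xy^2$ (orientability), the symmetry, and Poincar\'e duality.

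\paragraph{Your route.} You go to degree $3$ first.
\begin{itemize}
\item Bijectivity of ${\rm Sq}^1$ together with Poincar\'e duality makes $F(x,y)=\int x^2y$ a nondegenerate symmetric form on $H^1$.
\item The cyclic isometry induced by $\omega=(1+i+j+k)/2$, combined with additivity of $x\mapsto F(x,x)$, kills all cubes.
\item Nondegeneracy then forces $F(a,b)=1$.
\item The degree-$2$ relations are read off by pairing against $H^1$.
\end{itemize}

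\paragraph{What each approach buys.} Your argument stays on the manifold and avoids group cohomology and spectral sequences entirely. It needs only the $\mathbb Z/3$ part of the symmetry. Your explicit isometry also justifies the symmetry on $H^3(S^3/Q_8)$, which the paper invokes without comment. The paper's argument, in exchange, explains $ab+bc+ca=0$ as a structural fact about the extension $C_2\to Q_8\to K_4$, i.e.\ about the group rather than the particular space form.

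Some symmetry input is unavoidable in either approach. The manifold $\mathbb{RP}^3\#\mathbb{RP}^3$ has the same homology, is orientable, and has bijective ${\rm Sq}^1$ on $H^1$. Yet there $F$ is the non-alternating form $\left[\begin{smallmatrix}1&0\\0&1\end{smallmatrix}\right]$, so all your other ingredients are consistent with a different answer.

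\paragraph{A small slip in the abandoned detour.} In the $L(4,1)$ detour, the roles of $a$ and $b$ are swapped. With your convention ($a$ kills $j$, $b$ kills $i$), $a(i)=1$. So it is $a$ that restricts to the generator of $H^1(S^3/\langle i\rangle;\mathbb F_2)$, and $b$ that restricts to zero. Since you discard that route anyway (the degree-$2$ transfer is zero on $H^3$ mod $2$), it does not affect the proof; just delete it.
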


	\begin{proof}
		The abelianization of $Q_8$ is the Klein group, hence the claim about $H^1$. Also, since $H^1(S^3/Q_8; \bb Z) = 0$, and $\dim H^2(S^3/Q_8; \bb F_2) = 2$ by Poincar\'e duality, it follows that
		\[ {\rm Sq}^1 : H^1(S^3/Q_8; \bb F_2) \to H^2(S^3/Q_8; \bb F_2) \]
		is an isomorphism, which proves the claim that $H^2 = \{0, a^2, b^2, c^2\}$ is a Klein group.

		For the claim that $ab = c^2$, etc., we must work a bit harder. First, let us observe that when a group $G$ acts freely on a sphere $S^n$, the map $S^n / G \to BG$ induces an isomorphism on cohomology groups up to degree $n-1$; indeed, this follows from the Leray-Serre spectral sequence for the fiber sequence $S^n \to S^n / G \to BG$ and the vanishing of the rows $1 \le q \le n-1$. Thus, it suffices to prove the relations concerning double cup products in $H^{*\le 2}(BQ_8; \bb F_2)$. To this end, one can analyze the Leray-Serre spectral sequence associated to the fiber sequence $BC_2 \to BQ_8 \to BK_4$ (here, $K_4$ denotes the Klein group, and acts trivially on the fiber due to the extension being central.) Using $x \in H^1(BC_2; \bb F_2)$ to denote the generator, and the same letters $a, b, c$ to denote the nonzero elements of $H^1(BK_4; \bb F_2)$, we claim that $d_2(x \otimes 1) = ab + bc + ca$. Indeed, there is an action of $S_3$ permuting the nonzero elements of the Klein group, and that extends to an action on the spectral sequence, so $x$ must map to an element invariant under this $S_3$ action. It cannot map to zero for dimension reasons, since we expect $\dim H^2(BQ_8; \bb F_2) = 2$, and the only other $S_3$-invariant element is $ab + bc + ca$, as desired. Due to the fact that $c = a + b$, this proves that
		\[ 0 = ab + bc + ca = ab + (a+b)c = ab + c^2, \]
		i.e.~ the desired $ab = c^2$. The others follow by the $S_3$-symmetry.

		Finally, we must prove the relations concerning the triple cup products. First, in any \emph{orientable} 3-manifold $Y$, we have  $x^2 y = x y^2$ for all $x, y \in H^1(Y; \bb F_2).$ Indeed,
		\[ x^2 y + x y^2 = {\rm Sq}^1(x \smile y) = w_1(TY) \smile x \smile y, \]
		by the Wu relation for ${\rm Sq}^1$, and the orientability ensures the vanishing of this expression. In our case, this fact together with the $S_3$ symmetry guarantee
		\begin{equation}\label{eq:sixelts}
			a^2b = ab^2 = b^2c = bc^2 = c^2a = ca^2.
		\end{equation}
		In particular, $0 = a^2 b + a^2 c = a^2(b + c) = a^3$, and likewise for $b^3$ and $c^3$. The relation $ab = c^2$ also forces $abc$ to vanish, so we obtain the desired
		\[ a^3 = b^3 = c^3 = abc = 0. \]
		Finally, Poincar\'e duality demands that there be at least some non-trivial cup product, so in particular (\ref{eq:sixelts}) must be the fundamental class $\iota$, concluding the proof.
	\end{proof}

	Using this fact, we may finally compute the forms $F$ and $Q$ algebraically. First of all, we know that $A$ has rank 2, cf. (\ref{eq:AforQ8}); on the other hand, we know that $A/2A$ embeds in $H^1(S^3 / Q_8; \bb F_2) \cong \mathbb F_2^{\oplus 2}$, which is also rank two, hence implying that $A \twoheadrightarrow H^1(S^3 / Q_8; \bb F_2)$. The computation in the Proposition directly above shows that $F(x, x) = \int x^3$ is identically zero (so the obstruction to $\mathbb Z/2$-gradings vanishes), but the form $F(x, y) = Q(x, y)$ is sometimes nonzero, e.g.~ for $x, y$ lifting $a, b$ respectively. Thus, the obstruction to $\mathbb Z$-coefficients does not vanish, recovering the diagram-level conclusion.

\section{Canonicity of choices and Atiyah-Bott-Shapiro pushforwards in \texorpdfstring{$\it KR$}{KR}-theory}\label{sec:abs}

	In this somewhat speculative last section, we investigate the question of canonicity of $\mathbb Z/2$-gradings and $\mathbb Z$-lifts, and its conjectural relation to Atiyah-Bott-Shapiro pushforwards in $K$-theory, motivated by the monopole-theoretic analogue, cf. Baraglia \cite[\S6]{Ba25}. We already know that when $A = 0$, the choice of $\bb Z$-lift is unique, and that in certain cases even when $A \neq 0$, canonicity of $\bb Z/2$-gradings and $\bb Z$-lifts can be deduced a posteriori, due to the lack of symmetry in $\widehat{\it HFR}$ with respect to the torsor action (as seen in {\sc\S\ref{ssec:s1s2},~\S\ref{ssec:z4}}).
	
	\subsection{Various notions of canonicity, and the Fukaya category approach} For each one of the two upgrades ($\bb Z/2$-gradings and $\bb Z$-lifts), there are several closely related questions concerning canonicity of the choices:
	\begin{enumerate}
		\item Whether the set of all choices of said piece of data is a well-defined invariant of the 3-manifold $Y$ with involution $\tau$ (i.e.~ independent of the real Heegaard diagram);
		\item Whether this set can additionally be represented as a purely algebro-topological invariant of $(Y, \tau)$;
		\item Whether there is a distinguished element in this set, in the case that it is nonempty.
	\end{enumerate}
	Of course, {\sc ii} implies {\sc i}. In addition, note that {\sc iii} would imply {\sc ii}, since the torsor structure on the set would give a preferred identification of it with either $\mathbb Z/2$ or $A^\vee$. In their second paper \cite{OS-II}, Ozsv\'ath-Szab\'o show that classical Heegaard Floer theory always has a preferred $\mathbb Z$-lift, which is uniquely determined by a certain property of $\underline{\it HF}^\infty$ with $\mathbb Z$-coefficients; this fact is quite non-trivial, and requires developing a significant amount of the functoriality and exact sequence properties for the theory. Whether this approach could be made to work in the real setting remains unclear to us at the present moment.

	As for $\widehat{\it HFR}$, Srivastava \cite{Sr26} does establish {\sc iii} for $\mathbb Z/2$-gradings provided that a lift of $v \in H^1(Y' \setminus N'; \mathbb Z/2)$ to $\mathbb Z$-coefficients is given. However, as we have shown in {\sc\S\ref{ssec:z4}}, Srivastava's condition for $\mathbb Z/2$-gradings is not if-and-only-if, so some work remains to be done in establishing {\sc iii} in full generality. When it comes to the same question concerning $\mathbb Z$-coefficients in $\widehat{\it HFR}$, we have good reason to believe that {\sc i} and {\sc ii} hold, which we will present in this section, though the plausibility of {\sc iii} remains unclear to us.

	Let us begin by discussing canonicity question {\sc i} in the  context of $\widehat{\it HFR}$, and why it is reasonable to believe that it always holds true. In {\sc\S\ref{sec:gror}}, we have developed the necessary obstruction theory for $\mathbb Z/2$-gradings and $\mathbb Z$-lifts in the context of Floer chain groups for a particular pair of Lagrangians, as well as for continuation maps induced by a particular Hamiltonian isotopy of either one of them. It is a priori perfectly possible that a self-isotopy of the Lagrangians might induce a non-trivial permutation of the set of choices of $\mathbb Z/2$-gradings and $\mathbb Z$-lifts; in the context of $\widehat{\it HFR}$, this is complicated even further by the existence of stabilization moves. One would have to prove that these induced automorphisms on the set of choices are the identity; a conceivable way to accomplish this would be to establish an obstruction theory for the \emph{higher coherence homotopies}, associated to \emph{higher Heegaard moves}, cf. \cite{JTZ21, GM26}.

	These higher coherence homotopies would come from the $\mu_3$-maps of the $\mathbb A_\infty$-structure on the Fukaya category, and are associated to the singularities in the Morse function one encounters as one tries to fill in the $S^1$-parameter family of functions with a $D^2$-parameter family. Therefore, claim {\sc i} is true provided that
	\begin{list}{$\cdot$}{}
		\item These singularities and associated coherence homotopies could be classified, in the context of real Heegaard diagrams, which has recently been done in \cite{GM26};
		\item And the obstruction theory could be worked out for them in such a way that a choice of $\mathbb Z/2$-gradings or $\mathbb Z$-lifts for one Floer chain group would induce a unique extension of the choice for the whole ensemble of moduli spaces involved in the process.
	\end{list}
	We mention in passing that a similar story should hold for establishing functoriality and knot-surgery exact triangles, for which one needs to go all the way up to holomorphic pentagons, i.e.~ $\mu_4$ in the $\mathbb A_\infty$-algebra, cf. also Manolescu-Ozsv\'ath \cite{MO25} for even higher $\mu_k$ in the context of link-surgery. Thus, all that would remain to be done is setting up obstruction theory for the higher $\mu_k$ in the Fukaya category, as well as their interaction with continuation maps from Hamiltonian isotopy. This is, in principle, a matter of pure combinatorics and homological algebra, as we now quickly show, though we do not develop in full detail.

	Beginning with $\mu_2$ for Lagrangians $L_0$, $L_1$, $L_2$, one must consider either the mod-2 virtual dimension function or determinant line bundle over the space $\scr T(p, q, r)$ of triangles with boundaries on $L_0$, $L_1$, $L_2$, and with corners at intersection points $p \in L_0 \cap L_1$, $q \in L_1 \cap L_2$, $r \in L_0 \cap L_2$. In the case of determinant lines, we obtain a primary obstruction in $H^1$ of this space with $\pm 1$ coefficients, as before; we expect this to vanish in the stable range for $\widehat{\it HFR}$. The obstruction to $\mathbb Z/2$-gradings and the secondary obstruction to coherent orientations then live as relative 1- and 2-cocycles in the simplicial nerve of the category determined by:
	\begin{list}{$\cdot$}{}
		\item objects of two kinds: either pairs $(p, q)$ of intersection points as above, or single intersection points $r$;
		\item arrows of three kinds: pairs of homotopy classes of bigons $p' \rightsquigarrow p$ and $q' \rightsquigarrow q$ from $(p', q')$ to $(p, q)$; homotopy classes of bigons $r \rightsquigarrow r'$ from $r$ to $r'$; homotopy classes of triangles i.e.~ $\pi_0 \scr T(p, q, r)$ from $(p, q)$ to $r$ (the hom-set from $r$ to $(p, q)$ is empty).
	\end{list}
	Indeed, let us assume that the choices have already been made for each one of the three chain groups; then, the cocycle has already been trivialized on the subcategory consisting of no maps from $(p, q)$ to $r$, and the extendability to the whole category is given by a relative cocycle. In this case it is not hard to give an algebraic interpretation of the (co)chains of this nerve: first of all, the category is equivalent to a skeleton consisting of just one arbitrarily chosen pair $(p, q)$, and an arbitrary $r$. Just as in the case of continuation maps, the (co)chains compute a double-sided bar construction 
	\[ B_\bullet\big(\mathbb Z[G_p] \times \mathbb Z[G_q], \mathbb Z[\pi_0 \scr T (p, q, r)], \mathbb Z[G_r]\big),\]
	where $G_p, G_q, G_r$ are the isotropy groups of groupoids associated to the three chain groups, i.e.~ $\pi_1 \scr P(L_0, L_1)$, $\pi_1 \scr P(L_1, L_2)$ and $\pi_1 \scr P(L_0, L_2)$ respectively, and the action of these on $\pi_0 \scr T(p, q, r)$ is given by concatenation. 
	
	Whether one would have to do some index theory to calculate the obstruction is not so clear: however, there is hope that, just like in the case of continuation maps {\sc(\S\ref{ssec:cmaps})}, the obstruction extends uniquely to the whole category, due to a quasi-isomorphism of bar constructions, and in this case there would be no further calculations to be done. Before we move on to higher $\mu_k$, we note that $\pi_0 \scr T(p,q,r)$ should have an interpretation as the set of ${\rm Spin}^c$-structures on the 4-manifold whose trisection is given by the three sets of curves, in a similar case to classical $\it HF$, cf. \cite[\S6]{OS-II} and \cite[\S2.2]{OS-III}.
	
	For higher $\mu_k$ associated to $L_0, L_1, \ldots, L_k$, let us assume that all the lower-degree $\mu$'s have been successfully endowed with the desired structure. There is once again a primary obstruction to coherent orientations on the space of polygons (which we expect to vanish in the stable range in the case of $\widehat{\it HFR}$), and there is an obstruction for $\mathbb Z/2$-gradings and a secondary obstruction for $\mathbb Z$-coefficients which live as relative cocycles in the simplicial nerve of the category with
	\begin{list}{$\cdot$}{}
		\item objects given by tuples $(p_{0 = i_0, i_1}, p_{i_1, i_2}, \ldots, p_{i_{\ell-1}, i_\ell = k})$ of intersection points where $p_{\alpha, \beta} \in L_\alpha \cap L_\beta$, for any (potentially empty) partition $0 = i_0 < \cdots < i_\ell = k$.
		\item morphisms from $(p\ldots)$ to $(p'\ldots)$ exist only if $(i\ldots)$ is a sub-partition of $(i'\ldots)$, and in this case they are given by tuples of homotopy classes of polygons with corners given by the points all belonging to $p\ldots$ and the corresponding point in $p'\ldots$ in the same sub-segment of the partition.
	\end{list}
	If we have already endowed the lower $\mu_k$ with the necessary data, then that provides a trivialization of the obstruction on a subcomplex of the nerve, and once again the extension problem is given by a relative cocycle. We hope that this extension problem too, will turn out to have a unique solution for reasons of homological algebra. We also expect that the sets of homotopy classes expressed above can also be written in terms of ${\rm Spin}^c$-structures.

	\subsection{Pushforwards in \texorpdfstring{$\it KR$}{KR}-theory, and the analogy to the monopole side} In the remainder of this section, we address question {\sc ii} (and implicitly {\sc i}), to which we believe an affirmative answer can be given by means of the ${\rm Spin}^c$-pushforward in $K$-theory. In its algebraic form, this construction originates from work of Atiyah, Bott and Shapiro \cite{ABS64}, designed to understand Bott periodicity via Clifford algebras; in this seminal paper, they construct a Thom isomorphism in $K$-theory for bundles equipped with ${\rm Spin}$ and ${\rm Spin}^c$ structures, in $\it KO$ and $\it KU$ versions respectively. This has been generalized to $\it KR$-theory by Atiyah \cite{At66}; see also Joachim \cite{Jo04} for a modern approach that constructs a map $\it MSpin^c \to KU$ of $\bb E_\infty$-ring spectra, followed by Halladay-Kamel \cite{HK24} constructing a $C_2$-equivariant analogue in $\it KR$.
	
	The analytic significance of this $K$-theoretic pushforward is that it is supposed to model the $K$-theoretic index bundle associated to a family of linear Dirac operators on an even-dimensional manifold, which are precisely the kinds of operators that show up in Seiberg-Witten/monopole theories; this may be taken as an alternative definition, and we will sketch why they are equivalent at least in the non-equivariant setting, via the $K$-theoretic formulation of the Atiyah-Singer index theorem, cf. {\sc Constr.~\ref{constr:asind}}. The work that we are about to undertake parallels Baraglia \cite[\S6]{Ba25}'s construction in monopole theory, though we provide an alternative formulation more fitting for the Heegaard Floer side; the equivalence of the two is sketched in {\sc Rmk.~\ref{rmk:bar}}.

	Let us begin by recalling the algebraic construction first:
	
	\begin{CONSTR}\label{constr:pushfwd}\cite[(2) in {\S11}]{ABS64}
		Let $(M^n, \mathfrak s)$ be a ${\rm Spin}^c$-manifold, and $E \in {\it KU}(M)$ be a virtual complex bundle over it. An ``integration over the fundamental class''
		\[ \int_{M, \fk s} E \in {\it KU}^{-n}({\rm pt}) \]
		may be constructed as follows: First, let us embed $M^n$ into some Euclidean space $\mathbb R^{n + 2k}$ with large $k$ (the connectivity of the space of such embeddings goes to $\infty$ as $k \to \infty$). The tangential ${\rm Spin}^c$-structure $\fk s$ (of either positive or negative type, cf. {\sc Def.~\ref{def:spinc}}) on $M$ may be transported to its normal bundle $\scr N_M$ (more on this in the line after (\ref{eq:twospinc})). This admits a Clifford-theoretic interpretation: there are two isomorphism types of irreducible ${\rm Cl}_\pm(\scr N_M)$ super-modules (i.e.~ $\bb Z/2$-graded, with ${\rm gr}(v) = 1$), depending on whether the involutive element $\omega_\bb C = i^{k(2k-1)} e_1 \cdots e_{2k}$ induced by the orientation choice acts by the identity or minus the identity on the even-graded piece of the module. Let $S^* = S^{[0]} \oplus S^{[1]}$ be such a graded irrep where $\omega_\bb C$ acts trivially on the even piece $S^{[0]}$ (and hence as multiplication by $-1$ on the odd piece $S^{[1]}$). This comes equipped with a Clifford multiplication
		\begin{equation}\label{eq:clm}
			\scr N_M \to \scr H{\it om}(S^{[1]}, S^{[0]})
		\end{equation}
		Since $M$ is compact, there exists a bundle $R^{[0]}$ such that $R^{[0]} \oplus S^{[0]} \cong \underline{\mathbb C}^{r}$ for some $r$ (the space of such choices can also be arranged to be arbitrarily highly connected as $r \to \infty$). In this case, the bundle $R^{[0]} \oplus S^{[1]}$, when pulled back to the total space of $\scr N_M$, admits a trivialization away from the 0-section, given by applying the Clifford multiplication to the $S^{[1]}$ factor, and then applying the trivialization $R^{[0]} \oplus S^{[0]} \cong \underline{\mathbb C}^{r}.$ This gives a well-defined \keywd{Atiyah-Singer difference element} associated to the map (\ref{eq:clm}):
		\[ \theta_\mathfrak s \in {\it KU}^0(\scr N_M, \scr N_M - 0_{\rm sec}),\]
		which due to work of \cite[\S12]{ABS64} is a \emph{Thom class} for $\scr N_M$ in $K$-theory. Finally, we obtain the ``integral of $E$'' by tensoring $\theta_\mathfrak s$ with $[E]$, and then extending by zero to the whole of $\mathbb R^{n+2k}$:
		\[ \int_{M, \mathfrak s} E := j_!(\theta_\mathfrak s \otimes E) \in {\it KU}^0_c(\mathbb R^{n+2k}) \cong {\it KU}^{-n}({\rm pt}), \]
		where $j : \scr N_M \hookrightarrow \bb R^{n+2k}$ is the tubular inclusion. So far, this construction is not interesting at all when $n$ is odd, since the target group is zero, though it does become interesting if we allow the following two generalizations:

		First, we may apply the construction parametrically. Suppose $X$ is a topological space with a fiber-wise ${\rm Spin}^c$ structure $\underline{\mathfrak s}$ on $M^n \times X \overset p\twoheadrightarrow X$. We may embed $M^n \times X$ inside $\mathbb R^{n + 2k} \times X$ and perform the construction parametrically to get an \keywd{integration along fibers} or \keywd{pushforward}
		\[ p^{\underline{\mathfrak s}}_! : {\it KU}^0(M^n \times X) \to {\it KU}^{-n}(X). \]
		The other generalization is that, if $(M^{p + q}, \tau)$ is a manifold with involution, with fixed locus of dimension $p$, and $\underline{\mathfrak s}$ is a fiber-wise real ${\rm Spin}^c$-structure on $M \times X \overset p \twoheadrightarrow X$ (of either positive or negative type), then there is a similar pushforward
		\[ p^{\underline{\mathfrak s}}_! : {\it KR}^{0, 0}(M^{p+q} \times X) \to {\it KR}^{-p, -q}(X) \cong {\it KO}^{q-p}(X). \]
		Indeed, this is because the construction can be ensured to preserve the involution, by embedding $M^{p+q}$ equivariantly in the $C_2$-representation $\mathbb R^{p+k} \oplus i\mathbb R^{q+k}$.
	\end{CONSTR}

	Now, given any (real) 3-manifold $Y$ (with fixed locus of dimension 1), there is a tautological fiberwise (real) ${\rm Spin}^c$-structure $\underline{\fk t}$ on $Y \times \spinc(Y)$ (resp.~ $Y \times \rspinc(Y)$). We have already seen that the classical and real stable pathspaces are canonically homotopy-equivalent to the spaces of (real) relative ${\rm Spin}^c$ structures. We have already seen that the obstructions to $\mathbb Z/2$-gradings and $\mathbb Z$-lifts are expressed as Stiefel-Whitney classes of a certain K-theoretic element $[TL_0 - TL_1]$ on the said pathspace, so it is reasonable to ask whether this element can be interpreted as some kind of pushforward in K-theory. An even more general invariant than $[TL_0 - TL_1] \in {\it KO}^0(\scr P)$ is the \keywd{polarization class} $\theta \in {\it KO}^1(\scr P) \cong [\scr P, U/O]$, obtained from the fact that the formal difference $TL_0 - TL_1$ has a canonical trivialization of its complexification (as $TL_0 \otimes \bb C \cong TM \cong TL_1 \otimes \bb C$), and hence gives rise to an element of ${\rm HoFib}(BO \to BU) \cong U/O$. One can recover the formal difference via the map $U/O \to */O \cong BO$, which is classified by multiplication with the unique nonzero element $\eta$ in ${\it KO}^{-1}({\rm pt}) \cong \mathbb Z/2$, to get a homomorphism ${\it KO}^1 \to {\it KO}^0$; another useful invariant that comes from $U/O$ but not from $BO$ is the \emph{Maslov class}, given by post-composing with the map $U/O \overset{\rm det}\longrightarrow U_1 / O_1 \cong S^1$. This polarization map carries the obstruction necessary to define \emph{Floer homotopy theory} over $\mathbb S$, and as such is of great interest in the case of (real) Heegaard Floer theory.

	\begin{CONJ}\label{conj:abs}
		The polarization class for classical Heegaard Floer theory is given, up to a sign, by the image of $[\underline {\bb C}] \in {\it KU}^0(Y \times \spinc(Y))$ under the composite
		\[ {\it KU}^0(Y \times \spinc(Y)) \overset{p^{\underline{\fk t}}_!}\longrightarrow {\it KU}^{-3}(\spinc(Y)) \overunderset{\rm Bott}{\sim}{=\!\!\!=} {\it KU}^{1}(\spinc(Y)) \overset{\rm forg}\longrightarrow {\it KO}^1(\spinc(Y)). \]
		Likewise, the polarization class for real Heegaard Floer theory is given, up to a sign, by the image of $[\underline{\bb C}] \in {\it KR}^{0,0}(Y \times \rspinc(Y))$ under the composite
		\[ {\it KR}^{0,0}(Y \times \rspinc(Y)) \overset{p^{\underline{\fk t}}_!}\longrightarrow {\it KR}^{-1, -2}(\rspinc(Y)) \overset{\sim}\longrightarrow {\it KO}^1(\rspinc(Y)). \]
		Moreover, this should be true at the spectrum level up to a contractible space of choices, hence giving an algebro-topological interpretation of the set of $\bb Z/2$-gradings or $\bb Z$-coefficient lifts as the set of homotopy classes of null-homotopies of the spectrum-level polarization map, post-composed with $w_1$, or $w_2$, respectively.
	\end{CONJ}

	\begin{RMK}\label{rmk:bar}
		One can also consider the bigger $KR$-pushforward
		\[ {\it KR}^{0,0}(Y \times \spinc(Y)) \overset{p^{\underline{\fk t}}_!}\longrightarrow {\it KR}^{-1,-2}(\spinc(Y)), \]
		with the diagonal action on $Y \times \spinc(Y)$. This is rich enough to recover both conjectural polarization maps, and is probably the obstruction to a $C_2$-equivariant Heegaard Floer spectrum. It is very likely the same as ${\rm ind}(D) = (\Phi \circ \alpha_*) (1)$ of Baraglia \cite[\sc Prop.~6.6]{Ba25}, in the following sense: if we pick a background (real) ${\rm Spin}^c$-structure $\fk s_0$, then every other ${\rm Spin}^c$-structure is of the form $\fk s_0 \otimes L$ for some line bundle $L$, which could be used to construct an equivalence ${\it Pic}(Y) \overset\sim\to \spinc(Y)$. If we let $\scr L$ the tautological Atiyah-real line bundle on $Y \times {\it Pic}(Y)$, we believe that one should be able to interpret $p^{\underline{\fk t}}_!([\bb C])$ as $p^{\fk s_0}_!(\scr L)$, i.e.~ ${\rm ind}(D)$ from \cite{Ba25}.
	\end{RMK}
	
	We offer some strong evidence in favor of {\sc Conj.~\ref{conj:abs}}, by sketching a proof for how $w_1$ and $w_2$ of this conjectural polarization class provide obstructions in the case of $\widetilde{\it HMR}$, the monopole analogue of $\widehat{\it HFR}$, which is conjecturally the same. To this end, let us now relate {\sc Constr.~\ref{constr:pushfwd}} to its promised analytical interpretation in terms of indices of Dirac operators:

	\begin{CONSTR}\label{constr:asind}
		A particular case of the ${\rm Spin}^c$-pushforward is the complex structure pushforward: if $V$ is a complex vector space, then $\Lambda_\bb C^* V$ is an irreducible Clifford module over $V$. If $M^n \subset \bb R^N$ is a closed submanifold, the normal bundle of $T^*M \subset T^*\bb R^N$ is naturally the complexification $\bb C \otimes \scr N_M$ of the normal bundle of the original inclusion. This gives a pushforward
		\[ {\it KU}_c^0(T^*M) \to {\it KU}_c^0(T^*\bb R^N) \cong \bb Z. \]
		If $D : C^\infty(M, \scr E) \to C^\infty(M, \scr F)$ is an elliptic operator, it has an associated symbol element $\sigma(D) \in {\it KU}_c^0(T^*M)$, given by the Atiyah-Singer difference element associated to the symbol $T^*M \to \scr H{\it om}(\scr E, \scr F)$; Atiyah-Singer prove that the pushforward of this symbol is nothing but the analytic index of the operator $D$, and this can be generalized to families of operators \cite{AS-IV}.
	\end{CONSTR}

	Now, let us consider the case that $D = \text\dh_{M, \fk s}$ is a linear Dirac operator on $M^{2m}$, associated to a ${\rm Spin}^c$-structure $\mathfrak s$, so that the symbol $T^*M \to \scr H{\it om}(\scr E, \scr F)$ is given precisely by Clifford multiplication. In particular, $\sigma(D) \in {\it KU}_c^0(T^*M)$ itself comes from ${\it KU}^0(M)$ via a ${\rm Spin}^c$-pushforward construction, so the index of $D$ is depicted by going up-and-right from $\underline{\bb C} \in {\it KU}^0(M)$ in the following square of pushforward maps:
	\begin{equation*}
		\begin{tikzcd}
			{\it KU}_c^0(T^*M^{2m}) \rar & {\it KU}_c^0(T^*\bb R^{2m+2k}) \\
			{\it KU}^0(M^{2m}) \uar\rar & {\it KU}_c^0(\bb R^{2m+2k}) \uar
		\end{tikzcd}
	\end{equation*}
	The right-and-up path is given simply by {\sc Constr.~\ref{constr:pushfwd}}, and the two agree provided that the two ${\rm Spin}^c$-structures on
	\begin{equation}\label{eq:twospinc}
		(\scr N_M \otimes \bb C) \oplus T^*M \cong \scr N_M \oplus (\bb R^{2k})^*
	\end{equation}
	agree. Indeed, one may pick the ${\rm Spin}^c$-structure on $\scr N_M$ so that this equality holds. This can be done parametrically as well; we believe it can be done even in $\it KR$ instead of $\it KU$, provided that a $\it KR$-adaptation of the Atiyah-Singer index theorem holds.

	With this background, let us begin investigating our claimed corollary of {\sc Conj.~\ref{conj:abs}} about $\mathbb Z/2$-gradings in real monopole theory: the vanishing of $w_1(\theta) \in H^1(\rspinc(Y))$ is equivalent to the claim that $\<w_1(\theta), \gamma\> = 0$ for all $\gamma \in \pi_1 \rspinc(Y)$. We may phrase this condition by pulling back to $S^1 \overset\gamma\to \rspinc(Y)$ and requiring that the composite
	\begin{equation}\label{eq:pushw1}
		{\it KR}^{0,0}(Y \times S^1) \overset{p_!}\longrightarrow {\it KO}^1(S^1) \overset{\cdot \eta}\longrightarrow {\it KO}^0(S^1) \overset{w_1}\longrightarrow \mathbb Z/2 
	\end{equation}
	sends $[\underline{\mathbb C}]$ to zero. On the other hand, there is a commutative square
	\[ \begin{tikzcd}
		{\it KO}^1(S^1) \rar["\cdot \eta"]\dar["\int_{S^1}"] & {\it KO}^0(S^1) \dar["w_1"] \\
		{\it KO}^0({\rm pt}) \rar["\rm mod~2"] & \mathbb Z/2.
	\end{tikzcd} \]
	Indeed, the right-down composite is given by
	\[ \mathbb Z \overset{\left[\begin{smallmatrix}0 \\ 1\end{smallmatrix}\right]}\to \mathbb Z \oplus \mathbb Z/2 \overset{\left[\begin{smallmatrix}0 & 1\end{smallmatrix}\right]}\longrightarrow \mathbb Z/2 \]
	while the down-right composite is given by
	\[ \mathbb Z \overset\sim\to \mathbb Z \overset{\rm mod~2}\twoheadrightarrow \mathbb Z/2. \]
	Consequently, the composite (\ref{eq:pushw1}) can be rewritten as a double pushforward
	\[ {\it KR}^{0,0}(Y \times S^1) \overset{p_!}\longrightarrow {\it KO}^{1}(S^1) \overset{\int_{S^1}}\longrightarrow {\it KO}^{0}({\rm pt}) \cong \mathbb Z \overset{\rm mod~2}\twoheadrightarrow \mathbb Z/2, \]
	followed by reduction mod 2. It is not difficult to see that a double pushforward amounts to one single pushforward over the whole composite, so this is given by the mod-2 reduction of the ``integral in $K$-theory''
	\[ \int_{Y \times S^1} \underline {\bb C}. \]
	As discussed above, a suitable $\it KR$-version of the Atiyah-Singer theorem should imply that this integral is nothing but the numerical index of the real Dirac operator $\text\dh^R_{Y^3 \times S^1, \underline{\mathfrak s}}$ associated to the real ${\rm Spin}^c$-structure on $Y \times S^1$ given by the 1-parameter family of such structures on $Y$. Now, the vanishing of the reduction mod 2 of this index for all elements $\gamma \in \pi_1 \rspinc(Y)$ would imply that there is a well-defined relative $\mathbb Z/2$-grading for $\it HMR$: indeed, one can use a linear gluing argument at both infinite ends for the monopole Floer equation on two different cylinders $Y \times \mathbb R$ to show that the difference in index is given by the index of a Dirac operator on $Y \times S^1$, hence always even if the obstruction vanishes.

	Now, let us turn to the second Stiefel-Whitney class of the conjectural polarization class. Its vanishing can be detected equivalently on all tori $S^1 \times S^1 \to \rspinc$ induced by pairs of basis elements $\gamma, \gamma' \in \pi_1 \rspinc$. In a manner similar to (\ref{eq:pushw1}), we have the composite
	\begin{equation}\label{eq:pushw2}
		{\it KR}^{0,0}(Y \times S^1 \times S^1) \overset{p_!}\longrightarrow {\it KO}^1(S^1 \times S^1) \overset{\cdot \eta}\longrightarrow {\it KO}^0(S^1 \times S^1) \overset{w_2}\longrightarrow \mathbb Z/2,
	\end{equation}
	and the composite of the last two maps can be rewritten thanks to a commutative square
	\[ \begin{tikzcd}
		{\it KO}^1(S^1 \times S^1) \rar["\cdot \eta"]\dar["\int_{S^1 \times S^1}"] & {\it KO}^0(S^1 \times S^1) \dar["w_2"] \\
		{\it KO}^{-1}({\rm pt}) \rar["\sim"] & \mathbb Z/2.
	\end{tikzcd} \]
	Indeed, we have ${\it KO}^1(S^1 \times S^1) \cong \mathbb Z\<x, y\> \oplus \mathbb Z/2\<\eta\>$, and ${\it KO}^0(S^1 \times S^1) \cong \mathbb Z \oplus \mathbb Z/2\<\eta x, \eta y\> \oplus \mathbb Z/2\<\eta^2\>$, with multiplication by $\eta$ given by the obvious map, and the $w_2$-functional is the one that picks out the coefficient of $\eta^2$. Likewise, the vertical integration map picks out the coefficient of $\eta$, hence proving the commutativity. Thus, we can once again express (\ref{eq:pushw2}) as a double pushforward, first integrating the $Y$ factor, then the $S^1 \times S^1$ factor. This can be computed instead by integrating the $Y \times S^1$ factor, and then the remaining $S^1$-factor. In other words, it is given by integrating the \emph{parametric index} ${\rm Ind}(\text\dh^R_{Y \times S^1}) \in {\it KO}^0(S^1)$ over $S^1$. The integration map $\int_{S^1} : {\it KO}^0(S^1) \to {\it KO}^{-1}({\rm pt})$ is nothing but $w_1$ of this 1-parameter family of indices. Thus, we conclude that the image of $[\underline{\bb C}]$ under (\ref{eq:pushw2}) vanishes if and only if this 1-parameter family of indices has trivial determinant. A more useful reformulation of this is that an orientation of the determinant line for any particular Dirac operator on $Y \times S^1$ uniquely determines an orientation for the whole homotopy class of such operators. Now, we may fix an arbitrary orientation on a basis of $\pi_1 \rspinc(Y)$, and we believe one should be able to extend it by linearity, via the concatenation isomorphism relating the determinant lines of two loops and their concatenation.
	
	Finally, if {\sc Conj.~\ref{conj:abs}} were true for $\widehat{\it HFR}$, it should recover the obstructions we have obtained in our main result {\sc Thm.~\ref{thm:main}}; this appears tricky to prove algebraically, and it seems to require some sort of Stiefel-Whitney analogue of the Grothendieck-Riemann-Roch theorem. The additional difficulty comes from the fact that there is no such thing as a ``Stiefel-Whitney character'', unlike the classical Chern character. We do not pursue this idea further in the present work, due to its apparent difficulty.


  \bibliographystyle{abbrvurl} 
  \bibliography{main}{} 

\end{document}